\documentclass[11pt]{article}

\usepackage[margin=1in]{geometry}
\usepackage{amsmath,amssymb,amsfonts,amsthm,mathtools}
\usepackage{bm}
\usepackage{enumitem}
\usepackage{booktabs}
\usepackage{microtype}
\usepackage[backend=biber,style=numeric-comp,sorting=nyt]{biblatex}
\usepackage[colorlinks=true,linkcolor=blue,citecolor=blue,urlcolor=blue]{hyperref}
\hypersetup{
  pdftitle={Large Deviations for Controlled Branching Processes: Random Slopes, Harmonic Moments, and Transfer Principles},
  pdfauthor={Miguel Gonz\'alez; Carmen Minuesa; In\'es del Puerto; Anand N. Vidyashankar}
}

\allowdisplaybreaks
\numberwithin{equation}{section}

\newtheorem{theorem}{Theorem}[section]
\newtheorem{proposition}[theorem]{Proposition}
\newtheorem{lemma}[theorem]{Lemma}
\newtheorem{corollary}[theorem]{Corollary}
\newtheorem{assumption}[theorem]{Assumption}

\theoremstyle{remark}
\newtheorem{remark}[theorem]{Remark}

\newcommand{\N}{\mathbb N}
\newcommand{\Nzero}{\mathbb N_0}
\newcommand{\R}{\mathbb R}
\newcommand{\Pbf}{\mathbf P}
\newcommand{\Ebf}{\mathbf E}

\newcommand{\one}{\mathbf 1}
\newcommand{\e}{\mathrm e}
\newcommand{\dd}{\,\mathrm d}
\newcommand{\cE}{\mathcal E}
\newcommand{\cF}{\mathcal F}
\newcommand{\cD}{\mathcal D}
\newcommand{\cG}{\mathcal G}
\newcommand{\cH}{\mathcal H}

\newcommand{\Law}{\mathcal L}
\newcommand{\eps}{\varepsilon}
\DeclareMathOperator{\Var}{Var}

\makeatletter
\newcommand{\blfootnote}[1]{%
  \begingroup
  \renewcommand{\thefootnote}{}%
  \footnote{#1}%
  \addtocounter{footnote}{-1}%
  \endgroup
}
\makeatother

\title{Large Deviations for Controlled Branching Processes:\\
Random Slopes, Harmonic Moments, and Transfer Principles}
\author{%
\begin{tabular}{c}
Miguel Gonz\'alez\textsuperscript{1}\quad
Carmen Minuesa\textsuperscript{1}\quad
In\'es del Puerto\textsuperscript{1}\quad
Anand N. Vidyashankar\textsuperscript{2}
\\[0.65em]
{\small
\textsuperscript{1}Department of Mathematics, Faculty of Sciences,
University of Extremadura}
\\[-0.1em]
{\small
Avenida de Elvas s/n, 06006 Badajoz, Spain}
\\[-0.1em]
{\small
\textsuperscript{2}Department of Statistics, George Mason University}
\\[-0.1em]
{\small
4400 University Drive, MS 4A7, Fairfax, VA 22030, USA}
\end{tabular}}
\date{}

\begin{document}
\maketitle
\blfootnote{\textit{E-mail addresses:}
\href{mailto:mvelasco@unex.es}{mvelasco@unex.es},
\href{mailto:cminuesaa@unex.es}{cminuesaa@unex.es},
\href{mailto:idelpuerto@unex.es}{idelpuerto@unex.es}, and
\href{mailto:avidyash@gmu.edu}{avidyash@gmu.edu}.}

\begin{abstract}
Let \(\{X_n:n\geq0\}\) be a controlled branching process with random
control functions \(\{\phi_n:n\geq0\}\), and let
\(N_n:=\phi_n(X_n)\) denote the number of progenitors selected in generation
\(n\).  We investigate large deviations for the one-step ratios
\(X_{n+1}/N_n\), \(N_n/X_n\), and \(X_{n+1}/X_n\), on their natural
positive-denominator events, in supercritical and critical controlled
branching processes.  We consider two control regimes: one with a generation-wide
random slope and one with an asymptotically deterministic first-order slope.

For asymptotically affine controls with a generation-wide random slope, we
use an affine recursion and a random environmental product to prove a sharp
Bahadur--Rao--Petrov upper-tail asymptotic in both supercritical and critical
environments.  A positive Cram\'er tilt identifies its prefactor through a
nonnegative, nontrivial normalized limit.  Conditional on the corresponding
upper-deviation event, all three ratios concentrate simultaneously around
their environmental centers.

When the random control has an individual-sum structure, the controlled
process admits an exact representation as a branching process in a random
environment with immigration.  Motivated by this connection, we establish
sharp upper large-deviation asymptotics for BPREI generation sizes and use
their positive-transform and harmonic-moment profiles in the controlled
process.  In the nondecreasing supercritical case, the harmonic moments
exhibit an exact environmental--boundary--persistence trichotomy.  At
criticality, a finite-horizon-minimum decomposition, integrated
bridge-occupation estimates, and an endpoint-minimum entrance law yield
exact \(n^{-1/2}\)-scale limits for all positive harmonic orders and for
lower-tail kernels bounded by a negative power of the population size, in a
centered finite-variance class.  In particular, this gives exact normalized
limits for the three critical ratio-deviation probabilities in the
corresponding individual-sum subclass.

When the first-order control slope is deterministic, the environmental-product
mechanism disappears.  Matched lower and upper probability-generating-function
envelopes replace the branching property and transfer auxiliary
exact-recursion profiles to genuinely nonexact controls, producing different
supercritical and critical harmonic-moment trichotomies.  Thus the location of
first-order randomness determines the lower-tail mechanism, while immigration
may change a sharp upper-tail constant, a phase boundary, or the decay
exponent itself.
\end{abstract}

\medskip
\noindent\textbf{Keywords.}
Controlled branching processes; ratio large deviations; random-slope controls;
branching processes in random environments with immigration; harmonic moments;
state-indexed stochastic fixed-point equations; transfer principles;
probability-generating-function envelopes; finite-horizon minimum decomposition;
Spitzer--Doney condition.

\smallskip
\noindent\textbf{2020 Mathematics Subject Classification.}
Primary 60F10, 60J80; secondary 60K37, 60G50.
\section{Introduction}\label{sec:introduction}

Controlled branching processes generalize the classical
Galton--Watson process by allowing the number of progenitors in each
generation to depend on the current generation size through a random
control.  Specifically, let \(X_0\) be a nonnegative integer-valued
initial generation size, let
\(\{\xi_{n,j}:n\geq0,\ j\geq1\}\) be independent copies of a
nonnegative integer-valued random variable \(\xi\), and let
\(\{\phi_n:n\geq0\}\) be a sequence of random control functions,
independent of the offspring array.  The controlled branching process
is defined recursively by
\begin{equation}\label{eq:intro-controlled-recursion}
        X_{n+1}
        =\sum_{j=1}^{\phi_n(X_n)}\xi_{n,j},
        \qquad n\geq0,
\end{equation}
with the empty sum interpreted as zero.  We write
\(m:=\Ebf\xi\in(0,\infty)\) and
\(N_n:=\phi_n(X_n)\) for the number of progenitors in generation
\(n\).  We do not require \(\phi_n(0)=0\); an additive component of
the control may act as immigration and allow the process to leave zero.
We study both supercritical and critical environmental regimes under
conditions that permit the process to reach arbitrarily large generation
sizes with positive probability.

In this paper we study large deviations for the one-step ratios
\[
        \frac{X_{n+1}}{N_n},
        \qquad
        \frac{N_n}{X_n},
        \qquad
        \frac{X_{n+1}}{X_n},
\]
on the events where the corresponding denominators are positive.
Conditional on \(N_n\), the first ratio is the empirical mean of the
offspring variables, while, conditional on \(X_n\), the second records
the fluctuation of the control.  On \(\{N_n>0\}\), the third ratio
combines these two mechanisms.

Two large-deviation questions arise.  The first concerns the ratios on
events where the generation size is exponentially large.  This is the
natural growth scale in the supercritical regime and an upper-deviation
scale in the critical regime.  The second concerns unconditional ratio
deviations, where the relevant denominator is required only to be positive.
Small positive values of \(X_n\) or \(N_n\) may then determine the deviation
rate.  This random-denominator viewpoint continues the Galton--Watson
large-deviation program of Athreya and Vidyashankar, Athreya, and Ney and
Vidyashankar
\cite{AthreyaVidyashankar1993,Athreya1994,
AthreyaVidyashankar1995,AthreyaVidyashankar1997,
NeyVidyashankar2003,NeyVidyashankar2004}.  In a Galton--Watson process, the
branching property turns a one-generation ratio deviation into an empirical
mean deviation with a random sample size.  Geometric one-step bounds are averaged through positive generating
functions, whereas polynomial bounds are averaged through harmonic
moments; in both cases, the resulting population estimates are then
\emph{transferred} to the corresponding ratio probabilities.  We show that the one-step kernel reduction continues to hold, under
fairly weak moment and random-denominator conditions, for the random
asymptotically affine control model
\(\phi_n(k)=A_nC_n(k)+B_n\).  Here \(C_n\) may be a general
population-dependent random map, \(A_n\) carries a generation-wide random
shock, and \(B_n\) provides an additive component.  This reduction needs
neither an individual-sum representation nor the branching property.  The
harmonic-moment and positive-transform profiles used to identify the
unconditional rates are verified through the exact BPREI representation of
the individual-sum subclass.

Controlled branching processes were introduced by Sevast'yanov and Zubkov
\cite{SevastyanovZubkov1974}, and random control functions were studied by
Yanev \cite{Yanev1976}.  Gonz\'alez, Molina, del Puerto and their
collaborators developed much of the subsequent theory, including extinction,
geometric growth, \(L^2\) convergence, critical behavior, and stochastic
recursions; see
\cite{GonzalezMolinaDelPuerto2002,GonzalezMolinaDelPuerto2003,
GonzalezMolinaDelPuerto2005L2,GonzalezMolinaDelPuerto2005Critical,
GonzalezMolinaDelPuerto2006Growth,GonzalezMolinaDelPuerto2006Limits}
and the monograph \cite{GonzalezDelPuertoYanev2018}.  A recurring
quantity in this theory is the asymptotic mean control slope.  When \(0\) is
an absorbing state for the Markov chain
\(\{X_n:n\geq0\}\) and
\(k^{-1}\Ebf[\phi_0(k)]\to\tau\in(0,\infty)\) as \(k\to\infty\),
the quantity \(m\tau\) plays the role of an effective reproduction
mean.  Accordingly, the natural first-order scale of the generation
size \(X_n\) is the deterministic sequence \((m\tau)^n\).

We begin where this deterministic normalization ceases to describe the
control.  In the random-slope regime, the controls take the
asymptotically affine form
\begin{equation}\label{eq:intro-random-slope-control}
        \phi_n(k)=A_nC_n(k)+B_n,
        \qquad
        \frac{C_n(k)}{k}\longrightarrow\lambda\in(0,\infty)
        \quad(k\to\infty),
\end{equation}
where \(A_n\) is a positive integer multiplier common to the entire
generation and \(B_n\) is nonnegative.  We write \(\cE_n\) for the
complete generation mark containing \(A_n\), \(B_n\), and \(C_n\).
Although the normalized map \(C_n(k)/k\) approaches the deterministic
limit \(\lambda\), the generation-wide multiplier \(A_n\) remains.
Consequently,
\(\phi_n(k)/k\longrightarrow\lambda A_n\), and the first-order
population multiplier is \(M_n:=m\lambda A_n\).  The natural
first-order scale of \(X_n\) is therefore the environmental product
\(\Delta_n:=\prod_{j=0}^{n-1}M_j\), rather than a deterministic
geometric sequence.  The three ratio centers are correspondingly
\(m\), \(\lambda A_n\), and \(m\lambda A_n\).  Centering the latter
two ratios at their annealed means would remove neither the current
generation-wide shock nor the product mechanism that drives the large-deviation event.

The form of the control already suggests a connection with branching
processes in random environments with immigration.  The term
\(A_nC_n(X_n)\) describes the contribution associated with the current
generation, while the additive term \(B_n\) provides a new source of
progenitors whose offspring enter the next generation as immigration.
For a general population-dependent random map \(C_n\), however, this
interpretation is not exact because the branching property need not
hold: the control associated with two subpopulations need not split
into independent contributions.

The branching property is recovered in the individual-sum subclass
\(C_n(k)=\sum_{i=1}^kL_{n,i}\), where \(\Ebf L=\lambda\).  Conditional
on the generation mark \((A_n,B_n)\), the \(A_nL_{n,i}\) progenitors
associated with the \(i\)th current individual produce an induced
offspring variable with p.g.f. \(\ell(f(s)^{A_n})\), while the
\(B_n\) additive progenitors produce an immigration variable with
p.g.f. \(f(s)^{B_n}\).  The controlled process is therefore exactly a
BPRE with immigration whose quenched reproduction mean is
\(M_n=m\lambda A_n\).  The induced reproduction and immigration laws
may be dependent through the common generation mark, although they
are conditionally independent given that mark.

Branching processes in random environments were introduced by Smith and
Wilkinson \cite{SmithWilkinson1969}.  Athreya and Karlin subsequently
established fundamental extinction and limit results
\cite{AthreyaKarlin1971I,AthreyaKarlin1971II}; see Tanny \cite{Tanny1988} and Kersting and Vatutin
\cite{KerstingVatutin2017} for later normalized growth theory.  For BPRE without immigration, logarithmic population deviations were studied by
Kozlov \cite{Kozlov2006}, Bansaye and Berestycki
\cite{BansayeBerestycki2009}, B\"oinghoff and Kersting
\cite{BoeinghoffKersting2010}, Bansaye and B\"oinghoff
\cite{BansayeBoinghoff2011,BansayeBoinghoff2013,BansayeBoinghoff2014},
and Shklyaev \cite{Shklyaev2021I,Shklyaev2021II}.
Huang and Liu \cite{HuangLiu2012} developed moment and deviation results in
the supercritical case, while Buraczewski and Dyszewski
\cite{BuraczewskiDyszewski2022} proved the precise
Bahadur--Rao--Petrov upper-tail asymptotic that serves as our no-immigration
benchmark.  For geometric conditional offspring, Dmitrushchenkov and
Shklyaev \cite{DmitrushchenkovShklyaev2017} obtained precise upper-deviation
asymptotics for BPREI and showed that, in the critical and supercritical
regimes they considered, immigration changes the multiplicative constant
rather than the environmental exponential rate.  Wang and Liu
\cite{WangLiu2017} established limit theorems for supercritical BPREI.  Our
first BPREI result adds a sharp prefactor under an innovation hypothesis,
includes the critical environmental regime, and allows reproduction and
immigration to depend through the same environmental mark.

To state it, let \(Z_n\) be a BPREI with quenched reproduction mean \(M_n\),
write \(\Pi_n:=\prod_{j=0}^{n-1}M_j\), and set
\(\Lambda(\vartheta):=\log\Ebf[M^\vartheta]\).  Choose \(\alpha>0\) with
\(\rho:=\Lambda'(\alpha)\), and let
\(I(\rho):=\alpha\rho-\Lambda(\alpha)\) and
\(v_\alpha^2:=\Lambda''(\alpha)\).  Under the Cram\'er and
sublinear-innovation hypotheses of Section~\ref{sec:bprei}, we prove, for
every fixed \(y\in\R\),
\begin{equation}\label{eq:intro-bprei-sharp}
        \lim_{n\to\infty}
        \sqrt n\,\e^{nI(\rho)}
        \Pbf\bigl(Z_n>\e^{\rho n+y}\bigr)
        =\frac{c_Z(\alpha)\e^{-\alpha y}}
        {\alpha v_\alpha\sqrt{2\pi}},
\end{equation}
where \(c_Z(\alpha)=\Ebf_\alpha[V_\infty^\alpha]\in(0,\infty)\) and
\(V_n:=Z_n/\Pi_n\) converges almost surely and in
\(L^\alpha(\Pbf_\alpha)\).  The result applies in both supercritical and
critical environments.  Under the light-immigration conditions, immigration
changes the normalized tilted limit and therefore the multiplicative
constant, while the Cram\'er exponent continues to be generated by the
reproduction means.  When immigration is absent, the theorem recovers the
supercritical and critical upper-deviation result of Buraczewski and
Dyszewski.  For \(\alpha>1\), our innovation formulation requires an
\(\alpha\)th centered offspring moment rather than a moment strictly to the
right of \(\alpha\).  At \(\alpha=1\), a moment above one is still used to obtain a strict sublinear innovation exponent, while for \(0<\alpha<1\) the two formulations are more naturally viewed as alternative random-sum conditions.

The distinction between the environmental exponent and the BPREI
constant also guides the proof.  We condition at a logarithmic
generation \(j_n\) and use the normalized recursion to separate the
leading term \(Z_{j_n}\Pi_{j_n,n}\) from the contribution of the later
innovations.  Under the \(\alpha\)-tilted law, the normalized process converges to a
nonnegative, nontrivial limit: it may vanish on some paths, but it is
strictly positive with positive probability.  The early generation size then
enters the future environmental deviation as a random logarithmic shift.
Averaging the integrated Stone local-limit kernel under the
\(\alpha\)-size-biased law of \(Z_{j_n}\) yields the sharp prefactor in
\eqref{eq:intro-bprei-sharp}.  When we return to the controlled process, the
normalized limits obtained from all deterministic initial states satisfy the
state-indexed stochastic fixed-point equation (SFPE) of
Proposition~\ref{prop:main-random-SFPE}.  Their \(\alpha\)th-moment
function satisfies an eigenfunction identity for the original annealed
transition operator; this identity encodes the moment constant, and hence the
prefactor, as a function of the initial state.

The unconditional ratio problem brings the lower tail of the generation size
to the foreground.  In a nondecreasing supercritical BPREI, a small positive
population can be sustained by an unfavorable sequence of environmental
means or by repeated persistence at a small state.  The corresponding costs
are \(c_r:=\Ebf[M^{-r}]\) and
\(\gamma_i:=\Pbf_i(Z_1=i)=\Ebf[h_0p_1^i]\), where \(p_1\) is the quenched
probability of one child and \(h_0\) the quenched probability of zero
immigration.  We prove exact normalized limits on the scales \(c_r^n\),
\(n\gamma_i^n\), and \(\gamma_i^n\), according as \(c_r\) is greater than,
equal to, or less than \(\gamma_i\).  Huang, Wang and Wang
\cite{HuangWangWang2022} identified the threshold and these three
normalizations; our result identifies exact limits and constants in the
environmental and boundary regimes.  A negative-tilt first-exit renewal
identity supplies the constants.  When immigration vanishes,
\(\gamma_i=\Ebf[p_1^i]\), and the theorem reduces to the BPRE result of
Grama, Liu and Miqueu \cite{GramaLiuMiqueu2017}.

The critical theory below concerns the nondegenerate environmental case, in
which the random walk
\(S_n:=\sum_{j=0}^{n-1}\log M_j\) oscillates and satisfies the
Spitzer--Doney condition.  This excludes the boundary \(M=1\) almost surely,
for which \(S_n\equiv0\) and the finite-horizon minimum mechanism disappears;
in the controlled-process setting, that boundary belongs to the
deterministic-slope theory developed later.  For a fixed terminal generation,
we decompose according to the first time at which the environmental walk
attains its minimum over the finite horizon.  The resulting convolution
separates the endpoint-minimum entrance population from an independent
post-minimum block.

The abstract critical theorem is stated under four inputs: Spitzer--Doney
regular variation \((\mathrm{SD})\), convergence of the endpoint-minimum
entrance laws \((\mathrm{EL})\), a summable uniform post-minimum
inverse-moment bound \((\mathrm{UIM})\), and convolution-tail localization
\((\mathrm{LOC})\).  These assumptions isolate exactly what the
finite-horizon-minimum decomposition uses.  Under them,
\begin{equation}\label{eq:intro-critical-harmonic}
        \lim_{n\to\infty}
        \frac{\Ebf_i[Z_n^{-r};Z_n>0]}{d_n^-}
        =C_{i,r}\in(0,\infty),
        \qquad r>0.
\end{equation}
Thus all positive harmonic orders share the same descending-ladder scale when
the post-minimum profile is summable.  When it is nonsummable, a regularly
varying convolution supplies an alternative normalization under the explicit
convolution-replacement hypothesis of
Proposition~\ref{prop:bprei-critical-divergent-profile}.

The hypotheses are not left only at this abstract level.  We verify all four
inputs for a centered finite-variance class with bounded normalized quenched
offspring variance, immigration in every generation with bounded conditional
mean, and the stated positive-increment moment condition.  A ladder-piece
argument gives the local bridge and integrated occupation estimates; immigrant
clans initiated at future-minimum epochs give \((\mathrm{UIM})\) and
\((\mathrm{LOC})\); and a reversal argument using the strict-persistence ratio limit gives a
common endpoint-minimum entrance law.  Appendix~\ref{sec:critical-verification}
derives that strict ratio from the same local and persistence estimates and
relates it to the Bertoin--Doney conditioning framework.
Consequently, in this class,
\(\sqrt n\,\Ebf_i[Z_n^{-r};Z_n>0]\) has a positive finite limit whose
constant is independent of the fixed initial state.  The same minimum
convolution also gives exact critical limits for population-conditioned ratio
kernels dominated by a negative power of the current population; the
resulting constants may vanish for degenerate ratios.  In the primitive
individual-sum controlled-process verification, persistent immigration is
ensured by \(\Pbf(\xi=0)=0\) and \(B\geq1\) almost surely; this
restriction belongs to that verification, not to the abstract BPREI theorem.
For more general critical environments, the same abstract theorem applies
whenever the post-minimum inverse-moment, localization, and entrance-law
inputs can be verified.  The proof combines classical fluctuation theory
\cite{Doney1995,BertoinDoney1994,Feller1971} with the finite-horizon minimum
decomposition.

We then return to the random asymptotically affine class.  The
individual-sum representation is not needed for the exact affine recursion
\(X_{n+1}=M_nX_n+D_{n+1}\).  A sublinear innovation estimate gives the
normalized limit and the sharp upper deviation for this larger random-map
class.  Coupling the processes from all deterministic initial states through
the same random maps yields a state-indexed SFPE; the resulting
\(\alpha\)th-moment function satisfies an eigenfunction identity that
encodes the sharp constant across initial states.  On the upper-deviation event, all three ratios concentrate
around their environmental centers.  For unconditional ratio deviations, the
individual-sum subclass supplies the BPREI generating-function and
harmonic-moment profiles required by the one-step transfer principle.

The deterministic-multiplier case marks the boundary of the environmental
theory.  If \(A_n\equiv a_0\), then the environmental variance vanishes and
\(\phi_n(k)/k\) converges to \(a_0\lambda\).  More generally, the control may
remain random and need not satisfy the branching property even though its
first-order slope is deterministic.  In this asymptotically deterministic
class, we replace the missing recursion by normalized factorized envelopes
for \(h_k(s):=\Ebf[s^{\phi(k)}]\):
\begin{equation}\label{eq:intro-ad-envelope}
        d_L(s)l_L(s)^k
        \leq h_k(s)
        \leq d_U(s)l_U(s)^k,
        \qquad 0\leq s\leq1,
\end{equation}
with \(l_L'(1)=l_U'(1)=\tau\).  Evaluating the two envelopes at the
offspring p.g.f. gives transformed branching-with-immigration recursions
\(F_{n+1}^e(s)=R_e(s)F_n^e(u_e(s))\).  Iteration propagates the sandwich
through the iterates of \(u_e\) and accumulates the factors \(R_e\).  We keep
the population transform \(F_n(s)=\Ebf[s^{X_n}]\) and the progenitor
transform \(H_n(s)=\Ebf[s^{N_n}]\) separate because the relevant random
denominators differ.  The contribution here is the normalized sandwich, the
accumulated product and remainder calculus, and the transfer from the two
auxiliary recursions to the original controlled process.

The envelope dynamics yield the supercritical harmonic-moment scales
\(\theta^n\), \(n\theta^n\), and \((m\tau)^{-rn}\), according as
\(\theta(m\tau)^r\) is greater than, equal to, or less than one.  At
criticality, the accumulated immigration product creates
\(\sigma=\beta/\gamma\), where \(\gamma=u''(1)/2\) and \(\beta=R'(1)\), and
the corresponding scales are \(n^{-\sigma}\),
\(n^{-\sigma}\log n\), and \(n^{-r}\).  This contrasts sharply with the
critical random-environment result, where every positive harmonic order is
placed on the ladder scale \(d_n^-\).  Immigration also plays different roles
across the two theories: it changes the sharp constant in the light-tailed
environmental upper deviation, moves the persistence boundary in the
supercritical BPREI, and may change the harmonic-moment rate itself in the
deterministic-slope recursion.

The general negative-tilt identity reveals a second critical boundary.  If
\(\Lambda'(-r_c)=0\), the environment is critical under the negative tilt,
but the remaining functional associated with the finite-horizon minimum is
weighted by the normalized population.  For a linear-fractional BPRE without
immigration, the composition formula singles out the tilt \(-1\), leading to
the strongly, intermediately, and weakly supercritical classification.  For a
BPREI, the distinguished negative power depends on the immigration transform;
no universal value \(1\) is asserted.  Beyond the no-immigration
linear-fractional comparison, a weighted finite-horizon minimum analysis is
required.

We write \(\cF_n\) for the natural chronological filtration of the process
under discussion and \(\cG_n:=\sigma(\cE_0,\ldots,\cE_{n-1})\) for the
environmental filtration.  Section~\ref{sec:main-results} states the main
controlled-process results.  Section~\ref{sec:bprei} develops the BPREI
upper-deviation and harmonic-moment theory.  Section~\ref{sec:controlled-process}
proves the random-slope controlled-process results, and
Section~\ref{sec:ad-controls} develops the sandwich and transfer calculus for
asymptotically deterministic controls.  Section~\ref{sec:comparisons-examples}
gives representative exact and nonexact examples, explains the
deterministic-multiplier boundary, and closes with a brief discussion.
Appendix~\ref{sec:critical-verification} verifies the centered finite-variance
bridge, uniform inverse-moment, localization, and endpoint-minimum entrance-law
inputs used by the critical BPREI theorem.  Appendix~\ref{app:ad-standard-facts}
collects the standard fixed-argument facts used by the auxiliary exact
recursions in Section~\ref{sec:ad-controls}.
\section{Main results for controlled branching processes}
\label{sec:main-results}

In this section we present the main results concerning large deviations for
ratios of generation sizes and progenitor counts.  We first study ratio
concentration jointly with a sharp upper deviation of the generation size
under random asymptotically affine control.  We then remove that
upper-deviation restriction and identify the harmonic-moment and
generating-function profiles governing unconditional ratio deviations.
Finally, we turn to asymptotically deterministic controls, where lower and
upper p.g.f. envelopes replace the environmental product.
Throughout, \(N_n:=\phi_n(X_n)\) denotes the number of progenitors selected in
generation \(n\).  Unless stated otherwise, \(O(\cdot)\) and \(o(\cdot)\)
terms involving the generation index refer to \(n\to\infty\); other limiting
regimes are specified locally.  Sections~\ref{sec:controlled-process} and
\ref{sec:ad-controls} provide the primitive sufficient conditions and proofs,
while Section~\ref{sec:bprei} develops the BPREI inputs used below.

\subsection{Random asymptotically affine controls}
\label{subsec:main-random-slope}

Let \(\{\cE_n:n\geq0\}\) be i.i.d. environmental marks, independent of the
offspring array and of \(X_0\).  Associated with \(\cE_n\) are
\(A_n\in\N\), \(B_n\in\Nzero\), and a random map
\(C_n:\Nzero\to\Nzero\), with \(C_n(0)=0\); these objects may be dependent
through the common mark.  We consider
\begin{equation}\label{eq:main-random-slope-control}
        \phi_n(k)=A_nC_n(k)+B_n,
        \qquad k\in\Nzero,
\end{equation}
and fix \(\lambda\in(0,\infty)\).  Let \((A,B,C)\) have the law of a
generic generation mark, and let \((\xi_j)_{j\geq1}\) be an independent copy
of any row \((\xi_{n,j})_{j\geq1}\) of the offspring array.  For a
deterministic current state \(k\in\Nzero\), set
\(N(k):=AC(k)+B\),
\(\Psi(k):=\sum_{j=1}^{N(k)}\xi_j\), and
\(M:=m\lambda A\), and define the generic one-generation innovation by
\(D(k):=\Psi(k)-Mk\).  Equivalently,
\(D(k)=\sum_{j=1}^{N(k)}(\xi_j-m)
+mA\{C(k)-\lambda k\}+mB\).

For the realized transition from generation \(n\) to generation \(n+1\), put
\(M_n:=m\lambda A_n\) and
\(D_{n+1}:=X_{n+1}-M_nX_n\), so that
\(X_{n+1}=M_nX_n+D_{n+1}\).  Conditional on \(X_n=k\), the pair
\((M_n,D_{n+1})\) has the same law as \((M,D(k))\).  Thus \(D(k)\) is the
generic-state innovation, whereas \(D_{n+1}\) is the innovation realized in
the transition into generation \(n+1\).  The term ``innovation'' denotes the
remainder after removing the leading environmental linear term \(M_nX_n\); it
need not be centered because it contains the centered offspring fluctuation,
the control remainder, and the additive contribution.

The corresponding environmental normalization is
\(\Delta_n:=\prod_{j=0}^{n-1}M_j\).  Finally, set
\(\Lambda(\vartheta):=\log\Ebf[M^\vartheta]\) and
\(\cD_\Lambda:=\{\vartheta\in\R:\Lambda(\vartheta)<\infty\}\), and call
\(\cD_\Lambda\) the effective domain of \(\Lambda\).

\paragraph{Upper-deviation hypotheses.}
Fix \(\alpha>0\) in \(\operatorname{int}\cD_\Lambda\).  The sharp
environmental estimate uses the non-arithmeticity of \(\log M\) and
\begin{equation}\label{eq:main-random-cramer}
        \Ebf|\log M|<\infty,
        \qquad \mu:=\Ebf\log M\geq0,
        \qquad \rho:=\Lambda'(\alpha)>\mu,
        \qquad v_\alpha^2:=\Lambda''(\alpha)\in(0,\infty).
\end{equation}
The terminology in this subsection is environmental: the cases
\(\mu>0\) and \(\mu=0\) are called supercritical and critical,
respectively.  This differs from the classical absorbing-CBP classification
based on an asymptotic annealed one-step mean; see
\cite{GonzalezDelPuertoYanev2018}.  Here \(\phi_n(0)=B_n\), so zero need not
be absorbing, and the first-order coefficient \(M_n=m\lambda A_n\) is random
and may be less than one in individual generations.  The environmental
product \(\Delta_n\), rather than a deterministic geometric sequence, is
therefore the relevant first-order normalization.

We will also need the following moment conditions on the initial
generation size and the one-generation innovations: for some
\(0\leq\beta<\alpha\),
\begin{equation}\label{eq:main-random-sublinear}
        \Ebf X_0^\alpha<\infty,
        \qquad
        \Ebf|D(k)|^\alpha\leq C(1+k^\beta),
        \qquad k\in\Nzero.
\end{equation}
\paragraph{Finite-level accessibility.}
For \(K\geq1\), let \(\tau_K:=\inf\{n\geq0:X_n\geq K\}\) and assume
\begin{equation}\label{eq:main-random-accessibility}
        \Pbf(\tau_K<\infty)>0,
        \qquad K\geq1.
\end{equation}
This finite-level support condition is used only to transfer nondegeneracy
from large deterministic states to the prescribed initial law.  In the
individual-sum subclass it follows from the exact BPREI representation and
Lemma~\ref{lem:bprei-accessibility}; for the general random-map class it is
kept as an explicit hypothesis.

For the environmental change of measure, define \(\Pbf_\alpha\) on the
first \(n\) environmental marks by
\begin{equation}\label{eq:main-random-tilt}
        \frac{\dd\Pbf_\alpha}{\dd\Pbf}
        =\Delta_n^\alpha\e^{-n\Lambda(\alpha)}.
\end{equation}
Conditional on the tilted marks, the offspring law remains unchanged and
\(\Ebf_\alpha\log M=\Lambda'(\alpha)=\rho>0\).  The affine recursion
\(X_{n+1}=M_nX_n+D_{n+1}\) and the sublinear innovation estimate identify
\(\Delta_n\) as the natural normalization.

\begin{proposition}[Normalized limit and sharp population estimate]
\label{prop:main-random-normalized-limit}
Under the upper-deviation hypotheses, there is a nonnegative, nontrivial random
variable \(V_\infty\), with \(\Pbf_\alpha(V_\infty>0)>0\), such that
\begin{equation}\label{eq:main-random-V-limit}
        \frac{X_n}{\Delta_n}\longrightarrow V_\infty
        \qquad
        \Pbf_\alpha\text{-almost surely and in }L^\alpha(\Pbf_\alpha).
\end{equation}
Moreover, with
\(c_X(\alpha):=\Ebf_\alpha[V_\infty^\alpha]\in(0,\infty)\) and
\(I(\rho):=\alpha\rho-\Lambda(\alpha)\),
\begin{equation}\label{eq:main-random-moment-constant}
\begin{aligned}
\lim_{n\to\infty}
\e^{-n\Lambda(\alpha)}\Ebf[X_n^\alpha]
&=c_X(\alpha),\\
\lim_{n\to\infty}
\sqrt n\,\e^{nI(\rho)}
\Pbf(X_n>\e^{\rho n+y})
&=
\frac{c_X(\alpha)\e^{-\alpha y}}
     {\alpha v_\alpha\sqrt{2\pi}},
\qquad y\in\R.
\end{aligned}
\end{equation}
\end{proposition}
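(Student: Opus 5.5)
Under \(\Pbf_\alpha\) the marks remain i.i.d. with tilted law, and conditional on the marks the offspring array and \(X_0\) are unchanged. We write \(V_n:=X_n/\Delta_n\). Dividing the affine recursion \(X_{n+1}=M_nX_n+D_{n+1}\) by \(\Delta_{n+1}\) gives
\[
V_{n+1}=V_n+\frac{D_{n+1}}{\Delta_{n+1}},\qquad V_n=X_0+\sum_{j=1}^{n}\frac{D_j}{\Delta_j}.
\]
The plan is to show that this series converges absolutely in \(L^\alpha(\Pbf_\alpha)\) (or in \(L^1\) for \(\alpha\le1\)), to identify the limit as \(V_\infty\), and then to feed the moment constant into a Bahadur--Rao/Petrov argument for the environmental walk \(\log\Delta_n\).

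\emph{Step 1 (moment growth).} Conditioning on \(\cF_{j-1}\), and using that \(M_{j-1}\) is independent of \(X_{j-1}\), we get
\[
\Ebf_\alpha\bigl[|D_j|^\alpha\Delta_j^{-\alpha}\bigr]
=\Ebf\bigl[|D_j|^\alpha\bigr]\e^{-j\Lambda(\alpha)}
\le C\bigl(1+\Ebf X_{j-1}^\beta\bigr)\e^{-j\Lambda(\alpha)}.
\]
This is because the tilt density \(\Delta_j^\alpha\e^{-j\Lambda(\alpha)}\) cancels \(\Delta_j^{-\alpha}\). By Jensen, \(\Ebf X_{j-1}^\beta\le(\Ebf X_{j-1}^\alpha)^{\beta/\alpha}\). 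Set \(a_n:=\e^{-n\Lambda(\alpha)}\Ebf X_n^\alpha=\Ebf_\alpha V_n^\alpha\). Minkowski (for \(\alpha\ge1\)) or subadditivity of \(x^\alpha\) (for \(\alpha<1\)) yields
\[
a_n^{1/\alpha}\le a_0^{1/\alpha}+\sum_j C\bigl(\e^{-j\Lambda(\alpha)}(1+\e^{(j-1)\Lambda(\alpha)\beta/\alpha}a_{j-1}^{\beta/\alpha})\bigr)^{1/\alpha}.
\]
Since \(\Lambda(\alpha)\ge\alpha\mu\) is not automatically positive, we instead note \(\Lambda(\alpha)>0\). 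This holds because \(\Lambda(0)=0\), \(\Lambda\) is convex, and \(\Lambda'(\alpha)=\rho>\mu\ge0\); moreover \(\Lambda(\alpha)\ge0\) from \(\Lambda'\ge\mu\) near \(0\) by convexity, with strictness following from \(\rho>0\) and non-degeneracy. The recursion then gives \(\sup_n a_n<\infty\) by a discrete Gronwall/bootstrap, since \(\beta<\alpha\) makes the terms \(\e^{-j\Lambda(\alpha)(1-\beta/\alpha)}\) summable. For \(\alpha>1\) I would sharpen this with Burkholder applied to the centered offspring part, but the crude bound suffices for absolute convergence. We conclude that \(\sum_j\|D_j/\Delta_j\|_{L^\alpha(\Pbf_\alpha)}<\infty\) (respectively the \(\alpha\)-th power sum for \(\alpha<1\)). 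Hence \(V_n\to V_\infty\) almost surely and in \(L^\alpha(\Pbf_\alpha)\), and \(V_\infty\ge0\). This gives \(\e^{-n\Lambda(\alpha)}\Ebf X_n^\alpha=\Ebf_\alpha V_n^\alpha\to c_X(\alpha)\).

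\emph{Step 2 (nontriviality).} This is the main obstacle. Starting from a deterministic state \(k\), the same estimates give \(\Ebf_\alpha|V_\infty^{(k)}-k|^\alpha\le C'(1+k^\beta)\). Since \(\beta<\alpha\), this implies \(V_\infty^{(k)}/k\to1\) in \(L^\alpha\), so \(\Pbf_\alpha(V^{(k)}_\infty>0)>0\) for large \(k\). Accessibility \eqref{eq:main-random-accessibility} provides a finite \(n_0\) with \(\Pbf(X_{n_0}=k)>0\) for some large \(k\); the tilt on the first \(n_0\) marks is mutually absolutely continuous. The Markov property at \(n_0\) then shows \(V_\infty=\Delta_{n_0}^{-1}V_\infty^{(k)}\circ\theta_{n_0}\), where \(V_\infty^{(k)}\circ\theta_{n_0}\) denotes the normalized limit from \(k\) of the shifted process; this is positive with positive probability. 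Hence \(c_X(\alpha)>0\).

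\emph{Step 3 (sharp tail).} Fix \(j_n=\lfloor C\log n\rfloor\) and write \(X_n=X_{j_n}\Delta_{j_n,n}+R_n\), where \(\Delta_{j_n,n}:=\prod_{i=j_n}^{n-1}M_i\) and \(R_n\) collects the later innovations normalized. For the main term, \(\Delta_{j_n,n}\) is independent of \(\cF_{j_n}\). Applying the Bahadur--Rao/Petrov local estimate for the non-arithmetic walk \(\log\Delta_{j_n,n}\), uniformly in the shift \(\log X_{j_n}=O(\log n)\) with high probability, gives
\[
\Pbf\bigl(X_{j_n}\Delta_{j_n,n}>\e^{\rho n+y}\mid\cF_{j_n}\bigr)
\sim\frac{X_{j_n}^\alpha\e^{-\alpha y}\e^{-(n-j_n)\Lambda(\alpha)}\e^{-\alpha\rho j_n}\e^{-nI(\rho)+\ldots}}{\alpha v_\alpha\sqrt{2\pi n}}.
\]
The bookkeeping reduces this to \(\e^{-j_n\Lambda(\alpha)}\Ebf X_{j_n}^\alpha\to c_X(\alpha)\); the exponential factors recombine exactly because \(\rho\) enters only through \(I\). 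Uniform integrability of \(\e^{-j\Lambda(\alpha)}X_j^\alpha\), which follows from the \(L^\alpha\) convergence, justifies averaging. For the remainder, it suffices that \(R_n/\e^{\rho n}\) is negligible on the scale \(\e^{-nI}/\sqrt n\). Here we use a Markov bound in \(L^\alpha(\Pbf_\alpha)\) on the tail sum \(\sum_{i>j_n}D_i/\Delta_i\), of size \(\e^{-c j_n}=n^{-cC}\), together with a sandwich \(\e^{\rho n+y\pm\eps}\) and \(\eps\downarrow0\). To control the relative smallness of \(R_n\) against \(X_{j_n}\Delta_{j_n,n}\) at the deviation level, the change of measure \eqref{eq:main-random-tilt} is applied up to time \(n\). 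Finally, continuity of \(\e^{-\alpha y}\) in \(y\) closes the sandwich.
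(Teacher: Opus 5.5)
\textbf{The gap: averaging over the early generation size in Step~3.} Your architecture is right. The step that actually produces the sharp constant is not justified as written: averaging the local estimate for $\log\Delta_{j_n,n}$ over the random shift $\log X_{j_n}$. There are two problems.

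First, $\e^{-j\Lambda(\alpha)}X_j^\alpha$ is the wrong object for uniform integrability. Under $\Pbf$ its mean tends to $c_X(\alpha)>0$, but that mass sits on exponentially rare environments with $\log\Delta_j\approx\rho j$, so it is not uniformly integrable there. What $L^\alpha(\Pbf_\alpha)$ convergence gives is uniform integrability of $V_j^\alpha$ under $\Pbf_\alpha$. Through $\e^{-j\Lambda(\alpha)}\Ebf[X_j^\alpha F]=\Ebf_\alpha[V_j^\alpha F]$, this controls the $\alpha$-size-biased law $\widehat\Pbf_n(A)=\Ebf[X_{j_n}^\alpha\one_A]/\Ebf X_{j_n}^\alpha$. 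The good set $\{|\rho j_n-\log X_{j_n}|\le b_n\}$, with $b_n=o(\sqrt n)$, must have $\widehat\Pbf_n$-probability tending to one, not $\Pbf$-probability. To get this, split $\log X_{j_n}=\log\Delta_{j_n}+\log V_{j_n}$, apply Chebyshev to the tilted walk, and use uniform integrability of $V_{j_n}^\alpha$.

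Second, and more seriously, on the complementary set you need
$\sqrt n\,\e^{nI(\rho)}\Pbf(x\Delta_{j_n,n}>\e^{\rho n+y})\le Cx^\alpha\e^{-j_n\Lambda(\alpha)}$ uniformly in $x$. Chernoff gives this only without the factor $\sqrt n$. Only $\alpha$th moments are available, so uniform integrability gives no rate for the $\widehat\Pbf_n$-mass of the bad set, and you cannot make it $o(n^{-1/2})$. The missing ingredient is the uniform-in-level bound
$\sup_N\sup_z\sqrt N\,\Ebf_\alpha[\e^{-\alpha(T_N-z)};T_N>z]<\infty$, where $T_N=\sum_{j\le N}(\log M_j-\rho)$ (Lemma~\ref{lem:bprei-Stone-kernel}). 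The paper combines this with the exact identity \eqref{eq:bprei-prefactor-exact-identity} in Lemma~\ref{lem:bprei-random-prefactor}.

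Your displayed asymptotic also double-counts exponents. The correct bookkeeping is $\e^{-(n-j_n)I(\rho)-\alpha\rho j_n}=\e^{-nI(\rho)-j_n\Lambda(\alpha)}$.

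\textbf{The remaining steps are correct, and two of them differ usefully from the paper.} Step~1 is the paper's argument: tilt cancellation plus Lemma~\ref{lem:bprei-recursion-bound}.

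For nondegeneracy, the paper keeps the relative errors $D_{n+1}/(M_nX_n)$ above $-a_0r^n$ along the whole path. This gives $V_n\ge xp_a$ pathwise, with failure probability $O(x^{\beta-\alpha})$. Your additive bound $\Ebf_{\alpha,k}|V_\infty-k|^\alpha\le C(1+k^\beta)$ plus Markov gives the same order more directly, by reusing Step~1. You do need the constant to be uniform in $k$. It is, because the product form of the recursion lemma yields $\sup_n\Ebf_{\alpha,k}[V_n^\alpha]\le C(1+k^\alpha)$. Transferring through a deterministic time $n_0$ with $\Pbf(X_{n_0}=k)>0$, rather than the strong Markov property at $\tau_K$, is equally valid.

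For the remainder, set $Q_n:=\sum_{i=j_n+1}^{n}D_i/\Delta_i$. Your tilted estimate $\Pbf(|\Delta_nQ_n|>\eps\e^{\rho n})\le\eps^{-\alpha}\e^{-nI(\rho)}\Ebf_\alpha|Q_n|^\alpha$, with $\Ebf_\alpha|Q_n|^\alpha=O(n^{-cC})$, is a clean substitute for the paper's union bound with weights $(k-j)^{-2}$ under the original law.
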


For each \(x\in\Nzero\), let \(X_n^{(x)}\) be the process started from
\(x\), coupled with the other initial states through the same random maps,
and write \(\mathcal V_\alpha(x):=\lim_nX_n^{(x)}/\Delta_n\).  The limits
hold simultaneously on one \(\Pbf_\alpha\)-full event.  For a nonnegative
function \(g\) on \(\Nzero\), set \((Pg)(x):=\Ebf[g(\Psi(x))]\).

\begin{proposition}[State-indexed SFPE and moment eigenfunction]
\label{prop:main-random-SFPE}
Let \((M,\Psi)\) have the first-generation law under \(\Pbf_\alpha\), and let
\((\mathcal V_\alpha'(x):x\in\Nzero)\) be an independent copy of
\((\mathcal V_\alpha(x):x\in\Nzero)\), independent of \((M,\Psi)\).  Then
\begin{equation}\label{eq:main-random-joint-SFPE}
        (\mathcal V_\alpha(x):x\in\Nzero)
        \stackrel d=
        \bigl(M^{-1}\mathcal V_\alpha'(\Psi(x)):x\in\Nzero\bigr).
\end{equation}
If \(c_\alpha(x):=\Ebf_\alpha[\mathcal V_\alpha(x)^\alpha]\), then
\begin{equation}\label{eq:main-random-eigenfunction}
        Pc_\alpha=\e^{\Lambda(\alpha)}c_\alpha.
\end{equation}
The function \(c_\alpha\) is nonnegative.  If there exist
\(n\geq0\) and \(y\in\Nzero\) such that \(\Pbf_x(X_n=y)>0\) and
\(c_\alpha(y)>0\), then \(c_\alpha(x)>0\).  If
\(X_0\sim\nu\) is independent of the future marks and offspring array and
\(\sum_x\nu(x)x^\alpha<\infty\), then the corresponding moment constant is
\(c_\nu(\alpha)=\sum_x\nu(x)c_\alpha(x)\).
\end{proposition}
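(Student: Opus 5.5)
The plan is a first-generation decomposition under the tilted law, combined with the almost sure and \(L^\alpha\) convergence in Proposition~\ref{prop:main-random-normalized-limit}, applied simultaneously to every deterministic initial state. Under \(\Pbf_\alpha\) the marks \(\cE_0,\cE_1,\dots\) remain i.i.d. (the tilt \(\Delta_n^\alpha\e^{-n\Lambda(\alpha)}\) factorizes over generations), and the offspring array is unchanged. Hence the shifted system \(\bigl(X^{(y)}_{n}\circ\theta\bigr)_{y,n}\), built from marks \(\cE_1,\cE_2,\dots\) and offspring rows \(1,2,\dots\), has the same joint law as the original coupled family and is independent of \((\cE_0,\text{row }0)\).

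The coupling gives the pathwise identity \(X^{(x)}_{n+1}=\tilde X^{(\Psi_0(x))}_n\), where \(\tilde X\) is the shifted family and \(\Psi_0(x)=\sum_{j\le N_0(x)}\xi_{0,j}\). Here one should note that the offspring used by different initial states are the first \(N_0(x)\) entries of the same row, so the whole vector \((\Psi_0(x))_x\) is a function of generation \(0\). Dividing by \(\Delta_{n+1}=M_0\tilde\Delta_n\) and letting \(n\to\infty\) on the full event where all limits exist gives \(\mathcal V_\alpha(x)=M_0^{-1}\tilde{\mathcal V}_\alpha(\Psi_0(x))\) for all \(x\) simultaneously. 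The full event is countable in \(x\), and the shifted event has full probability by stationarity. Independence of \(\tilde{\mathcal V}\) from \((M_0,\Psi_0)\), together with equality in law of \(\tilde{\mathcal V}\) and \(\mathcal V\), yields \eqref{eq:main-random-joint-SFPE} as an identity of joint laws on \([0,\infty)^{\Nzero}\).

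For \eqref{eq:main-random-eigenfunction}, take \(\alpha\)th powers and condition on generation \(0\):
\[
c_\alpha(x)=\Ebf_\alpha\bigl[M_0^{-\alpha}c_\alpha(\Psi_0(x))\bigr].
\]
The density of the first mark is \(M_0^\alpha\e^{-\Lambda(\alpha)}\), and the conditional law of the offspring row given the mark is unchanged, so the right side equals \(\e^{-\Lambda(\alpha)}\Ebf[c_\alpha(\Psi(x))]=\e^{-\Lambda(\alpha)}(Pc_\alpha)(x)\). Finiteness of \(c_\alpha\) is needed so that no \(\infty\) appears; it follows from the \(L^\alpha\) bound of Proposition~\ref{prop:main-random-normalized-limit} applied to \(X_0=x\). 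Nonnegativity is trivial.

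The positivity statement follows by iterating the eigenidentity: \(c_\alpha=\e^{-n\Lambda(\alpha)}P^nc_\alpha\). Here \(P^n\) is the \(n\)-step annealed kernel, which is the law of \(X_n\) under \(\Pbf_x\), because marks are i.i.d. and the transition from state \(k\) has law \(\Psi(k)\). Therefore \(c_\alpha(x)\ge\e^{-n\Lambda(\alpha)}\Pbf_x(X_n=y)c_\alpha(y)>0\).

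For the last claim, let \(X_0\sim\nu\) be independent. Realize the process as \(X^{(X_0)}_n\) in the coupled family. Then \(V_\infty=\mathcal V_\alpha(X_0)\) almost surely, and \(X_0\) is independent of \(\mathcal V_\alpha(\cdot)\) under \(\Pbf_\alpha\), since the tilt involves only marks. Conditioning on \(X_0\) gives \(c_\nu(\alpha)=\sum_x\nu(x)c_\alpha(x)\). Proposition~\ref{prop:main-random-normalized-limit} under the moment condition \(\sum\nu(x)x^\alpha<\infty\) identifies this with the moment constant.

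The main obstacle is the measure-theoretic bookkeeping behind the shift argument. The tilt is defined only on finitely many marks, so one must check that the \(\Pbf_\alpha\) laws are consistent, extend to an i.i.d. product law on the whole sequence, and are shift-invariant. One also needs the simultaneous limit event used in the definition of \(\mathcal V_\alpha\) to be stable under the shift. I would handle this by defining \(\Pbf_\alpha\) directly as the product of tilted mark laws, tensored with the untouched offspring law.
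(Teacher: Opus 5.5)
Your proposal is correct and follows the paper's proof essentially step for step. The common steps are the pathwise identity $\mathcal V_\alpha(x)=M_0^{-1}\mathcal V_\alpha^{\langle1\rangle}(\Psi_0(x))$ on a countable intersection of full-measure events, the one-generation tilt density for $Pc_\alpha=\e^{\Lambda(\alpha)}c_\alpha$, and $P^n(x,y)=\Pbf_x(X_n=y)$ for positivity. The only variation is in the last claim, and both versions are valid: you identify $V_\infty=\mathcal V_\alpha(X_0)$ and condition on the independent $X_0$, using $L^\alpha(\Pbf_\alpha)$ convergence for the $\nu$-started process, while the paper applies dominated convergence to $\sum_x\nu(x)\Ebf_{\alpha,x}[(X_n/\Delta_n)^\alpha]$ with the statewise bound $C_\alpha(1+x^\alpha)$.
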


We next impose common polynomial bounds on the three one-generation ratio
kernels.  On \(\{X_n>\e^{\rho n+y}\}\), a kernel of order \(k^{-q}\)
produces an exponentially small conditional failure probability.

Fix \(p\geq2\), write \(q:=p/2\), and say that
\((\mathrm{RM}_p)\) holds if
\begin{equation}\label{eq:main-random-ratio-moments}
\begin{aligned}
        \Ebf|\xi-m|^p&<\infty,\\
        \Ebf[N(k)^q]
        +\Ebf|N(k)-\lambda Ak|^p
        &\leq C(1+k^q),\\
        \Pbf\{N(k)=0\}
        +\Ebf[N(k)^{-q};N(k)>0]
        &\leq Ck^{-q},
        \qquad k\geq1.
\end{aligned}
\end{equation}
The offspring moment controls the empirical-mean fluctuation in
\(X_{n+1}/N_n\).  The middle bound controls the progenitor count and the
deviation of the control from its environmental center, while the last bound
controls small positive values of the progenitor count together with the
zero-progenitor event.
Together, these conditions place all three one-step error kernels on the
common scale \(k^{-q}\); Section~\ref{sec:controlled-process} derives the
corresponding estimates.

For \(0<\eps\leq1\), let \(\mathcal B_{n,1}(\eps)\) be the event
\(\{X_n>0,\ |N_n/X_n-\lambda A_n|>\eps\}\), let
\(\mathcal B_{n,2}(\eps)\) be
\(\{X_n>0,\ |X_{n+1}/X_n-m\lambda A_n|>\eps\}\), and let
\(\mathcal B_{n,3}(\eps)\) be the event that \(X_n>0\) and either
\(N_n=0\), or \(N_n>0\) and
\(|X_{n+1}/N_n-m|>\eps\).  Write
\(\mathcal B_n(\eps):=\bigcup_{j=1}^3\mathcal B_{n,j}(\eps)\).
These events record only the one-step ratio failures; the upper-deviation
event for the generation size is imposed separately in the theorem below.

The next theorem estimates these ratio failures jointly with the
upper-deviation event \(\{X_n>\e^{\rho n+y}\}\).  The conditional failure
probability decays on the denominator scale, so the joint sharp asymptotic is
preserved under the simple tolerance condition stated below.  Recall that
\(\rho:=\Lambda'(\alpha)>\mu\), as in
\eqref{eq:main-random-cramer}.

\begin{theorem}[Sharp upper deviations with environmentally centered ratios]
\label{thm:main-random-sharp-ratio}
Assume the upper-deviation hypotheses and \((\mathrm{RM}_p)\).  For every
fixed \(y\in\R\) and every sequence \(0<\eps_n\leq1\),
\begin{equation}\label{eq:main-random-sharp-ratio-error}
\Pbf\bigl(
X_n>\e^{\rho n+y},\mathcal B_n(\eps_n)
\bigr)
\leq C_y\eps_n^{-p}\e^{-q\rho n}
\Pbf\bigl(X_n>\e^{\rho n+y}\bigr).
\end{equation}
Equivalently, whenever the conditioning event has positive probability,
\[
\Pbf\!\left(
\mathcal B_n(\eps_n)\,\middle|\,X_n>\e^{\rho n+y}
\right)
\leq C_y\eps_n^{-p}\e^{-q\rho n}.
\]
If
\begin{equation}\label{eq:main-random-sharp-separation}
        \eps_n^{-p}\e^{-q\rho n}\longrightarrow0,
\end{equation}
then
\begin{equation}\label{eq:main-random-sharp-joint}
        \lim_{n\to\infty}
        \sqrt n\,\e^{nI(\rho)}
        \Pbf\bigl(
        X_n>\e^{\rho n+y},
        \mathcal B_n(\eps_n)^c
        \bigr)
        =
        \frac{c_X(\alpha)\e^{-\alpha y}}
             {\alpha v_\alpha\sqrt{2\pi}}.
\end{equation}
The theorem holds both when \(\mu>0\) and when \(\mu=0\).  Since
\(q=p/2\), condition \eqref{eq:main-random-sharp-separation} is equivalent to
\(\eps_n\e^{\rho n/2}\to\infty\); in particular, every fixed positive
tolerance satisfies it.
\end{theorem}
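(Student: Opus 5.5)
The plan is to condition on \(\cF_n\) together with the current mark \(\cE_n\), reduce each failure event to a one-step kernel evaluated at the current state \(k=X_n\), and then integrate that kernel over \(\{X_n>\e^{\rho n+y}\}\). The key point is that the event \(\mathcal B_n(\eps)\) depends on \(X_n\) only through the state value, while the transition \((A_n,B_n,C_n,\xi_{n,\cdot})\) is independent of \(\cF_n\). We therefore have
\[
\Pbf\bigl(X_n>\e^{\rho n+y},\mathcal B_n(\eps_n)\bigr)
=\Ebf\bigl[K_{\eps_n}(X_n);X_n>\e^{\rho n+y}\bigr],
\]
where \(K_\eps(k)\) is the probability that the generic transition from deterministic state \(k\) fails one of the three ratio tests. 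Given the bound
\[
K_\eps(k)\leq C\eps^{-p}k^{-q},\qquad k\geq1,
\]
the first claim follows, since on the event in question \(k^{-q}\leq \e^{-q\rho n-qy}\). This gives \eqref{eq:main-random-sharp-ratio-error} with \(C_y=C\e^{-qy}\). The conditional form is then immediate, and \eqref{eq:main-random-sharp-joint} follows by subtracting from Proposition~\ref{prop:main-random-normalized-limit}, because \(\Pbf(X_n>\e^{\rho n+y},\mathcal B_n(\eps_n)^c)=\Pbf(X_n>\e^{\rho n+y})(1-O(\eps_n^{-p}\e^{-q\rho n}))\). Nothing in this argument distinguishes \(\mu>0\) from \(\mu=0\), since both cases are covered by that proposition.

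The kernel bound is handled event by event, with \(k\geq1\) fixed and the generic mark \((A,B,C)\).

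\emph{Control ratio.} The event is \(|N(k)/k-\lambda A|>\eps\). Markov's inequality at order \(p\), together with the middle line of \((\mathrm{RM}_p)\), gives
\[
\eps^{-p}k^{-p}\Ebf|N(k)-\lambda Ak|^p\leq C\eps^{-p}k^{-p}(1+k^q)\leq 2C\eps^{-p}k^{-q}.
\]

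\emph{Offspring ratio.} The zero-progenitor part is bounded by \(Ck^{-q}\leq C\eps^{-p}k^{-q}\), using \(\eps\leq1\). On \(\{N(k)=N>0\}\), Markov's inequality followed by the Marcinkiewicz--Zygmund (Rosenthal) inequality yields
\[
\Pbf\Bigl(\Bigl|\tfrac1N\sum_{j\le N}(\xi_j-m)\Bigr|>\eps\,\Big|\,N\Bigr)\leq C\eps^{-p}N^{-p/2}\Ebf|\xi-m|^p .
\]
Averaging with \(\Ebf[N(k)^{-q};N(k)>0]\leq Ck^{-q}\) completes this case.

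\emph{Product ratio.} Write
\[
\frac{\Psi(k)}{k}-m\lambda A=\frac{N}{k}\Bigl(\frac{\Psi}{N}-m\Bigr)+m\Bigl(\frac Nk-\lambda A\Bigr)
\]
on \(\{N>0\}\). The case \(N=0\) gives \(|0-m\lambda A|\) and is absorbed via \(\Pbf(N=0)\leq Ck^{-q}\). The second term is the control case with \(\eps/(2m)\). For the first, conditionally on \(N\), Markov's inequality at order \(p\) together with Marcinkiewicz--Zygmund gives
\[
C\eps^{-p}k^{-p}\,\Ebf\Bigl|\sum_{j\le N}(\xi_j-m)\Bigr|^p\leq C\eps^{-p}k^{-p}\,\Ebf N^{q}\leq C\eps^{-p}k^{-p}(1+k^q),
\]
where the middle bound \(\Ebf N(k)^q\leq C(1+k^q)\) is used. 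This is again at most \(C\eps^{-p}k^{-q}\). Note that the failure events use \(M_n\) from the same mark, and this is consistent with the generic law because \((M_n,D_{n+1})\) given \(X_n=k\) has the law of \((M,D(k))\).

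The main obstacle is the middle case, \(X_{n+1}/N_n\), specifically getting the rate \(k^{-q}\) rather than something worse when \(N\) is small. This is exactly why \((\mathrm{RM}_p)\) includes the harmonic bound \(\Ebf[N(k)^{-q};N(k)>0]\). I would also check that the Marcinkiewicz--Zygmund constant depends only on \(p\), so that the constants are uniform in \(k\) and \(n\). The final equivalence, \(\eps_n^{-p}\e^{-q\rho n}\to0\) if and only if \(\eps_n\e^{\rho n/2}\to\infty\), follows from \(q=p/2\) by taking \(p\)th roots.
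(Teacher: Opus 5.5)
Your proposal is correct and matches the paper's proof. The paper also reduces \(\mathcal B_n(\eps_n)\) on \(\{X_n>\e^{\rho n+y}\}\) to the population-conditioned kernels \(\psi_\phi+\psi_D+\psi_N\leq C\eps^{-p}k^{-q}\) (Proposition~\ref{prop:controlled-poly-kernels}), uses \(X_n^{-q}\leq \e^{-q(\rho n+y)}\) on the upper-deviation event, and subtracts from the sharp tail of Proposition~\ref{prop:main-random-normalized-limit}. The only cosmetic difference is the total-ratio kernel: you split it through the product decomposition and a union bound, whereas the paper bounds \(\Ebf|D(k)|^p\) directly with a conditional Rosenthal inequality.
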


\subsection{Unconditional ratio deviations in the random-slope class}
\label{subsec:main-random-transfer}

In Theorem~\ref{thm:main-random-sharp-ratio}, the ratio-failure events
were intersected with the upper-deviation event
\(\{X_n>\e^{\rho n+y}\}\).  We now remove that restriction and assume only
that the denominator of the ratio under consideration is positive.  A
fixed-tolerance ratio deviation may then be concentrated on small positive
generation sizes.  Its unconditional rate is therefore governed by harmonic
moments under polynomial one-step bounds and by generating-function profiles
under geometric bounds.

For \(s\in(0,1)\), write
\(F_n^+(s):=\Ebf[s^{X_n};X_n>0]\) and
\(H_n^+(s):=\Ebf[s^{N_n};N_n>0]\).  For fixed \(\eps>0\), set
\(\mathcal R_{n,j}(\eps):=\Pbf(\mathcal B_{n,j}(\eps))\),
\(1\leq j\leq3\), and
\(\mathcal R_n(\eps):=\sum_{j=1}^3\mathcal R_{n,j}(\eps)\).
Thus \(\mathcal R_n(\eps)\) is the sum of the unconditional ratio-failure
probabilities; unlike Theorem~\ref{thm:main-random-sharp-ratio}, no
upper-deviation event for \(X_n\) is imposed.

\begin{proposition}[One-step transfer through random denominators]
\label{prop:main-random-one-step-transfer}
Under \((\mathrm{RM}_p)\), for every fixed \(\eps>0\),
\begin{equation}\label{eq:main-random-polynomial-transfer}
        \mathcal R_n(\eps)
        \leq C_\eps\Ebf[X_n^{-q};X_n>0].
\end{equation}
More generally, for \(0<\eps_n\leq1\) and \(K_n\geq1\),
\begin{equation}\label{eq:main-random-cutoff-transfer}
        \mathcal R_n(\eps_n)
        \leq C\Pbf(0<X_n\leq K_n)
             +C\eps_n^{-p}K_n^{-q}.
\end{equation}
For the geometric alternative, fix \(\eps>0\).  Suppose there are
\(C_\eps<\infty\) and four numbers in \((0,1)\), denoted
\(s_{\phi,\eps}\), \(s_{D,\eps}\), \(s_{\xi,\eps}\), and
\(s_{0,\eps}\), such that, for every \(k\geq1\),
\[
\begin{aligned}
\Pbf\left(\left|\frac{N(k)}{k}-\lambda A\right|>\eps\right)
&\leq C_\eps s_{\phi,\eps}^k,
&\Pbf\bigl(|D(k)|>\eps k\bigr)
&\leq C_\eps s_{D,\eps}^k,\\
\Pbf\left(\left|\frac1k\sum_{j=1}^k\xi_j-m\right|>\eps\right)
&\leq C_\eps s_{\xi,\eps}^k,
&\Pbf\{N(k)=0\}
&\leq C_\eps s_{0,\eps}^k.
\end{aligned}
\]
Then
\begin{equation}\label{eq:main-random-exponential-transfer}
\begin{aligned}
\Pbf(\mathcal B_{n,1}(\eps))
&\leq C_\eps F_n^+(s_{\phi,\eps}),\\
\Pbf(\mathcal B_{n,2}(\eps))
&\leq C_\eps F_n^+(s_{D,\eps}),\\
\Pbf(\mathcal B_{n,3}(\eps))
&\leq C_\eps F_n^+(s_{0,\eps})
     +C_\eps H_n^+(s_{\xi,\eps}).
\end{aligned}
\end{equation}
\end{proposition}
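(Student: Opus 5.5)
The plan is to condition on the current state and reduce everything to one-step kernels evaluated at a deterministic state \(k\). The three events \(\mathcal B_{n,j}(\eps)\) depend only on \((X_n,N_n,X_{n+1},A_n)\). Conditional on \(X_n=k\), the triple \((A_n,N_n,X_{n+1})\) has the law of \((A,N(k),\Psi(k))\), because the mark \(\cE_n\) and the row \((\xi_{n,j})_j\) are independent of \(\cF_n\). We therefore define three kernels:
\[
g_1(k):=\Pbf(|N(k)/k-\lambda A|>\eps),\quad
g_2(k):=\Pbf(|D(k)|>\eps k),
\]
\[
g_3(k):=\Pbf(N(k)=0)+\Pbf\Bigl(N(k)>0,\ \Bigl|\tfrac{1}{N(k)}\textstyle\sum_{j\le N(k)}\xi_j-m\Bigr|>\eps\Bigr).
\]
For \(j=1,2\) this gives \(\Pbf(\mathcal B_{n,j}(\eps))=\Ebf[g_j(X_n);X_n>0]\). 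Note that \(X_{n+1}/X_n-m\lambda A_n=D_{n+1}/X_n\), so \(\mathcal B_{n,2}\) is exactly the event \(|D_{n+1}|>\eps X_n\). For \(j=3\) we get \(\Pbf(\mathcal B_{n,3}(\eps))\le \Ebf[g_3(X_n);X_n>0]\).

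\textbf{Polynomial transfer.} Under \((\mathrm{RM}_p)\) we bound each kernel by \(C\eps^{-p}k^{-q}\) for \(k\ge1\).

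For \(g_1\), Markov's inequality at order \(p\) together with \(\Ebf|N(k)-\lambda Ak|^p\le C(1+k^q)\) gives \(g_1(k)\le \eps^{-p}k^{-p}C(1+k^q)\le C\eps^{-p}k^{-q}\), using \(p\ge q\) and \(k\ge1\).

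For \(g_3\), the first term is at most \(Ck^{-q}\). For the second, condition on \(N(k)=\ell\) and \(A\), which are independent of the offspring, and apply Rosenthal's (or Marcinkiewicz–Zygmund's) inequality: \(\Ebf|\sum_{j\le\ell}(\xi_j-m)|^p\le C\ell^{q}\). Chebyshev at order \(p\) then bounds the conditional probability by \(C\eps^{-p}\ell^{-q}\). Averaging over \(\ell\) gives \(C\eps^{-p}\Ebf[N(k)^{-q};N(k)>0]\le C\eps^{-p}k^{-q}\).

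For \(g_2\), split \(D(k)=\sum_{j\le N(k)}(\xi_j-m)+m\{N(k)-\lambda Ak\}\). This identity follows from \(N(k)=AC(k)+B\). Then \(g_2(k)\le\Pbf(|\sum_{j\le N(k)}(\xi_j-m)|>\eps k/2)+\Pbf(m|N(k)-\lambda Ak|>\eps k/2)\). The second term is handled exactly like \(g_1\). For the first, conditional Rosenthal gives a bound \(C\eps^{-p}k^{-p}\Ebf[N(k)^q]\le C\eps^{-p}k^{-p}(1+k^q)\le C\eps^{-p}k^{-q}\).

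Summing, \(\mathcal R_n(\eps)\le C\eps^{-p}\Ebf[X_n^{-q};X_n>0]\), which proves \eqref{eq:main-random-polynomial-transfer}. For \eqref{eq:main-random-cutoff-transfer}, split the expectation according to \(X_n\le K_n\) or \(X_n>K_n\). On the first piece, bound every kernel (each is a probability) by \(1\), giving \(3\Pbf(0<X_n\le K_n)\). On the second piece, use \(C\eps_n^{-p}k^{-q}\le C\eps_n^{-p}K_n^{-q}\). The constants from Markov/Rosenthal are uniform in \(\eps\le1\), which is why the \(\eps_n\)-dependence appears explicitly.

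\textbf{Geometric alternative.} This follows the same conditioning scheme. The hypotheses give \(g_1(k)\le C_\eps s_{\phi,\eps}^k\) and \(g_2(k)\le C_\eps s_{D,\eps}^k\) directly. Hence \(\Ebf[g_j(X_n);X_n>0]\le C_\eps F_n^+(s)\) with the corresponding \(s\).

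For \(\mathcal B_{n,3}\) we handle the two parts differently, because the empirical-mean failure should be averaged over \(N_n\) rather than \(X_n\).
\begin{itemize}
\item The zero-progenitor part is \(\Ebf[\Pbf(N(X_n)=0);X_n>0]\le C_\eps F_n^+(s_{0,\eps})\).
\item For the empirical-mean part, condition instead on \(\sigma(\cF_n,\cE_n)\). Given \(N_n=\ell>0\), the sum \(X_{n+1}\) is a sum of \(\ell\) i.i.d. copies of \(\xi\) independent of that sigma-field, so the conditional failure probability is at most \(C_\eps s_{\xi,\eps}^\ell\). Taking expectations yields \(C_\eps H_n^+(s_{\xi,\eps})\).
\end{itemize}

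\textbf{Main obstacle.} The only delicate point is justifying the conditional independence. We must check that \(\xi_{n,\cdot}\) is independent of \((\cF_n,\cE_n)\), and that \(X_n\) is independent of \(\cE_n\). Both follow from the i.i.d. structure of the marks and the independence of the offspring array. This is also what permits replacing \(D_{n+1}\) given \(X_n=k\) by \(D(k)\), as already noted before \eqref{eq:main-random-sublinear}.
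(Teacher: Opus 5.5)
Your proposal is correct and follows essentially the same route as the paper. You condition on \(X_n\) to get the ratio identities with the kernels \(\psi_\phi,\psi_D,\psi_N\), bound them by \(C\eps^{-p}k^{-q}\) via Markov and conditional Rosenthal under \((\mathrm{RM}_p)\), split at \(K_n\) for the cutoff bound, and average the empirical-mean part of the offspring ratio over \(N_n\) (conditioning on \(\sigma(\cF_n,\cE_n)\)) to produce the \(H_n^+\) term. The differences are cosmetic: you handle \(\psi_D\) by a union bound over the two pieces of \(D(k)\) rather than first bounding \(\Ebf|D(k)|^p\); also, absorbing \(\Pbf\{N(k)=0\}\leq Ck^{-q}\) into \(C\eps_n^{-p}k^{-q}\) is where \(\eps_n\leq1\) actually enters.
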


In the third inequality of \eqref{eq:main-random-exponential-transfer}, the
term \(C_\eps F_n^+(s_{0,\eps})\) bounds the contribution of
\(\{X_n>0,N_n=0\}\), while \(C_\eps H_n^+(s_{\xi,\eps})\) bounds the
offspring-ratio deviation on \(\{X_n>0,N_n>0\}\).  Corresponding reverse
transfers follow from pointwise lower bounds on the one-step kernels; see the
discussion following Proposition~\ref{prop:controlled-exponential-transfer}.
For the offspring kernel, the geometric lower transfer uses
\(\Ebf[s^{N_n};X_n>0,N_n>0]\).  This expectation need not equal
\(H_n^+(s)\); equality holds when \(X_n>0\) almost surely.

The preceding proposition applies to the random-map class, but it does
not by itself identify the lower-population profile.  The individual-sum
structure now becomes essential.  Suppose
\begin{equation}\label{eq:main-individual-sum-control}
        C_n(k)=\sum_{j=1}^{k}L_{n,j},
        \qquad \Ebf L=\lambda,
\end{equation}
where the variables \(\{L_{n,j}:j\geq1\}\) are i.i.d. and independent of
\((A_n,B_n)\) and of the offspring array.  The controlled process is then an
exact BPREI with quenched reproduction mean \(M=m\lambda A\).  Section~\ref{sec:bprei}
provides its harmonic-moment and generating-function profiles, while
Section~\ref{sec:controlled-process} verifies their primitive hypotheses for
the induced laws. The exact BPREI representation now allows us to identify the
lower-population profile appearing in
Proposition~\ref{prop:main-random-one-step-transfer}.

We begin with the nondecreasing supercritical regime.  In this case the
harmonic-moment scale is determined by the competition between an
unfavorable sequence of environmental means and repeated persistence
of the generation size at its initial value. Specifically, assume the induced BPREI satisfies the standing hypothesis
\((\mathrm H)\) of
Assumption~\ref{ass:bprei-harmonic-supercritical} and set
\(a_1:=\Pbf(L=1)\Pbf(\xi=1)\).  Then
\begin{equation}\label{eq:main-induced-cr-gamma}
        c_q:=\Ebf[(m\lambda A)^{-q}],
        \qquad
        \gamma_i:=\Pbf(A=1,B=0)a_1^i,
\end{equation}
where \(c_q\) is the environmental harmonic cost and
\(\gamma_i=\Pbf_i(X_1=i)\) is the one-step persistence probability
of generation size \(i\).  Define
\[
        b_{i,n}^{\mathrm{sc}}(q):=
        \begin{cases}
        c_q^n, & c_q>\gamma_i,\\[1mm]
        n\gamma_i^n, & c_q=\gamma_i,\\[1mm]
        \gamma_i^n, & c_q<\gamma_i.
        \end{cases}
\]

At criticality, the supercritical exponential trichotomy is replaced by a
fluctuation-theoretic normalization.  The abstract critical hypotheses are
\((\mathrm C):=(\mathrm{SD})+(\mathrm{EL})+(\mathrm{UIM})+(\mathrm{LOC})\):
regular variation of the descending-ladder probability, convergence of the
endpoint-minimum entrance laws, a summable uniform post-minimum inverse-moment
bound, and convolution-tail localization.  Under these inputs,
Theorem~\ref{thm:bprei-critical-harmonic} gives
\(\Ebf_i[Z_n^{-q};Z_n>0]/d_n^-\to C_{i,q}\in(0,\infty)\), where
\(d_n^-:=\Pbf(\max_{1\leq j\leq n}S_j<0)\) and
\(S_j:=\sum_{r=0}^{j-1}\log M_r\).  Theorem
\ref{thm:bprei-critical-fv-class} verifies all four inputs for the centered
finite-variance class described there; outside that class they remain
model-specific assumptions.  In the individual-sum controlled-process
verification of Section~\ref{sec:controlled-process}, the transparent
primitive persistent-immigration subclass uses
\(\Pbf(\xi=0)=0\) and \(1\leq B\leq\overline b\) almost surely for some
\(\overline b<\infty\).  This restriction concerns that primitive
verification, not the abstract BPREI theorem.  This is the critical
population profile transferred to the ratio probabilities below.

\begin{theorem}[Supercritical and critical ratio profiles in the individual-sum class]
\label{thm:main-random-harmonic-transfer}
Assume \eqref{eq:main-individual-sum-control} and the polynomial one-step
conditions of Proposition~\ref{prop:main-random-one-step-transfer}.

If the induced BPREI is nondecreasing and satisfies \((\mathrm H)\), then,
for \(X_0=i\geq1\),
\begin{equation}\label{eq:main-induced-super-ratio-profile}
        \limsup_{n\to\infty}
        \frac{\mathcal R_n(\eps)}{b_{i,n}^{\mathrm{sc}}(q)}<\infty.
\end{equation}
If instead the induced BPREI is critical and satisfies \((\mathrm C)\), then,
for each \(j=1,2,3\), there exists a finite nonnegative constant
\(K_{i,j}(\eps)\) such that
\begin{equation}\label{eq:main-induced-critical-ratio-profile}
        \frac{\mathcal R_{n,j}(\eps)}{d_n^-}
        \longrightarrow K_{i,j}(\eps).
\end{equation}
\end{theorem}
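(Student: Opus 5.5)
The proof combines the one-step transfer of Proposition~\ref{prop:main-random-one-step-transfer} with the BPREI harmonic-moment theory of Section~\ref{sec:bprei}, applied through the exact BPREI representation of the individual-sum class. Under \eqref{eq:main-individual-sum-control}, conditional on the mark \((A_n,B_n)\), the process \(X_n\) is a BPREI. Its induced reproduction p.g.f. is \(\ell(f(s)^{A_n})\) and its immigration p.g.f. is \(f(s)^{B_n}\), with quenched mean \(M_n=m\lambda A_n\). Hence \(\Ebf_i[X_n^{-q};X_n>0]=\Ebf_i[Z_n^{-q};Z_n>0]\) for the induced BPREI \(Z\). The first step is to check that the induced laws satisfy the primitive inputs of \((\mathrm H)\), respectively \((\mathrm C)\). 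This is delegated to the verification in Section~\ref{sec:controlled-process}. I also check that \(c_q\) and \(\gamma_i\) in \eqref{eq:main-induced-cr-gamma} are the BPREI quantities \(\Ebf[M^{-q}]\) and \(\Ebf[h_0p_1^i]\). For the latter, \(h_0=\one\{B=0\}\) when \(\Pbf(\xi=0)=0\) is not assumed but nondecreasing forces \(\xi\ge1\) and \(L\ge1\) a.s. Moreover, one child per individual requires \(A=1\), \(L=1\) and \(\xi=1\), which gives \(p_1=\one\{A=1\}a_1\).

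\emph{Supercritical case.} By \eqref{eq:main-random-polynomial-transfer}, \(\mathcal R_n(\eps)\le C_\eps\Ebf_i[X_n^{-q};X_n>0]\). The BPREI supercritical trichotomy of Section~\ref{sec:bprei} gives \(\Ebf_i[Z_n^{-q};Z_n>0]\sim C\,b^{\mathrm{sc}}_{i,n}(q)\) with a finite constant. Dividing by \(b^{\mathrm{sc}}_{i,n}(q)\) then yields \eqref{eq:main-induced-super-ratio-profile}. Only an upper bound is claimed, so nothing more is needed.

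\emph{Critical case.} The claim is an exact limit, so an upper bound is not enough. I would write each \(\mathcal R_{n,j}(\eps)\) as a population-conditioned kernel. Conditioning on \(\cF_n\) and using that the mark \(\cE_n\) and the \(n\)th offspring row are independent of \(\cF_n\), we get
\[
\mathcal R_{n,j}(\eps)=\Ebf_i\bigl[g_j(X_n);X_n>0\bigr],
\]
with \(g_1(k)=\Pbf(|N(k)/k-\lambda A|>\eps)\), \(g_2(k)=\Pbf(|\Psi(k)/k-m\lambda A|>\eps)\), and \(g_3(k)=\Pbf(N(k)=0)+\Pbf(N(k)>0,|\Psi(k)/N(k)-m|>\eps)\). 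The estimates behind Proposition~\ref{prop:main-random-one-step-transfer}, derived from \((\mathrm{RM}_p)\) via Markov and Rosenthal-type bounds, give \(0\le g_j(k)\le C_\eps k^{-q}\) for \(k\ge1\). The kernels are thus dominated by a negative power of the population. I then invoke the critical BPREI result for lower-tail kernels mentioned in the introduction, i.e.\ the finite-horizon-minimum convolution extension of Theorem~\ref{thm:bprei-critical-harmonic}. It gives \(\Ebf_i[g(Z_n);Z_n>0]/d_n^-\to K_i(g)\) for every \(g\ge0\) with \(g(k)\le Ck^{-q}\).

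The limit constant is obtained as follows. The decomposition at the minimum time writes \(\Ebf_i[g(Z_n);Z_n>0]\) as a sum over \(k\) of \(\Pbf(\text{min at }k)\) times a post-minimum block expectation. Here \((\mathrm{EL})\) gives convergence of the entrance laws, \((\mathrm{UIM})\) gives summable domination of the blocks by \(\Ebf[Z^{-q}]\), and \((\mathrm{LOC})\) localizes the convolution tail. Dominated convergence then identifies \(K_{i,j}(\eps)=K_i(g_j)\in[0,\infty)\), which may be zero for degenerate ratios. This is \eqref{eq:main-induced-critical-ratio-profile}.

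The main obstacle is the critical case: making sure the kernel version of Theorem~\ref{thm:bprei-critical-harmonic} applies with \(g_j\) in place of \(k^{-q}\). The point is that the dominated convergence in the post-minimum block only uses the pointwise bound \(g\le Ck^{-q}\) and the fact that \(g\) is a fixed function of the state. If this is not already stated in Section~\ref{sec:bprei}, I would rerun that proof with \(g\), using \((\mathrm{UIM})\) as the dominating summable sequence.
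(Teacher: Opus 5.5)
Your proposal is correct and follows the paper's own proof essentially step for step. In the supercritical clause, the polynomial one-step transfer is combined with Theorem~\ref{thm:bprei-supercritical-harmonic}. In the critical clause, you write each $\mathcal R_{n,j}(\eps)=\Ebf_i[g_j(X_n)]$ using the kernels $\psi_\phi,\psi_D,\psi_N$, bound these by $C_\eps k^{-q}$ (Proposition~\ref{prop:controlled-poly-kernels}), and apply the bounded-kernel result. That result is already stated in the paper as Corollary~\ref{cor:bprei-critical-bounded-kernel}, which uses $(\mathrm{UIM})$ at an exponent $s<\min\{q,s_0\}$, so your fallback ``rerun'' is unnecessary. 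Re-verifying $(\mathrm H)$ and $(\mathrm C)$ for the induced laws is also unnecessary, since the theorem takes them as hypotheses.
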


Summing \eqref{eq:main-induced-critical-ratio-profile} over \(j\) gives
\[
        \frac{\mathcal R_n(\eps)}{d_n^-}
        \longrightarrow \sum_{j=1}^3K_{i,j}(\eps).
\]
Section~\ref{sec:controlled-process} identifies each constant through the
corresponding population-conditioned one-step kernel.  Each constant is
strictly positive whenever its corresponding kernel is positive on a state
occurring with positive post-minimum probability; it may vanish for a
degenerate ratio.
\begin{remark}[Further transfer consequences]
\label{rem:main-random-transfer-refinements}
In the supercritical case, a matching lower bound for an individual one-step
kernel gives a positive normalized \(\liminf\) on the same harmonic-moment
scale.  For the corresponding population-conditioned kernel
\(\psi_j\in\{\psi_\phi,\psi_D,\psi_N\}\), if
\(k^q\psi_j(k;\eps)\to c_{\eps,j}\), then the transfer is exact in the
environmental and boundary regimes, with constants
\(c_{\eps,j}C^{\mathrm{env}}_{i,q}\) and
\(c_{\eps,j}C_i^{\mathrm{bd}}\), respectively.  Only in the persistence
regime may fixed finite states remain visible on the leading scale, so a
tail-kernel limit alone need not determine the constant.

In the critical case, \eqref{eq:main-induced-critical-ratio-profile} is
already an exact limit under the polynomial upper kernel.  Its constant
depends on the complete kernel and may be zero; positivity follows whenever
the kernel is positive on a state seen with positive post-minimum
probability.  Under centered finite variance,
\(\sqrt n\,d_n^-\to\kappa_0\), and hence
\(\sqrt n\,\mathcal R_{n,j}(\eps)\to
\kappa_0K_j(\eps)\) in the verified class.  If the post-minimum profile is
nonsummable, a different normalization is available only under the
convolution-replacement hypothesis of
Proposition~\ref{prop:bprei-critical-divergent-profile}.
Under geometric one-step bounds, the corresponding generating-function
profiles give the analogous exponential transfer.
\end{remark}

The two clauses are disjoint at the primitive level.  The nondecreasing case
requires \(\xi,L\geq1\) almost surely.  In the persistent-immigration
critical subclass verified in Section~\ref{sec:controlled-process}, one uses
\(\Pbf(\xi=0)=0\) and \(1\leq B\leq\overline b\) almost surely, so positive probability
of zero induced offspring must come from \(\Pbf(L=0)>0\).  More general
critical BPREI covered by the abstract theorem may instead allow offspring
zeros directly.

The preceding harmonic-moment rates are derived under the original
environmental law.  Lower population deviations introduce a second critical
boundary through a negative environmental tilt.  The next proposition records
the exact change-of-measure identity.

\begin{proposition}[Negative-tilt identity]
\label{prop:main-negative-tilt-boundary}
Write \(Z_n\) for the induced BPREI population, let
\(\Pi_n:=\prod_{j=0}^{n-1}M_j\), and set \(V_n:=Z_n/\Pi_n\).  For every
\(r>0\) such that \(-r\in\operatorname{int}\cD_\Lambda\),
\begin{equation}\label{eq:main-negative-tilt-identity}
        \Ebf[Z_n^{-r};Z_n>0]
        =\e^{n\Lambda(-r)}
         \Ebf_{-r}[V_n^{-r};Z_n>0].
\end{equation}
\end{proposition}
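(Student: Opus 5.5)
The identity \eqref{eq:main-negative-tilt-identity} is a change of measure on the environmental marks alone. Write \(\Pbf_{-r}\) for the law that, on the first \(n\) environmental marks, has density
\[
\frac{\dd\Pbf_{-r}}{\dd\Pbf}=\Pi_n^{-r}\e^{-n\Lambda(-r)},
\]
with the quenched reproduction and immigration laws unchanged given the marks. This is the same construction as \eqref{eq:main-random-tilt}, with \(\alpha\) replaced by \(-r\).

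\textbf{Step 1: the tilt is well defined and consistent.} The marks are i.i.d., so \(\Ebf[\Pi_n^{-r}]=\Ebf[M^{-r}]^n=\e^{n\Lambda(-r)}\). This is finite because \(-r\in\operatorname{int}\cD_\Lambda\), so the density has unit mean. The density is the product of the one-generation factors \(M_j^{-r}\e^{-\Lambda(-r)}\). Hence it is a \(\cG_n\)-martingale, and the laws on different horizons are consistent. Under \(\Pbf_{-r}\) the marks stay i.i.d., with each mark law reweighted by \(M^{-r}\e^{-\Lambda(-r)}\). Only interiority (finiteness) of \(\Lambda(-r)\) is needed here.

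\textbf{Step 2: extend the density to the full \(\sigma\)-field.} Let \(\cF_n\) contain the marks together with the reproduction and immigration variables up to generation \(n\). Define \(\Pbf_{-r}\) on \(\cF_n\) by the same density. Since the density is \(\cG_n\)-measurable, the conditional law of the branching variables given the marks is the same under \(\Pbf\) and \(\Pbf_{-r}\). This agrees with the stated convention that the tilt does not alter the quenched laws.

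\textbf{Step 3: compute.} On \(\{Z_n>0\}\), and using \(\Pi_n>0\) (because \(A\geq1\) and \(m,\lambda>0\)), we have \(Z_n^{-r}=\Pi_n^{-r}V_n^{-r}\). The variable \(V_n^{-r}\one\{Z_n>0\}\) is nonnegative and \(\cF_n\)-measurable. Therefore
\[
\Ebf[Z_n^{-r};Z_n>0]=\e^{n\Lambda(-r)}\,\Ebf\bigl[\Pi_n^{-r}\e^{-n\Lambda(-r)}V_n^{-r};Z_n>0\bigr]=\e^{n\Lambda(-r)}\,\Ebf_{-r}[V_n^{-r};Z_n>0].
\]
Both sides may be \(+\infty\) simultaneously, so no integrability assumption is needed beyond Step 1.

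\textbf{Main obstacle.} The only point needing care is the bookkeeping of the filtrations. One must check that the density depends on the environment alone, so that tilting does not change the quenched branching mechanism, and that \(\Pi_n\) is strictly positive. Once these are in place, the identity is a one-line Radon--Nikodym computation.
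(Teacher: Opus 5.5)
Your proposal is correct and is essentially the paper's argument: the paper writes \(\Ebf_{-r}[V_n^{-r};Z_n>0]=\e^{-n\Lambda(-r)}\Ebf[\e^{-rS_n}V_n^{-r};Z_n>0]\) directly from the definition of the tilt, and then uses \(\e^{-rS_n}V_n^{-r}=Z_n^{-r}\). Your additional checks (a unit-mean density that is measurable with respect to the environment alone, and \(\Pi_n>0\)) are the bookkeeping the paper leaves implicit in its definition of \(\Pbf_{-r}\).
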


The identity identifies a second critical boundary.  If \(r_c>0\) satisfies
\(\Lambda'(-r_c)=0\), then the environment is critical under
\(\Pbf_{-r_c}\), but the remaining functional is weighted by
\(V_n^{-r_c}=\e^{r_cS_n}Z_n^{-r_c}\).  The unweighted critical theorem
therefore cannot be applied directly; a weighted finite-horizon minimum
analysis is required.

For a linear-fractional BPRE without immigration, the composition formula
singles out the tilt \(-1\), and the signs of \(\Lambda'(-1)\) give the
strongly, intermediately, and weakly supercritical regimes.  For a BPREI, the
distinguished negative power depends on the immigration transform; no
universal value \(1\) is asserted.  A full sharp result additionally requires
the generating-function and intermediate-window analysis described in
Section~\ref{subsec:bprei-negative-tilt}.

\subsection{Asymptotically deterministic controls}
\label{subsec:main-ad}

We finally turn to controls with deterministic first-order slope.  The
environmental product no longer carries a nondegenerate fluctuation, and the
original controlled process need not satisfy the branching property.  A
normalized lower and upper p.g.f. sandwich supplies two exact auxiliary
recursions.  Their generating-function and harmonic-moment profiles are then
transferred through the same random-denominator principle used above.

\paragraph{Normalized envelope condition \textup{(AD0)}.}
Let \(h_k(s):=\Ebf[s^{\phi(k)}]\).  Assume that there are p.g.f.s
\(l_L,l_U,d_L,d_U\) such that
\begin{equation}\label{eq:main-ad-envelope}
        d_L(s)l_L(s)^k
        \leq h_k(s)
        \leq d_U(s)l_U(s)^k,
        \qquad 0\leq s\leq1,
\end{equation}
with \(d_e(1)=l_e(1)=1\) for \(e\in\{L,U\}\) and
\(l_L'(1)=l_U'(1)=:\tau\in(0,\infty)\).  Whenever harmonic moments of
\(N_n\) are asserted, also assume \(l_L''(1)+l_U''(1)<\infty\).  Set
\(u_e:=l_e\circ f\), \(R_e:=d_e\circ f\), and \(a:=m\tau\).  With
\(F_0^e=F_0\), define the auxiliary transforms by
\begin{equation}\label{eq:main-ad-exact-recursions}
        F_{n+1}^e(s)=R_e(s)F_n^e(u_e(s)),
        \qquad
        H_n^e(z)=d_e(z)F_n^e(l_e(z)),
        \qquad e\in\{L,U\}.
\end{equation}
Section~\ref{sec:ad-controls} proves that these transforms provide the needed
lower and upper bounds for the population and progenitor transforms.

We call the envelopes \emph{coincident} when \(d_L=d_U\) and \(l_L=l_U\);
then the original process itself satisfies one exact transformed recursion.
When the envelopes are distinct, \textup{(AD-S)} matches their supercritical
branch and Schr\"oder rate, whereas \textup{(AD-C)} matches their critical
indices.  Matching yields a common two-sided normalization but does not imply
coincident envelopes or an exact recursion for the original process.

For \(\eps>0\), let \(J_{n,1}(\eps)\) be the probability of
\(\{|X_{n+1}/N_n-m|>\eps,\ N_n>0\}\), let \(J_{n,2}(\eps)\) be the
probability of \(\{|N_n/X_n-\tau|>\eps,\ X_n>0\}\), and let
\(J_{n,3}(\eps)\) be the probability of
\(\{|X_{n+1}/X_n-a|>\eps,\ X_n>0\}\).  Set
\(\mathcal R_n^{\mathrm{ad}}(\eps):=\sum_{j=1}^3J_{n,j}(\eps)\).
For polynomial transfer, assume that, for some \(p\geq2\) and \(q:=p/2\),
\begin{equation}\label{eq:main-ad-poly-conditions}
        \Ebf|\xi-m|^p<\infty,
        \qquad
        \sup_{k\geq1}k^{-p/2}
        \Ebf|\phi(k)-\tau k|^p<\infty.
\end{equation}

\paragraph{Exponential one-step condition \textup{(AD-E)}.}
Assume that the offspring law has a moment generating function in a
neighborhood of zero.  For the control, assume that the scaled log-m.g.f.
\[
        \Lambda_\phi(\eta)
        :=\lim_{k\to\infty}\frac1k
          \log\Ebf[\e^{\eta\phi(k)}]
\]
exists on an open interval containing zero, is differentiable there, and
satisfies \(\Lambda_\phi'(0)=\tau\).  Also assume, for every \(\eps>0\), that
\(\inf_{|x-\tau|\geq\eps}\Lambda_\phi^*(x)>0\), where
\(\Lambda_\phi^*(x):=\sup_\eta\{\eta x-\Lambda_\phi(\eta)\}\).
The polynomial condition transfers harmonic moments, whereas
\textup{(AD-E)} transfers generating-function profiles.

\paragraph{Matched supercritical envelope condition \textup{(AD-S)}.}
Assume \(a>1\).  For each \(e\in\{L,U\}\), let \(q_e\in[0,1)\) be the
attracting fixed point of \(u_e\), with \(0<u_e'(q_e)<1\), and let
\(\eta_e\) have p.g.f. \(R_e\).  Define the auxiliary small-value rate
\[
\theta_e:=
\begin{cases}
R_e(0)u_e'(0), & q_e=0,\\
R_e(q_e), & q_e>0\text{ and }R_e(q_e)<1,\\
u_e'(q_e), & q_e>0\text{ and }R_e\equiv1.
\end{cases}
\]
Assume \(\theta_L=\theta_U=:\theta\in(0,1)\).  If
\(u_e(s)=\sum_{k\geq0}u_{e,k}s^k\), require
\(\sum_{k\geq1}k\log^+k\,u_{e,k}<\infty\) and
\(\Ebf\log(1+\eta_e)<\infty\) for both envelopes.  Impose the corresponding
initial-support condition for each branch: when \(q_e=0\),
\(F_0(s)=\mu_1s+O(s^2)\) as \(s\downarrow0\) for some \(\mu_1>0\); when
\(q_e>0\) and \(R_e\equiv1\), require \(F_0'(q_e)>0\).  If one envelope is
in the positive-fixed-point no-immigration branch, assume both are in that
branch and \(q_L=q_U\).

With the common auxiliary rate \(\theta\), define
\[
        b_n^{\mathrm{sc}}(r):=
        \begin{cases}
        \theta^n, & \theta a^r>1,\\[1mm]
        n\theta^n, & \theta a^r=1,\\[1mm]
        a^{-rn}, & \theta a^r<1.
        \end{cases}
\]

\begin{theorem}[Supercritical AD harmonic-moment and ratio profiles]
\label{thm:main-ad-supercritical}
Under conditions \textup{(AD0)} and \textup{(AD-S)}, for
\(Y_n=X_n\) and for \(Y_n=N_n\), and every \(r>0\),
\begin{equation}\label{eq:main-ad-super-harmonic}
        0<\liminf_{n\to\infty}
        \frac{\Ebf[Y_n^{-r};Y_n>0]}{b_n^{\mathrm{sc}}(r)}
        \leq
        \limsup_{n\to\infty}
        \frac{\Ebf[Y_n^{-r};Y_n>0]}{b_n^{\mathrm{sc}}(r)}<\infty.
\end{equation}
If \eqref{eq:main-ad-poly-conditions} holds, then, for \(j=1,2,3\),
\begin{equation}\label{eq:main-ad-super-ratio-profile}
        \limsup_{n\to\infty}
        \frac{J_{n,j}(\eps)}{b_n^{\mathrm{sc}}(q)}<\infty.
\end{equation}
Under \textup{(AD-E)},
\(\limsup_{n\to\infty}\theta^{-n}J_{n,j}(\eps)<\infty\) for
\(j=1,2,3\).
\end{theorem}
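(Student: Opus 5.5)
The plan is to reduce everything to the two auxiliary exact recursions \eqref{eq:main-ad-exact-recursions} and then transfer. \textbf{Step 1 (sandwich).} Conditioning on \(X_n\) and using the envelope \eqref{eq:main-ad-envelope} at the argument \(f(s)\), we get
\[
\Ebf[s^{X_{n+1}}\mid X_n]=h_{X_n}(f(s)),
\]
which lies between \(R_L(s)u_L(s)^{X_n}\) and \(R_U(s)u_U(s)^{X_n}\). Each \(u_e\) is a nondecreasing p.g.f. and the bounds are monotone in the generating argument. Induction on \(n\) then gives \(F_n^L(s)\leq F_n(s)\leq F_n^U(s)\) for \(s\in[0,1]\). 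The induction iterates through \(u_e\) and uses that each \(F_n^e\) is nondecreasing. Likewise \(H_n^L(z)\leq H_n(z)\leq H_n^U(z)\). The one point to check here is that the comparison of \(F_n(u_L(s))\) with \(F_n^L(u_L(s))\) uses the inductive hypothesis at the shifted argument, which is legitimate since the hypothesis holds on all of \([0,1]\).

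\textbf{Step 2 (harmonic moments via the transform).} We use
\[
\Ebf[Y^{-r};Y>0]=\Gamma(r)^{-1}\int_0^\infty t^{r-1}\bigl(\Ebf[\e^{-tY}]-\Pbf(Y=0)\bigr)\dd t,
\]
together with \(\Pbf(Y=0)=F(0)\). The sandwich controls \(F_n(\e^{-t})\), but the subtraction of \(F_n(0)\) needs care. Under (AD-S), in the no-immigration positive-fixed-point branch we have \(q_L=q_U\), and we work with the transform of \(X_n\) conditioned through \(F_n(s)-F_n(0)\). Concretely, we bound \(F_n(\e^{-t})-F_n(0)\) above by \(F_n^U(\e^{-t})-F_n^L(0)\), and show \(F_n^U(0)-F_n^L(0)=O(b_n^{\mathrm{sc}})\) using the matched branches. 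This is where the common rate \(\theta\) enters.

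For each auxiliary recursion, standard Schröder-type analysis (the appendix facts) gives three things. First, \(F_n^e(s)-F_n^e(0)\) behaves like \(\theta^n\) times a Schröder/Koenigs function near the attracting point, for \(s\) bounded away from \(1\). Second, near \(s=1\) one rescales \(s=\e^{-t a^{-n}}\) and uses \(u_e^{n}(\e^{-ta^{-n}})\to\) the Laplace transform of a nondegenerate limit, which requires the \(k\log k\) and \(\log(1+\eta_e)\) conditions. Third, splitting the \(t\)-integral at \(t\asymp a^{n}\) and at \(t\asymp1\) yields the competition between \(\theta^n\) and \(a^{-rn}\), with the sum \(\sum_{k\le n}\theta^{k}a^{-r(n-k)}\) producing the \(n\theta^n\) boundary case.

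Lower and upper bounds use \(F^L\) and \(F^U\) respectively. The matched \(\theta\) and the common slope \(a\), which follows from \(l_L'(1)=l_U'(1)\), give the same scale, and hence \eqref{eq:main-ad-super-harmonic} for \(X_n\). For \(N_n\) we repeat this with \(H_n^e=d_e F_n^e(l_e)\). The factor \(d_e\) and the composition with \(l_e\) only change constants, because \(l_e\) has finite second moment, so harmonic moments of a sum of \(X_n\) i.i.d. \(l_e\)-variables are comparable to \(X_n^{-r}\) on \(\{X_n>0\}\), plus the \(X_n=0\) contribution carrying \(d_e\). The initial-support conditions ensure positivity of the \(\liminf\).

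\textbf{Step 3 (transfer).} Conditional on \(N_n=k\), Rosenthal/Marcinkiewicz–Zygmund gives \(\Pbf(|k^{-1}\sum\xi_j-m|>\eps)\leq C_\eps k^{-q}\). Conditional on \(X_n=k\), \eqref{eq:main-ad-poly-conditions} gives \(\Pbf(|\phi(k)/k-\tau|>\eps)\leq C_\eps k^{-q}\). The third ratio follows by combining these through a triangle inequality, on \(\{N_n>0\}\) together with a bound for \(\Pbf(\phi(k)=0)\leq C k^{-q}\) (a consequence of the same moment bound, since \(\phi(k)=0\) forces \(|\phi(k)-\tau k|=\tau k\)). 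Averaging gives \(J_{n,j}\leq C\Ebf[X_n^{-q};X_n>0]+C\Ebf[N_n^{-q};N_n>0]\), and Step 2 with \(r=q\) finishes. Under (AD-E), Cramér's bound via Gärtner–Ellis gives geometric kernels \(s^k\). These are averaged to \(F_n(s)-F_n(0)\leq F_n^U(s)-F_n^L(0)\) and the analogous expression for \(H_n\), each \(O(\theta^n)\) for fixed \(s<1\) by Step 2's first ingredient.

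\textbf{Main obstacle.} The delicate point is Step 2's control of the difference \(F_n^U-F_n^L\) at small arguments, especially at \(s=0\). The two envelopes share only \(\theta\) and \(a\), not their Koenigs functions, so the plan is to bound each separately by \(O(\theta^n)\) increments rather than hoping for cancellation.
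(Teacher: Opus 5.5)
Your outline follows the paper's route: the p.g.f.\ sandwich, the Laplace--Gamma identity with a zero-atom correction, profiles of the two auxiliary exact recursions, and then kernel transfer. The transfer step is fine, including your treatment of $\{N_n=0\}$ and the \textup{(AD-E)} averaging through $F_n(s)-F_n(0)\leq F_n^U(s)-F_n^L(0)$.

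\textbf{The gap: the lower bound in \eqref{eq:main-ad-super-harmonic}.} The sandwich only gives $G_n(t):=F_n(\e^{-t})-F_n(0)\geq G_n^L(t)-\Delta_n^F$, where $\Delta_n^F:=F_n^U(0)-F_n^L(0)$. All you establish is $\Delta_n^F=O(\theta^n)$. Your alternative form $O(b_n^{\mathrm{sc}}(r))$ is of no use for a lower bound.
\begin{itemize}
\item When $\theta a^r>1$, so that $b_n^{\mathrm{sc}}(r)=\theta^n$, the lower-envelope positive transform at any fixed $t$ is itself only of order $\theta^n$.
\item Concretely, if both envelopes are in the branch $q_e>0$, $R_e(q_e)<1$, then $\theta^{-n}\Delta_n^F\to\mathsf Q_U(0)-\mathsf Q_L(0)$ while $\theta^{-n}G_n^L(t)\to\mathsf Q_L(\e^{-t})-\mathsf Q_L(0)$. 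Nothing prevents $G_n^L(t)-\Delta_n^F$ from being negative, and the initial-support conditions do not address this.
\end{itemize}
What you need is the \emph{uniform} small-value window $c\theta^nt^{-\kappa_{\mathrm{sc}}}\leq G_n^L(t)\leq C\theta^nt^{-\kappa_{\mathrm{sc}}}$ for $a^{-n}\leq t\leq t_0$, where $\theta a^{\kappa_{\mathrm{sc}}}=1$ (Lemma~\ref{lem:ad-super-window}). Since $t^{-\kappa_{\mathrm{sc}}}\to\infty$ as $t\downarrow0$, you can then pick $t_*$ so small that $G_n^L\geq2\Delta_n^F$ on $[t_*/2,t_*]$, and integrate only there. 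For $\theta a^r=1$ and $\theta a^r<1$ the $O(\theta^n)$ gap is negligible against $n\theta^n$ and $a^{-rn}$. The lower bounds there still come from the window, respectively from the range $t\asymp a^{-n}$.

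\textbf{The window estimate itself is missing.} Your sum $\sum_{k\leq n}\theta^ka^{-r(n-k)}$ is the right mechanism, but this window estimate is exactly what has to be proved. It does not follow from the fixed-argument Koenigs profile plus the macroscopic limit at $t\asymp a^{-n}$. The paper proceeds in three steps:
\begin{itemize}
\item It tracks $\delta_j(t)=1-u^{(j)}(\e^{-t})$ and shows $\delta_j(t)\asymp a^jt$ up to the first time $j(t)$ at which it reaches a fixed level. This is where $\sum_{k\geq1}k\log^+k\,u_{e,k}<\infty$ enters, through $\int_0^1g(x)x^{-2}\dd x<\infty$.
\item It bounds the accumulated immigration product below for $j<j(t)$. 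This is where $\Ebf\log(1+\eta_e)<\infty$ enters.
\item It applies the fixed-argument profile on a compact set for the remaining $n-j(t)$ generations, giving $\theta^{n-j(t)}\asymp\theta^nt^{-\kappa_{\mathrm{sc}}}$.
\end{itemize}

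\textbf{Two smaller repairs.}
\begin{itemize}
\item For the upper bound, you cannot integrate the constant $\Delta_n^F$ against $t^{r-1}$ over $(0,\infty)$. Use the sandwich only for $t\leq1$, and for $t\geq1$ use integer-valuedness: $G_n(t)\leq\e^{-(t-1)}G_n(1)$.
\item For $N_n$, in the nonexact case $\phi(k)$ is not a sum of $k$ i.i.d.\ $l_e$-variables. The random-sum picture holds only for the auxiliary count, whose number of summands is $\mathsf Z_n^e$. You have two options:
\begin{itemize}
\item Run the transform argument for $H_n$ with its own gap $H_n^U(0)-H_n^L(0)=O(\theta^n)$ and the change of argument $-\log l_e(\e^{-t})\asymp t$, as the paper does.
\item Alternatively, derive $ck^{-r}\leq\Ebf[\phi(k)^{-r};\phi(k)>0]\leq Ck^{-r}$ for $k\geq1$ directly from the $h_k$-sandwich, and average over $X_n$. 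Handle $\{X_n=0\}$ using $F_n(0)\leq F_n^U(0)=O(\theta^n)$ in the immigration branches, and $\phi(0)=0$ when $d_L=d_U\equiv1$. This route does not even need $l_e''(1)<\infty$.
\end{itemize}
\end{itemize}
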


For a fixed \(j\in\{1,2,3\}\), a matching pointwise lower bound of order
\(k^{-q}\) for the corresponding one-step kernel yields
\[
        \liminf_{n\to\infty}
        \frac{J_{n,j}(\eps)}{b_n^{\mathrm{sc}}(q)}>0.
\]
A matching lower bound of the form \(cs^k\), for some \(c>0\) and
\(s\in(0,1)\), yields
\[
        \liminf_{n\to\infty}\theta^{-n}J_{n,j}(\eps)>0.
\]
The corresponding kernels for \(j=1,2,3\) are \(\psi_\xi\),
\(\psi_{\mathrm{ad}}\), and \(\psi_{\mathrm{comb}}\), respectively, as
defined in Subsection~\ref{subsec:ad-ratio-transfer} below.

\paragraph{Matched critical envelope condition \textup{(AD-C)}.}
Assume \(a=1\) and, for each \(e\in\{L,U\}\), that there are
\(\gamma_e,\beta_e\in(0,\infty)\) and \(\eta_e\in(0,1]\) such that, as
\(x\downarrow0\),
\[
        u_e(1-x)=1-x+\gamma_ex^2+O(x^{2+\eta_e}),
        \qquad
        \log R_e(1-x)=-\beta_ex+O(x^2),
\]
with \(u_e(1)=u_e'(1)=R_e(1)=1\).  Assume that the critical indices agree:
\(\beta_L/\gamma_L=\beta_U/\gamma_U=:\sigma\in(0,\infty)\).  Define
\[
        b_n^{\mathrm{cr}}(r):=
        \begin{cases}
        n^{-\sigma}, & r>\sigma,\\[1mm]
        n^{-\sigma}\log n, & r=\sigma,\\[1mm]
        n^{-r}, & 0<r<\sigma.
        \end{cases}
\]

\begin{theorem}[Critical AD harmonic-moment and ratio profiles]
\label{thm:main-ad-critical}
Under conditions \textup{(AD0)} and \textup{(AD-C)}, for \(Y_n=X_n\) and
for \(Y_n=N_n\), and every \(r>0\),
\begin{equation}\label{eq:main-ad-critical-harmonic}
        0<\liminf_{n\to\infty}
        \frac{\Ebf[Y_n^{-r};Y_n>0]}{b_n^{\mathrm{cr}}(r)}
        \leq
        \limsup_{n\to\infty}
        \frac{\Ebf[Y_n^{-r};Y_n>0]}{b_n^{\mathrm{cr}}(r)}<\infty.
\end{equation}
If \eqref{eq:main-ad-poly-conditions} holds, then, for \(j=1,2,3\),
\begin{equation}\label{eq:main-ad-critical-ratio-profile}
        \limsup_{n\to\infty}
        \frac{J_{n,j}(\eps)}{b_n^{\mathrm{cr}}(q)}<\infty.
\end{equation}
Under \textup{(AD-E)},
\(\limsup_{n\to\infty}n^\sigma J_{n,j}(\eps)<\infty\) for
\(j=1,2,3\).
\end{theorem}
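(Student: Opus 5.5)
The plan is to sandwich the transforms of the original process between the two auxiliary exact recursions of \eqref{eq:main-ad-exact-recursions}, read off harmonic-moment asymptotics from those recursions, and then transfer to the ratio probabilities through random denominators.

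\emph{Step 1: the sandwich.} Write \(F_n(s)=\Ebf[s^{X_n}]\) and \(H_n(z)=\Ebf[s^{N_n}]\big|_{s=z}\). Conditioning on \(X_n\) gives \(F_{n+1}(s)=\Ebf[h_{X_n}(f(s))]\). By \eqref{eq:main-ad-envelope} this is bounded between \(R_L(s)F_n(u_L(s))\) and \(R_U(s)F_n(u_U(s))\). Since every \(F_n\) is nondecreasing on \([0,1]\) and \(R_e\ge0\), induction on \(n\) gives
\[
F_n^L(s)\le F_n(s)\le F_n^U(s),\qquad H_n^L(z)\le H_n(z)\le H_n^U(z)
\]
for \(s,z\in[0,1]\). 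The progenitor version follows from \(H_n(z)=\Ebf[h_{X_n}(z)]\) and the same envelopes.

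\emph{Step 2: harmonic moments from the auxiliary recursions.} Subtracting the atoms at zero (\(F_n(0)\) is also sandwiched), use
\[
\Ebf[Y^{-r};Y>0]=\Gamma(r)^{-1}\int_0^\infty t^{r-1}\bigl(G(\e^{-t})-G(0)\bigr)\dd t .
\]
The problem then reduces to the asymptotics of \(G_n^e(\e^{-t})-G_n^e(0)\) for the exact recursions. Iterating \(F_n^e(s)=\prod_{k=0}^{n-1}R_e(u_e^{k}(s))\,F_0(u_e^{n}(s))\) reduces these to iterates of \(u_e\) and the accumulated product of \(R_e\).

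In the supercritical case under (AD-S), Schr\"oder/K\"onigs linearization of \(u_e\) near \(q_e\) gives the small-\(s\) behaviour at rate \(\theta\). The region \(t\approx a^{n}\) gives the \(a^{-rn}\) contribution. Splitting the \(t\)-integral at the scale \(a^{k}\) and summing \(\theta^{n-k}a^{-rk}\) produces the trichotomy \(\theta^n\), \(n\theta^n\), \(a^{-rn}\). The log-moment conditions guarantee the nondegenerate limits needed for two-sided bounds. These are the standard facts of Appendix~\ref{app:ad-standard-facts}.

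In the critical case under (AD-C), the expansion gives \(1-u_e^{k}(1-x)\sim x/(1+\gamma_e k x)\), and the product of \(R_e\) then behaves like \((1+\gamma_e n x)^{-\beta_e/\gamma_e}\). Integrating against \(t^{r-1}\) yields the scales \(n^{-\sigma}\), \(n^{-\sigma}\log n\), \(n^{-r}\).

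Because (AD-S) and (AD-C) match \(\theta\) and \(\sigma\), the lower and upper envelopes give the same scale. The sandwich on the integrand then yields \eqref{eq:main-ad-super-harmonic} and \eqref{eq:main-ad-critical-harmonic}, with \(\liminf>0\) from the \(L\) recursion and \(\limsup<\infty\) from the \(U\) recursion. For \(N_n\) the same argument applies to \(H_n^e=d_e\,F_n^e\circ l_e\), using \(l_e''(1)<\infty\).

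\emph{Step 3: ratio transfer.} Under \eqref{eq:main-ad-poly-conditions}, Marcinkiewicz--Zygmund/Rosenthal and Markov bounds give one-step kernels of order \(k^{-q}\). For \(J_{n,1}\) the kernel is conditional on \(N_n=k\); for \(J_{n,2}\) it is conditional on \(X_n=k\); for \(J_{n,3}\) one splits the ratio into the two mechanisms. This gives \(J_{n,j}\le C\,\Ebf[Y_n^{-q};Y_n>0]\) with \(Y_n\in\{X_n,N_n\}\), and Step 2 bounds the right side. Under (AD-E), G\"artner--Ellis and Cram\'er give geometric kernels \(Cs^k\), hence \(J_{n,j}\le C\,G_n^+(s)\le C\,G_n^{U,+}(s)\). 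Finally, \(F_n^{U,+}(s)\) is of order \(\theta^n\) in the supercritical case and \(n^{-\sigma}\) in the critical case, from the same product analysis at fixed \(s<1\).

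\emph{Main obstacle.} The hard part is the two-sided harmonic asymptotics for the auxiliary recursions in Step 2, particularly the boundary regimes (\(\theta a^r=1\) and \(r=\sigma\)) and the branches with \(q_e>0\). There I would first obtain uniform-in-\(k\) control of \(u_e^{k}\) near the fixed point, then carry out the split-sum estimate carefully.
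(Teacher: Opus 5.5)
Your outline has the same architecture as the paper's proof: p.g.f.\ sandwich, Laplace--Gamma integral, parabolic analysis of the two exact recursions, then one-step kernel transfer with \(J_{n,3}\le J_{n,1}(\eps_1)+J_{n,2}(\eps_2)\). However, the step ``the sandwich on the integrand then yields \ldots'' fails as written. The ordering \(F_n^L\le F_n\le F_n^U\) does not order the positive transforms \(G_n(t):=F_n(\e^{-t})-F_n(0)\). Applying it at \(\e^{-t}\) and at \(0\) gives only
\[
G_n^L(t)-\Delta_n^F\le G_n(t)\le G_n^U(t)+\Delta_n^F,
\qquad
\Delta_n^F:=F_n^U(0)-F_n^L(0),
\]
and similarly for \(H_n\) with a gap \(\Delta_n^H\); this is the point of Remark~\ref{rem:ad-one-sided}. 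Three ingredients are then missing.

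\begin{enumerate}
\item You need \(\Delta_n^F,\Delta_n^H=O(n^{-\sigma})\). This requires the fixed-argument limit \(n^\sigma F_n^e(s)\to\mathsf U_e(s)\) to hold at \(s=0\). The paper extends it there using \(u_e(0)>0\) and \(R_e(s)\mathsf U_e(u_e(s))=\mathsf U_e(s)\), and uses \(l_e(0)<1\) for the progenitor atoms.
\item An additive constant error is not integrable against \(t^{r-1}\) on \((1,\infty)\). The upper bound must therefore first use integer-valuedness, \(G_n(t)\le\e^{-(t-1)}G_n(1)\), and only then insert the envelope at \(t=1\).
\item This is the decisive point. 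For \(r>\sigma\) the gap has the same order \(n^{-\sigma}\) as the target scale, so \(\liminf>0\) does not follow ``from the \(L\) recursion'' alone. You need an interval of \(t\) on which \(\mathsf U_L(\e^{-t})-\mathsf U_L(0)\) strictly dominates \(\limsup_n n^\sigma\Delta_n^F\). The paper obtains this from the boundary blow-up \(t^\sigma\mathsf U_L(\e^{-t})\to\gamma_L^{-\sigma}\) in Lemma~\ref{lem:ad-critical-profile-positive}, which is itself derived from the exact sublinear-window profile. Without an argument of this kind the lower bound for \(r>\sigma\) is unproved.
\end{enumerate}
For \(r<\sigma\) and \(r=\sigma\) the gap is of lower order and harmless once the first point is in place.

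The same correction applies to your (AD-E) step. The inequality \(G_n^+(s)\le G_n^{U,+}(s)\) is not implied by the p.g.f.\ ordering. What holds is \(G_n^+(s)\le G_n^{U,+}(s)+\Delta_n^F\), with the analogous progenitor bound, and this still gives \(O(n^{-\sigma})\) by the first point. You correctly identify the uniform broad- and sublinear-window estimates as the main analytic work; the paper supplies them in Lemma~\ref{lem:ad-parabolic-iteration} and Proposition~\ref{prop:ad-critical-profile}. With those estimates and the additions above, your outline becomes the paper's proof.
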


For a fixed \(j\in\{1,2,3\}\), a matching pointwise lower bound of order
\(k^{-q}\) for the corresponding one-step kernel yields
\[
        \liminf_{n\to\infty}
        \frac{J_{n,j}(\eps)}{b_n^{\mathrm{cr}}(q)}>0.
\]
A matching lower bound of the form \(cs^k\), for some \(c>0\) and
\(s\in(0,1)\), yields
\[
        \liminf_{n\to\infty}n^\sigma J_{n,j}(\eps)>0.
\]
The corresponding kernels for \(j=1,2,3\) are \(\psi_\xi\),
\(\psi_{\mathrm{ad}}\), and \(\psi_{\mathrm{comb}}\), respectively, as
defined in Subsection~\ref{subsec:ad-ratio-transfer} below.

For \(Y_n=X_n\) or \(Y_n=N_n\), if
\[
        \Pbf(Y_n>0)\longrightarrow p_Y\in(0,1],
\]
then conditioning on \(\{Y_n>0\}\) does not change the harmonic-moment
scales in \eqref{eq:main-ad-critical-harmonic}.

Section~\ref{sec:ad-controls} works under conditions \textup{(AD0)},
\textup{(AD-S)}, \textup{(AD-C)}, and \textup{(AD-E)}, develops the
sandwich and exact-recursion calculus, and proves
Theorems~\ref{thm:main-ad-supercritical} and
\ref{thm:main-ad-critical}.  Section~\ref{sec:comparisons-examples} compares
the resulting rates and identifies the deterministic-multiplier boundary.
\section{Upper deviations and harmonic moments for BPREI}
\label{sec:bprei}

In this section we derive precise upper large-deviation estimates for the
generation sizes of a BPREI, together with the Laplace-transform and
harmonic-moment estimates that govern the ratio deviations described in
Sections~\ref{sec:introduction} and~\ref{sec:main-results}.  Sharp upper
deviations concern unusually large generation sizes.  For unconditional ratio
deviations, two mechanisms interact: an atypical one-generation reproduction
fluctuation and a positive denominator that is too small for averaging to
suppress that fluctuation.  Conditional one-step probabilities decay with the
denominator, so after averaging the small-denominator profile determines the
leading rate.  Polynomial one-step estimates are therefore averaged through
harmonic moments, whereas exponential one-step estimates are averaged through
positive Laplace transforms; see
\cite{AthreyaVidyashankar1993,Athreya1994,
AthreyaVidyashankar1995,AthreyaVidyashankar1997,
NeyVidyashankar2003,NeyVidyashankar2004}.

We state the results for a branching process in an i.i.d.\ random environment
with immigration.  Reproduction and immigration may depend through the same
generation-level environmental mark, and hence need not be independent under
the annealed law.
Unless otherwise stated, every \(O(\cdot)\) or \(o(\cdot)\) term involving the generation index \(n\) is understood as \(n\to\infty\); when another index or a real parameter tends to a limit, we state that regime locally.

\subsection{The BPREI model and common analytic tools}
\label{subsec:bprei-model}

Let \(\cE_n=(Q_n,G_n)\), \(n\geq0\), be i.i.d.\ environmental marks taking values in a standard Borel space.  Here
\(Q_n\) is the reproduction law in generation \(n\), and \(G_n\) is the law
of the immigration entering at time \(n+1\).  Conditional on
\(\cE=(\cE_n)_{n\geq0}\),
\begin{equation}\label{eq:bprei-recursion}
        Z_{n+1}=\sum_{i=1}^{Z_n}\xi_{n,i}+\eta_{n+1},
        \qquad n\geq0.
\end{equation}
Given \(\cE\), the variables \(\{\xi_{n,i}:i\geq1\}\) are independent with
common law \(Q_n\), while \(\eta_{n+1}\) has law \(G_n\) and is independent
of the offspring variables in generation \(n\).  The measures \(Q_n\) and
\(G_n\) may nevertheless be dependent through \(\cE_n\).  We assume that the
initial generation size is independent of the future environment and of the
reproduction--immigration variables.

Write \(f_n(s):=\Ebf[s^{\xi_{n,1}}\mid\cE_n]\),
\(g_n(s):=\Ebf[s^{\eta_{n+1}}\mid\cE_n]\), and
\(M_n:=f_n'(1)\in(0,\infty)\) almost surely.  For \(0\leq j<n\), put
\begin{equation}\label{eq:bprei-products}
        \Pi_{j,n}:=\prod_{r=j}^{n-1}M_r,
        \qquad \Pi_n:=\Pi_{0,n},
        \qquad S_n:=\log\Pi_n,
\end{equation}
with \(\Pi_{n,n}=1\).  If
\(f_{j,n}:=f_j\circ\cdots\circ f_{n-1}\), with \(f_{n,n}(s)=s\), then the
quenched p.g.f.\ from \(Z_0=i\) is
\begin{equation}\label{eq:bprei-quenched-product}
        F^{\cE}_{i,n}(s)
        =f_{0,n}(s)^i
          \prod_{j=0}^{n-1}g_j\bigl(f_{j+1,n}(s)\bigr).
\end{equation}
The immigration p.g.f.\ therefore enters every iteration and may alter both
the Laplace-transform rate and its limiting profile.

Following the affine decomposition introduced in
Subsection~\ref{subsec:main-random-slope}, put
\(D_{n+1}:=Z_{n+1}-M_nZ_n\) and \(V_n:=Z_n/\Pi_n\).  We call
\(\{D_{n+1}:n\geq0\}\) the innovation sequence.  It need not be centered,
since it consists of the centered offspring fluctuation together with the
immigration entering generation \(n+1\).  The exact linear recursion and its
normalized form are
\begin{equation}\label{eq:bprei-linear-recursion}
        Z_{n+1}=M_nZ_n+D_{n+1},
        \qquad
        V_{n+1}-V_n=\frac{D_{n+1}}{\Pi_{n+1}}.
\end{equation}

For a generic environmental mark, write $M$ for its reproduction mean and
let $\xi_1,\xi_2,\ldots$ and $\eta$ have the associated one-generation
conditional laws.  For $k\in\Nzero$, set
\[
        D(k):=\sum_{i=1}^{k}(\xi_i-M)+\eta.
\]
Conditional on $Z_n=k$ and $\cE_n$, the variable $D_{n+1}$ has the law of
$D(k)$ under the mark $\cE_n$; hence, conditional on $Z_n=k$, it has the
corresponding annealed law.

Let \(\Lambda(\vartheta):=\log\Ebf[M^\vartheta]\) and
\(\cD_\Lambda:=\{\vartheta\in\R:\Lambda(\vartheta)<\infty\}\).  Whenever
\(\Ebf|\log M|<\infty\), we write \(\mu:=\Ebf\log M\).

For \(\vartheta\in\operatorname{int}\cD_\Lambda\), let
\(\cG_n:=\sigma(\cE_0,\ldots,\cE_{n-1})\).  We define the
\(\vartheta\)-tilted environmental law through
\begin{equation}\label{eq:bprei-tilt}
        \left.\frac{\dd\Pbf_\vartheta}{\dd\Pbf}\right|_{\cG_n}
        =\exp\{\vartheta S_n-n\Lambda(\vartheta)\}.
\end{equation}
Conditional on the tilted marks, the reproduction and immigration mechanisms
remain unchanged, and the environmental walk has drift
\(\Ebf_\vartheta\log M=\Lambda'(\vartheta)\).  For the positive tilt used in
the upper-deviation analysis, write \(\rho:=\Lambda'(\alpha)\).  Thus
\(\rho\) is the target logarithmic growth rate, and \(\alpha>0\) is its
Cram\'er dual parameter.  The supercritical harmonic-moment analysis uses the
same family of changes of measure at the negative parameter \(-r\).

For \(t>0\), put
\(G_{i,n}(t):=\Ebf_i[\e^{-tZ_n};Z_n>0]\).  The common Laplace--Gamma
representation is
\begin{equation}\label{eq:bprei-laplace-gamma}
        \Gamma(r)\Ebf_i[Z_n^{-r};Z_n>0]
        =\int_0^\infty G_{i,n}(t)t^{r-1}\dd t,
        \qquad r>0.
\end{equation}
We use these common tools in three different ways.
Section~\ref{subsec:bprei-upper-deviations} studies the normalized recursion
under the positive \(\alpha\)-tilt.  For harmonic moments,
Section~\ref{subsec:bprei-supercritical-harmonic} treats the nondecreasing
supercritical case by conditioning on the first exit from the current
generation size, which yields a geometric renewal equation.
Section~\ref{subsec:bprei-critical-harmonic} treats the critical case by
decomposing at the finite-horizon minimum of the environmental walk, which yields a
regularly varying ladder convolution.  Both harmonic-moment analyses start
from the Laplace--Gamma representation \eqref{eq:bprei-laplace-gamma}.

\subsection{Precise upper deviations}
\label{subsec:bprei-upper-deviations}

\paragraph{BPREI specialization of the upper-deviation hypotheses.}
The environmental and innovation assumptions below are the BPREI
counterparts of \eqref{eq:main-random-cramer} and
\eqref{eq:main-random-sublinear}.  We record them in the present notation
because Theorem~\ref{thm:bprei-sharp} is a standalone BPREI result.  Unlike
the general random-map result, no separate accessibility condition is
imposed: the nontriviality condition below, together with the BPREI structure,
yields finite-level accessibility in Lemma~\ref{lem:bprei-accessibility}.

\begin{assumption}[Environmental Cram\'er condition]\label{ass:bprei-cramer}
Fix \(\alpha>0\) in the interior of \(\cD_\Lambda\).  Assume that the law of
\(\log M\) is non-arithmetic and
\begin{equation}\label{eq:bprei-cramer-parameter}
        \Ebf|\log M|<\infty,
        \qquad \mu:=\Ebf\log M\geq0,
        \qquad \rho:=\Lambda'(\alpha)>\mu,
        \qquad v_\alpha^2:=\Lambda''(\alpha)\in(0,\infty).
\end{equation}
Put \(I(\rho):=\alpha\rho-\Lambda(\alpha)\).
\end{assumption}

In both the original supercritical case \(\mu>0\) and the original critical case \(\mu=0\), the environmental walk has drift \(\Lambda'(\alpha)=\rho>0\) under \(\Pbf_\alpha\).  Thus the BPREI is supercritical under the \(\alpha\)-tilted law.

\begin{assumption}[Sublinear one-generation innovation]\label{ass:bprei-linearization}
Assume \(\Ebf Z_0^\alpha<\infty\), and suppose that, for some
\(C<\infty\) and \(0\leq\beta<\alpha\),
\begin{equation}\label{eq:bprei-sublinear-innovation}
        \Ebf|D(k)|^\alpha\leq C(1+k^\beta),
        \qquad k\in\Nzero.
\end{equation}
\end{assumption}

\begin{assumption}[Nontriviality]\label{ass:bprei-nontrivial}
Assume \(\Pbf(Z_0>0)+\Pbf(\eta>0)>0\).
\end{assumption}

We refer to Assumptions~\ref{ass:bprei-cramer}--\ref{ass:bprei-nontrivial}
collectively as \((\mathrm U_\alpha)\).  Proposition~\ref{prop:bprei-primitive}
gives primitive branching-model conditions for the innovation estimate.

\begin{theorem}[Precise upper deviations for critical and supercritical BPREI]
\label{thm:bprei-sharp}
Under \((\mathrm U_\alpha)\), there is a nonnegative, nontrivial random
variable \(V_\infty\), with \(\Pbf_\alpha(V_\infty>0)>0\), such that
\begin{equation}\label{eq:bprei-Lalpha-limit}
        \frac{Z_n}{\Pi_n}\longrightarrow V_\infty
        \qquad
        \Pbf_\alpha\text{-almost surely and in }L^\alpha(\Pbf_\alpha).
\end{equation}
Moreover,
\begin{equation}\label{eq:bprei-moment-limit-main}
        \lim_{n\to\infty}
        \e^{-n\Lambda(\alpha)}\Ebf[Z_n^\alpha]
        =c_Z(\alpha):=\Ebf_\alpha[V_\infty^\alpha]
        \in(0,\infty),
\end{equation}
and, for every fixed \(y\in\R\),
\begin{equation}\label{eq:bprei-sharp-shifted}
        \lim_{n\to\infty}
        \sqrt n\,\e^{nI(\rho)}
        \Pbf\bigl(Z_n>\e^{\rho n+y}\bigr)
        =\frac{c_Z(\alpha)\e^{-\alpha y}}
        {\alpha v_\alpha\sqrt{2\pi}}.
\end{equation}
The conclusion holds in both the supercritical case \(\mu>0\) and the
critical case \(\mu=0\).
\end{theorem}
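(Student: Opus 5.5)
The proof has three stages: first the normalized limit under the tilted law, then the moment limit by change of measure, and finally the sharp tail estimate by conditioning at a logarithmic generation and applying a local limit theorem.

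\emph{Stage 1: the normalized limit.} Work under \(\Pbf_\alpha\), where the walk \(S_n\) has drift \(\rho>0\), and use the normalized recursion \eqref{eq:bprei-linear-recursion}, \(V_n=Z_0+\sum_{k<n}D_{k+1}/\Pi_{k+1}\). Set \(\kappa:=\min(\alpha,1)\).
\begin{itemize}
\item \textbf{Case \(\alpha\leq1\).} By subadditivity of \(x\mapsto x^\alpha\),
\[
\Ebf_\alpha|V_{n}-V_{m}|^\alpha\leq\sum_{k=m}^{n-1}\Ebf_\alpha\bigl[\Pi_{k+1}^{-\alpha}\,\Ebf(|D_{k+1}|^\alpha\mid\cF_k)\bigr].
\]
By \eqref{eq:bprei-sublinear-innovation} the conditional moment is at most \(C(1+Z_k^\beta)=C(1+\Pi_k^\beta V_k^\beta)\). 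Under the tilt, \(\Ebf_\alpha\Pi_k^{-(\alpha-\beta)}=\e^{k(\Lambda(\beta)-\Lambda(\alpha))}\) decays geometrically, since \(\Lambda\) is convex with \(\Lambda'(\alpha)=\rho>0\) on \([\beta,\alpha]\). This holds provided \(\beta\in\cD_\Lambda\), which is true because \(\Lambda(0)=0\) and \(\cD_\Lambda\) is convex; if \(\beta<0\) is ever needed, raise \(\beta\) to \(0\).
\item \textbf{Case \(\alpha>1\).} Split \(D\) into the centered offspring martingale part and the immigration part. Apply Burkholder (or von Bahr--Esseen for \(\alpha\leq2\)) to the martingale part and Minkowski to the immigration part.
\end{itemize}
In both cases the \(V_k^\beta\) factor is closed by a bootstrap: first show \(\sup_k\Ebf_\alpha V_k^\alpha<\infty\) by an induction inequality \(a_{k+1}\leq a_k+C\e^{-ck}(1+a_k^{\beta/\alpha})\), Jensen on \(V_k^\beta\), and Gronwall. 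The series then converges in \(L^\alpha\), and almost surely by Borel--Cantelli on the geometric tail. Nontriviality of \(V_\infty\) comes from Assumption~\ref{ass:bprei-nontrivial} and Lemma~\ref{lem:bprei-accessibility}. From a large state \(K\), \(\Ebf_\alpha|V_\infty-K|^\alpha\) grows only like \(K^{\beta}\), since the innovation sum is sublinear, so \(\Pbf_\alpha(V_\infty>0)>0\).

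\emph{Stage 2: the moment limit.} This is immediate from the change of measure: \(\e^{-n\Lambda(\alpha)}\Ebf Z_n^\alpha=\Ebf_\alpha V_n^\alpha\to\Ebf_\alpha V_\infty^\alpha\) by \(L^\alpha\) convergence.

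\emph{Stage 3: the sharp tail.} Take \(j_n=\lfloor K\log n\rfloor\) and write
\[
Z_n=Z_{j_n}\Pi_{j_n,n}+\Pi_{n}\,(V_n-V_{j_n}).
\]
The second term, relative to \(\Pi_n\), is small in \(L^\alpha(\Pbf_\alpha)\) at geometric rate in \(j_n\), so it is \(o(n^{-1/2})\) after a Markov-type bound. Next change measure:
\[
\Pbf(Z_n>\e^{\rho n+y})=\e^{n\Lambda(\alpha)}\,\Ebf_\alpha\bigl[\Pi_n^{-\alpha};Z_n>\e^{\rho n+y}\bigr].
\]
On the main term, condition on \(\cF_{j_n}\), put \(W:=V_{j_n}\), and write \(\Pi_n=\Pi_{j_n}\Pi_{j_n,n}\). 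The event becomes \(S_n-S_{j_n}+\log(\Pi_{j_n}W)>\rho n+y\), while the weight is \(\Pi_n^{-\alpha}=\e^{-\alpha S_n}\). The independent block \(S_n-S_{j_n}\) has \(\Pbf_\alpha\)-drift \(\rho\) and variance \(v_\alpha^2\). Stone's non-arithmetic local limit theorem gives
\[
\Ebf_\alpha\bigl[\e^{-\alpha(S'-x)};S'>x\bigr]\sim\frac{1}{\alpha v_\alpha\sqrt{2\pi(n-j_n)}}
\]
uniformly for \(x-(n-j_n)\rho=o(\sqrt n)\). Applied with \(x=\rho n+y-\log(\Pi_{j_n}W)\), this yields the conditional asymptotic
\[
\frac{W^\alpha\e^{-\alpha y}\,\e^{-n\alpha\rho}}{\alpha v_\alpha\sqrt{2\pi n}},
\]
where the factor \(\Pi_{j_n}^{\alpha}\) cancels against the \(\Pi_{j_n}^{-\alpha}\) part of the weight. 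Taking expectations and using \(\Ebf_\alpha W^\alpha\to c_Z(\alpha)\) gives \eqref{eq:bprei-sharp-shifted}.

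\emph{Main obstacle.} Uniformity is the main obstacle: one must control the region where \(\log(\Pi_{j_n}W)\) is not \(o(\sqrt n)\), and where \(W\) is near \(0\) or the remainder is comparable to the leading term. I would truncate at \(\{|\log W|+|S_{j_n}|\leq\log^2 n\}\) and bound the complement using \(L^\alpha\) bounds together with the crude estimate that \(\sqrt n\,\e^{\alpha x}\Ebf_\alpha[\e^{-\alpha S'};S'>x]\) is uniformly bounded. A smoothing argument, perturbing \(y\) by \(\pm\delta\) and sending \(\delta\to0\), absorbs the remainder term via continuity of the limit in \(y\).
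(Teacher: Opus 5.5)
Your skeleton matches the paper's. You tilt by \(\Pbf_\alpha\), run an \(L^\alpha\) bootstrap for \(V_n\), prove nondegeneracy from a large state and transfer it by Lemma~\ref{lem:bprei-accessibility}, split at \(j_n\asymp\log n\), average Stone's integrated kernel against the \(V_{j_n}^\alpha\)-weighted tilted law, and close with a \(\pm\delta\) sandwich. That weighted law is exactly the size-biased law \(\widehat\Pbf_n\) of Lemma~\ref{lem:bprei-random-prefactor}. Your \(\log^2 n\) truncation plus uniform integrability of \(V_{j_n}^\alpha\) replaces the small-shift verification in Corollary~\ref{cor:bprei-early-prefactor}.

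Two sub-steps genuinely differ from the paper, and both of your versions work.
\begin{itemize}
\item \emph{Nondegeneracy.} The paper builds the event \(\bigcap_j\{R_j\geq-a_j\}\) of summably small relative errors. This gives the pathwise envelope \(Z_n\geq xp_a\Pi_n\) for all \(n\), with failure probability \(O(x^{-(\alpha-\beta)})\). You instead reuse the convergence estimates. Run from \(Z_0=x\), the bootstrap gives \(\sup_k\Ebf_{\alpha,x}V_k^\alpha\leq C(1+x^\alpha)\), hence \(\Ebf_{\alpha,x}|V_\infty-x|^\alpha\leq C(1+x^\beta)\). So \(\Pbf_{\alpha,x}(V_\infty\leq x/2)\leq Cx^{\beta-\alpha}\) uniformly in \(x\geq K\), and the strong Markov property at \(\tau_K\) finishes. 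Your route is shorter; the paper's gives a uniform-in-time lower envelope that yours does not.
\item \emph{Block approximation.} The paper uses a union bound with weights \((k-j)^{-2}\) and the decay \(\Ebf|D_k|^\alpha\leq C_0\chi_\alpha^k\) under \(\Pbf\). Your ``Markov-type bound,'' made explicit, is
\[
\Pbf\bigl(\Pi_n|V_n-V_{j_n}|>\eps\e^{\rho n}\bigr)
=\e^{n\Lambda(\alpha)}\Ebf_\alpha\bigl[\Pi_n^{-\alpha};\,\Pi_n|V_n-V_{j_n}|>\eps\e^{\rho n}\bigr]
\leq\eps^{-\alpha}\e^{-nI(\rho)}\Ebf_\alpha|V_n-V_{j_n}|^\alpha .
\]
The geometric tail of the tilted series then gives \(o(n^{-1/2}\e^{-nI(\rho)})\) for large \(K\). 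This is a one-line replacement for Proposition~\ref{prop:bprei-block}.
\end{itemize}

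One step of Stage~1 needs repair. You pull \(\Pi_{k+1}^{-\alpha}\) out of a conditional expectation given \(\cF_k\). You then control \(\Ebf_\alpha[\Pi_k^{\beta-\alpha}V_k^\beta]\) through \(\Ebf_\alpha\Pi_k^{-(\alpha-\beta)}\). Neither move is valid as written, for two reasons.
\begin{itemize}
\item \(M_k\) and \(D_{k+1}\) depend on the same mark \(\cE_k\), which under \(\Pbf_\alpha\) is size-biased by \(M_k^\alpha\). So the tilted moment \(\Ebf_\alpha|D(k)|^\alpha\) is not controlled by \eqref{eq:bprei-sublinear-innovation}.
\item \(V_k\) and \(\Pi_k\) are functions of the same marks, so the expectation of their product does not factor.
\end{itemize}
The fix is to keep \(M_k^{-\alpha}\) attached to \(D_{k+1}\). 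With \(g(k):=\Ebf|D(k)|^\alpha\),
\[
\Ebf_\alpha[M_k^{-\alpha}|D_{k+1}|^\alpha\mid\cF_k]=\e^{-\Lambda(\alpha)}g(Z_k),
\]
that is, \(\Ebf_\alpha|V_{k+1}-V_k|^\alpha=\e^{-(k+1)\Lambda(\alpha)}\Ebf|D_{k+1}|^\alpha\). Then apply Jensen under \(\Pbf\): \(\Ebf Z_k^\beta\leq(\e^{k\Lambda(\alpha)}\Ebf_\alpha V_k^\alpha)^{\beta/\alpha}\). The contraction factor becomes \(\e^{-\Lambda(\alpha)(1-\beta/\alpha)}\) instead of \(\e^{\Lambda(\beta)-\Lambda(\alpha)}\). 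It is still below one because \(\Lambda(\alpha)>0\), so your bootstrap and everything downstream go through unchanged. For \(\alpha>1\), the Burkholder split is unnecessary; Minkowski on the full increments suffices.
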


The proof is organized in four steps.  First, under the Cram\'er tilt
\(\Pbf_\alpha\), Lemma~\ref{lem:bprei-tilt-cancellation} converts moments of
the normalized process into original-law moments, while
Lemma~\ref{lem:bprei-recursion-bound} controls the sublinear recursion
generated by the innovation estimate \eqref{eq:bprei-sublinear-innovation}.
Proposition~\ref{prop:bprei-normalized-limit} then proves that
\[
        V_n=\frac{Z_n}{\Pi_n}
\]
converges almost surely and in \(L^\alpha(\Pbf_\alpha)\), and identifies the
limit in \eqref{eq:bprei-moment-limit-main} as
\(\Ebf_\alpha[V_\infty^\alpha]\).  Lemma~\ref{lem:bprei-accessibility}
supplies finite-level accessibility, and
Proposition~\ref{prop:bprei-positive-limit} uses it to prove
\[
        \Pbf_\alpha(V_\infty>0)>0.
\]
Consequently,
\[
        c_Z(\alpha)=\Ebf_\alpha[V_\infty^\alpha]\in(0,\infty).
\]

Second, to isolate the contribution responsible for the sharp upper tail,
let \(j_n=\lceil K\log n\rceil\).  Iteration of the affine recursion gives
\begin{equation}\label{eq:bprei-method-block-identity}
        Z_n=Z_{j_n}\Pi_{j_n,n}
        +\sum_{k=j_n+1}^{n}D_k\Pi_{k,n}.
\end{equation}
The logarithmic split serves two purposes.  For \(K\) sufficiently large, it
makes the post-\(j_n\) innovation sum negligible on the sharp scale; at the
same time, \(j_n=o(n)\), so the future environmental block still has length
asymptotic to \(n\), and the logarithmic shift generated by \(Z_{j_n}\) is
negligible on the \(\sqrt n\)-scale under the relevant \(\alpha\)-size-biased
law.

Third, Lemma~\ref{lem:bprei-Stone-kernel} gives the integrated Stone estimate
for the future environmental product, and Lemma~\ref{lem:bprei-random-prefactor}
shows how that estimate can be averaged over a random size-biased logarithmic
shift.  Corollary~\ref{cor:bprei-early-prefactor} applies this mechanism to
the leading term \(Z_{j_n}\Pi_{j_n,n}\) and yields
\[
\sqrt n\,\e^{nI(\rho)}
\Pbf\!\left(Z_{j_n}\Pi_{j_n,n}>\e^{\rho n+y}\right)
\longrightarrow
\frac{c_Z(\alpha)\e^{-\alpha y}}
{\alpha v_\alpha\sqrt{2\pi}}.
\]

Finally, Proposition~\ref{prop:bprei-block} proves that the innovation sum in
\eqref{eq:bprei-method-block-identity} has probability
\[
        o\!\left(n^{-1/2}\e^{-nI(\rho)}\right)
\]
of affecting the upper-tail event.  The proof of
Theorem~\ref{thm:bprei-sharp} then compares \(Z_n\) with
\(Z_{j_n}\Pi_{j_n,n}\) at the shifted levels
\[
        (1-\delta)\e^{\rho n+y}
        \quad\text{and}\quad
        (1+\delta)\e^{\rho n+y},
\]
applies Corollary~\ref{cor:bprei-early-prefactor} to the two bounds, and lets
\(\delta\downarrow0\) to obtain \eqref{eq:bprei-sharp-shifted}.

\begin{remark}[Relation with Buraczewski--Dyszewski]
\label{rem:bprei-BD}
Setting \(\eta\equiv0\) recovers the critical and supercritical
upper-deviation problem of \cite{BuraczewskiDyszewski2022}.  Theorem
\ref{thm:bprei-sharp} additionally allows immigration, including dependence
of reproduction and immigration through the same environmental mark.  It also
uses only the innovation estimate at exponent \(\alpha\): for \(\alpha>1\),
Proposition~\ref{prop:bprei-primitive} requires an \(\alpha\)-moment rather
than an \(\alpha+\delta\) moment, while at \(\alpha=1\) a moment above one is
still needed to obtain \(\beta<1\).

Our treatment of the early generation size is also different.  The
\(\alpha\)-size-biased small-shift condition follows from
\eqref{eq:bprei-Lalpha-limit}, so no moment interval above \(\alpha\) is
needed for that prefactor.  Although the answer has the classical
Bahadur--Rao--Petrov form \cite{BahadurRao1960,Petrov1965}, we obtain the
uniform small-shift estimate by applying Stone's theorem \cite{Stone1965} to
\(g(x)=\e^{-\alpha x}\one_{(0,\infty)}(x)\).  The same argument supplies the
uniform bound needed to average over the size-biased shift.  A neighborhood of
\(\alpha\) is still required for the tilted environmental laws.
\end{remark}

\begin{remark}[The critical environmental regime]
\label{rem:bprei-Afanasyev}
Afanasyev \cite{Afanasyev2021} derives a functional limit theorem for a
critical BPREI under a random environmental normalization.  The critical part
of Theorem~\ref{thm:bprei-sharp} concerns a different event: the original walk
has zero drift, but the positive \(\alpha\)-tilt changes its drift to
\(\Lambda'(\alpha)=\rho>0\).  We analyze the resulting positive-drift
recursion and then return to the original law.
\end{remark}

The theorem is stated at the innovation level; the following proposition
records standard branching-model conditions that verify it.

\begin{proposition}[Primitive sufficient conditions]\label{prop:bprei-primitive}
The following conditions imply \eqref{eq:bprei-sublinear-innovation}.
\begin{enumerate}[label=\textup{(\roman*)}]
\item If $0<\alpha\leq1$, suppose there exists $r\in(1,2]$ such that
\begin{equation}\label{eq:bprei-primitive-small-alpha}
        \Ebf\left[
        \left\{\Ebf\bigl(|\xi_1-M|^r\mid\cE_0\bigr)\right\}^{\alpha/r}
        \right]<\infty,
        \qquad
        \Ebf[\eta^\alpha]<\infty.
\end{equation}
Then \eqref{eq:bprei-sublinear-innovation} holds with
$\beta=\alpha/r$.

\item If $\alpha>1$, suppose
\begin{equation}\label{eq:bprei-primitive-large-alpha}
        \Ebf|\xi_1-M|^\alpha<\infty,
        \qquad
        \Ebf[\eta^\alpha]<\infty.
\end{equation}
Then \eqref{eq:bprei-sublinear-innovation} holds with
\begin{equation}\label{eq:bprei-beta-large-alpha}
        \beta=1\vee\frac{\alpha}{2}<\alpha.
\end{equation}
\end{enumerate}
\end{proposition}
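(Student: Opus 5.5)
We bound the two parts of \(D(k)=\sum_{i=1}^k(\xi_i-M)+\eta\) separately and combine them through \(|x+y|^\alpha\leq c_\alpha(|x|^\alpha+|y|^\alpha)\), with \(c_\alpha=1\) for \(\alpha\leq1\) and \(c_\alpha=2^{\alpha-1}\) otherwise. The immigration part contributes the constant \(\Ebf[\eta^\alpha]<\infty\) in both cases. It therefore suffices to show
\[
\Ebf\Bigl|\sum_{i=1}^k(\xi_i-M)\Bigr|^\alpha\leq Ck^\beta .
\]
We first condition on the environmental mark \(\cE_0\); given \(\cE_0\), the summands are i.i.d. and centered with mean \(M\).

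Case (i), \(0<\alpha\leq1<r\leq2\). Write \(T_k:=\sum_{i\le k}(\xi_i-M)\). Since \(\alpha/r\leq1\), conditional Jensen gives
\[
\Ebf(|T_k|^\alpha\mid\cE_0)\leq\{\Ebf(|T_k|^r\mid\cE_0)\}^{\alpha/r}.
\]
For \(1<r\leq2\), the von Bahr--Esseen inequality for independent centered summands gives
\[
\Ebf(|T_k|^r\mid\cE_0)\leq 2k\,\Ebf(|\xi_1-M|^r\mid\cE_0).
\]
Combining the two bounds,
\[
\Ebf(|T_k|^\alpha\mid\cE_0)\leq 2^{\alpha/r}k^{\alpha/r}\{\Ebf(|\xi_1-M|^r\mid\cE_0)\}^{\alpha/r}.
\]
Taking expectations and using \eqref{eq:bprei-primitive-small-alpha} yields \(\Ebf|T_k|^\alpha\leq Ck^{\alpha/r}\). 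Hence \(\beta=\alpha/r<\alpha\), which holds because \(r>1\).

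Case (ii), \(\alpha>1\). Apply the conditional Marcinkiewicz--Zygmund/Rosenthal inequality. For \(1<\alpha\leq2\), it gives
\[
\Ebf(|T_k|^\alpha\mid\cE_0)\leq C_\alpha k\,\Ebf(|\xi_1-M|^\alpha\mid\cE_0).
\]
For \(\alpha>2\), Rosenthal gives
\[
\Ebf(|T_k|^\alpha\mid\cE_0)\leq C_\alpha\bigl(k\,\Ebf(|\xi_1-M|^\alpha\mid\cE_0)+k^{\alpha/2}\{\Ebf(|\xi_1-M|^2\mid\cE_0)\}^{\alpha/2}\bigr).
\]
The second term is at most \(k^{\alpha/2}\Ebf(|\xi_1-M|^\alpha\mid\cE_0)\) by conditional Jensen, since \(\alpha/2\geq1\).

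In both subcases, taking expectations gives \(\Ebf|T_k|^\alpha\leq Ck^{1\vee\alpha/2}\,\Ebf|\xi_1-M|^\alpha\). Here \(\Ebf|\xi_1-M|^\alpha\) denotes the annealed moment, which equals \(\Ebf[\Ebf(|\xi_1-M|^\alpha\mid\cE_0)]\). Since \(\alpha>1\) and \(\alpha>\alpha/2\), we get \(\beta=1\vee\alpha/2<\alpha\).

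The only care needed is measurability and uniformity of the constants. The von Bahr--Esseen and Rosenthal constants depend only on \(r\) or \(\alpha\), not on the environment, so the conditional bounds integrate cleanly. The case \(k=0\) is trivial, and the factor \(1+k^\beta\) absorbs the immigration constant. This step is the mild main point; the rest is routine.
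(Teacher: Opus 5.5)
Your proof is correct and follows the paper's argument step for step. You condition on \(\cE_0\) and bound the centered offspring sum by conditional Jensen plus an \(r\)th-moment inequality for \(\alpha\le1\), or by a moment inequality plus Rosenthal and Lyapunov for \(\alpha>1\); you then anneal and absorb the immigration term through the elementary \(|x+y|^\alpha\) inequality. The only cosmetic difference is that you use von Bahr--Esseen at exponent \(r\) in case (i), where the paper uses Marcinkiewicz--Zygmund, and the reverse for \(1<\alpha\le2\); both choices give the same \(k^{\alpha/r}\) and \(k\) bounds.
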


\begin{proof}
Let \(S_k:=\sum_{i=1}^{k}(\xi_i-M)\).  If \(0<\alpha\leq1\), then
subadditivity gives
\(\Ebf(|S_k+\eta|^\alpha\mid\cE_0)
 \leq\Ebf(|S_k|^\alpha\mid\cE_0)+\Ebf(\eta^\alpha\mid\cE_0)\).
Conditional Jensen and the Marcinkiewicz--Zygmund inequality at exponent
\(r\) yield
\[
\Ebf(|S_k|^\alpha\mid\cE_0)
\leq\{\Ebf(|S_k|^r\mid\cE_0)\}^{\alpha/r}
\leq C_rk^{\alpha/r}
\{\Ebf(|\xi_1-M|^r\mid\cE_0)\}^{\alpha/r}.
\]
Annealing proves \eqref{eq:bprei-sublinear-innovation} with
\(\beta=\alpha/r\).

If \(1<\alpha\leq2\), conditional von Bahr--Esseen gives
\(\Ebf(|S_k|^\alpha\mid\cE_0)
 \leq C_\alpha k\Ebf(|\xi_1-M|^\alpha\mid\cE_0)\).
If \(\alpha>2\), conditional Rosenthal and conditional Lyapunov give
\[
\begin{aligned}
\Ebf(|S_k|^\alpha\mid\cE_0)
&\leq C_\alpha\left[k\Ebf(|\xi_1-M|^\alpha\mid\cE_0)
+k^{\alpha/2}\{\Ebf(|\xi_1-M|^2\mid\cE_0)\}^{\alpha/2}\right]\\
&\leq C_\alpha(k+k^{\alpha/2})
\Ebf(|\xi_1-M|^\alpha\mid\cE_0).
\end{aligned}
\]
Annealing and
\(|x+y|^\alpha\leq2^{\alpha-1}(|x|^\alpha+|y|^\alpha)\) prove the claim
with \(\beta=1\) for \(1<\alpha\leq2\) and
\(\beta=\alpha/2\) for \(\alpha>2\).
\end{proof}

\subsubsection{Tilted normalization and the innovation estimate}
\label{subsec:bprei-normalization}

We first convert the sublinear one-generation estimate into control of the
normalized recursion.  The following identities convert the relevant
original-law moments into normalized moments under the \(\alpha\)-tilted law.

\begin{lemma}[Tilt cancellation]\label{lem:bprei-tilt-cancellation}
For every $n\geq0$,
\begin{equation}\label{eq:bprei-moment-tilt-identity}
        \Ebf_\alpha[V_n^\alpha]
        =\e^{-n\Lambda(\alpha)}\Ebf[Z_n^\alpha],
\end{equation}
and
\begin{equation}\label{eq:bprei-increment-tilt-identity}
        \Ebf_\alpha|V_{n+1}-V_n|^\alpha
        =\e^{-(n+1)\Lambda(\alpha)}\Ebf|D_{n+1}|^\alpha.
\end{equation}
More generally, for every $k\geq1$,
\begin{equation}\label{eq:bprei-relative-tilt-identity}
        \Ebf_\alpha\left[M^{-\alpha}|D(k)|^\alpha\right]
        =\e^{-\Lambda(\alpha)}\Ebf|D(k)|^\alpha.
\end{equation}
\end{lemma}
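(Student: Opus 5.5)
All three identities follow from one observation. Under the tilt \eqref{eq:bprei-tilt}, only the environmental marks are reweighted; conditional on the marks, the reproduction–immigration mechanism and the initial law are unchanged. Hence, for any nonnegative random variable $W$ that is measurable with respect to the first $n$ generations (environment, offspring, immigration and $Z_0$),
\[
\Ebf_\alpha[W]=\Ebf\bigl[W\,\Pi_n^{\alpha}\e^{-n\Lambda(\alpha)}\bigr].
\]
I would justify this by conditioning on $\cG_n$: the density $\exp\{\alpha S_n-n\Lambda(\alpha)\}=\Pi_n^\alpha\e^{-n\Lambda(\alpha)}$ is $\cG_n$-measurable, so $\Ebf_\alpha[W]=\Ebf_\alpha[\Ebf(W\mid\cG_n)]=\Ebf[\Pi_n^\alpha\e^{-n\Lambda(\alpha)}\Ebf(W\mid\cG_n)]$. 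The first step uses the fact that the conditional law of $W$ given the marks is the same under $\Pbf_\alpha$ and $\Pbf$. The tilts at different $n$ are consistent because $\Ebf[M^\alpha]\e^{-\Lambda(\alpha)}=1$ and the marks are i.i.d.

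For \eqref{eq:bprei-moment-tilt-identity}, take $W=V_n^\alpha=Z_n^\alpha\Pi_n^{-\alpha}$. The factor $\Pi_n^{-\alpha}$ cancels the density's $\Pi_n^\alpha$, leaving $\e^{-n\Lambda(\alpha)}\Ebf[Z_n^\alpha]$.

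For \eqref{eq:bprei-increment-tilt-identity}, use \eqref{eq:bprei-linear-recursion}. The increment $V_{n+1}-V_n=D_{n+1}/\Pi_{n+1}$ is measurable with respect to the first $n+1$ generations, since it involves $\cE_0,\dots,\cE_n$. Apply the identity with $n+1$ in place of $n$ and $W=|D_{n+1}|^\alpha\Pi_{n+1}^{-\alpha}$; again $\Pi_{n+1}^{-\alpha}$ cancels, giving $\e^{-(n+1)\Lambda(\alpha)}\Ebf|D_{n+1}|^\alpha$.

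For \eqref{eq:bprei-relative-tilt-identity}, use the one-generation version ($n=1$) applied to a generic mark, with density $M^\alpha\e^{-\Lambda(\alpha)}$. Take $W=M^{-\alpha}|D(k)|^\alpha$, where $D(k)$ is built from the conditional laws attached to that single mark. The $M^{-\alpha}$ cancels the density, giving $\e^{-\Lambda(\alpha)}\Ebf|D(k)|^\alpha$.

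There is no real obstacle here. The only point needing care is to state precisely that the tilt leaves the conditional reproduction–immigration law given $\cG_n$ (and the law of $Z_0$) unchanged. This is exactly what is asserted after \eqref{eq:bprei-tilt}, and it is what makes the conditioning step legitimate. One should also note that the expectations may be infinite, but all integrands are nonnegative, so the identities hold in $[0,\infty]$ without integrability assumptions.
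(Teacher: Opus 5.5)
Your proposal is correct and is the same argument as the paper's: each left-hand side is rewritten as an original-law expectation against the density $\Pi_n^\alpha\e^{-n\Lambda(\alpha)}$ (at time $n$, at time $n+1$, and for a single mark), and the factor $\Pi^{-\alpha}$ in the integrand cancels that density. The only addition is that you state explicitly why the $\cG_n$-density extends to the full $n$-generation $\sigma$-field: the tilt leaves the conditional reproduction--immigration law given the marks unchanged. The paper uses this fact implicitly.
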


\begin{proof}
Since \(V_n=Z_n/\Pi_n\),
\[
\Ebf_\alpha[V_n^\alpha]
=\e^{-n\Lambda(\alpha)}
\Ebf\!\left[\Pi_n^\alpha\frac{Z_n^\alpha}{\Pi_n^\alpha}\right]
=\e^{-n\Lambda(\alpha)}\Ebf[Z_n^\alpha],
\]
which proves \eqref{eq:bprei-moment-tilt-identity}.  Since
\(V_{n+1}-V_n=D_{n+1}/\Pi_{n+1}\), the same calculation at time \(n+1\)
proves \eqref{eq:bprei-increment-tilt-identity}.  Under the one-generation Radon--Nikodym derivative
\(M^\alpha\e^{-\Lambda(\alpha)}\),
\(\Ebf_\alpha[M^{-\alpha}|D(k)|^\alpha]
 =\e^{-\Lambda(\alpha)}\Ebf|D(k)|^\alpha\), which proves
\eqref{eq:bprei-relative-tilt-identity}.
\end{proof}

We use the following elementary recursion bound.

\begin{lemma}\label{lem:bprei-recursion-bound}
Let $q\in(0,1)$, $r\in[0,1)$, and suppose a nonnegative sequence $(a_n)$
satisfies
\begin{equation}\label{eq:bprei-deterministic-recursion}
        a_{n+1}\leq a_n+Cq^n(1+a_n^r),
        \qquad n\geq0.
\end{equation}
Then $\sup_n a_n<\infty$.
\end{lemma}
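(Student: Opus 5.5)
We prove Lemma~\ref{lem:bprei-recursion-bound} by a direct two-stage comparison. Since $0\leq r<1$, we have $a^r\leq 1+a$ for every $a\geq0$: if $a\leq1$ then $a^r\leq1$, and if $a>1$ then $a^r\leq a$. The recursion \eqref{eq:bprei-deterministic-recursion} therefore yields the linear bound
\[
        a_{n+1}\leq a_n+Cq^n(2+a_n)
        =(1+Cq^n)a_n+2Cq^n.
\]

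Next we iterate this affine inequality. Put $P:=\prod_{j\geq0}(1+Cq^j)$; the product is finite because $\sum_jCq^j=C/(1-q)<\infty$ and $\log(1+x)\leq x$, so in fact $P\leq\exp\{C/(1-q)\}$. An induction then gives
\[
        a_n\leq\prod_{j=0}^{n-1}(1+Cq^j)\,a_0
        +\sum_{k=0}^{n-1}2Cq^k\prod_{j=k+1}^{n-1}(1+Cq^j)
        \leq P\Bigl(a_0+\frac{2C}{1-q}\Bigr).
\]
The right-hand side does not depend on $n$, so $\sup_na_n<\infty$.

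Neither step is a genuine obstacle: the only points needing care are the sublinearity bound $a^r\leq1+a$, which handles the case $r=0$ uniformly, and the convergence of the infinite product. The sublinear exponent $r<1$ is not actually needed for boundedness in this crude form, since linear growth in $a_n$ is already absorbed by the summable factors $Cq^n$. The hypothesis $r<1$ matters only for the applications, where the lemma is invoked with $a_n=\Ebf_\alpha[V_n^\alpha]$ and $r=\beta/\alpha$ from Assumption~\ref{ass:bprei-linearization}.
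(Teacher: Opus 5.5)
Your proof is correct and is essentially the paper's argument. The paper sets $b_n:=1\vee a_n$ and uses $1+a_n^r\leq 2b_n$ to get the purely multiplicative bound $b_{n+1}\leq b_n(1+2Cq^n)$, hence $b_n\leq b_0\prod_{j\geq0}(1+2Cq^j)<\infty$; you instead keep the additive term and iterate the affine inequality, which takes one more line but yields the same convergent-product bound. Your remark that $r<1$ is not needed is also right for the paper's version, since $1+a^r\leq 2(1\vee a)$ holds for every $r\in[0,1]$.
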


\begin{proof}
Put \(b_n=1\vee a_n\).  Since \(1+a_n^r\leq2b_n\), we have
\(b_{n+1}\leq b_n(1+2Cq^n)\).  Iteration gives
\(b_n\leq b_0\prod_{j=0}^{n-1}(1+2Cq^j)\), and the infinite product is
finite because \(\sum_{j\geq0}q^j<\infty\).
\end{proof}

\begin{proposition}[Normalized limit and innovation decay]
\label{prop:bprei-normalized-limit}
Under Assumptions~\ref{ass:bprei-cramer} and
\ref{ass:bprei-linearization}, \(V_n\) converges
\(\Pbf_\alpha\)-almost surely and in
\(L^\alpha(\Pbf_\alpha)\) to a finite random variable \(V_\infty\).
Furthermore,
\begin{equation}\label{eq:bprei-innovation-decay}
        \Ebf|D_n|^\alpha\leq C_0\chi_\alpha^n,
        \qquad n\geq1,
\end{equation}
for some $C_0<\infty$ and
\begin{equation}\label{eq:bprei-chi-range}
        1\leq\chi_\alpha<\e^{\Lambda(\alpha)}.
\end{equation}
\end{proposition}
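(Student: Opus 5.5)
The plan is to control the original-law moments \(a_n:=\Ebf[Z_n^\alpha]\) through the affine recursion \(Z_{n+1}=M_nZ_n+D_{n+1}\), and then convert them with Lemma~\ref{lem:bprei-tilt-cancellation}. First, conditioning on \(Z_n=k\) and using independence of \(\cE_n\) from \(\cF_n\), we get
\[
\Ebf|D_{n+1}|^\alpha=\Ebf\bigl[\Ebf|D(k)|^\alpha\big|_{k=Z_n}\bigr]\leq C(1+\Ebf[Z_n^\beta])\leq C\bigl(1+a_n^{\beta/\alpha}\bigr),
\]
where the last step is Jensen, since \(\beta<\alpha\). Next we bound \(a_n\) crudely. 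For \(\alpha\leq1\), subadditivity gives \(a_{n+1}\leq\e^{\Lambda(\alpha)}a_n+C(1+a_n^{\beta/\alpha})\). For \(\alpha>1\), Minkowski in \(L^\alpha\) gives \(a_{n+1}^{1/\alpha}\leq\e^{\Lambda(\alpha)/\alpha}a_n^{1/\alpha}+C(1+a_n^{\beta/\alpha})^{1/\alpha}\). Either way, \(a_n\leq C'\e^{n\Lambda(\alpha)}\) up to a sublinear correction.

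To get this exactly, I will work with the normalized sequence \(b_n:=\Ebf_\alpha[V_n^\alpha]=\e^{-n\Lambda(\alpha)}a_n\). Dividing by \(\e^{(n+1)\Lambda(\alpha)}\) turns the case \(\alpha\leq1\) into
\[
b_{n+1}\leq b_n+C\e^{-(n+1)\Lambda(\alpha)}\bigl(1+\e^{n\Lambda(\alpha)\beta/\alpha}b_n^{\beta/\alpha}\bigr)\leq b_n+Cq^n(1+b_n^{\beta/\alpha}),
\]
with \(q:=\e^{-\Lambda(\alpha)(1-\beta/\alpha)}\). This needs \(\Lambda(\alpha)>0\). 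That holds because \(\Lambda(0)=0\), \(\Lambda\) is convex and \(\Lambda'(0)=\mu\geq0\), with \(\Lambda'(\alpha)=\rho>\mu\) and \(v_\alpha^2>0\), so \(\Lambda\) is strictly increasing on \((0,\alpha]\). For \(\alpha>1\), the same normalization is applied to \(b_n^{1/\alpha}\), giving \(b_{n+1}^{1/\alpha}\leq b_n^{1/\alpha}+Cq^{n/\alpha}(1+b_n^{\beta/\alpha^2})\), which has the same form with \(r=\beta/\alpha<1\). Lemma~\ref{lem:bprei-recursion-bound} then yields \(\sup_nb_n<\infty\), so \(a_n\leq C\e^{n\Lambda(\alpha)}\). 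Substituting back, \(\Ebf|D_{n+1}|^\alpha\leq C(1+\e^{n\Lambda(\alpha)\beta/\alpha})\), which is \eqref{eq:bprei-innovation-decay} with \(\chi_\alpha:=\max(1,\e^{\Lambda(\alpha)\beta/\alpha})\). This lies in \([1,\e^{\Lambda(\alpha)})\) because \(\beta<\alpha\) and \(\Lambda(\alpha)>0\), giving \eqref{eq:bprei-chi-range}.

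For convergence, \eqref{eq:bprei-increment-tilt-identity} gives \(\Ebf_\alpha|V_{n+1}-V_n|^\alpha\leq C(\chi_\alpha\e^{-\Lambda(\alpha)})^{n+1}\), which decays geometrically. If \(\alpha\leq1\), then \(\Ebf_\alpha\sum_n|V_{n+1}-V_n|^\alpha<\infty\). Hence \(\sum_n|V_{n+1}-V_n|<\infty\) almost surely, because the terms are eventually below \(1\), where \(x\leq x^\alpha\). Subadditivity also makes \(V_n\) Cauchy in \(L^\alpha\). If \(\alpha>1\), Minkowski gives \(\sum_n\|V_{n+1}-V_n\|_{L^\alpha(\Pbf_\alpha)}<\infty\). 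Then \(\sum|V_{n+1}-V_n|\in L^\alpha\), and in particular it is finite almost surely. In both cases \(V_n\to V_\infty\) almost surely and in \(L^\alpha(\Pbf_\alpha)\), and \(V_\infty\) is finite.

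One technical point must be checked: \(\Pbf_\alpha\) is defined consistently on each \(\cG_n\) (extended to include offspring and immigration, which are unchanged), so it extends to a single law on the whole path and the almost-sure statement makes sense. The main obstacle is the exact bookkeeping in the case \(\alpha>1\), where the recursion must be run on \(b_n^{1/\alpha}\) to match the form required by Lemma~\ref{lem:bprei-recursion-bound}. The positivity \(\Lambda(\alpha)>0\), which guarantees \(q<1\), is the one structural input to verify carefully.
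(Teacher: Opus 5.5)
Your proposal is correct and is essentially the paper's proof. It uses the same Lyapunov bootstrap \(\Ebf|D_{n+1}|^\alpha\leq C\{1+\e^{n\Lambda(\alpha)\beta/\alpha}(\Ebf_\alpha V_n^\alpha)^{\beta/\alpha}\}\), the same application of Lemma~\ref{lem:bprei-recursion-bound} to \(\Ebf_\alpha V_n^\alpha\) when \(\alpha\leq1\) and to \(\|V_n\|_{L^\alpha(\Pbf_\alpha)}\) via Minkowski when \(\alpha>1\), and the same choice of \(\chi_\alpha\) from \(\e^{\Lambda(\alpha)\beta/\alpha}\). Two of your steps are checks the paper leaves implicit: that \(\Lambda(\alpha)>0\), which is needed for \(q<1\) and \(\chi_\alpha<\e^{\Lambda(\alpha)}\), and that summability of \(|V_{n+1}-V_n|^\alpha\) gives almost-sure convergence when \(\alpha\leq1\).
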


\begin{proof}
Assumption~\ref{ass:bprei-linearization}, Lyapunov's inequality, and
\eqref{eq:bprei-moment-tilt-identity} give
\begin{equation}\label{eq:bprei-D-bootstrap}
\Ebf|D_{n+1}|^\alpha
\leq C\left\{1+
\e^{n\Lambda(\alpha)\beta/\alpha}
(\Ebf_\alpha V_n^\alpha)^{\beta/\alpha}\right\}.
\end{equation}

Suppose \(0<\alpha\leq1\), and set
\(a_n:=\Ebf_\alpha V_n^\alpha\).  Subadditivity and
\eqref{eq:bprei-increment-tilt-identity} give
\(a_{n+1}\leq a_n+Cq^n(1+a_n^{\beta/\alpha})\) for some \(q\in(0,1)\).
Lemma~\ref{lem:bprei-recursion-bound} yields \(\sup_na_n<\infty\), and then
\[
\sum_{n\geq0}\Ebf_\alpha|V_{n+1}-V_n|^\alpha
\leq C\sum_{n\geq0}
\left[\e^{-n\Lambda(\alpha)}
+\e^{-n\Lambda(\alpha)(1-\beta/\alpha)}\right]<\infty.
\]
Hence \(V_n\) converges almost surely and in
\(L^\alpha(\Pbf_\alpha)\).

Let \(\alpha>1\), and set
\(a_n:=\|V_n\|_{L^\alpha(\Pbf_\alpha)}\).  Minkowski's inequality,
\eqref{eq:bprei-increment-tilt-identity}, and
\eqref{eq:bprei-D-bootstrap} give
\[
\begin{aligned}
a_{n+1}
&\leq a_n+C\e^{-n\Lambda(\alpha)/\alpha}
+C\e^{-n\Lambda(\alpha)(\alpha-\beta)/\alpha^2}
a_n^{\beta/\alpha}\\
&\leq a_n+Cq^n(1+a_n^{\beta/\alpha})
\end{aligned}
\]
for some \(q\in(0,1)\).  Thus \(\sup_na_n<\infty\), and the same estimates
give
\(\sum_{n\geq0}\|V_{n+1}-V_n\|_{L^\alpha(\Pbf_\alpha)}<\infty\).
Therefore \(V_n\) converges almost surely and in
\(L^\alpha(\Pbf_\alpha)\).

In both cases, \eqref{eq:bprei-D-bootstrap} and the bounded tilted moments
give
\(\Ebf|D_{n+1}|^\alpha
 \leq C\{1+\e^{n\Lambda(\alpha)\beta/\alpha}\}\).
Choose
\(1\vee\e^{\Lambda(\alpha)\beta/\alpha}<\chi_\alpha<
\e^{\Lambda(\alpha)}\); this proves
\eqref{eq:bprei-innovation-decay}--\eqref{eq:bprei-chi-range}.  Finally,
\eqref{eq:bprei-moment-tilt-identity} and the
\(L^\alpha(\Pbf_\alpha)\) convergence give
\(\e^{-n\Lambda(\alpha)}\Ebf Z_n^\alpha
 \to\Ebf_\alpha V_\infty^\alpha\).
\end{proof}

\subsubsection{Positivity of the tilted limit}
\label{subsec:bprei-positivity}

The preceding moment argument establishes convergence to a finite limit but
does not rule out \(V_\infty=0\).  To prove nondegeneracy, we adapt the
large-state product method for asymptotically linear random maps
\cite{CollamoreVidyashankar2013}.

\begin{lemma}[Finite-level accessibility]\label{lem:bprei-accessibility}
Under Assumptions~\ref{ass:bprei-cramer} and
\ref{ass:bprei-nontrivial}, for every $K\geq1$,
\begin{equation}\label{eq:bprei-accessibility}
        \Pbf_\alpha(\tau_K<\infty)>0,
        \qquad
        \tau_K:=\inf\{n\geq0:Z_n\geq K\}.
\end{equation}
\end{lemma}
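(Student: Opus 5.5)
We prove the stronger quenched statement: for \(\Pbf\)-almost every environment event of positive probability, the process reaches level \(K\). Since \(\Pbf_\alpha\) and \(\Pbf\) are mutually absolutely continuous on \(\cG_n\) for each fixed \(n\), it suffices to exhibit a fixed \(n\) such that \(\Pbf(Z_n\geq K)>0\). Indeed, \(\Pbf_\alpha(Z_n\geq K)=\e^{-n\Lambda(\alpha)}\Ebf[\Pi_n^\alpha;Z_n\geq K]\), and \(\Pi_n^\alpha>0\) almost surely because \(M_j\in(0,\infty)\). We then have \(\Pbf_\alpha(\tau_K<\infty)\geq\Pbf_\alpha(Z_n\geq K)>0\). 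The task therefore reduces to a statement under the original law.

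\textbf{Step 1: a positive state.} By Assumption~\ref{ass:bprei-nontrivial}, either \(\Pbf(Z_0>0)>0\) or \(\Pbf(\eta>0)>0\). In the second case, independence of \(Z_0\) from the first mark gives \(\Pbf(Z_1\geq1)\geq\Pbf(\eta_1\geq1)>0\). Hence some \(n_0\in\{0,1\}\) satisfies \(\Pbf(Z_{n_0}\geq1)>0\).

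\textbf{Step 2: growth from a positive state.} Since \(\rho=\Lambda'(\alpha)>\mu\geq0\) and \(\log M\) is nondegenerate (\(v_\alpha^2>0\)), we have \(\Pbf(M>1)>0\). Otherwise \(\log M\leq0\) almost surely with \(\Ebf\log M\geq0\) would force \(M=1\) almost surely. Fix \(\delta>0\) with \(\Pbf(M>1+\delta)>0\). On \(\{M_j>1+\delta\}\), the quenched offspring mean exceeds one, so the quenched law \(Q_j\) charges some integer \(\geq2\). Therefore there is a measurable set \(E\) of marks with \(\Pbf(\cE_j\in E)>0\) and, for every mark in \(E\), \(Q(\{2,3,\dots\})\geq c>0\) for a deterministic \(c\). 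The constant \(c\) comes from countable additivity: the sets \(\{Q(\{k\geq2\})>1/m\}\) increase to \(\{M>1+\delta\}\) up to null sets.

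Given \(Z_j=k\geq1\) and \(\cE_j\in E\), the event that all \(k\) individuals have at least two children has quenched probability \(\geq c^k\). Since immigration is nonnegative, on this event \(Z_{j+1}\geq2k\). Iterating over \(L=\lceil\log_2K\rceil\) consecutive generations, using independence of the marks and conditional independence of the offspring, yields
\[
\Pbf(Z_{n_0+L}\geq K)\geq\Pbf(Z_{n_0}\geq1)\,\Pbf(\cE\in E)^L\prod_{\ell=0}^{L-1}c^{2^\ell}\cdot(\text{restriction to }Z_{n_0}\text{ finite})>0.
\]
To handle unbounded \(Z_{n_0}\), we restrict first to \(\{1\leq Z_{n_0}\leq k_0\}\) with \(k_0\) chosen so that this event has positive probability; the bound then uses \(c^{2^\ell k_0}\).

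\textbf{Main obstacle.} The delicate point is the measurability and uniformity in Step 2: extracting a deterministic \(c>0\) and a positive-probability set of marks from \(M>1\). This is handled by the countable-union argument above. Everything else is the absolute continuity of \(\Pbf_\alpha\) with respect to \(\Pbf\) on finite horizons.
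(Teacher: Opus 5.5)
Your strategy is the paper's. You find a positive-probability set of marks on which $Q(\{2,3,\ldots\})\geq c$, obtain one individual from $Z_0$ or from immigration, and double the population over $L=\lceil\log_2K\rceil$ generations. Arguing under $\Pbf$ and transferring through $\Pbf_\alpha(Z_n\geq K)=\e^{-n\Lambda(\alpha)}\Ebf[\Pi_n^\alpha;Z_n\geq K]$ is a harmless variant of the paper's choice to work directly under $\Pbf_\alpha$. That identity holds on the full $\sigma$-field generated up to time $n$, not only on $\cG_n$, because the tilt reweights only the environment. Two small corrections: your opening sentence about a ``stronger quenched statement'' does not describe anything you prove, and the sets $\{Q(\{2,3,\ldots\})>1/j\}$ increase to $\{Q(\{2,3,\ldots\})>0\}\supseteq\{M>1\}$, so the $\delta$ is unnecessary.

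The step that fails as written is the lower bound in Step~2. You require \emph{all} $Z_{n_0+\ell}$ individuals of generation $n_0+\ell$ to have at least two children, which has quenched probability at least $c^{Z_{n_0+\ell}}$. On your event $Z_{n_0+\ell}\geq2^\ell$, so $c^{Z_{n_0+\ell}}\leq c^{2^\ell}$, and the displayed bound with $\prod_\ell c^{2^\ell}$ runs in the wrong direction. Truncating to $\{1\leq Z_{n_0}\leq k_0\}$ does not help. Nothing bounds $Z_{n_0+\ell}$ for $\ell\geq1$, since one individual may have arbitrarily many children and immigration is unbounded, so $c^{2^\ell k_0}$ is not a lower bound either.

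The repair, which is exactly what the paper does, is to demand only that the \emph{first} $2^\ell$ individuals of generation $n_0+\ell$ have at least two children. Offspring and immigration are nonnegative, so this still forces $Z_{n_0+\ell+1}\geq2^{\ell+1}$ on $\{Z_{n_0+\ell}\geq2^\ell\}$. The event involves only $\cE_{n_0+\ell}$ and $2^\ell$ offspring variables, which are independent of everything before generation $n_0+\ell$. Hence $\Pbf(Z_{n_0+L}\geq K)\geq\Pbf(Z_{n_0}\geq1)\Pbf(\cE_0\in E)^Lc^{2^L-1}>0$, with no truncation needed.

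Alternatively, keep your event and argue only positivity. Given the past, the next-step conditional probability is at least $\Pbf(\cE_0\in E)c^{Z_{n_0+\ell}}$. This is strictly positive almost surely because $Z_{n_0+\ell}<\infty$, and an almost surely positive variable has positive integral over a positive-probability event.
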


\begin{proof}
Under \(\Pbf_\alpha\), the environmental marks
\[
        \cE_n=(Q_n,G_n),\qquad n\geq0,
\]
remain i.i.d., and
\[
        \Ebf_\alpha[\log M]=\rho>0.
\]
Hence \(\Pbf_\alpha(M>1)>0\).  For a reproduction law \(Q\), put
\[
        p_2(Q):=Q(\{2,3,\ldots\}),
\]
and, for each generation \(n\), write
\[
        p_{2,n}:=p_2(Q_n)=Q_n(\{2,3,\ldots\}).
\]
Since an integer-valued reproduction law satisfying \(M>1\) must assign
positive mass to \(\{2,3,\ldots\}\), it follows that
\(\Pbf_\alpha(p_{2,0}>0)>0\).  Consequently, there exist \(\delta>0\) and
\[
        \gamma:=\Pbf_\alpha(p_{2,0}\geq\delta)>0.
\]

Fix \(K\geq1\), and choose an integer \(r\geq1\) such that \(2^r\geq K\).
For \(s\geq0\), define
\[
B_{s,r}:=\bigcap_{\ell=0}^{r-1}\{p_{2,s+\ell}\geq\delta\}
\]
and
\[
C_{s,r}:=\bigcap_{\ell=0}^{r-1}
          \bigcap_{i=1}^{2^\ell}\{\xi_{s+\ell,i}\geq2\}.
\]
Also put
\[
        \cH_{s,r}:=\sigma(\cE_s,\ldots,\cE_{s+r-1}).
\]

For completeness, if \(A_0,\ldots,A_{r-1}\) are measurable subsets of the
environmental mark space, then the independence of the marks under
\(\Pbf_\alpha\) gives
\[
\Pbf_\alpha(\cE_s\in A_0,\ldots,\cE_{s+r-1}\in A_{r-1})
=\prod_{\ell=0}^{r-1}\Pbf_\alpha(\cE_0\in A_\ell).
\]
Applying this with
\[
A_\ell=\{(Q,G):Q(\{2,3,\ldots\})\geq\delta\},
\qquad 0\leq\ell\leq r-1,
\]
yields \(\Pbf_\alpha(B_{s,r})=\gamma^r\).

Conditional on \(\cH_{s,r}\), the offspring variables appearing in
\(C_{s,r}\) are independent, and the variables in generation \(s+\ell\)
have common conditional law \(Q_{s+\ell}\).  Therefore
\[
\Pbf_\alpha(C_{s,r}\mid\cH_{s,r})
=\prod_{\ell=0}^{r-1}p_{2,s+\ell}^{2^\ell}.
\]
It follows that
\begin{align*}
\Pbf_\alpha(B_{s,r}\cap C_{s,r})
&=\Ebf_\alpha\left[
  \one_{B_{s,r}}\Pbf_\alpha(C_{s,r}\mid\cH_{s,r})\right]\\
&=\Ebf_\alpha\left[
  \one_{B_{s,r}}\prod_{\ell=0}^{r-1}p_{2,s+\ell}^{2^\ell}\right]\\
&\geq \delta^{\sum_{\ell=0}^{r-1}2^\ell}
        \Pbf_\alpha(B_{s,r})\\
&=\gamma^r\delta^{2^r-1}>0.
\tag{3.23a}
\end{align*}

On the event \(\{Z_s\geq1\}\cap C_{s,r}\), induction gives
\[
        Z_{s+\ell}\geq2^\ell,\qquad 0\leq\ell\leq r.
\]
Indeed, the assertion holds for \(\ell=0\).  If it holds for some
\(\ell<r\), then, since the immigration variables and offspring counts are
nonnegative,
\[
\begin{aligned}
Z_{s+\ell+1}
&=\sum_{i=1}^{Z_{s+\ell}}\xi_{s+\ell,i}+\eta_{s+\ell+1}\\
&\geq\sum_{i=1}^{2^\ell}\xi_{s+\ell,i}
\geq2^{\ell+1}.
\end{aligned}
\]
Thus \(Z_{s+r}\geq2^r\geq K\).

Suppose first that \(\Pbf(Z_0>0)>0\).  Since \(Z_0\) is independent of the
future environmental marks and reproduction variables, this independence is
preserved under the environmental tilt, and hence
\[
\begin{aligned}
\Pbf_\alpha(\tau_K\leq r)
&\geq\Pbf_\alpha(Z_0>0,\,B_{0,r}\cap C_{0,r})\\
&=\Pbf(Z_0>0)\Pbf_\alpha(B_{0,r}\cap C_{0,r})\\
&\geq\Pbf(Z_0>0)\gamma^r\delta^{2^r-1}>0.
\end{aligned}
\]

Otherwise, Assumption~\ref{ass:bprei-nontrivial} implies
\(\Pbf(\eta>0)>0\).  Since \(M>0\) almost surely, the tilted one-generation
density is strictly positive, and therefore
\[
q_\eta:=\Pbf_\alpha(\eta_1>0)
=\e^{-\Lambda(\alpha)}\Ebf[M_0^\alpha;\eta_1>0]>0.
\]
On \(\{\eta_1>0\}\), one has \(Z_1\geq1\).  Moreover,
\(\{\eta_1>0\}\) depends only on the generation-\(0\) mark and immigration
variable, whereas \(B_{1,r}\cap C_{1,r}\) involves only the environmental
marks and offspring variables from generations \(1,\ldots,r\).  These events
are therefore independent under \(\Pbf_\alpha\).  Consequently,
\[
\begin{aligned}
\Pbf_\alpha(\tau_K\leq r+1)
&\geq\Pbf_\alpha(\eta_1>0,\,B_{1,r}\cap C_{1,r})\\
&=q_\eta\Pbf_\alpha(B_{1,r}\cap C_{1,r})\\
&\geq q_\eta\gamma^r\delta^{2^r-1}>0.
\end{aligned}
\]

In either case, \(\Pbf_\alpha(\tau_K<\infty)>0\).
\end{proof}

\begin{proposition}[Nondegeneracy of the normalized limit]
\label{prop:bprei-positive-limit}
Under Assumptions~\ref{ass:bprei-cramer}--\ref{ass:bprei-nontrivial},
\begin{equation}\label{eq:bprei-positive-limit}
        \Pbf_\alpha(V_\infty>0)>0.
\end{equation}
Consequently,
\begin{equation}\label{eq:bprei-positive-constant}
        0<\Ebf_\alpha[V_\infty^\alpha]<\infty.
\end{equation}
\end{proposition}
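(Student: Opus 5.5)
We follow the large-state product method of \cite{CollamoreVidyashankar2013}: starting from a sufficiently large state, the normalized process stays bounded away from zero with positive probability; Lemma~\ref{lem:bprei-accessibility} together with the strong Markov property then transfers this to the prescribed initial law. The second claim \eqref{eq:bprei-positive-constant} is immediate once \eqref{eq:bprei-positive-limit} holds, since Proposition~\ref{prop:bprei-normalized-limit} gives \(\Ebf_\alpha V_\infty^\alpha<\infty\), and \(V_\infty\geq0\) with \(\Pbf_\alpha(V_\infty>0)>0\) forces \(\Ebf_\alpha V_\infty^\alpha>0\).

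\emph{Step 1 (large deterministic states).} Under \(\Pbf_\alpha\), start the process at generation \(s\) from a deterministic state \(k\). Write \(V^{(s,k)}_n:=Z_{s+n}/\Pi_{s,s+n}\), so that
\[
V^{(s,k)}_\infty-k=\sum_{j\geq1}D_{s+j}/\Pi_{s,s+j}.
\]
Rerun the bootstrap of Proposition~\ref{prop:bprei-normalized-limit} with \(Z_s=k\). Tilt cancellation (Lemma~\ref{lem:bprei-tilt-cancellation}) and the sublinear bound \eqref{eq:bprei-sublinear-innovation} give \(a_0=k\) (or \(k^\alpha\)). The recursion of Lemma~\ref{lem:bprei-recursion-bound} then yields the following: in the case \(\alpha>1\),
\[
\|V^{(s,k)}_\infty-k\|_{L^\alpha(\Pbf_\alpha)}\leq C(1+k^{\beta/\alpha}),
\]
with the analogous \(\alpha\)-th moment bound when \(\alpha\leq1\). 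The point to check is that the constant is uniform in \(k\). This holds because the increment bound is \(Cq^j(1+a_j^{\beta/\alpha})\) and \(a_j\leq C'k\) uniformly, by the multiplicative form of Lemma~\ref{lem:bprei-recursion-bound}. Markov's inequality then gives
\[
\Pbf_\alpha\bigl(V^{(s,k)}_\infty\leq k/2\bigr)\leq C k^{\beta-\alpha}\to0 .
\]
Hence for \(K\) large, every \(k\geq K\) satisfies \(\Pbf_\alpha(V^{(s,k)}_\infty>k/2)\geq1/2\). This step is the main obstacle: the sublinear exponent \(\beta<\alpha\) must be shown to survive the whole infinite iteration uniformly in the starting state. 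We plan to handle it by normalizing by \(k\), i.e.\ studying \(V^{(s,k)}/k\), whose recursion has the perturbation \(Cq^j k^{\beta-\alpha}(\cdots)\).

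\emph{Step 2 (transfer via accessibility).} By Lemma~\ref{lem:bprei-accessibility}, \(\Pbf_\alpha(\tau_K<\infty)>0\), so some finite \(s\) has \(\Pbf_\alpha(\tau_K=s)>0\). Under \(\Pbf_\alpha\) the marks remain i.i.d.\ and the process is a time-homogeneous Markov chain given its past. Conditionally on \(\cF_s\) with \(Z_s=k\geq K\), the future evolution has the law from Step~1. On \(\{\tau_K=s\}\) we have
\[
V_\infty=\Pi_s^{-1}V^{(s,Z_s)}_\infty ,
\]
with \(\Pi_s\in(0,\infty)\) almost surely, because \(M>0\) almost surely. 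Therefore
\[
\Pbf_\alpha(V_\infty>0)\geq\Ebf_\alpha\bigl[\one_{\{\tau_K=s\}}\,\Pbf_\alpha(V^{(s,Z_s)}_\infty>0\mid\cF_s)\bigr]\geq\tfrac12\Pbf_\alpha(\tau_K=s)>0.
\]

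One further point must be checked: the conditional law of generations after \(s\), given \(\cF_s\), is the tilted law started at \(Z_s\). This holds because the tilt density factorizes over generations and \(\tau_K\) is a stopping time for \(\cF_n\).
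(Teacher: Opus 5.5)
Your proof is correct, but it reaches the large-state estimate by a different route from the paper's.

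\textbf{The paper's route.} It works pathwise with the relative errors $R_{n+1}=D_{n+1}/(M_nZ_n)$. The relative tilt identity \eqref{eq:bprei-relative-tilt-identity} gives the one-step conditional bound \eqref{eq:bprei-relative-error-bound}. One then fixes a geometrically shrinking tolerance $a_n=a_0r^n$. On the event that $R_j\geq-a_j$ for all $j$, one gets the deterministic bound $V_n\geq xp_a$ at every generation. The failure probabilities are summed via the environmental negative moment $\Ebf_\alpha\Pi_n^{\beta-\alpha}=q_\beta^n$, with $q_\beta=\e^{\Lambda(\beta)-\Lambda(\alpha)}<1$, giving $C_1x^{-(\alpha-\beta)}$.

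\textbf{Your route.} You rerun the $L^\alpha(\Pbf_\alpha)$ bootstrap of Proposition~\ref{prop:bprei-normalized-limit} from a deterministic state $k$. Your key observation is right: the multiplicative form $1\vee a_n\leq(1\vee a_0)\prod_j(1+2Cq^j)$ from Lemma~\ref{lem:bprei-recursion-bound} gives $\sup_na_n\leq Pk$ (or $Pk^\alpha$ when $\alpha\leq1$) with $P$ free of $k$. Summing the increment bounds then gives $\|V^{(k)}_\infty-k\|_{L^\alpha(\Pbf_\alpha)}\leq C(1+k^{\beta/\alpha})$, respectively $\Ebf_\alpha|V^{(k)}_\infty-k|^\alpha\leq C(1+k^\beta)$. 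Markov's inequality yields the same rate $O(k^{\beta-\alpha})$ for $\{V^{(k)}_\infty\leq k/2\}$. So the extra normalization by $k$ that you list as a fallback is not needed.

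\textbf{Comparison.} Your argument introduces no new objects: no relative-error process, no tolerance sequence, no negative moment of $\Pi_n$. It also gives a two-sided conclusion, $V^{(k)}_\infty/k\to1$ in $L^\alpha(\Pbf_\alpha)$ as $k\to\infty$, in the spirit of the statewise bound \eqref{eq:controlled-statewise-moment-bound} that the paper records later. The paper's argument instead gives a lower bound $Z_n\geq xp_a\Pi_n$ holding at all generations simultaneously, obtained from one-step conditional estimates rather than from moments of the whole trajectory. Using the ordinary Markov property at a fixed $s$ with $\Pbf_\alpha(\tau_K=s)>0$, instead of the strong Markov property at $\tau_K$, is a harmless simplification.
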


\begin{proof}
By Proposition~\ref{prop:bprei-normalized-limit},
\(V_n\to V_\infty\) \(\Pbf_\alpha\)-almost surely.  Therefore, to prove that
\(V_\infty\) is not identically zero, it is enough to construct an event of
positive probability on which \(V_n\) is bounded away from zero for every
\(n\).

Start from \(Z_0=x\geq1\), and on \(\{Z_n>0\}\) set
\(R_{n+1}:=D_{n+1}/(M_nZ_n)\).  On \(\{Z_n=k\}\), Markov's inequality,
\eqref{eq:bprei-relative-tilt-identity}, and
\eqref{eq:bprei-sublinear-innovation} give, for \(a\in(0,1)\),
\begin{equation}\label{eq:bprei-relative-error-bound}
\begin{aligned}
\Pbf_\alpha(R_{n+1}<-a\mid\cF_n)
&\leq a^{-\alpha}k^{-\alpha}
\Ebf_\alpha[M_n^{-\alpha}|D_{n+1}|^\alpha\mid\cF_n]\\
&=a^{-\alpha}k^{-\alpha}\e^{-\Lambda(\alpha)}
\Ebf|D(k)|^\alpha
\leq Ca^{-\alpha}k^{\beta-\alpha}.
\end{aligned}
\end{equation}

Set
\(q_\beta:=\Ebf_\alpha[M^{\beta-\alpha}]
 =\exp\{\Lambda(\beta)-\Lambda(\alpha)\}\in(0,1)\), choose
\(r\in(q_\beta^{1/\alpha},1)\) and \(a_0\in(0,1)\), and let
\(a_n:=a_0r^n\) and
\[
        p_a:=\prod_{n\geq1}(1-a_n)>0.
\]
For \(n\geq0\), put
\[
        \mathcal A_n:=\bigcap_{j=1}^{n}\{R_j\geq-a_j\},
        \qquad \mathcal A_0:=\Omega.
\]
Whenever \(Z_n>0\), the recursion gives
\[
        Z_{n+1}=M_nZ_n(1+R_{n+1}).
\]
Hence, on \(\mathcal A_n\), induction yields
\[
        Z_n=x\Pi_n\prod_{j=1}^{n}(1+R_j)
        \geq x\Pi_n\prod_{j=1}^{n}(1-a_j)
        \geq xp_a\Pi_n,
\]
and therefore \(V_n\geq xp_a\).

Since \(\beta-\alpha<0\),
\[
\begin{aligned}
\Pbf_{\alpha,x}(\mathcal A_n,R_{n+1}<-a_{n+1})
&\leq Ca_{n+1}^{-\alpha}(xp_a)^{\beta-\alpha}
\Ebf_\alpha\Pi_n^{\beta-\alpha}\\
&=Ca_{n+1}^{-\alpha}(xp_a)^{\beta-\alpha}q_\beta^n.
\end{aligned}
\]
Define
\[
        \mathcal A_\infty:=\bigcap_{j\geq1}\{R_j\geq-a_j\}.
\]
The events
\(\mathcal A_n\cap\{R_{n+1}<-a_{n+1}\}\), \(n\geq0\), describe the first
failure of the inequalities defining \(\mathcal A_\infty\).  Consequently,
\[
\begin{aligned}
\Pbf_{\alpha,x}(\mathcal A_\infty^c)
&\leq\sum_{n\geq0}
\Pbf_{\alpha,x}(\mathcal A_n,R_{n+1}<-a_{n+1})\\
&\leq Cx^{-(\alpha-\beta)}
\sum_{n\geq0}q_\beta^nr^{-\alpha(n+1)}
\leq C_1x^{-(\alpha-\beta)}.
\end{aligned}
\]
Choose \(K\) sufficiently large that
\[
        c_K:=1-C_1K^{-(\alpha-\beta)}>0.
\]
Then
\[
        \inf_{x\geq K}\Pbf_{\alpha,x}(\mathcal A_\infty)\geq c_K.
\]
On \(\mathcal A_\infty\), one has \(V_n\geq xp_a\) for every \(n\), and
letting \(n\to\infty\) gives
\[
        V_\infty\geq xp_a>0.
\]
Thus
\[
        \inf_{x\geq K}\Pbf_{\alpha,x}(V_\infty>0)\geq c_K>0.
\]

It remains to transfer this large-state conclusion to the prescribed initial
law.  Let
\[
        \tau:=\tau_K=\inf\{n\geq0:Z_n\geq K\}.
\]
By Lemma~\ref{lem:bprei-accessibility},
\(\Pbf_\alpha(\tau<\infty)>0\).  On \(\{\tau<\infty\}\), define
\[
        \Pi_{\tau,\tau+m}:=\prod_{j=\tau}^{\tau+m-1}M_j.
\]
Conditional on \(\cF_\tau\), the shifted process starts from
\(Z_\tau\geq K\) and has the same future law as the BPREI started from that
state.  Let \(\mathcal A_\infty^{(\tau)}\) denote the corresponding shifted
good event.  By the preceding large-state lower bound and the strong
Markov property,
\[
\Pbf_\alpha(\mathcal A_\infty^{(\tau)}\mid\cF_\tau)
\geq c_K
\qquad\text{on }\{\tau<\infty\}.
\]
On \(\mathcal A_\infty^{(\tau)}\),
\[
        Z_{\tau+m}\geq Z_\tau p_a\Pi_{\tau,\tau+m},
        \qquad m\geq0.
\]
Since \(\Pi_{\tau+m}=\Pi_\tau\Pi_{\tau,\tau+m}\),
\[
        V_{\tau+m}
        =\frac{Z_{\tau+m}}{\Pi_{\tau+m}}
        \geq\frac{Z_\tau p_a}{\Pi_\tau}>0.
\]
Letting \(m\to\infty\) gives
\[
        V_\infty\geq\frac{Z_\tau p_a}{\Pi_\tau}>0
\]
on \(\{\tau<\infty\}\cap\mathcal A_\infty^{(\tau)}\).  Therefore
\[
\begin{aligned}
\Pbf_\alpha(V_\infty>0)
&\geq\Pbf_\alpha(\tau<\infty,\mathcal A_\infty^{(\tau)})\\
&=\Ebf_\alpha\left[
\one_{\{\tau<\infty\}}
\Pbf_\alpha(\mathcal A_\infty^{(\tau)}\mid\cF_\tau)
\right]\\
&\geq c_K\Pbf_\alpha(\tau<\infty)>0.
\end{aligned}
\]
Finally, Proposition~\ref{prop:bprei-normalized-limit} gives
\(\Ebf_\alpha[V_\infty^\alpha]<\infty\), while nonnegativity and the
positive-probability event just obtained give
\(\Ebf_\alpha[V_\infty^\alpha]>0\).
\end{proof}

\subsubsection{A size-biased random-prefactor estimate}
\label{subsec:bprei-prefactor}

With positivity established, we turn to the product with the random shift
created by the early generation size.  Under $\Pbf_\alpha$, let
\begin{equation}\label{eq:bprei-centered-random-walk}
        T_N:=\sum_{j=1}^{N}(\log M_j-\rho).
\end{equation}
For $N\geq1$ and $y\in\R$, define
\begin{equation}\label{eq:bprei-Petrov-kernel}
        K_N(y):=
        \sqrt N\,
        \Ebf_\alpha\left[
        \e^{-\alpha(T_N-y)};
        T_N>y
        \right].
\end{equation}

\begin{lemma}[The integrated Stone kernel]\label{lem:bprei-Stone-kernel}
Under Assumption~\ref{ass:bprei-cramer},
\begin{equation}\label{eq:bprei-Stone-uniform-bound}
        \sup_{N\geq1}\sup_{y\in\R}K_N(y)<\infty.
\end{equation}
Moreover, for every sequence $b_N$ satisfying $b_N/\sqrt N\to0$,
\begin{equation}\label{eq:bprei-Stone-central-limit}
        \sup_{|y|\leq b_N}
        \left|K_N(y)-\frac{1}{\alpha v_\alpha\sqrt{2\pi}}\right|
        \longrightarrow0.
\end{equation}
\end{lemma}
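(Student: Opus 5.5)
The plan is to write \(K_N(y)=\sqrt N\,\Ebf_\alpha[g(T_N-y)]\) with \(g(x):=\e^{-\alpha x}\one_{(0,\infty)}(x)\), and to apply Stone's local limit theorem for non-arithmetic walks. Under \(\Pbf_\alpha\) the increments \(\log M_j-\rho\) are i.i.d., centered (since \(\Ebf_\alpha\log M=\Lambda'(\alpha)=\rho\)), non-arithmetic (the tilt has a strictly positive density with respect to the original law, so the support, hence the lattice property, is unchanged), and have variance \(\Lambda''(\alpha)=v_\alpha^2\in(0,\infty)\). Stone's theorem \cite{Stone1965} then gives, for every fixed \(h>0\),
\[
\sqrt N\,\Pbf_\alpha\bigl(T_N\in[x,x+h)\bigr)
=\frac{h}{v_\alpha\sqrt{2\pi}}\exp\Bigl\{-\frac{x^2}{2Nv_\alpha^2}\Bigr\}+o(1),
\]
uniformly in \(x\in\R\). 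As a by-product, \(\sup_N\sup_x\sqrt N\,\Pbf_\alpha(T_N\in[x,x+h))\leq C_h<\infty\). For small \(N\) this bound is trivial, since \(\sqrt N\) is bounded and the probability is at most one.

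For the uniform bound \eqref{eq:bprei-Stone-uniform-bound}, decompose \((0,\infty)\) into the cells \([kh,(k+1)h)\), \(k\geq0\). On the event \(T_N-y\in[kh,(k+1)h)\) we have \(g(T_N-y)\leq\e^{-\alpha kh}\). Therefore
\[
K_N(y)\leq\sum_{k\geq0}\e^{-\alpha kh}\sqrt N\,\Pbf_\alpha\bigl(T_N\in[y+kh,y+(k+1)h)\bigr)\leq C_h(1-\e^{-\alpha h})^{-1},
\]
uniformly in \(N\) and \(y\).

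For \eqref{eq:bprei-Stone-central-limit}, fix \(h>0\) and \(L\) large, and split the sum into cells with \(k\leq L/h\) and the tail \(k>L/h\). The tail contributes at most \(C_h\e^{-\alpha L}(1-\e^{-\alpha h})^{-1}\), uniformly in \(N\) and \(y\). Each of the finitely many remaining cells is handled by Stone's theorem at \(x=y+kh\). Since \(|x|\leq b_N+L\) and \(b_N=o(\sqrt N)\), the Gaussian factor \(\exp\{-x^2/(2Nv_\alpha^2)\}\) tends to one uniformly over \(|y|\leq b_N\). On each cell \(g\) lies between \(\e^{-\alpha(k+1)h}\) and \(\e^{-\alpha kh}\). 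Combining these facts gives
\[
\limsup_{N}\sup_{|y|\leq b_N}K_N(y)\leq\frac{1}{v_\alpha\sqrt{2\pi}}\sum_{k\geq0}h\e^{-\alpha kh}+\text{tail},
\]
together with the analogous \(\liminf\) bound with \(\e^{-\alpha(k+1)h}\) in place of \(\e^{-\alpha kh}\). Both Riemann sums converge to \(\int_0^\infty\e^{-\alpha x}\dd x=1/\alpha\) as \(h\downarrow0\). Letting first \(L\to\infty\) and then \(h\downarrow0\) yields the limit \(1/(\alpha v_\alpha\sqrt{2\pi})\), uniformly in \(|y|\leq b_N\).

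The main point requiring care is the uniformity in \(x\) of Stone's theorem. It is needed both for the global bound over all \(y\) (including very negative \(y\), where the mass sits far out in the distribution of \(T_N\)) and for the uniform-in-\(|y|\leq b_N\) statement. I would cite Stone's uniform local limit theorem directly, which holds in the finite-variance, non-arithmetic case without further moments. The non-arithmeticity of \(\log M\) under \(\Pbf_\alpha\) must be checked via mutual absolute continuity of one-step laws, as noted above.
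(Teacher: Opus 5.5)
Your proposal is correct and follows the paper's proof. The paper applies Stone's non-arithmetic local limit theorem to the directly Riemann integrable $g(x)=\e^{-\alpha x}\one_{(0,\infty)}(x)$ for the limit uniform over $|y|\le b_N$, and sums the uniform concentration bound against exponential weights for the global bound; you unpack the first step into an explicit cell-by-cell Riemann sandwich and also justify, via mutual absolute continuity and $\Lambda''(\alpha)=v_\alpha^2$, the transfer of non-arithmeticity and variance to $\Pbf_\alpha$, which the paper takes for granted. One cosmetic fix: in the lower bound, use cells $(kh,(k+1)h]$ or drop the $k=0$ cell, since $g(0)=0$ violates $g\ge\e^{-\alpha(k+1)h}$ on $[0,h)$.
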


\begin{proof}
Under $\Pbf_\alpha$, the summands in \eqref{eq:bprei-centered-random-walk}
are non-arithmetic, have mean zero, and have variance $v_\alpha^2$.  Apply
Stone's non-arithmetic local limit theorem \cite{Stone1965} to the directly
Riemann integrable function
\[
        g(x)=\e^{-\alpha x}\one_{(0,\infty)}(x).
\]
It gives, uniformly for $|y|\leq b_N$,
\[
\sqrt N\,\Ebf_\alpha[g(T_N-y)]
\longrightarrow
\frac{1}{v_\alpha\sqrt{2\pi}}
\int_0^\infty\e^{-\alpha x}\dd x,
\]
which is \eqref{eq:bprei-Stone-central-limit}.  The concentration bound
associated with the same local limit theorem gives
\[
        \sup_y\Pbf_\alpha(T_N\in[y+k,y+k+1])\leq C N^{-1/2}.
\]
Summing over $k\geq0$ with weights $\e^{-\alpha k}$ proves
\eqref{eq:bprei-Stone-uniform-bound}.
\end{proof}

\begin{lemma}[Random prefactor under an $\alpha$-size-biased shift]
\label{lem:bprei-random-prefactor}
Let $j_n=o(n)$, $\mathfrak n_n:=n-j_n$, and let $Y_n$ be a
nonnegative random variable independent of $\Pi_{j_n,n}$.  Suppose
\begin{equation}\label{eq:bprei-prefactor-moment}
        c_n:=\e^{-j_n\Lambda(\alpha)}\Ebf[Y_n^\alpha]
        \longrightarrow c\in(0,\infty).
\end{equation}
On $\{Y_n>0\}$, define the probability measure
\begin{equation}\label{eq:bprei-size-biased-law}
        \widehat\Pbf_n(A)
        :=\frac{\e^{-j_n\Lambda(\alpha)}
        \Ebf[Y_n^\alpha\one_A]}{c_n}.
\end{equation}
Assume
\begin{equation}\label{eq:bprei-size-biased-small-shift}
        \frac{\rho j_n-\log Y_n}{\sqrt{\mathfrak n_n}}
        \longrightarrow0
        \qquad\text{in }\widehat\Pbf_n\text{-probability}.
\end{equation}
Then, for every fixed $y\in\R$,
\begin{equation}\label{eq:bprei-prefactor-asymptotic}
        \lim_{n\to\infty}
        \sqrt n\,\e^{nI(\rho)}
        \Pbf\left(Y_n\Pi_{j_n,n}>\e^{\rho n+y}\right)
        =\frac{c\e^{-\alpha y}}
        {\alpha v_\alpha\sqrt{2\pi}}.
\end{equation}
\end{lemma}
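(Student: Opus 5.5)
The plan is to condition on \(Y_n\), rewrite the tail of the independent future product \(\Pi_{j_n,n}\) exactly through the \(\alpha\)-tilt, and recognize the integrated Stone kernel \(K_N\) of Lemma~\ref{lem:bprei-Stone-kernel} evaluated at a random argument. Averaging over \(Y_n\) then produces exactly the size-biased law \(\widehat\Pbf_n\) of \eqref{eq:bprei-size-biased-law}.

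\emph{Step 1: exact identity on a fixed block.} Put \(N=\mathfrak n_n\). Since the marks are i.i.d., the walk \(\log\Pi_{j_n,n}\) has the law of \(S_N\). By \eqref{eq:bprei-tilt}, for deterministic \(x\),
\[
\Pbf(S_N>x)=\e^{N\Lambda(\alpha)}\Ebf_\alpha\bigl[\e^{-\alpha S_N};S_N>x\bigr].
\]
Write \(S_N=T_N+\rho N\), with \(T_N\) as in \eqref{eq:bprei-centered-random-walk} (up to reindexing), and take \(x=\rho n+y-\log Y\) for a fixed value \(Y>0\). Then \(S_N>x\) holds exactly when \(T_N>w\), where \(w:=\rho j_n+y-\log Y\). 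Using \eqref{eq:bprei-Petrov-kernel}, this gives
\[
\Pbf\bigl(Y\Pi_{j_n,n}>\e^{\rho n+y}\bigr)
=\e^{-NI(\rho)}\e^{-\alpha(\rho j_n+y)}Y^\alpha N^{-1/2}K_N(w).
\]
Multiplying by \(\sqrt n\,\e^{nI(\rho)}\) and using \(j_nI(\rho)-\alpha\rho j_n=-j_n\Lambda(\alpha)\), the right side becomes
\[
\sqrt{n/N}\;\e^{-\alpha y}\,\e^{-j_n\Lambda(\alpha)}Y^\alpha K_N(w).
\]

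\emph{Step 2: averaging over \(Y_n\).} By the independence of \(Y_n\) and \(\Pi_{j_n,n}\), we can integrate the identity of Step 1 against the law of \(Y_n\). On \(\{Y_n=0\}\) the event is empty, and the factor \(Y^\alpha\) vanishes there as well, so the identity is consistent. Hence
\[
\sqrt n\,\e^{nI(\rho)}\Pbf\bigl(Y_n\Pi_{j_n,n}>\e^{\rho n+y}\bigr)
=\sqrt{n/\mathfrak n_n}\;\e^{-\alpha y}\,c_n\,\widehat\Ebf_n\bigl[K_{\mathfrak n_n}(W_n)\bigr],
\]
where \(W_n:=\rho j_n+y-\log Y_n\). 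Since \(j_n=o(n)\), we have \(\sqrt{n/\mathfrak n_n}\to1\), and \(c_n\to c\) by \eqref{eq:bprei-prefactor-moment}. It therefore remains to show
\[
\widehat\Ebf_n\bigl[K_{\mathfrak n_n}(W_n)\bigr]\to\frac{1}{\alpha v_\alpha\sqrt{2\pi}}.
\]

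\emph{Step 3: the limit, which is the only delicate point.} Because \(y\) is fixed, \eqref{eq:bprei-size-biased-small-shift} gives \(W_n/\sqrt{\mathfrak n_n}\to0\) in \(\widehat\Pbf_n\)-probability. By a standard diagonal argument, I will choose \(b_N\) with \(b_N/\sqrt N\to0\) such that \(\widehat\Pbf_n(|W_n|>b_{\mathfrak n_n})\to0\). Concretely, for each \(k\) let \(n_k\) be large enough that \(\widehat\Pbf_n(|W_n|>\sqrt{\mathfrak n_n}/k)<1/k\) for all \(n\ge n_k\), and set \(b_N=\sqrt N/k\) on the corresponding range.

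Now split the expectation over \(\{|W_n|\le b_{\mathfrak n_n}\}\) and its complement.
\begin{itemize}
\item On the good set, \eqref{eq:bprei-Stone-central-limit} gives uniform convergence of \(K_{\mathfrak n_n}\) to \(1/(\alpha v_\alpha\sqrt{2\pi})\).
\item On the complement, the uniform bound \eqref{eq:bprei-Stone-uniform-bound} times the vanishing probability makes the contribution negligible.
\end{itemize}
Together these yield \eqref{eq:bprei-prefactor-asymptotic}.

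The main technical care is to keep the uniformity in Stone's theorem compatible with a random argument: that is why the sequence \(b_N\) is chosen before applying \eqref{eq:bprei-Stone-central-limit}. One must also check that \(\mathfrak n_n\to\infty\), which holds since \(j_n=o(n)\).
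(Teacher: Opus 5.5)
Your proposal is correct and is essentially the paper's proof. You derive the same exact tilted identity \(\Pbf(Y_n\Pi_{j_n,n}>\e^{\rho n+y})=c_n\e^{-nI(\rho)-\alpha y}\mathfrak n_n^{-1/2}\,\widehat\Ebf_nK_{\mathfrak n_n}(\rho j_n+y-\log Y_n)\), and then, as the paper does, split at a window \(b=o(\sqrt{\mathfrak n_n})\) using the uniform bound and the uniform central limit of Lemma~\ref{lem:bprei-Stone-kernel}. The only point to tidy is the definition of \(b_N\): since \(n\mapsto\mathfrak n_n\) need not be monotone, define \(b_N=\sqrt N/k\) on \(N_k\le N<N_{k+1}\) with increasing \(N_k\ge n_k\), which works because \(\mathfrak n_n\le n\); equivalently, use the Stone limit in the form \(\lim_{\delta\downarrow0}\limsup_{N\to\infty}\sup_{|z|\le\delta\sqrt N}|K_N(z)-(\alpha v_\alpha\sqrt{2\pi})^{-1}|=0\).
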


\begin{proof}
Condition on \(Y_n\).  For \(h>0\), set
\(z_{n,h}:=\rho j_n+y-\log h\).  The change of measure on the future block of
length \(\mathfrak n_n\), followed by integration over \(Y_n\), gives
\begin{equation}\label{eq:bprei-prefactor-exact-identity}
\Pbf(Y_n\Pi_{j_n,n}>\e^{\rho n+y})
=\frac{c_n\e^{-nI(\rho)-\alpha y}}{\sqrt{\mathfrak n_n}}
\widehat\Ebf_nK_{\mathfrak n_n}(\rho j_n+y-\log Y_n).
\end{equation}
Let
\(Y_n^*:=\rho j_n+y-\log Y_n\).  From
\(Y_n^*/\sqrt{\mathfrak n_n}\to0\) in
\(\widehat\Pbf_n\)-probability, choose
\(b_n=o(\sqrt{\mathfrak n_n})\) such that
\(\widehat\Pbf_n(|Y_n^*|>b_n)\to0\).  Lemma~\ref{lem:bprei-Stone-kernel}
then gives
\[
\begin{aligned}
&\widehat\Ebf_n\left|K_{\mathfrak n_n}(Y_n^*)
-\frac1{\alpha v_\alpha\sqrt{2\pi}}\right|\\
&\quad\leq
\sup_{|z|\leq b_n}\left|K_{\mathfrak n_n}(z)
-\frac1{\alpha v_\alpha\sqrt{2\pi}}\right|
+C\widehat\Pbf_n(|Y_n^*|>b_n)\longrightarrow0.
\end{aligned}
\]
Since \(\mathfrak n_n/n\to1\) and \(c_n\to c\),
\eqref{eq:bprei-prefactor-asymptotic} follows from
\eqref{eq:bprei-prefactor-exact-identity}.
\end{proof}

\begin{corollary}[The early generation size as a prefactor]
\label{cor:bprei-early-prefactor}
Let $j_n=\lceil K\log n\rceil$ for a fixed $K>0$.  Under
Assumptions~\ref{ass:bprei-cramer}--\ref{ass:bprei-nontrivial}, for every fixed
$y\in\R$,
\begin{equation}\label{eq:bprei-early-prefactor-asymptotic}
        \lim_{n\to\infty}
        \sqrt n\,\e^{nI(\rho)}
        \Pbf\left(Z_{j_n}\Pi_{j_n,n}>\e^{\rho n+y}\right)
        =\frac{c_Z(\alpha)\e^{-\alpha y}}
        {\alpha v_\alpha\sqrt{2\pi}}.
\end{equation}
\end{corollary}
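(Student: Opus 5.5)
The plan is to apply Lemma~\ref{lem:bprei-random-prefactor} with \(Y_n:=Z_{j_n}\). There are three hypotheses to check: independence, convergence of the normalized moment, and the size-biased small-shift condition.

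\emph{Independence.} \(Z_{j_n}\) is measurable with respect to \(Z_0\), the marks \(\cE_0,\ldots,\cE_{j_n-1}\), and the reproduction--immigration variables up to generation \(j_n\). The product \(\Pi_{j_n,n}\) depends only on \(\cE_{j_n},\ldots,\cE_{n-1}\). Because the marks are i.i.d. and \(Z_0\) is independent of the future, these two objects are independent under \(\Pbf\).

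\emph{Moment convergence.} By \eqref{eq:bprei-moment-tilt-identity},
\[
c_n=\e^{-j_n\Lambda(\alpha)}\Ebf[Z_{j_n}^\alpha]=\Ebf_\alpha[V_{j_n}^\alpha].
\]
Proposition~\ref{prop:bprei-normalized-limit} gives \(L^\alpha(\Pbf_\alpha)\) convergence, so \(c_n\to\Ebf_\alpha[V_\infty^\alpha]=c_Z(\alpha)\). Proposition~\ref{prop:bprei-positive-limit} shows this limit lies in \((0,\infty)\).

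\emph{Small shift.} The key identification is that \(\widehat\Pbf_n\) is the \(V_{j_n}^\alpha\)-size-biased version of \(\Pbf_\alpha\) restricted to the first \(j_n\) generations. Indeed,
\[
\e^{-j_n\Lambda(\alpha)}\Ebf[Z_{j_n}^\alpha\one_A]=\Ebf_\alpha[V_{j_n}^\alpha\one_A],
\]
so \(\widehat\Pbf_n(A)=\Ebf_\alpha[V_{j_n}^\alpha\one_A]/c_n\). Writing \(\log Z_{j_n}=S_{j_n}+\log V_{j_n}\), the shift decomposes as
\[
\rho j_n-\log Y_n=-(S_{j_n}-\rho j_n)-\log V_{j_n},
\]
and it suffices to show that each term is \(O_{\widehat\Pbf}(\sqrt{\log n})\), which is certainly \(o(\sqrt n)\).

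For the \(\log V_{j_n}\) term, we bound each tail separately.
\begin{itemize}[label=--]
\item \emph{Upper tail.} We have \(\widehat\Pbf_n(V_{j_n}>L)\leq\Ebf_\alpha[V_{j_n}^\alpha;V_{j_n}>L]/c_n\). This tends to zero uniformly in \(n\) as \(L\to\infty\), because the family \(V_{j_n}^\alpha\) is uniformly integrable under \(\Pbf_\alpha\) by the \(L^\alpha\) convergence.
\item \emph{Lower tail.} Trivially, \(\widehat\Pbf_n(V_{j_n}<\delta)\leq\delta^\alpha/c_n\), which is small for small \(\delta\).
\end{itemize}
Hence \(\log V_{j_n}\) is tight under \(\widehat\Pbf_n\). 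Note that \(\widehat\Pbf_n\) lives on \(\{Y_n>0\}\), so the logarithm is well defined.

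The environmental term is the main obstacle, because the size-biasing weight \(V_{j_n}^\alpha\) depends on the environment. I would handle it with a Cauchy--Schwarz-free bound:
\[
\widehat\Pbf_n\bigl(|S_{j_n}-\rho j_n|>t\bigr)\leq c_n^{-1}\Ebf_\alpha\bigl[V_{j_n}^\alpha;|S_{j_n}-\rho j_n|>t\bigr].
\]
The \(\Pbf_\alpha\)-probability of the event on the right tends to zero for \(t=t_n\) with \(t_n/\sqrt{j_n}\to\infty\), by Chebyshev, since \(v_\alpha^2<\infty\). Uniform integrability of \(V_{j_n}^\alpha\) then makes the expectation vanish. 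Taking \(t_n=\log n=o(\sqrt n)\) suffices.

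Combining the two terms, \((\rho j_n-\log Y_n)/\sqrt{\mathfrak n_n}\to0\) in \(\widehat\Pbf_n\)-probability. Lemma~\ref{lem:bprei-random-prefactor} with \(c=c_Z(\alpha)\) then yields \eqref{eq:bprei-early-prefactor-asymptotic}.
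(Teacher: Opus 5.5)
Your proposal is correct and follows the paper's proof essentially step for step. The shared steps are: independence of \(Z_{j_n}\) and \(\Pi_{j_n,n}\); the tilt identity giving \(c_n=\Ebf_\alpha V_{j_n}^\alpha\to c_Z(\alpha)>0\); the representation of \(\widehat\Pbf_n\) as the \(V_{j_n}^\alpha\)-size-biased tilted law; and the split \(\log Z_{j_n}=S_{j_n}+\log V_{j_n}\). The walk term is handled by Chebyshev under \(\Pbf_\alpha\), transferred to \(\widehat\Pbf_n\) through uniform integrability. The two tails of \(\log V_{j_n}\) are handled by uniform integrability and by the trivial \(\delta^\alpha\) bound. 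The differences are cosmetic: you prove tightness of \(\log V_{j_n}\) and use the threshold \(t_n=\log n\), while the paper uses thresholds \(\e^{\pm\eps\sqrt n}\) and \(\eps\sqrt n\). Either choice gives the required \(o(\sqrt{\mathfrak n_n})\) small-shift condition.
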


\begin{proof}
The variable \(Z_{j_n}\) is independent of \(\Pi_{j_n,n}\), and
Propositions~\ref{prop:bprei-normalized-limit} and
\ref{prop:bprei-positive-limit} give
\(\e^{-j_n\Lambda(\alpha)}\Ebf Z_{j_n}^\alpha
 =\Ebf_\alpha V_{j_n}^\alpha\to c_Z(\alpha)>0\).  Under the size-biased law,
\begin{equation}\label{eq:bprei-size-biased-V-representation}
\widehat\Ebf_nF
=\frac{\Ebf_\alpha[V_{j_n}^\alpha F]}
       {\Ebf_\alpha V_{j_n}^\alpha}.
\end{equation}
Proposition~\ref{prop:bprei-normalized-limit} gives
\(V_n\to V_\infty\) in \(L^\alpha(\Pbf_\alpha)\); hence
\(V_n^\alpha\to V_\infty^\alpha\) in \(L^1(\Pbf_\alpha)\), and
\(\{V_n^\alpha:n\geq0\}\) is uniformly integrable under the tilted law.
Since \(j_n=O(\log n)\), Chebyshev's inequality gives
\[
\Pbf_\alpha(|S_{j_n}-\rho j_n|>\eps\sqrt n)
\leq\frac{v_\alpha^2j_n}{\eps^2n}\longrightarrow0.
\]
Uniform integrability of the weights in
\eqref{eq:bprei-size-biased-V-representation} transfers this convergence to
\(\widehat\Pbf_n\).

For every \(\eps>0\), uniform integrability and
\(\Ebf_\alpha V_{j_n}^\alpha\to c_Z(\alpha)>0\) give
\[
\begin{aligned}
\widehat\Pbf_n(V_{j_n}>\e^{\eps\sqrt n})
&=\frac{\Ebf_\alpha[V_{j_n}^\alpha;
V_{j_n}>\e^{\eps\sqrt n}]}
        {\Ebf_\alpha V_{j_n}^\alpha}\longrightarrow0,\\
\widehat\Pbf_n(0<V_{j_n}<\e^{-\eps\sqrt n})
&\leq\frac{\e^{-\alpha\eps\sqrt n}}
{\Ebf_\alpha V_{j_n}^\alpha}\longrightarrow0.
\end{aligned}
\]
Thus \(\log V_{j_n}/\sqrt n\to0\) in
\(\widehat\Pbf_n\)-probability.  Since
\(\log Z_{j_n}=S_{j_n}+\log V_{j_n}\) on the charged set and
\(\mathfrak n_n/n\to1\), condition
\eqref{eq:bprei-size-biased-small-shift} follows.  Apply
Lemma~\ref{lem:bprei-random-prefactor} with \(Y_n=Z_{j_n}\).
\end{proof}

\subsubsection{Logarithmic-block approximation}
\label{subsec:bprei-block}

It remains to show that the innovations after the logarithmic split are
negligible on the sharp scale.

\begin{proposition}[Approximation by an early generation size and the future product]
\label{prop:bprei-block}
Under Assumptions~\ref{ass:bprei-cramer} and
\ref{ass:bprei-linearization}, let $j_n=\lceil K\log n\rceil$.  If $K$ is
sufficiently large, then for every $\eps>0$,
\begin{equation}\label{eq:bprei-block-approximation}
        \Pbf\left(
        |Z_n-Z_{j_n}\Pi_{j_n,n}|>\eps\e^{\rho n}
        \right)
        =o\left(n^{-1/2}\e^{-nI(\rho)}\right).
\end{equation}
\end{proposition}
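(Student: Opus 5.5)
By the block identity \eqref{eq:bprei-method-block-identity}, the difference to be controlled is the post-split innovation sum
\[
Z_n-Z_{j_n}\Pi_{j_n,n}=\sum_{k=j_n+1}^{n}D_k\Pi_{k,n}.
\]
The plan is to bound its probability of exceeding \(\eps\e^{\rho n}\) by a union bound over the individual terms, with a geometric weight sequence so that the thresholds sum to at most \(\eps\e^{\rho n}\). Concretely, fix \(\theta\in(0,1)\) (to be chosen) and set \(w_k:=(1-\theta)\theta^{k-j_n-1}\), so that \(\sum_{k>j_n}w_k\leq1\). This gives
\[
\Pbf\Bigl(\Bigl|\sum_{k=j_n+1}^nD_k\Pi_{k,n}\Bigr|>\eps\e^{\rho n}\Bigr)
\leq\sum_{k=j_n+1}^n\Pbf\bigl(|D_k|\Pi_{k,n}>\eps w_k\e^{\rho n}\bigr).
\]

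For each term, I use that \(D_k\) is \(\cF_k\)-measurable and therefore independent of \(\Pi_{k,n}\), which is a product of \(n-k\) fresh environmental means. I apply the exponential Chebyshev (Markov) bound with exponent \(\alpha\) to the product, conditionally on \(D_k\):
\[
\Pbf\bigl(|D_k|\Pi_{k,n}>t\bigr)\leq t^{-\alpha}\Ebf|D_k|^\alpha\,\e^{(n-k)\Lambda(\alpha)}.
\]
Proposition~\ref{prop:bprei-normalized-limit} gives \(\Ebf|D_k|^\alpha\leq C_0\chi_\alpha^k\). With \(t=\eps w_k\e^{\rho n}\) and \(I(\rho)=\alpha\rho-\Lambda(\alpha)\), each term is bounded by
\[
C\eps^{-\alpha}\e^{-nI(\rho)}\,\theta^{-\alpha(k-j_n)}\bigl(\chi_\alpha\e^{-\Lambda(\alpha)}\bigr)^k .
\]
Since \(\chi_\alpha<\e^{\Lambda(\alpha)}\) by \eqref{eq:bprei-chi-range}, I choose \(\theta\) close to \(1\) so that \(\kappa:=\theta^{-\alpha}\chi_\alpha\e^{-\Lambda(\alpha)}<1\). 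Summing over \(k>j_n\) then yields a total bound
\[
C\eps^{-\alpha}\e^{-nI(\rho)}\bigl(\chi_\alpha\e^{-\Lambda(\alpha)}\bigr)^{j_n}\leq C\eps^{-\alpha}\e^{-nI(\rho)}\,n^{-K\delta_0},
\]
where \(\delta_0:=\Lambda(\alpha)-\log\chi_\alpha>0\).

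Taking \(K>1/(2\delta_0)\) makes this \(o(n^{-1/2}\e^{-nI(\rho)})\), which is the claim. For a fixed \(\eps\), the factor \(\eps^{-\alpha}\) is only a constant. The main point needing care is the geometric-weight bookkeeping, that is, checking that the loss \(\theta^{-\alpha(k-j_n)}\) is absorbed by the strict gap in \eqref{eq:bprei-chi-range}. Beyond that, I will verify that the independence of \(D_k\) and \(\Pi_{k,n}\) holds under the annealed law even though reproduction and immigration share the mark \(\cE_{k-1}\): \(D_k\) depends only on marks up to \(\cE_{k-1}\), while \(\Pi_{k,n}\) uses \(\cE_k,\ldots,\cE_{n-1}\). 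The Chebyshev step at exponent \(\alpha\) is exactly where the Cram\'er parameter enters, and no non-arithmeticity is needed here.
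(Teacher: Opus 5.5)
Your proof is correct and follows the paper's argument. You telescope via the affine recursion, union-bound the post-split innovation sum with summable weights, apply Markov at order $\alpha$ using the independence of $D_k$ and $\Pi_{k,n}$ together with $\Ebf|D_k|^\alpha\le C_0\chi_\alpha^k$, and then take $K$ large. The one difference is cosmetic. The paper uses polynomial weights $c_0(k-j)^{-2}$, whose loss $(k-j)^{2\alpha}$ is absorbed automatically by the geometric factor $(\chi_\alpha\e^{-\Lambda(\alpha)})^k$. Your geometric weights require the extra choice of $\theta$ near one, but in return give the explicit threshold $K>1/(2\delta_0)$.
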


\begin{proof}
For \(0\leq j<n\), iteration of \eqref{eq:bprei-linear-recursion} gives
\begin{equation}\label{eq:bprei-exact-telescope}
Z_n-Z_j\Pi_{j,n}=\sum_{k=j+1}^{n}D_k\Pi_{k,n}.
\end{equation}
Choose \(c_0>0\) so that \(c_0\sum_{r\geq1}r^{-2}\leq1\).  Then
\[
\{|Z_n-Z_j\Pi_{j,n}|>t\}
\subseteq\bigcup_{k=j+1}^{n}
\{|D_k|\Pi_{k,n}>c_0t(k-j)^{-2}\}.
\]
The union bound, Markov's inequality, independence of \(D_k\) and
\(\Pi_{k,n}\), and \eqref{eq:bprei-innovation-decay} give
\[
\begin{aligned}
\Pbf(|Z_n-Z_j\Pi_{j,n}|>t)
&\leq Ct^{-\alpha}\sum_{k=j+1}^{n}(k-j)^{2\alpha}
\Ebf|D_k|^\alpha\Ebf\Pi_{k,n}^\alpha\\
&\leq Ct^{-\alpha}\e^{n\Lambda(\alpha)}
\sum_{k=j+1}^{n}(k-j)^{2\alpha}
\left(\frac{\chi_\alpha}{\e^{\Lambda(\alpha)}}\right)^k.
\end{aligned}
\]
The last sum is bounded by \(Cj^d\vartheta^j\) for some \(d<\infty\) and
\(\vartheta\in(0,1)\).  Taking \(t=\eps\e^{\rho n}\), then
\(j=j_n=\lceil K\log n\rceil\), gives
\[
\Pbf(|Z_n-Z_{j_n}\Pi_{j_n,n}|>\eps\e^{\rho n})
\leq C_\eps j_n^d\vartheta^{j_n}\e^{-nI(\rho)}.
\]
Choose \(K\) so that
\((\log n)^d\vartheta^{j_n}=o(n^{-1/2})\).
\end{proof}

\subsubsection{Proof of Theorem~\ref{thm:bprei-sharp}}
\label{subsec:bprei-main-proof}

\begin{proof}[Proof of Theorem~\ref{thm:bprei-sharp}]
Propositions~\ref{prop:bprei-normalized-limit} and
\ref{prop:bprei-positive-limit} give the almost-sure and
\(L^\alpha(\Pbf_\alpha)\) convergence, the moment limit, and positivity of
the constant.  For the sharp tail, let
\(j_n=\lceil K\log n\rceil\),
\(Y_n:=Z_{j_n}\Pi_{j_n,n}\), and
\(t_n:=\e^{\rho n+y}\).  For \(\delta\in(0,1)\),
\[
\begin{aligned}
\Pbf(Y_n>(1+\delta)t_n)-\Pbf(|Z_n-Y_n|>\delta t_n)
&\leq\Pbf(Z_n>t_n)\\
&\leq\Pbf(Y_n>(1-\delta)t_n)+\Pbf(|Z_n-Y_n|>\delta t_n).
\end{aligned}
\]
Proposition~\ref{prop:bprei-block} makes the two error terms
\(o(n^{-1/2}\e^{-nI(\rho)})\).  Corollary~\ref{cor:bprei-early-prefactor},
applied at the shifted levels \(y+\log(1+\delta)\) and
\(y+\log(1-\delta)\), gives
\[
(1+\delta)^{-\alpha}C_y
\leq\liminf_{n\to\infty}\sqrt n\e^{nI(\rho)}\Pbf(Z_n>t_n)
\leq\limsup_{n\to\infty}\sqrt n\e^{nI(\rho)}\Pbf(Z_n>t_n)
\leq(1-\delta)^{-\alpha}C_y,
\]
where \(C_y=c_Z(\alpha)\e^{-\alpha y}/
(\alpha v_\alpha\sqrt{2\pi})\).  Let \(\delta\downarrow0\).
\end{proof}

\subsection{Supercritical harmonic moments in the nondecreasing case}
\label{subsec:bprei-supercritical-harmonic}

We now study the small-generation-size profile that enters the unconditional
ratio bounds.  Every individual has at least one child, so the BPREI is
nondecreasing and the first exit from the current generation size gives the
natural renewal variable.

\paragraph{Standing supercritical hypotheses.}
For a generic environmental mark, write \(p_j:=Q(\{j\})\) and
\(h_j:=G(\{j\})\).  Thus \(p_j\) is the quenched probability that one
individual has \(j\) offspring, while \(h_j\) is the quenched probability of
\(j\) immigrants; both are random through the environmental mark.  Put
\(M:=\sum_{j\geq0}jp_j\), and write
\(\mathsf p_{ij}:=\Pbf_i(Z_1=j)\) for the annealed one-step transition
probability of the full BPREI from state \(i\) to state \(j\).

\begin{assumption}[Nondecreasing supercritical BPREI]
\label{ass:bprei-harmonic-supercritical}
Assume the following.
\begin{enumerate}[label=\textup{(H\arabic*)}]
\item
      \(p_0=0\) almost surely and \(\mu:=\Ebf\log M\in(0,\infty)\).
\item\label{cond:bprei-harmonic-schroder}
      \(\Pbf(h_0>0,\ 0<p_1<1)>0\).
\item\label{cond:bprei-harmonic-ks}
      \(\Ebf[M^{-1}\sum_{j\geq1}j\log^+j\,p_j]<\infty\).
\item\label{cond:bprei-harmonic-moment}
      \(\Ebf M^\delta<\infty\) for some \(\delta>0\).
\item\label{cond:bprei-harmonic-immigration}
      \(\Ebf\eta<\infty\).
\end{enumerate}
Reproduction and immigration may depend through the same environmental mark.
\end{assumption}

We refer to this assumption as \((\mathrm H)\).  Condition \textup{(H3)} is
the annealed Kesten--Stigum condition for the conditionally size-biased
offspring count: if
\(\Pbf_{\cE}(\xi^\star=j)=jp_j/M\), then it says
\(\Ebf\log^+\xi^\star<\infty\).  Condition \textup{(H4)} supplies the
positive environmental moment required by the inverse-moment criterion under
the negative tilt, while \textup{(H5)} makes the normalized immigrant-clan
series summable.  Since \(M\geq1\) under \textup{(H1)}, a first immigration
moment is enough for this last step.

Put \(\gamma_i:=\mathsf p_{ii}=\Ebf[h_0p_1^i]\).  This is the one-step
probability that the generation size remains equal to \(i\); accordingly,
\(\gamma_i^\ell\) is the cost of remaining at \(i\) for \(\ell\) consecutive
generations.  Condition \textup{(H2)} gives
\(0<\gamma_{i+1}<\gamma_i<1\), since
\(\gamma_i-\gamma_{i+1}=\Ebf[h_0p_1^i(1-p_1)]>0\).  For \(r>0\), put
\[
        c_r:=\Ebf[M^{-r}].
\]
Since \(M\geq1\) almost surely under \textup{(H1)},
\(c_r\in(0,1]\).  Moreover, \(\mu>0\) implies
\(\Pbf(M>1)>0\), and hence \(c_r<1\) for every \(r>0\).  The function
\(r\mapsto c_r=\exp\{\Lambda(-r)\}\) is continuous and strictly decreasing
from \(1\) to \(\Pbf(M=1)\).  Finally,
let \(r_i:=\sup\{r>0:c_r>\gamma_i\}\in(0,\infty]\), and write
\(H_{i,n}(r):=\Ebf_iZ_n^{-r}\) and
\(\Phi_{i,n}(t):=\Ebf_i\e^{-tZ_n}\).  Since \(Z_n\geq i\), these quantities
are finite.

\begin{theorem}[Exact supercritical harmonic moments for BPREI]
\label{thm:bprei-supercritical-harmonic}
Under \((\mathrm H)\), let \(i\geq1\).  The following limits hold in the
indicated ranges of \(r\):
\begin{equation}\label{eq:bprei-supercritical-harmonic-main}
\begin{aligned}
\lim_{n\to\infty}c_r^{-n}H_{i,n}(r)
    &=C^{\mathrm{env}}_{i,r}, &&0<r<r_i,\\
\lim_{n\to\infty}\frac{H_{i,n}(r)}{n\gamma_i^n}
    &=C^{\mathrm{bd}}_i, &&r=r_i<\infty,\\
\lim_{n\to\infty}\gamma_i^{-n}H_{i,n}(r)
    &=C^{\mathrm{sm}}_{i,r}, &&r>r_i.
\end{aligned}
\end{equation}
Every constant on the right is finite and strictly positive.
\end{theorem}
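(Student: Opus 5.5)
We condition on the first exit from the initial state and reduce the problem to a renewal equation. Let \(T:=\inf\{n\geq1:Z_n\neq i\}\). Under \textup{(H1)}, \(p_0=0\), so the process is nondecreasing and \(Z_T>i\). The strong Markov property gives
\[
H_{i,n}(r)=\gamma_i^n i^{-r}+\sum_{\ell=1}^{n}\gamma_i^{\ell-1}\sum_{j>i}\mathsf p_{ij}H_{j,n-\ell}(r).
\]
Write \(u_n:=\sum_{j>i}\mathsf p_{ij}H_{j,n}(r)\). This is a harmonic moment of a BPREI started from a random state strictly above \(i\), with the first step weighted by the off-diagonal transitions. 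Then \(H_{i,n}=\gamma_i^n i^{-r}+\sum_{\ell}\gamma_i^{\ell-1}u_{n-\ell}\).

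The trichotomy then follows from elementary convolution asymptotics, once we know that \(u_n\sim\kappa\,c_r^n\) with \(\kappa\in(0,\infty)\) whenever \(c_r>\gamma_{i+1}\). Three cases arise.

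\emph{Case \(c_r>\gamma_i\).} The series \(\sum_\ell(\gamma_i/c_r)^\ell\) converges, so \(c_r^{-n}H_{i,n}\to\kappa\sum_{\ell\geq1}\gamma_i^{\ell-1}c_r^{-\ell}=C^{\mathrm{env}}_{i,r}>0\).

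\emph{Case \(c_r=\gamma_i\).} Every term contributes \(\kappa\gamma_i^{n-1}\), which gives \(H_{i,n}\sim n\gamma_i^{n-1}\kappa\).

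\emph{Case \(c_r<\gamma_i\).} Dominated convergence gives \(\gamma_i^{-n}H_{i,n}\to i^{-r}+\sum_{m\geq0}\gamma_i^{-m-1}u_m\). Here we only need \(u_m\leq Cc_r^m\) when \(c_r>\gamma_{i+1}\), or \(u_m\leq C\gamma_{i+1}^m m\)-type bounds otherwise, and these are summable against \(\gamma_i^{-m}\) because \(\gamma_{i+1}<\gamma_i\) and \(c_r<\gamma_i\). This constant is clearly positive.

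The case \(r=r_i\) is exactly \(c_r=\gamma_i\), using continuity and strict monotonicity of \(r\mapsto c_r\). The bounds for states above \(i\) come from induction on \(i\) downward: for large states the environmental regime always holds, since \(\gamma_j\to\Ebf[h_0\one_{p_1=1}]\)-type limits lie below \(c_r\). Alternatively, one uses a crude uniform bound \(\sup_j\Ebf_j Z_n^{-r}\leq\Ebf_1 Z_n^{-r}\) by monotone coupling in the initial state, together with induction.

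The core step is the environmental asymptotic \(c_r^{-n}\Ebf_j Z_n^{-r}\to C_{j,r}\) in the regime where boundary persistence is cheaper than \(c_r^n\). The plan is to use Proposition~\ref{prop:main-negative-tilt-boundary}: \(\Ebf_j Z_n^{-r}=c_r^n\,\Ebf_{-r}[V_n^{-r}]\), where \(V_n=Z_n/\Pi_n\). Under \(\Pbf_{-r}\) the drift is \(\Lambda'(-r)\), which may have either sign. We therefore need \(\Ebf_{-r}V_n^{-r}\to\Ebf_{-r}V_\infty^{-r}<\infty\). Under \textup{(H3)} and \textup{(H5)}, \(V_n\) converges almost surely under the tilted law to a positive limit. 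Here \textup{(H3)} is the Kesten--Stigum condition, and \textup{(H5)} makes the immigrant-clan series summable given \(M\geq1\); positivity holds because \(Z_n\geq j\). Because \(V_n\) is nondecreasing in distribution only loosely, I would instead bound \(V_n^{-r}\) uniformly: use the Laplace--Gamma representation \eqref{eq:bprei-laplace-gamma} and the quenched product \eqref{eq:bprei-quenched-product} to show \(\sup_n\Ebf_{-r}V_n^{-r'}<\infty\) for some \(r'>r\), and conclude by uniform integrability. This inverse-moment bound is the main obstacle. It is finite precisely when the cost of staying at a small state, measured under the tilt, \(\Ebf_{-r}[\dots]\), beats \(c_r\). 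This is exactly the condition \(c_r>\gamma_j\), and it requires \textup{(H2)} (Schröder-type) and \textup{(H4)}.

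I would prove this bound by a first-exit renewal argument under \(\Pbf_{-r}\). Starting from large \(j\), the quenched Laplace transform decays geometrically, via a Schröder-type estimate \(f_{0,n}(e^{-t/\Pi_n})\) controlled using \textup{(H4)}. The contributions of small states are then absorbed by iterating the same renewal identity. The renewal constant expressed through the first exit is the source of the explicit constants.
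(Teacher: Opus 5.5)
Your reduction is the same as the paper's: the first-exit renewal identity, the three convolution cases at state $i$, and backward induction from large states. The case analysis is essentially right, including the persistence-regime bound $u_m\le C(m+1)(c_r\vee\gamma_{i+1})^m$.

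\textbf{The gap: the base case.} You flag it as ``the main obstacle'' but do not supply it. The induction needs a $K$ with two properties:
\begin{itemize}
\item $c_r^{-n}H_{j,n}(r)$ converges to a positive finite limit for \emph{every} $j\ge K$;
\item $\sup_{j\ge K}\sup_n c_r^{-n}H_{j,n}(r)<\infty$, so that the limit can pass through the infinite sum over $j>i$.
\end{itemize}
A ``Schr\"oder-type estimate'' for $f_{0,n}(\e^{-t/\Pi_n})$ is not an argument. Bounding $\sup_n\Ebf_{-r}V_n^{-r'}$ for some $r'>r$ asks for more than is needed, and you do not say how to get it.

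\textbf{The missing idea: compare with an immigration-free process.}
\begin{itemize}
\item Choose $K$ with $\Ebf[p_1^K]<c_r$. This is possible since $\Ebf[p_1^K]\downarrow\Pbf(M=1)<c_r$, and it is equivalent to $\Ebf_{-r}[p_1^KM^r]<1$.
\item (H4) gives $\Ebf_{-r}[M^{r+\delta}]=\Ebf[M^\delta]/c_r<\infty$. With the previous point, these are the hypotheses of the Grama--Liu--Miqueu inverse-moment lemma for the BPRE started from $K$ particles, which gives $\Ebf_{-r}[(\overline V_\infty^{(K)})^{-r}]<\infty$.
\item $\overline V_n^{(K)}$ is a uniformly integrable martingale, so conditional Jensen yields $(\overline V_n^{(K)})^{-r}\le\Ebf_{-r}[(\overline V_\infty^{(K)})^{-r}\mid\cF_n]$, a uniformly integrable family.
\item The coupling $V_n^{(j)}\ge\overline V_n^{(K)}$ (keep the first $K$ ancestral families, discard immigration) transfers this to all $j\ge K$ at once, at exponent exactly $r$. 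Vitali's theorem and the tilt identity then give the base case.
\end{itemize}
For the finitely many states between $i$ and $K$, no tilted uniform integrability is needed: the renewal equation produces the limit directly.

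\textbf{Other incorrect claims.}
\begin{itemize}
\item The tilted drift cannot change sign. Under (H1) $M\ge1$, so $\Lambda'(-r)=\Ebf[M^{-r}\log M]/c_r>0$ for every $r>0$. If the drift could become nonpositive, no almost surely positive limit of $V_n$ would exist.
\item Positivity of $V_\infty$ does not follow from $Z_n\ge j$, because $\Pi_n\to\infty$. It comes from (H3), which survives the tilt because $M^{-r}\le1$, together with $p_0=0$, which makes each normalized ancestral clan limit almost surely positive.
\item Your alternative domination $H_{j,n}(r)\le H_{1,n}(r)$ is on the wrong scale. $H_{1,n}(r)$ may be of order $\gamma_1^n\gg c_r^n$, so it cannot control $c_r^{-n}\sum_{j>i}\mathsf p_{ij}H_{j,n}(r)$. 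What works is $H_{j,n}(r)\le H_{K,n}(r)$ for $j\ge K$, combined with the base-case bound at $K$.
\end{itemize}
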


If \(r_i=\infty\), the boundary and small-state regimes are absent, so only
the environmental limit in \eqref{eq:bprei-supercritical-harmonic-main}
occurs.  Proposition~\ref{prop:bprei-harmonic-gamma-constants} below gives
representations of the constants.

The proof of Theorem~\ref{thm:bprei-supercritical-harmonic} begins with the
Laplace--Gamma identity and conditions on the first generation at which the
population leaves the state \(i\).  The resulting renewal equation separates
the cost \(\gamma_i^\ell\) of remaining at \(i\) for \(\ell\) generations
from the future harmonic contribution after the process enters a larger
state.  An early exit leaves most of the time horizon for environmental
growth and produces the scale \(c_r^n\), whereas a late exit produces the
persistence scale \(\gamma_i^n\).  At \(c_r=\gamma_i\), all exit times have
the same exponential weight, and summing over them produces the factor \(n\).

The proof then uses the negative tilt to identify the environmental
contribution after an early exit.  We first establish inverse-moment limits
after the process has left a finite small-state region.  A tilted first-exit
identity propagates these limits to smaller initial states, and backward
induction through the renewal equation yields the three limits above.

\begin{remark}[Relation with the existing BPREI theorem]
\label{rem:bprei-harmonic-improvement}
Huang, Wang and Wang \cite{HuangWangWang2022} identified the phase
boundary \(c_r=\gamma_i\) and the three harmonic-moment normalizations.
Under comparable assumptions, they obtained an exact normalized limit in the
persistence-dominated regime \(r>r_i\), but only positive finite normalized
lower and upper limits in the environmental and boundary regimes
\(r\leq r_i\).  Theorem~\ref{thm:bprei-supercritical-harmonic} establishes
exact normalized limits in these latter two regimes as well, thereby
completing the exact trichotomy.
Proposition~\ref{prop:bprei-harmonic-gamma-constants} identifies the
corresponding constants.  Without immigration the theorem reduces to the BPRE
result of \cite{GramaLiuMiqueu2017}; in a deterministic environment it becomes
the usual supercritical GWI trichotomy.
\end{remark}

\begin{remark}[Why there are three regimes here]
\label{rem:bprei-harmonic-no-critical-tilt}
Under $p_0=0$, one has $M\geq1$ almost surely.  Hence for every finite $r>0$,
\[
        \Ebf_{-r}\log M
        =\frac{\Ebf[M^{-r}\log M]}{c_r}>0.
\]
The negatively tilted process therefore remains supercritical.  A critical
negative tilt can occur only when environments with \(M<1\) are allowed.  The
unweighted critical BPREI theory is developed in
Section~\ref{subsec:bprei-critical-harmonic}, while the additional weighted
problem arising at a critical negative tilt is discussed in
Section~\ref{subsec:bprei-negative-tilt}.
\end{remark}

\subsubsection{Normalized generation sizes under the negative tilt}

In Theorem~\ref{thm:bprei-sharp}, we analyzed the normalized population
\[
        V_n=\frac{Z_n}{\Pi_n}
\]
under the positive Cram\'er tilt \(\Pbf_\alpha\), and used tilt cancellation
to translate its \(\alpha\)-moment behavior back to the original law.  The
proof of Theorem~\ref{thm:bprei-supercritical-harmonic} requires the
corresponding analysis under the negative tilt \(\Pbf_{-r}\), since
\[
        H_{i,n}(r)
        =c_r^n\Ebf_{-r,i}\!\left[(V_n^{(i)})^{-r}\right].
\]
We therefore begin by establishing the convergence and strict positivity of
\(V_n^{(i)}\) under \(\Pbf_{-r}\).

The proof uses the clan decomposition rather than the innovation recursion of
Subsection~\ref{subsec:bprei-upper-deviations}.
Condition \textup{(H1)} ensures that the environment remains supercritical under
every negative tilt and that ancestral clans cannot become extinct.  Together
with \textup{(H1)},
Condition~\ref{cond:bprei-harmonic-ks} supplies the Kesten--Stigum
integrability needed for the normalized ancestral-clan limits, while
Condition~\ref{cond:bprei-harmonic-immigration} makes the normalized
immigrant-clan series summable.  Condition~\ref{cond:bprei-harmonic-moment}
is not needed in the following lemma; it enters
Lemma~\ref{lem:bprei-harmonic-large-state}, where inverse moments of the
normalized limit are established.

\begin{lemma}[Normalized BPREI under the negative tilt]
\label{lem:bprei-harmonic-normalized-limit}
Under the standing hypothesis \((\mathrm H)\), fix $r>0$ and
$i\geq1$.  Under $\Pbf_{-r,i}$,
\begin{equation}\label{eq:bprei-harmonic-normalized-convergence}
        V_n^{(i)}=\frac{Z_n}{\Pi_n}
        \longrightarrow V_\infty^{(i)}
        \quad\text{in }L^1\text{ and almost surely},
\end{equation}
and
\begin{equation}\label{eq:bprei-harmonic-normalized-positive}
        0<V_\infty^{(i)}<\infty
        \qquad\Pbf_{-r,i}\text{-almost surely}.
\end{equation}
\end{lemma}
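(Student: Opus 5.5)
We will use the clan decomposition announced before the lemma.  Under \(\Pbf_{-r,i}\) the marks stay i.i.d. and the reproduction and immigration mechanisms given the marks are unchanged.  Write
\[
        Z_n=\sum_{a=1}^{i}Z_n^{(0,a)}+\sum_{k=1}^{n}\sum_{b=1}^{\eta_k}Z_{n}^{(k,b)},
\]
where \(Z^{(0,a)}\) is the clan of the \(a\)th ancestor and \(Z^{(k,b)}\) is the clan of the \(b\)th immigrant arriving at time \(k\).  Given the environment, each clan is a BPRE without immigration, started at its arrival time, and clans are conditionally independent.  Normalizing gives
\[
        V_n^{(i)}=\sum_{a=1}^{i}W_n^{(0,a)}+\sum_{k=1}^{n}\Pi_k^{-1}\sum_{b=1}^{\eta_k}W^{(k,b)}_{n},
\qquad
W_n^{(k,b)}:=\frac{Z_n^{(k,b)}}{\Pi_{k,n}} .
\]
Each \(W^{(k,b)}_n\) is a nonnegative quenched martingale in \(n\) with quenched mean \(1\), so it converges a.s. to some \(W^{(k,b)}_\infty\).

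First we treat the ancestral clans.  Under \(\Pbf_{-r}\), Remark~\ref{rem:bprei-harmonic-no-critical-tilt} gives \(\Ebf_{-r}\log M>0\).  Since \(p_0=0\), the quenched extinction probability is zero.  We invoke the quenched Kesten--Stigum theorem for BPRE (Tanny; Athreya--Karlin): \(W_\infty>0\) a.s. and \(\Ebf_{\cE}W_\infty=1\) a.s. hold once \(\Ebf_{-r}[M^{-1}\sum_j j\log^+j\,p_j]<\infty\).  This is the main point to verify.  The tilted law of the mark has density \(M^{-r}/c_r\le 1/c_r\) because \(M\ge1\).  Therefore (H3) under \(\Pbf\) implies the same condition under \(\Pbf_{-r}\).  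Once quenched uniform integrability holds, Scheffé's lemma gives quenched \(L^1\) convergence, and hence annealed \(L^1\) convergence, because all quenched means equal one.

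Second we treat the immigrant part.  Its quenched mean is \(\sum_{k\ge1}\Pi_k^{-1}\Ebf_\cE\eta_k\).  The factor \(\eta_k\) depends on mark \(k-1\), and \(\Pi_k^{-1}=\Pi_{k-1}^{-1}M_{k-1}^{-1}\le\Pi_{k-1}^{-1}\).  Hence
\[
        \Ebf_{-r}\bigl[\Pi_k^{-1}\eta_k\bigr]\le\Ebf_{-r}[M^{-1}]^{k-1}\,\Ebf_{-r}[\eta].
\]
We have \(\Ebf_{-r}[M^{-1}]=c_{r+1}/c_r<1\), since \(M\ge1\) and \(\Pbf(M>1)>0\).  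We also have \(\Ebf_{-r}\eta\le\Ebf\eta/c_r<\infty\) by (H5).  The series is therefore summable in \(L^1(\Pbf_{-r})\).  Monotone convergence and dominated tails then give a.s. and \(L^1\) convergence of the immigrant sum: the \(n\)-truncated terms converge clanwise, and the remaining tail has vanishing \(L^1\) norm uniformly in \(n\), because \(\Ebf_\cE W_n^{(k,b)}=1\).

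Finally, \(V_\infty^{(i)}\ge W_\infty^{(0,1)}>0\) a.s., and \(V_\infty^{(i)}<\infty\) a.s. because it is an \(L^1\) limit.  The expected obstacle is the quenched Kesten--Stigum step, namely that the tilted mark law inherits (H3) and positive drift.  The bound \(M^{-r}\le1\) handles this directly.
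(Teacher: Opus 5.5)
Your route matches the paper's. You use the same clan decomposition and transfer (H3) to \(\Pbf_{-r}\) through the same density bound \(M^{-r}/c_r\leq1/c_r\). You get geometric summability of \(\Ebf_{-r}[\Pi_k^{-1}\eta_k]\) from \(c_{r+1}/c_r<1\); your bound \(\Pi_k^{-1}\leq\Pi_{k-1}^{-1}\) is a harmless variant of the paper's exact formula. Truncation gives \(L^1\) convergence, and one ancestral clan gives positivity. All of this is sound.

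The gap is the almost-sure convergence. Put \(T_{N,n}:=\sum_{N<k\leq n}\Pi_k^{-1}\sum_{b\leq\eta_k}W_n^{(k,b)}\). What you establish is \(\sup_n\Ebf_{-r}T_{N,n}\to0\) as \(N\to\infty\). Together with a.s. convergence of the finitely many early clans, this yields \(L^1\) convergence only. For a.s. convergence you need \(\limsup_nT_{N,n}\to0\) almost surely, and a uniform bound on the means does not control the supremum over \(n\) along paths. Monotone convergence in \(N\) identifies the candidate limit and its mean. There is no monotonicity in \(n\), however, because the \(W_n^{(k,b)}\) are martingales.

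What is missing is the submartingale structure of the whole sum. The paper uses it directly: \(\Ebf_{-r}[V_{n+1}^{(i)}\mid\cF_n]=V_n^{(i)}+\Pi_n^{-1}\Ebf_{-r}[\eta/M]\geq V_n^{(i)}\), and \(\sup_n\Ebf_{-r}V_n^{(i)}\leq i+\sum_s\Ebf_{-r}[\Pi_s^{-1}\eta_s]<\infty\). The submartingale convergence theorem then gives an a.s. limit, which must agree with the \(L^1\) limit.

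Alternatively, you can close the gap inside your own scheme. Given the environment, \(n\mapsto T_{N,n}\) is a nonnegative submartingale, since each new immigrant clan adds a term of nonnegative conditional mean. Doob's maximal inequality followed by annealing gives \(\Pbf_{-r}(\sup_nT_{N,n}>\lambda)\leq\lambda^{-1}\sum_{k>N}\Ebf_{-r}[\Pi_k^{-1}\eta_k]\). Since \(\sup_nT_{N,n}\) is nonincreasing in \(N\), it tends to zero almost surely. The a.s. convergence of \(V_n^{(i)}\) then follows from \(\limsup_n|V_n^{(i)}-V_\infty^{(i)}|\leq\sup_nT_{N,n}+T_{N,\infty}\).
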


\begin{proof}
Under \(\Pbf_{-r}\), the environmental marks remain i.i.d. and
\(\Ebf_{-r}\log M=c_r^{-1}\Ebf[M^{-r}\log M]>0\).  Moreover,
\[
\Ebf_{-r}\left[M^{-1}\sum_{j\geq1}j\log^+j\,p_j\right]
=\frac1{c_r}\Ebf\left[M^{-r-1}\sum_{j\geq1}j\log^+j\,p_j\right]
\leq\frac1{c_r}\Ebf\left[M^{-1}\sum_{j\geq1}j\log^+j\,p_j\right]<\infty.
\]
Thus the annealed Kesten--Stigum condition holds under the negative tilt.
For \(s=0\), let \(Z_{0,n}^{(a)}\) denote the number of descendants at time
\(n\) of the \(a\)th initial particle.  For \(1\leq s\leq n\), let
\(Z_{s,n}^{(a)}\) denote the number of descendants at time \(n\) of the
\(a\)th immigrant entering at time \(s\).  In each case, only descendants
produced through reproduction are included; immigrants entering at later
times initiate separate clans.  Put
\[
        W_{s,n}^{(a)}:=\frac{Z_{s,n}^{(a)}}{\Pi_{s,n}},
        \qquad
        \Pi_{s,n}:=\prod_{k=s}^{n-1}M_k,
\]
with \(\Pi_{n,n}=1\).  Conditional on the environment, each clan is a
no-immigration BPRE started from one particle at time \(s\).  Hence, by the
preceding Kesten--Stigum verification,
\[
        W_{s,n}^{(a)}\longrightarrow W_{s,\infty}^{(a)}
\]
almost surely and in \(L^1(\Pbf_{-r})\), with
\[
        \Ebf_{-r}W_{s,\infty}^{(a)}=1,
        \qquad W_{s,\infty}^{(a)}>0
        \quad\text{almost surely}.
\]

The population at time \(n\) is the sum of the initial and immigrant clans:
\[
        Z_n=\sum_{a=1}^{i}Z_{0,n}^{(a)}
        +\sum_{s=1}^{n}\sum_{a=1}^{\eta_s}Z_{s,n}^{(a)}.
\]
Since \(\Pi_n=\Pi_s\Pi_{s,n}\), division by \(\Pi_n\) gives the clan
decomposition
\begin{equation}\label{eq:bprei-harmonic-clan-decomposition}
V_n^{(i)}=\sum_{a=1}^{i}W_{0,n}^{(a)}
+\sum_{s=1}^{n}\Pi_s^{-1}\sum_{a=1}^{\eta_s}W_{s,n}^{(a)}.
\end{equation}
Since the immigration at time \(s\) belongs to the same mark as the last
factor in \(\Pi_s\),
\begin{equation}\label{eq:bprei-harmonic-immigration-sum}
\Ebf_{-r}[\Pi_s^{-1}\eta_s]
=\frac{\Ebf[M^{-r-1}\eta]}{c_r}
\left(\frac{c_{r+1}}{c_r}\right)^{s-1}.
\end{equation}
Condition~\ref{cond:bprei-harmonic-immigration} gives
\(\Ebf[M^{-r-1}\eta]<\infty\), while \(c_{r+1}<c_r\); hence
\(\sum_{s\geq1}\Ebf_{-r}[\Pi_s^{-1}\eta_s]<\infty\).
Define
\begin{equation}\label{eq:bprei-harmonic-limit-series}
V_\infty^{(i)}:=\sum_{a=1}^{i}W_{0,\infty}^{(a)}
+\sum_{s=1}^{\infty}\Pi_s^{-1}\sum_{a=1}^{\eta_s}W_{s,\infty}^{(a)}.
\end{equation}
For a truncation time \(N\), let \(V_{n,N}^{(i)}\) and
\(V_{\infty,N}^{(i)}\) retain only clans entering by time \(N\).  Then
\[
\Ebf_{-r}|V_n^{(i)}-V_\infty^{(i)}|
\leq\Ebf_{-r}|V_{n,N}^{(i)}-V_{\infty,N}^{(i)}|
+2\sum_{s>N}\Ebf_{-r}[\Pi_s^{-1}\eta_s].
\]
First let \(n\to\infty\), then \(N\to\infty\); this proves the
\(L^1\) convergence.  For almost-sure convergence, use the chronological
filtration \((\cF_n)\).  Under the negative tilt,
\[
\Ebf_{-r}[V_{n+1}^{(i)}\mid\cF_n]
=V_n^{(i)}+\Pi_n^{-1}\Ebf_{-r}\!\left[\frac{\eta}{M}\right]
\geq V_n^{(i)}.
\]
Thus \((V_n^{(i)})\) is a nonnegative submartingale, and
\eqref{eq:bprei-harmonic-immigration-sum} gives
\[
\sup_n\Ebf_{-r}V_n^{(i)}
\leq i+\sum_{s\geq1}\Ebf_{-r}[\Pi_s^{-1}\eta_s]<\infty.
\]
The submartingale convergence theorem yields a finite almost-sure limit,
which the established \(L^1\) convergence identifies with
\eqref{eq:bprei-harmonic-limit-series}.  The first sum in
\eqref{eq:bprei-harmonic-limit-series} is strictly positive almost surely,
so \(0<V_\infty^{(i)}<\infty\).  See \cite{WangLiu2017} for related
normalized convergence results under weaker immigration assumptions.
\end{proof}

The positive normalized limit in
Lemma~\ref{lem:bprei-harmonic-normalized-limit} does not by itself imply
convergence of inverse moments.  Recall that
\[
        H_{j,n}(r):=\Ebf_j[Z_n^{-r}].
\]
The negative-tilt identity gives
\[
        c_r^{-n}H_{j,n}(r)
        =\Ebf_{-r,j}\!\left[(V_n^{(j)})^{-r}\right].
\]
The next lemma chooses \(K=K(r)\) as a sufficiently large number of initial
ancestors for a comparison BPRE without immigration.  This same \(K\) serves
as a large-state threshold for the BPREI: coupling with the first \(K\)
ancestral families yields uniform integrability of the inverse moments for
every initial state \(j\geq K\).  The lemma therefore identifies the limiting
contribution on the environmental scale \(c_r^n\) once the BPREI has entered
the region \([K,\infty)\).  Since the BPREI is nondecreasing, its first
entrance into \([K,\infty)\) is also its last exit from
\(\{1,\ldots,K-1\}\).  For \(j\geq K\), the quantity
\[
        U_j(r):=\Ebf_{-r,j}\!\left[(V_\infty^{(j)})^{-r}\right]
\]
is the corresponding limiting inverse-moment constant after that entrance.

\begin{lemma}[Large-state inverse moments under the negative tilt]
\label{lem:bprei-harmonic-large-state}
Fix $r>0$.  There is an integer $K=K(r)$ such that, for every $j\geq K$,
\begin{equation}\label{eq:bprei-harmonic-large-state-limit}
        c_r^{-n}H_{j,n}(r)
        \longrightarrow
        U_j(r):=\Ebf_{-r,j}[(V_\infty^{(j)})^{-r}]
        \in(0,\infty),
\end{equation}
and
\begin{equation}\label{eq:bprei-harmonic-large-state-uniform}
        \sup_{j\geq K}\sup_{n\geq0}
        c_r^{-n}H_{j,n}(r)<\infty.
\end{equation}
\end{lemma}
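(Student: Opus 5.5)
The plan is to reduce everything to a comparison BPRE without immigration started from \(K\) ancestors, and to obtain uniform integrability of \((V_n^{(j)})^{-r}\) under \(\Pbf_{-r,j}\) from a uniform bound on a slightly higher inverse moment.

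\textbf{Step 1 (monotone comparison).} Under (H1) offspring counts are at least one and immigration is nonnegative. The clan decomposition \eqref{eq:bprei-harmonic-clan-decomposition} therefore gives, for \(j\geq K\),
\[
V_n^{(j)}\geq W_n^{[K]}:=\sum_{a=1}^{K}W_{0,n}^{(a)}.
\]
Here \(W_n^{[K]}\) is the normalized population of a BPRE without immigration started from \(K\) individuals, in the same environment. Consequently
\[
c_r^{-n}H_{j,n}(r)=\Ebf_{-r,j}[(V_n^{(j)})^{-r}]\leq \Ebf_{-r}[(W_n^{[K]})^{-r}].
\]
The right-hand side does not depend on \(j\). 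It therefore suffices to show that, for \(K\) large,
\[
\sup_n\Ebf_{-r}[(W_n^{[K]})^{-r'}]<\infty\quad\text{for some }r'>r.
\]
This bound gives \eqref{eq:bprei-harmonic-large-state-uniform}. It also gives uniform integrability of \((V_n^{(j)})^{-r}\) for each fixed \(j\geq K\). Combined with the almost-sure convergence \(V_n^{(j)}\to V_\infty^{(j)}\in(0,\infty)\) from Lemma~\ref{lem:bprei-harmonic-normalized-limit}, uniform integrability yields \eqref{eq:bprei-harmonic-large-state-limit}. Positivity of \(U_j(r)\) is automatic because \(V_\infty^{(j)}<\infty\) almost surely, and finiteness follows from the same bound via Fatou.

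\textbf{Step 2 (inverse moments for the comparison BPRE; the main obstacle).} I would use the Laplace--Gamma representation
\[
\Gamma(r')\Ebf[(W_n^{[K]})^{-r'}]=\int_0^\infty \Ebf_{-r}[\e^{-tW_n^{[K]}}]\,t^{r'-1}\dd t .
\]
Conditionally on the environment, the ancestral lines are independent. Hence
\[
\Ebf_{-r}[\e^{-tW_n^{[K]}}]=\Ebf_{-r}\bigl[\phi_n^{\cE}(t)^K\bigr],
\qquad \phi_n^{\cE}(t):=\Ebf_{-r}[\e^{-tW_{0,n}^{(1)}}\mid\cE].
\]
The small-\(t\) part of the integral is harmless, so the task is to show \(\sup_n\Ebf_{-r}[\phi_n^{\cE}(t)^K]\leq Ct^{-r''}\) as \(t\to\infty\) for some \(r''>r'\), with \(r''\) growing linearly in \(K\).

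The mechanism is as follows. Since \(p_0=0\), each single line grows at least as fast as the environmental product, except through generations in which \(p_1\) is close to \(1\). By (H2) there is a mark set of positive probability with \(p_1\leq 1-\delta\). A standard argument then gives a bound of the form \(\phi_n^{\cE}(t)\leq\kappa<1\) at scale \(t\approx\Pi_m\), after \(m\) good generations. This is the Grama--Liu--Miqueu/Huang--Wang--Wang inverse-moment criterion, and the plan is to adapt it under the tilt. (H4) supplies the positive environmental moment needed to control the tilted law of \(\Pi_m\); under \(\Pbf_{-r}\), \(\Ebf_{-r}M^{\delta'}<\infty\) for small \(\delta'\). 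Raising \(\kappa\) to the power \(K\) gives polynomial decay \(t^{-cK}\), which beats \(t^{-r'}\) once \(K\) is large. I expect the delicate part to be making this uniform in \(n\), handling the randomness of the environment through a first-good-generation decomposition. If that proves awkward, I would instead cite the known BPRE harmonic-moment result of \cite{GramaLiuMiqueu2017} applied under \(\Pbf_{-r}\). That reference gives finiteness of \(\sup_n\Ebf[(W_n^{[1]})^{-s}]\) for \(s\) below a critical exponent, and the \(K\)-ancestor version pushes this critical exponent above \(r'\).

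\textbf{Step 3 (assembly).} Fix \(K=K(r)\) as in Step 2 and apply the domination of Step 1 for every \(j\geq K\). This yields the uniform bound \eqref{eq:bprei-harmonic-large-state-uniform}, and uniform integrability then gives the limit \eqref{eq:bprei-harmonic-large-state-limit} with constant \(U_j(r)\).
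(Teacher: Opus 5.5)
Your architecture matches the paper's. You dominate \(V_n^{(j)}\), \(j\geq K\), by the normalized no-immigration BPRE \(\overline V_n^{(K)}\) of the first \(K\) ancestral families. You then take an inverse-moment bound for it under \(\Pbf_{-r}\) from \cite{GramaLiuMiqueu2017}, and conclude by uniform integrability, the almost-sure limit of Lemma~\ref{lem:bprei-harmonic-normalized-limit}, and Vitali.

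The real difference is how uniform integrability is produced. The paper uses that \(\overline V_n^{(K)}\) is a uniformly integrable martingale under \(\Pbf_{-r}\), since Kesten--Stigum survives the tilt. Conditional Jensen then gives \((\overline V_n^{(K)})^{-r}\leq\Ebf_{-r}[(\overline V_\infty^{(K)})^{-r}\mid\cF_n]\). So a single finite inverse moment of the \emph{limit}, at the exponent \(r\) itself, gives both the uniform bound and uniform integrability, because conditional expectations of one integrable variable form a uniformly integrable family. Your de la Vall\'ee-Poussin route through some \(r'>r\) is also valid. However, the cited criterion (Lemma~3.1 there, as used in the paper) bounds the inverse moment of the limit, not \(\sup_n\) of the inverse moments of \(W_n^{[K]}\). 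To get your \(\sup_n\) bound you need exactly the same Jensen step, and once you have it, raising the exponent to \(r'\) buys nothing.

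Step~2 should also be made concrete, and two of its remarks are inaccurate.
\begin{itemize}
\item The tilt does not leave only small moments. By \textup{(H4)}, \(\Ebf_{-r}[M^{r+\delta}]=\Ebf[M^\delta]/c_r<\infty\), and this moment beyond the inverse-moment exponent is what the criterion needs; the paper applies it with \(a=r\), \(p=r+\delta\).
\item The choice of \(K\) comes from \(\Ebf_{-r}[p_1^KM^{a}]=\Ebf[p_1^KM^{a-r}]/c_r\). At \(a=r\) this is \(\Ebf[p_1^K]/c_r\), and \(\Ebf[p_1^K]\to\Pbf(p_1=1)=\Pbf(M=1)<c_r\), the strict inequality holding because \(\mu>0\) forces \(\Pbf(M>1)>0\). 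For \(a=r'\in(r,r+\delta)\) the same limit follows by dominated convergence with majorant \(M^\delta\).
\item The admissible exponent does not grow linearly in \(K\) under \textup{(H4)} alone. On \(\{Z_1=K\}\) one has \(\overline V_\infty^{(K)}=M_0^{-1}W'\) with \(W'\) an independent copy. Hence finiteness of \(\Ebf_{-r}[(\overline V_\infty^{(K)})^{-a}]\) forces \(\Ebf[p_1^KM^{a-r}]\leq c_r\), and this fails for every \(K\) whenever \(\Ebf[M^{a-r};p_1\geq1/2]=\infty\).
\end{itemize}
The last point does not damage your argument, which needs only an exponent slightly above \(r\). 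It does mean your direct Laplace-transform sketch cannot produce decay \(t^{-cK}\) as stated.
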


\begin{proof}
Since \(0\leq p_1\leq1\),
\(\Ebf[p_1^K]\to\Pbf(p_1=1)=\Pbf(M=1)\), whereas
\(c_r=\Pbf(M=1)+\Ebf[M^{-r};M>1]>\Pbf(M=1)\).
Choose \(K\) so that
\begin{equation}\label{eq:bprei-harmonic-K-choice}
        \Ebf[p_1^K]<c_r.
\end{equation}

Under \(\Pbf_{-r}\), consider the no-immigration BPRE initiated by \(K\)
individuals, and write \(\overline V_n^{(K)}\) for its normalized
population.  Condition~\ref{cond:bprei-harmonic-moment} and the choice of
\(K\) give
\[
        \Ebf_{-r}[M^{r+\delta}]
        =\frac{\Ebf[M^\delta]}{c_r}<\infty,
        \qquad
        \Ebf_{-r}[p_1^KM^r]
        =\frac{\Ebf[p_1^K]}{c_r}<1.
\]
As observed in
Lemma~\ref{lem:bprei-harmonic-normalized-limit}, under
\(\Pbf_{-r}\) the environmental marks remain i.i.d., the conditional
offspring laws are unchanged, and the no-immigration BPRE is supercritical;
moreover, Condition~\ref{cond:bprei-harmonic-ks} remains valid after tilting.
Hence Lemma~3.1 of \cite{GramaLiuMiqueu2017} applies with their parameters
\(a=r\), \(p=r+\delta\), and initial state \(k=K\).  The preceding display
verifies its moment and inverse-moment conditions and therefore gives
\begin{equation}\label{eq:bprei-harmonic-noimm-inverse}
        \Ebf_{-r}[(\overline V_\infty^{(K)})^{-r}]<\infty.
\end{equation}

The no-immigration martingale is uniformly integrable, so
\(\overline V_n^{(K)}
=\Ebf_{-r}[\overline V_\infty^{(K)}\mid\cF_n]\).  Convexity of
\(x\mapsto x^{-r}\) therefore gives
\begin{equation}\label{eq:bprei-harmonic-jensen-large-state}
        (\overline V_n^{(K)})^{-r}
        \leq
        \Ebf_{-r}[(\overline V_\infty^{(K)})^{-r}\mid\cF_n].
\end{equation}
For \(j\geq K\), couple the BPREI started from \(j\) individuals with
the no-immigration BPRE started from \(K\) individuals by retaining the first
\(K\) ancestral families and discarding immigration.  Then
\[
        V_n^{(j)}\geq\overline V_n^{(K)},
\]
and hence
\[
        (V_n^{(j)})^{-r}\leq(\overline V_n^{(K)})^{-r}.
\]
It follows from \eqref{eq:bprei-harmonic-jensen-large-state} and
\eqref{eq:bprei-harmonic-noimm-inverse} that
\[
\sup_{j\geq K}\sup_{n\geq0}
\Ebf_{-r,j}\!\left[(V_n^{(j)})^{-r}\right]<\infty,
\]
and that, for every fixed \(j\geq K\), the family
\(\{(V_n^{(j)})^{-r}:n\geq0\}\) is uniformly integrable.
Lemma~\ref{lem:bprei-harmonic-normalized-limit} gives
\[
        V_n^{(j)}\longrightarrow V_\infty^{(j)}
        \qquad \Pbf_{-r,j}\text{-almost surely}.
\]
Therefore, Vitali's theorem yields
\[
\Ebf_{-r,j}\!\left[(V_n^{(j)})^{-r}\right]
\longrightarrow
\Ebf_{-r,j}\!\left[(V_\infty^{(j)})^{-r}\right]
=U_j(r).
\]

It remains to translate these inverse-moment statements back to the original
law.  Under \textup{(H1)}, a process started from \(j\geq1\) remains
positive, and the definition of the negative tilt gives
\begin{equation}\label{eq:bprei-harmonic-change-measure}
\begin{aligned}
\Ebf_{-r,j}\!\left[(V_n^{(j)})^{-r}\right]
&=c_r^{-n}\Ebf_j\!\left[
  \Pi_n^{-r}\left(\frac{Z_n}{\Pi_n}\right)^{-r}\right]\\
&=c_r^{-n}\Ebf_j[Z_n^{-r}]
=c_r^{-n}H_{j,n}(r).
\end{aligned}
\end{equation}
Consequently, the inverse-moment convergence gives
\[
        c_r^{-n}H_{j,n}(r)\longrightarrow U_j(r),
\]
which is \eqref{eq:bprei-harmonic-large-state-limit}, while the uniform
inverse-moment bound gives
\eqref{eq:bprei-harmonic-large-state-uniform}.  The finiteness of \(U_j(r)\)
follows from the preceding uniform-integrability argument, and its strict
positivity follows from
\[
        0<V_\infty^{(j)}<\infty
        \qquad\Pbf_{-r,j}\text{-almost surely}
\]
in \eqref{eq:bprei-harmonic-normalized-positive}.
\end{proof}

For every \(i\geq1\), set
\(U_i(r):=\Ebf_{-r,i}[(V_\infty^{(i)})^{-r}]\in(0,\infty]\).
We now condition on the first exit from \(i\).  The resulting identity couples
the negative tilt with the renewal decomposition and avoids assuming inverse
uniform integrability before it has been proved.

\begin{lemma}[First exit under the negative tilt]
\label{lem:bprei-harmonic-tilted-first-exit}
Under the standing hypothesis \((\mathrm H)\), for every $i\geq1$
and $r>0$,
\begin{equation}\label{eq:bprei-harmonic-tilted-first-exit}
        U_i(r)
        =\sum_{\ell=0}^{\infty}
          \frac{\gamma_i^\ell}{c_r^{\ell+1}}
          \sum_{j>i}\mathsf p_{ij}U_j(r),
\end{equation}
where equality is understood in $[0,\infty]$.  Consequently, if
$c_r>\gamma_i$ and $\sum_{j>i}\mathsf p_{ij}U_j(r)<\infty$, then
\begin{equation}\label{eq:bprei-harmonic-Ui-recursion}
        U_i(r)
        =\frac{\sum_{j>i}\mathsf p_{ij}U_j(r)}{c_r-\gamma_i}
        <\infty.
\end{equation}
\end{lemma}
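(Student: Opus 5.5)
The plan is to prove the identity at the level of the limiting random variables by a first-exit decomposition under \(\Pbf_{-r,i}\). Let \(T:=\inf\{n\geq1:Z_n\neq i\}\) be the first exit time from \(i\). Under (H1) the process is nondecreasing, so \(Z_T>i\). Moreover \(T<\infty\) almost surely, since staying at \(i\) forever has probability \(\lim_\ell(\text{tilted stay probability})^\ell=0\); this tilted stay probability is \(\Ebf_{-r}[h_0p_1^i]<1\) by (H2). On \(\{T=\ell+1,\ Z_{\ell+1}=j\}\) the strong Markov property at time \(\ell+1\) applies. Because the tilt is a product over generations, the environment after \(\ell+1\) is still i.i.d. under \(\Pbf_{-r}\), independent of \(\cF_{\ell+1}\). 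Hence, for \(n\geq\ell+1\),
\[
V_n^{(i)}=\frac{Z_n}{\Pi_n}=\Pi_{\ell+1}^{-1}\,\widetilde V_{n-\ell-1}^{(j)},
\]
where \(\widetilde V^{(j)}\) is the normalized process of a fresh BPREI started from \(j\) and driven by the shifted environment. Letting \(n\to\infty\) and using Lemma~\ref{lem:bprei-harmonic-normalized-limit} (almost-sure convergence for every starting state) gives \(V_\infty^{(i)}=\Pi_{\ell+1}^{-1}\widetilde V_\infty^{(j)}\) on this event. Here \(\widetilde V_\infty^{(j)}\) has the \(\Pbf_{-r,j}\)-law of \(V_\infty^{(j)}\) and is conditionally independent of \(\cF_{\ell+1}\).

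Next I take \((-r)\)th powers and expectations. All terms are nonnegative, so Tonelli justifies the sum in \([0,\infty]\):
\[
U_i(r)=\sum_{\ell\geq0}\sum_{j>i}\Ebf_{-r,i}\bigl[\Pi_{\ell+1}^{r};\,Z_1=\cdots=Z_\ell=i,\ Z_{\ell+1}=j\bigr]\,U_j(r).
\]
The remaining expectation I convert back to the original law through the tilt density \(\Pi_{\ell+1}^{-r}c_r^{-(\ell+1)}\) on \(\cG_{\ell+1}\). Since the event is \(\cF_{\ell+1}\)-measurable and the reproduction and immigration mechanisms are unchanged given the marks, the factor \(\Pi_{\ell+1}^r\) cancels the density exactly. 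This gives
\[
c_r^{-(\ell+1)}\Pbf_i(Z_1=\cdots=Z_\ell=i,\ Z_{\ell+1}=j)=c_r^{-(\ell+1)}\gamma_i^\ell\,\mathsf p_{ij},
\]
where the factorization follows from the Markov property under the original annealed law. Substituting yields \eqref{eq:bprei-harmonic-tilted-first-exit}.

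The consequence \eqref{eq:bprei-harmonic-Ui-recursion} is then just the geometric sum
\[
\sum_{\ell\geq0}\frac{\gamma_i^\ell}{c_r^{\ell+1}}=\frac{1}{c_r-\gamma_i},
\qquad c_r>\gamma_i,
\]
multiplied by the finite quantity \(\sum_{j>i}\mathsf p_{ij}U_j(r)\).

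The delicate step is the strong Markov identification of \(V_\infty^{(i)}\) on the exit event under the tilted measure. There I must check two things. First, the shifted environment has the \(\Pbf_{-r}\) law and is independent of the pre-exit block, which holds because the density factorizes over generations. Second, the clan-based limit of Lemma~\ref{lem:bprei-harmonic-normalized-limit} for the shifted process agrees almost surely with the limit of the shifted normalized sequence. This is immediate if I use the almost-sure convergence of the submartingale \(V_n\) for every initial state rather than the series representation. Nothing requires finiteness of the \(U_j\), which is why the identity is stated in \([0,\infty]\).
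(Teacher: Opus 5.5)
Your proposal is correct and follows the paper's proof essentially step for step. You use the same first-exit decomposition under \(\Pbf_{-r,i}\), with almost-sure finiteness of the exit time coming from (H2), and the same pathwise factorization \(V_\infty^{(i)}=\Pi_{\ell+1}^{-1}\widetilde V_\infty^{(j)}\) on the exit event; the factor \(\Pi_{\ell+1}^{r}\) then cancels against the tilt density to give \(c_r^{-(\ell+1)}\gamma_i^\ell\mathsf p_{ij}U_j(r)\), and Tonelli plus the geometric series finish the argument. Your tilted stay probability \(\Ebf_{-r}[h_0p_1^i]\) is exactly the paper's \(\Ebf[M^{-r}h_0p_1^i]/c_r\).
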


\begin{proof}
Let \(\tau_i:=\inf\{n\geq1:Z_n>i\}\) when \(Z_0=i\).  Under
\(\Pbf_{-r}\), the probability of remaining at \(i\) for one more generation
is
\[
        \widetilde\gamma_i(r)
        :=\frac{\Ebf[M^{-r}h_0p_1^i]}{c_r}<1,
\]
where strict inequality follows from
Condition~\ref{cond:bprei-harmonic-schroder}.  Hence
\(\Pbf_{-r,i}(\tau_i<\infty)=1\).

Now, on \(\{\tau_i=\ell+1,Z_{\ell+1}=j\}\), write
\(V_\infty^{(i)}
=\Pi_{\ell+1}^{-1}V_{\infty,\ell+1}^{(j)}\), where the second factor is
constructed from the shifted future and has the law of
\(V_\infty^{(j)}\).  The negative-tilt Radon--Nikodym derivative over the first
\(\ell+1\) generations cancels the factor \(\Pi_{\ell+1}^{r}\) arising
from \((V_\infty^{(i)})^{-r}\).  Independence of the shifted future then
gives
\[
\Ebf_{-r,i}\left[
(V_\infty^{(i)})^{-r};
\tau_i=\ell+1,\ Z_{\ell+1}=j
\right]
=
\frac{\gamma_i^\ell\mathsf p_{ij}}
     {c_r^{\ell+1}}U_j(r).
\]
Summing over \(\ell\geq0\) and \(j>i\) by Tonelli's theorem proves
\eqref{eq:bprei-harmonic-tilted-first-exit}.  Put
\[
        B_i(r):=\sum_{j>i}\mathsf p_{ij}U_j(r).
\]
This quantity is the post-exit contribution: it averages the future
inverse-moment constants \(U_j(r)\) over the possible states \(j>i\) entered
when the process first leaves \(i\).  In renewal-theory terminology,
\(B_i(r)/c_r\) is the forcing term in
\eqref{eq:bprei-harmonic-tilted-first-exit}, since that identity can be
written as
\[
        U_i(r)=\sum_{\ell=0}^{\infty}
        \left(\frac{\gamma_i}{c_r}\right)^\ell\frac{B_i(r)}{c_r}.
\]
Consequently, if \(c_r>\gamma_i\) and \(B_i(r)<\infty\), then
\[
\begin{aligned}
U_i(r)
&=\frac{B_i(r)}{c_r}
  \sum_{\ell=0}^{\infty}\left(\frac{\gamma_i}{c_r}\right)^\ell\\
&=\frac{B_i(r)}{c_r-\gamma_i},
\end{aligned}
\]
which is \eqref{eq:bprei-harmonic-Ui-recursion}.
\end{proof}

\subsubsection{First-exit renewal identities}

We now return to the Laplace transforms and isolate the exit-time regions that
produce the three scales.

\begin{lemma}[First-exit renewal identities]
\label{lem:bprei-harmonic-first-exit}
For every \(j\geq1\), \(n\geq0\), and \(t>0\),
\begin{equation}\label{eq:bprei-harmonic-first-exit-laplace}
\Phi_{j,n}(t)=\gamma_j^n\e^{-jt}
+\sum_{\ell=0}^{n-1}\gamma_j^\ell
\sum_{k>j}\mathsf p_{jk}\Phi_{k,n-\ell-1}(t).
\end{equation}
Let \(B_{j,n}(r):=\sum_{k>j}\mathsf p_{jk}H_{k,n}(r)\).  Then
\(H_{j,n+1}(r)=\gamma_jH_{j,n}(r)+B_{j,n}(r)\), and
\begin{equation}\label{eq:bprei-harmonic-renewal-iterated}
H_{j,n}(r)=j^{-r}\gamma_j^n
+\sum_{m=0}^{n-1}\gamma_j^{n-m-1}B_{j,m}(r).
\end{equation}
Moreover, \(H_{k,n}(r)\leq H_{j,n}(r)\) whenever \(k\geq j\).
\end{lemma}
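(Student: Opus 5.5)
The plan is to use the Markov property of the annealed BPREI together with monotonicity. Under (H1) the process is nondecreasing: every individual has at least one child and immigration is nonnegative, so $Z_{n+1}\geq Z_n$. From state $j$, the one-step transition therefore either stays at $j$, with probability $\mathsf p_{jj}=\gamma_j=\Ebf[h_0p_1^j]$, or jumps to some $k>j$, with probability $\mathsf p_{jk}$.

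For the Laplace identity \eqref{eq:bprei-harmonic-first-exit-laplace}, start from $Z_0=j$ and let $\tau_j:=\inf\{n\geq1:Z_n>j\}$. First split according to whether $\tau_j>n$ or $\tau_j=\ell+1\leq n$.
\begin{itemize}
\item On $\{\tau_j>n\}$ we have $Z_n=j$. Since the annealed chain is Markov with i.i.d. marks, $\Pbf_j(\tau_j>n)=\gamma_j^n$, and this event contributes $\gamma_j^n\e^{-jt}$.
\item On $\{\tau_j=\ell+1,\,Z_{\ell+1}=k\}$ the probability is $\gamma_j^\ell\mathsf p_{jk}$. The simple Markov property at time $\ell+1$ gives $\Ebf_j[\e^{-tZ_n}\mid\cF_{\ell+1}]=\Phi_{k,n-\ell-1}(t)$ on this event.
\end{itemize}
Summing over $0\leq\ell\leq n-1$ and $k>j$ yields the identity. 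All terms are nonnegative and bounded by one, so the interchange of sums is justified by Tonelli's theorem.

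For the harmonic recursion, condition on the first step instead. By the Markov property,
\[
H_{j,n+1}(r)=\Ebf_j\bigl[H_{Z_1,n}(r)\bigr]=\gamma_jH_{j,n}(r)+\sum_{k>j}\mathsf p_{jk}H_{k,n}(r).
\]
Here $H_{k,n}(r)\leq k^{-r}\leq 1$ because $Z_n\geq k$, so every quantity is finite. Iterating this linear recursion from $H_{j,0}(r)=j^{-r}$ gives \eqref{eq:bprei-harmonic-renewal-iterated} by induction on $n$. Alternatively, one can integrate the Laplace identity against $t^{r-1}\,\dd t/\Gamma(r)$ via \eqref{eq:bprei-laplace-gamma}; after reindexing $m=n-\ell-1$ this produces the same formula and confirms the two agree.

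For the monotonicity claim $H_{k,n}(r)\leq H_{j,n}(r)$ when $k\geq j$, I would use a coupling. Run the process from $k$ with the same environment, the same immigration, and the same offspring variables for the first $j$ ancestral lines. The extra $k-j$ ancestors and their descendants add a nonnegative number of individuals, so $Z_n^{(k)}\geq Z_n^{(j)}$ pathwise. Since $x\mapsto x^{-r}$ is decreasing, the inequality follows. Constructing this coupling is the only step that needs any care: one must check that the descendants of the extra ancestors stay conditionally independent given the environment, which is exactly the branching construction in \eqref{eq:bprei-recursion}. Otherwise the lemma is routine bookkeeping.
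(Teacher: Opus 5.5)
Your proposal is correct and follows the paper's proof essentially step for step: you use the same first-exit decomposition at $\tau_j$ for the Laplace identity, and a pathwise coupling through a common environment, immigration and offspring variables for monotonicity. The differences are cosmetic. You obtain the harmonic recursion by direct first-step conditioning, whereas the paper reads it off the Laplace identity via \eqref{eq:bprei-laplace-gamma}. The paper also phrases the coupling as an induction on the shared array, $Z_{n+1}^{(k)}-Z_{n+1}^{(j)}=\sum_{a=Z_n^{(j)}+1}^{Z_n^{(k)}}\xi_{n,a}\geq0$, which avoids having to check conditional independence of the extra clans.
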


\begin{proof}
Let \(\tau_j:=\inf\{m\geq1:Z_m>j\}\) under \(\Pbf_j\).  Since the process is
nondecreasing, on \(\{\tau_j>n\}\) one has
\(Z_0=\cdots=Z_n=j\), and therefore
\(\Ebf_j[\e^{-tZ_n};\tau_j>n]=\gamma_j^n\e^{-jt}\).  For
\(0\leq\ell\leq n-1\) and \(k>j\), the Markov property gives
\[
\Ebf_j[\e^{-tZ_n};\tau_j=\ell+1,Z_{\ell+1}=k]
=\gamma_j^\ell\mathsf p_{jk}\Phi_{k,n-\ell-1}(t).
\]
Summing over \(\ell\) and \(k\) proves
\eqref{eq:bprei-harmonic-first-exit-laplace}; the Laplace--Gamma identity
and iteration give the two harmonic relations.

For monotonicity, construct the processes from \(j\leq k\) with the same
environment, immigration variables, and offspring array.  If
\(Z_n^{(j)}\leq Z_n^{(k)}\), then
\(Z_{n+1}^{(k)}-Z_{n+1}^{(j)}
 =\sum_{a=Z_n^{(j)}+1}^{Z_n^{(k)}}\xi_{n,a}\geq0\).
Thus \(Z_n^{(j)}\leq Z_n^{(k)}\) for every \(n\), and negative powers give
\(H_{k,n}(r)\leq H_{j,n}(r)\).
\end{proof}

In \eqref{eq:bprei-harmonic-renewal-iterated}, the index \(m\) is the number
of generations available after the first exit.  Terms with bounded \(m\)
form the late-exit region, terms with bounded \(n-1-m\) form the early-exit
region, and the middle range contributes at leading order only when
\(c_r=\gamma_i\).  This geometric convolution yields exact constants rather
than only the three orders.

\subsubsection{Proof of the harmonic-moment theorem}

\begin{proof}[Proof of Theorem~\ref{thm:bprei-supercritical-harmonic}]
Fix \(r>0\), choose \(K=K(r)\) as in
Lemma~\ref{lem:bprei-harmonic-large-state}, and proceed backward from
\(K-1\) to the prescribed initial state.  Suppose the conclusions are known
for every state larger than \(i\).

If \(c_r>\gamma_i\), the induction hypothesis and
\eqref{eq:bprei-harmonic-large-state-limit} give
\[
c_r^{-n}B_{i,n}(r)
=\sum_{j>i}\mathsf p_{ij}\,c_r^{-n}H_{j,n}(r)
\longrightarrow B_i(r):=\sum_{j>i}\mathsf p_{ij}U_j(r)\in(0,\infty).
\]
Hence
\[
\begin{aligned}
c_r^{-n}H_{i,n}(r)
&=i^{-r}\left(\frac{\gamma_i}{c_r}\right)^n
+\frac1{c_r}\sum_{m=0}^{n-1}
\left(\frac{\gamma_i}{c_r}\right)^{n-1-m}c_r^{-m}B_{i,m}(r)\\
&\longrightarrow\frac{B_i(r)}{c_r-\gamma_i}.
\end{aligned}
\]
Lemma~\ref{lem:bprei-harmonic-tilted-first-exit} gives
\(U_i(r)=B_i(r)/(c_r-\gamma_i)<\infty\).  Since
\(V_n^{(i)}\to V_\infty^{(i)}\) almost surely and the expectations of their
inverse \(r\)th powers converge, Scheff\'e's lemma gives convergence in
\(L^1(\Pbf_{-r,i})\).

If \(c_r=\gamma_i\), then every \(j>i\) belongs to the preceding case, so
\(c_r^{-n}B_{i,n}(r)\to B_i(r)\).  Therefore
\[
\frac{H_{i,n}(r)}{nc_r^n}
=\frac{i^{-r}}n+\frac1{nc_r}\sum_{m=0}^{n-1}c_r^{-m}B_{i,m}(r)
\longrightarrow\frac{B_i(r)}{c_r}
\]
by Ces\`aro convergence.

If \(c_r<\gamma_i\), set
\(d_i:=c_r\vee\gamma_{i+1}<\gamma_i\).  The already established result for
state \(i+1\) gives
\(0\leq B_{i,n}(r)\leq H_{i+1,n}(r)\leq C(n+1)d_i^n\); hence
\(\sum_{m\geq0}\gamma_i^{-m}B_{i,m}(r)<\infty\), and
\[
\gamma_i^{-n}H_{i,n}(r)
\longrightarrow i^{-r}+\frac1{\gamma_i}
\sum_{m\geq0}\gamma_i^{-m}B_{i,m}(r).
\]
The backward induction reaches every fixed initial state.  Since \(c_r\) is
strictly decreasing, the three cases are equivalent to
\(r<r_i\), \(r=r_i\), and \(r>r_i\).
\end{proof}

\subsubsection{Identification of the constants}

\begin{proposition}[Representations of the constants]
\label{prop:bprei-harmonic-gamma-constants}
Under the assumptions of Theorem~\ref{thm:bprei-supercritical-harmonic}, let
\(V_\infty^{(j)}\) be the tilted limit in
Lemma~\ref{lem:bprei-harmonic-normalized-limit} and put
\(U_j(r):=\Ebf_{-r,j}[(V_\infty^{(j)})^{-r}]\).  In the environmental regime,
\begin{equation}\label{eq:bprei-harmonic-env-constant}
        C^{\mathrm{env}}_{i,r}=U_i(r)\in(0,\infty).
\end{equation}
At the boundary \(c_r=\gamma_i\),
\begin{equation}\label{eq:bprei-harmonic-boundary-constant}
        C^{\mathrm{bd}}_i
        =\frac1{c_r}\sum_{j>i}\mathsf p_{ij}U_j(r).
\end{equation}
In the small-state regime,
\begin{equation}\label{eq:bprei-harmonic-small-constant}
        C^{\mathrm{sm}}_{i,r}
        =i^{-r}
        +\frac1{\gamma_i}\sum_{m=0}^{\infty}\gamma_i^{-m}
          \sum_{j>i}\mathsf p_{ij}H_{j,m}(r).
\end{equation}
These constants have the following Laplace--Gamma forms.  If
\(c_r>\gamma_i\), let
\[
        \phi_{i,r}(t)
        :=\Ebf_{-r,i}[\e^{-tV_\infty^{(i)}}].
\]
Then
\begin{equation}\label{eq:bprei-harmonic-env-gamma-constant}
        C^{\mathrm{env}}_{i,r}
        =\frac1{\Gamma(r)}
          \int_0^\infty\phi_{i,r}(t)t^{r-1}\dd t.
\end{equation}

If $c_r=\gamma_i$, then
\begin{equation}\label{eq:bprei-harmonic-boundary-gamma-constant}
        C^{\mathrm{bd}}_i
        =\frac1{c_r\Gamma(r)}
          \int_0^\infty
          \left\{\sum_{j>i}\mathsf p_{ij}\phi_{j,r}(t)\right\}
          t^{r-1}\dd t.
\end{equation}

If $c_r<\gamma_i$, define
\begin{equation}\label{eq:bprei-harmonic-Qi}
        \mathcal Q_i(t)
        :=\e^{-it}
        +\frac1{\gamma_i}
          \sum_{m=0}^{\infty}\gamma_i^{-m}
          \sum_{j>i}\mathsf p_{ij}\Phi_{j,m}(t),
        \qquad t>0.
\end{equation}
Then
\begin{equation}\label{eq:bprei-harmonic-small-gamma-constant}
        C^{\mathrm{sm}}_{i,r}
        =\frac1{\Gamma(r)}
          \int_0^\infty\mathcal Q_i(t)t^{r-1}\dd t,
\end{equation}
and
\begin{equation}\label{eq:bprei-harmonic-fixed-laplace-limit}
        \gamma_i^{-n}\Phi_{i,n}(t)
        \uparrow\mathcal Q_i(t),
        \qquad t>0.
\end{equation}
\end{proposition}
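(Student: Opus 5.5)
The three scalar representations are read off the proof of Theorem~\ref{thm:bprei-supercritical-harmonic}, and the Laplace--Gamma forms are obtained by applying \eqref{eq:bprei-laplace-gamma} to each one.

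\emph{Environmental regime.} In the environmental case the proof gives \(c_r^{-n}H_{i,n}(r)\to B_i(r)/(c_r-\gamma_i)\). Lemma~\ref{lem:bprei-harmonic-tilted-first-exit} identifies this limit as \(U_i(r)\), which is \eqref{eq:bprei-harmonic-env-constant}.

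\emph{Boundary and small-state regimes.} At the boundary the Ces\`aro step gives \(B_i(r)/c_r\). Inserting \(B_i(r)=\sum_{j>i}\mathsf p_{ij}U_j(r)\) yields \eqref{eq:bprei-harmonic-boundary-constant}; here every \(j>i\) lies in the environmental regime, so \(U_j(r)<\infty\). In the small-state regime the limit displayed in the proof is exactly \eqref{eq:bprei-harmonic-small-constant}, once \(B_{i,m}(r)\) is expanded.

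\emph{Laplace--Gamma forms.} For the environmental form, note that \(U_i(r)=\Ebf_{-r,i}[(V_\infty^{(i)})^{-r}]\) and that \(V_\infty^{(i)}>0\) almost surely by Lemma~\ref{lem:bprei-harmonic-normalized-limit}. Apply \(\Gamma(r)x^{-r}=\int_0^\infty\e^{-tx}t^{r-1}\dd t\) pathwise and use Tonelli; this gives \eqref{eq:bprei-harmonic-env-gamma-constant}. The boundary form follows in the same way, term by term in \(j\), again by Tonelli, since all terms are nonnegative.

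\emph{Small-state Laplace limit.} Start from \eqref{eq:bprei-harmonic-first-exit-laplace} and divide by \(\gamma_i^n\). Substituting \(m=n-\ell-1\) gives
\[
\gamma_i^{-n}\Phi_{i,n}(t)=\e^{-it}+\frac1{\gamma_i}\sum_{m=0}^{n-1}\gamma_i^{-m}\sum_{j>i}\mathsf p_{ij}\Phi_{j,m}(t).
\]
All summands are nonnegative, so the right side is nondecreasing in \(n\) and increases to \(\mathcal Q_i(t)\). This is \eqref{eq:bprei-harmonic-fixed-laplace-limit}.

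Multiply by \(t^{r-1}\) and integrate. By monotone convergence, \(\Gamma(r)\gamma_i^{-n}H_{i,n}(r)\) increases to \(\int_0^\infty\mathcal Q_i(t)t^{r-1}\dd t\). The theorem shows that the left side converges to \(\Gamma(r)C^{\mathrm{sm}}_{i,r}<\infty\), so \eqref{eq:bprei-harmonic-small-gamma-constant} follows. Tonelli applied inside \(\mathcal Q_i\), together with \(\Gamma(r)H_{j,m}(r)=\int\Phi_{j,m}(t)t^{r-1}\dd t\), also reproduces \eqref{eq:bprei-harmonic-small-constant} directly. The only point needing care is finiteness, and it is inherited from the theorem.

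The main obstacle is this last use of monotone convergence, because the interchange of the \(n\)-limit with the \(t\)-integral must not lose mass. Here it is harmless, since the sequence is monotone in \(n\).
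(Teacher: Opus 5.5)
Your proposal is correct and follows the paper's proof essentially step by step. You read the scalar constants off the backward induction in Theorem~\ref{thm:bprei-supercritical-harmonic}, obtain the environmental and boundary Gamma forms from the pathwise Gamma identity plus Tonelli, and get the small-state forms by reindexing \eqref{eq:bprei-harmonic-first-exit-laplace}. The only difference is minor. The paper proves \eqref{eq:bprei-harmonic-small-gamma-constant} by applying Tonelli to the series defining $\mathcal Q_i$, after first checking $\mathcal Q_i(t)<\infty$ via $\e^{-tx}\leq C_{r,t}x^{-r}$. Your monotone-convergence-in-$n$ route gives the same identity directly from the theorem's limit, and pointwise finiteness of $\mathcal Q_i$ then follows because it is nonincreasing in $t$.
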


\begin{proof}
If \(c_r>\gamma_i\), the Gamma identity gives
\[
C^{\mathrm{env}}_{i,r}=U_i(r)
=\frac1{\Gamma(r)}\int_0^\infty
\Ebf_{-r,i}[\e^{-tV_\infty^{(i)}}]t^{r-1}\dd t.
\]
If \(c_r=\gamma_i\), Tonelli and the preceding identity for \(j>i\) give
\[
C_i^{\mathrm{bd}}
=\frac1{c_r}\sum_{j>i}\mathsf p_{ij}U_j(r)
=\frac1{c_r\Gamma(r)}\int_0^\infty
\left\{\sum_{j>i}\mathsf p_{ij}\phi_{j,r}(t)\right\}t^{r-1}\dd t.
\]
If \(c_r<\gamma_i\), reindex
\eqref{eq:bprei-harmonic-first-exit-laplace} to obtain
\[
\gamma_i^{-n}\Phi_{i,n}(t)
=\e^{-it}+\frac1{\gamma_i}\sum_{m=0}^{n-1}\gamma_i^{-m}
\sum_{j>i}\mathsf p_{ij}\Phi_{j,m}(t)\uparrow\mathcal Q_i(t).
\]
For \(x\geq1\), \(\e^{-tx}\leq C_{r,t}x^{-r}\); hence
\[
\sum_{m\geq0}\gamma_i^{-m}
\sum_{j>i}\mathsf p_{ij}\Phi_{j,m}(t)
\leq C_{r,t}\sum_{m\geq0}\gamma_i^{-m}B_{i,m}(r)<\infty.
\]
Tonelli and \eqref{eq:bprei-laplace-gamma} then give
\[
\frac1{\Gamma(r)}\int_0^\infty\mathcal Q_i(t)t^{r-1}\dd t
=i^{-r}+\frac1{\gamma_i}\sum_{m\geq0}\gamma_i^{-m}
\sum_{j>i}\mathsf p_{ij}H_{j,m}(r),
\]
which is \eqref{eq:bprei-harmonic-small-constant}.
\end{proof}

\begin{remark}[Repeated persistence probabilities]
\label{rem:bprei-harmonic-repeated-persistence}
Condition~\ref{cond:bprei-harmonic-schroder} was imposed to keep the standard
three-regime statement transparent.  If several accessible states share the dominant one-step persistence probability, the first-exit renewal equation remains valid, but repeated resonances
can create higher powers of $n$.  The polynomial power is then determined by
the longest accessible chain carrying the dominant persistence probability.  This is a
spectral refinement of the theorem rather than a change in the Gamma method.
\end{remark}

\begin{remark}[When \(\Pbf(p_0>0)>0\)]
\label{rem:bprei-harmonic-p0-positive}
When downward moves are possible, the persistence probability \(\gamma_i\) no
longer determines the small-generation-size rate by itself.  The complete
reproduction--immigration p.g.f.\ product then governs the Laplace-transform
profile.  If that profile has a purely exponential rate \(\theta_i\) and the
negative \(r\)-tilt remains supercritical, the same renewal mechanism leads
to the scales \(c_r^n\), \(n\theta_i^n\), and \(\theta_i^n\).  Exact
constants require additional Laplace-transform and intermediate-window
estimates and are not needed for the controlled-process results here.
\end{remark}

This geometric renewal argument relies on the nondecreasing population sizes
and on the fact that every negative \(r\)-tilt remains supercritical.  In a
critical environment the environmental walk oscillates, and a small
terminal population is organized by the finite-horizon minimum of that walk.
The next subsection replaces the geometric renewal kernel by a regularly
varying ladder convolution, bringing Spitzer--Doney and local fluctuation
estimates into the analysis.

\subsection{Critical BPREI and the finite-horizon minimum decomposition}
\label{subsec:bprei-critical-harmonic}

We now consider a critical BPREI.  Recall that
\(S_0=0\) and
\(S_n:=\log\Pi_n=\sum_{j=0}^{n-1}\log M_j\) is the environmental random
walk.  For a fixed terminal generation \(n\), we decompose according to the
first time at which \(S\) attains its minimum over
\(\{0,\ldots,n\}\).  The resulting ladder convolution separates the
fluctuation theory of the environment from the lower tail of the
post-minimum population.

For \(n\geq0\), put
\begin{equation}\label{eq:critical-minimum-time}
        L_n:=\min_{0\leq j\leq n}S_j,
        \qquad
        \tau_n:=\min\{0\leq j\leq n:S_j=L_n\}.
\end{equation}
Thus \(L_n\) is the finite-horizon minimum and \(\tau_n\) is its first
attainment time.  Define
\begin{equation}\label{eq:critical-dminus}
        d_n^-:=\Pbf(\tau_n=n)
        =\Pbf\left(\max_{1\leq j\leq n}S_j<0\right).
\end{equation}
Indeed, \(\tau_n=n\) if and only if \(S_n<S_k\) for every \(k<n\); time
reversal turns this into strict negativity of the reversed partial sums.  The
first-attainment convention is therefore paired with the strict descending-
ladder convention.

\begin{assumption}[Critical Spitzer--Doney condition]
\label{ass:bprei-critical-SD}
Assume that \(S\) is nondegenerate and oscillating and that, for some
\(\varrho\in(0,1)\),
\begin{equation}\label{eq:critical-SD}
        \Pbf(S_n>0)\longrightarrow\varrho.
\end{equation}
Equivalently, one may impose Spitzer's averaged condition; the equivalence is
due to Doney \cite{Doney1995}.
\end{assumption}

Under Assumption~\ref{ass:bprei-critical-SD}, fluctuation theory gives slowly
varying functions \(\ell_-\) and \(\ell_+\) such that
\begin{equation}\label{eq:critical-ladder-rv}
        d_n^-=n^{-\varrho}\ell_-(n),
        \qquad
        \Pbf(L_n\geq0)=n^{-(1-\varrho)}\ell_+(n);
\end{equation}
see \cite{Doney1995,Feller1971,AfanasyevGeigerKerstingVatutin2005}.  The
first sequence weights environmental blocks that end at a strict descending
ladder epoch, while the second is the weak nonnegative-persistence
probability for the post-minimum block.

\begin{remark}[Centered finite variance]
\label{rem:bprei-critical-finite-variance}
If \(\Ebf\log M=0\) and \(0<\Var(\log M)<\infty\), then
\(\varrho=1/2\) and
\(d_n^-\sim\kappa_0n^{-1/2}\) for some \(\kappa_0\in(0,\infty)\); see
\cite[Chapter~XII]{Feller1971}.  No arithmetic restriction is needed for
this persistence asymptotic.
\end{remark}

If \(p_0=Q(\{0\})=0\) almost surely, then \(M\geq1\) almost surely.  The
critical condition \(\Ebf\log M=0\) would then force \(M=1\) almost surely
and the integer-valued reproduction law to be concentrated at one.  Thus a
nondegenerate critical environment necessarily satisfies
\begin{equation}\label{eq:critical-p0-positive}
        \Pbf(p_0>0)>0.
\end{equation}

For \(z\in\Nzero\), \(t>0\), and \(m\geq0\), define
\begin{equation}\label{eq:critical-Phi}
        \Phi^{\mathrm{min}}_{m,t}(z)
        :=\Ebf_z[\e^{-tZ_m};Z_m>0,L_m\geq0],
\end{equation}
and, for \(r>0\),
\begin{equation}\label{eq:critical-Psi}
        \Psi_{m,r}(z)
        :=\Ebf_z[Z_m^{-r};Z_m>0,L_m\geq0].
\end{equation}
For \(k\geq0\), let
\begin{equation}\label{eq:critical-entrance-law-finite}
        \nu_{i,k}^-(A)
        :=\Pbf_i(Z_k\in A\mid\tau_k=k),
        \qquad A\subseteq\Nzero.
\end{equation}

\begin{assumption}[Entrance law at an endpoint minimum]
\label{ass:bprei-critical-entrance}
For the fixed initial state \(i\), there is a probability law \(\nu_i^-\) on
\(\Nzero\) such that
\begin{equation}\label{eq:critical-entrance-TV}
        \|\nu_{i,k}^--\nu_i^-\|_{\mathrm{TV}}\longrightarrow0.
\end{equation}
Moreover, for some \(m_0\geq0\),
\(\Pbf_{\nu_i^-}(Z_{m_0}>0,L_{m_0}\geq0)>0\).
\end{assumption}

Write
\(\mathfrak N_i^{\mathrm{fin}}:=\{\nu_{i,k}^-:k\geq0\}\), and, when the
limiting law is available, put
\(\mathfrak N_i:=\mathfrak N_i^{\mathrm{fin}}\cup\{\nu_i^-\}\).

\begin{assumption}[Uniform post-minimum inverse moments]
\label{ass:bprei-critical-UIM}
There is \(s_0>0\) such that, for every \(s\in(0,s_0]\), there is a
summable sequence \((\overline a_m(s))_{m\geq0}\) satisfying
\begin{equation}\label{eq:critical-UIM}
\sup_{\nu\in\mathfrak N_i}
\Ebf_\nu[Z_m^{-s};Z_m>0,L_m\geq0]
\leq\overline a_m(s),
\qquad m\geq0.
\end{equation}
For results not assuming an entrance law, the supremum is taken only over
\(\mathfrak N_i^{\mathrm{fin}}\).
\end{assumption}

Assumption~\ref{ass:bprei-critical-UIM} controls, uniformly over the
possible endpoint-minimum entrance laws, the lower tail of the population
during the post-minimum block.  Indeed, for \(z\geq1\),
\[
        z^{-s}=s\int_z^\infty x^{-s-1}\dd x.
\]
This is the Mellin integral representation of the negative power \(z^{-s}\).
Consequently, Tonelli's theorem gives, for every starting law \(\nu\),
\[
\Ebf_\nu[Z_m^{-s};Z_m>0,L_m\geq0]
=s\int_1^\infty x^{-s-1}
\Pbf_\nu(0<Z_m\leq x,L_m\geq0)\dd x.
\]
Thus \eqref{eq:critical-UIM} is a uniform integrated lower-tail bound.  The
summability of \(\overline a_m(s)\) supplies the domination needed in the
finite-horizon minimum convolution; together with
Assumption~\ref{ass:bprei-critical-terminal-localization} below, it leads to
the Laplace-transform and harmonic-moment limits in
Theorem~\ref{thm:bprei-critical-harmonic}.

\begin{assumption}[Convolution-tail localization]
\label{ass:bprei-critical-terminal-localization}
For every \(s\in(0,s_0]\),
\begin{equation}\label{eq:critical-terminal-localization}
\frac1{d_n^-}
\sum_{k=0}^{\lfloor n/2\rfloor}
 d_k^-\overline a_{n-k}(s)
\longrightarrow0.
\end{equation}
\end{assumption}

We refer to Assumptions~\ref{ass:bprei-critical-SD},
\ref{ass:bprei-critical-entrance}, \ref{ass:bprei-critical-UIM}, and
\ref{ass:bprei-critical-terminal-localization} collectively as
\((\mathrm C)\).  The first two describe the environmental ladder process
and the population entering at an endpoint minimum; the last two give exactly
the summability and localization needed by the post-minimum convolution.
Appendix~\ref{sec:critical-verification} verifies all four inputs for a broad
centered finite-variance class.  Thus the theorem below is abstract at the
level of the post-minimum inputs, but it is not left without a model
verification.

\begin{remark}[A verified critical class]
Subsection~3.4.2 introduces a natural centered
finite-variance class \(\mathcal C_{\mathrm{fv}}\) of BPREI.
Theorem~\ref{thm:bprei-critical-fv-class} below, using the results proved in
Appendix~\ref{sec:critical-verification}, shows that every BPREI in this class
satisfies the assumptions \((\mathrm C)\), namely \((\mathrm{SD})\),
\((\mathrm{EL})\), \((\mathrm{UIM})\), and \((\mathrm{LOC})\).
\end{remark}

Define
\begin{equation}\label{eq:critical-qim}
\begin{aligned}
        q_{i,m}(t)
        &:=\int_{\Nzero}\Phi^{\mathrm{min}}_{m,t}(z)\,\nu_i^-(\dd z),
        &Q_i(t)&:=\sum_{m=0}^{\infty}q_{i,m}(t),\\
        h_{i,m}(r)
        &:=\int_{\Nzero}\Psi_{m,r}(z)\,\nu_i^-(\dd z).
\end{aligned}
\end{equation}

\begin{theorem}[Critical Laplace profiles and harmonic moments]
\label{thm:bprei-critical-harmonic}
Assume \((\mathrm C)\).  Then, locally uniformly for \(t\in(0,\infty)\),
\begin{equation}\label{eq:critical-Laplace-limit}
        \frac{G_{i,n}(t)}{d_n^-}\longrightarrow Q_i(t)\in(0,\infty).
\end{equation}
Moreover, for every \(r>0\),
\begin{equation}\label{eq:critical-harmonic-main}
        \frac{\Ebf_i[Z_n^{-r};Z_n>0]}{d_n^-}
        \longrightarrow C_{i,r}\in(0,\infty),
\end{equation}
where
\begin{equation}\label{eq:critical-harmonic-constant}
        C_{i,r}
        =\sum_{m=0}^{\infty}h_{i,m}(r)
        =\frac1{\Gamma(r)}\int_0^\infty Q_i(t)t^{r-1}\dd t.
\end{equation}
\end{theorem}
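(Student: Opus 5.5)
The plan is to decompose at the first attainment time \(\tau_n\) of the finite-horizon minimum and work with the Laplace transform first. For fixed \(t>0\),
\[
G_{i,n}(t)=\sum_{k=0}^{n}\Ebf_i\bigl[\e^{-tZ_n};Z_n>0,\tau_n=k\bigr].
\]
The event \(\{\tau_n=k\}\) splits into two independent environmental blocks. The first is \(\{\tau_k=k\}\), which concerns only \(\cE_0,\dots,\cE_{k-1}\). The second is the event that the shifted walk \((S_{k+j}-S_k)_{0\le j\le n-k}\) stays weakly nonnegative, which concerns only \(\cE_k,\dots,\cE_{n-1}\). By the Markov property at time \(k\) and the i.i.d.\ environment,
\[
G_{i,n}(t)=\sum_{k=0}^{n}d_k^-\int\Phi^{\mathrm{min}}_{n-k,t}(z)\,\nu_{i,k}^-(\dd z).
\]
I will check the strict/weak conventions carefully: first attainment at \(k\) means strict inequality before \(k\) and weak inequality after \(k\). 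This matches \(d_k^-\) and \(L_{n-k}\geq0\) exactly. Reindexing with \(m=n-k\) gives \(G_{i,n}(t)/d_n^-=\sum_{m=0}^n (d_{n-m}^-/d_n^-)\,\int\Phi^{\mathrm{min}}_{m,t}\,\dd\nu_{i,n-m}^-\).

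Next I pass to the limit term by term with dominated convergence. For fixed \(m\), regular variation (SD) gives \(d_{n-m}^-/d_n^-\to1\). Since \(\Phi^{\mathrm{min}}_{m,t}\le1\), the total-variation convergence (EL) gives \(\int\Phi^{\mathrm{min}}_{m,t}\dd\nu_{i,n-m}^-\to q_{i,m}(t)\). For domination, use \(\e^{-tz}\le C_{s,t}z^{-s}\) for \(z\ge1\) with a fixed \(s\le s_0\). Then (UIM) bounds the \(m\)th term by \(C_{s,t}\,\overline a_m(s)\,d_{n-m}^-/d_n^-\).

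I split the sum at \(m=\lceil n/2\rceil\). For \(m\le n/2\), Potter bounds give \(d_{n-m}^-/d_n^-\le C\), so summability of \(\overline a_m(s)\) yields dominated convergence over this range. For \(m>n/2\), i.e.\ \(k=n-m<n/2\), the tail is at most \(C_{s,t}(d_n^-)^{-1}\sum_{k\le n/2}d_k^-\overline a_{n-k}(s)\), which tends to zero by (LOC). Together these give \(G_{i,n}(t)/d_n^-\to Q_i(t)\). The bounds depend on \(t\) only through \(C_{s,t}\), which is locally bounded, and \(\Phi^{\mathrm{min}}_{m,t}\) is monotone in \(t\); this gives local uniformity. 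Finiteness of \(Q_i(t)\) follows from \(\sum\overline a_m<\infty\). Positivity follows from the \(m_0\) condition in (EL), because \(q_{i,m_0}(t)>0\).

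For the harmonic moments, I would not integrate the Laplace limit directly, since uniformity near \(t=0\) and \(t=\infty\) is delicate. Instead I repeat the decomposition with \(z^{-r}\) in place of \(\e^{-tz}\):
\[
\Ebf_i[Z_n^{-r};Z_n>0]/d_n^-=\sum_m (d_{n-m}^-/d_n^-)\int\Psi_{m,r}\,\dd\nu_{i,n-m}^-.
\]
For \(r\le s_0\) the same domination argument applies verbatim, since \(\Psi_{m,r}\le1\) gives pointwise TV convergence and (UIM), (LOC) give domination. The limit is \(\sum_m h_{i,m}(r)\). For \(r>s_0\), note that \(z^{-r}\le z^{-s_0}\) on \(z\ge1\). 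Hence \(\Psi_{m,r}\le\Psi_{m,s_0}\), and the same dominating sequence \(\overline a_m(s_0)\) works, so every \(r>0\) is covered. Positivity again comes from \(h_{i,m_0}(r)>0\). Finally, the identity \(\sum_m h_{i,m}(r)=\Gamma(r)^{-1}\int_0^\infty Q_i(t)t^{r-1}\dd t\) follows from the Laplace–Gamma representation applied to each \(\Psi_{m,r}\) and Tonelli, since all terms are nonnegative.

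The main obstacle is the uniform domination across the whole convolution. The early-\(k\) range, where the post-minimum block is long but \(d_k^-\) is not comparable to \(d_n^-\), is handled only by (LOC). The late-\(k\) range needs Potter-type control of \(d_{n-m}^-/d_n^-\) uniformly in \(m\le n/2\). I would first verify that the regular-variation index \(-\varrho\) gives \(\sup_{m\le n/2}d_{n-m}^-/d_n^-\le 2^{\varrho}+o(1)\), and then check that the conditioning in \(\nu^-_{i,k}\) matches the decomposition event exactly.
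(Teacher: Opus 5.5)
Your proposal is correct and follows essentially the paper's own route: the first-attainment decomposition of Lemma~\ref{lem:bprei-critical-minimum-decomposition}, fixed-$m$ convergence from (EL) in total variation, domination by $\overline a_m(s)$ with Potter/uniform-convergence bounds for $m\le n/2$ and (LOC) for $m>n/2$ (exactly the content of Lemma~\ref{lem:bprei-critical-summable-convolution}), positivity from the $m_0$-clause, and the Gamma identity with Tonelli for the constant. The differences are cosmetic. Local uniformity already follows from the $t$-independent bound $\sup_t|a_{n,m}(t)-q_{i,m}(t)|\le\|\nu_{i,n-m}^--\nu_i^-\|_{\mathrm{TV}}$, so the monotonicity argument is not needed. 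The paper also fixes one $s<\min\{r,s_0\}$ instead of treating $r\le s_0$ and $r>s_0$ separately.
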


\begin{corollary}[Critical transfer for bounded lower-tail kernels]
\label{cor:bprei-critical-bounded-kernel}
Assume \((\mathrm C)\).  Let \(g:\Nzero\to[0,\infty)\) be bounded,
\(g(0)=0\), and suppose that, for some \(q>0\) and \(C_g<\infty\),
\(g(k)\leq C_gk^{-q}\) for every \(k\geq1\).  Then
\begin{equation}\label{eq:critical-bounded-kernel-limit}
        \frac{\Ebf_i[g(Z_n)]}{d_n^-}
        \longrightarrow
        \sum_{m=0}^{\infty}
        \Ebf_{\nu_i^-}[g(Z_m);L_m\geq0].
\end{equation}
The limit is finite and nonnegative.
\end{corollary}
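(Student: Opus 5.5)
The plan is to repeat the finite-horizon minimum decomposition behind Theorem~\ref{thm:bprei-critical-harmonic}, with the kernel \(g\) in place of \(z\mapsto z^{-r}\one_{\{z>0\}}\). Since \(g(0)=0\), we have \(\Ebf_i[g(Z_n)]=\Ebf_i[g(Z_n);Z_n>0]\). We partition according to the first minimum time \(\tau_n=k\in\{0,\dots,n\}\). On \(\{\tau_n=k\}\) the pre-minimum block ends at a strict endpoint minimum (\(\tau_k=k\)). The post-minimum block, of length \(m=n-k\), satisfies \(L_m\geq0\) for the shifted walk. The post-block depends only on the environment and reproduction after time \(k\), started from \(Z_k\). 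The Markov property and independence of the environmental marks then give the exact convolution
\[
\Ebf_i[g(Z_n)]=\sum_{k=0}^{n}d_k^-\int \Ebf_z[g(Z_{n-k});L_{n-k}\geq0]\,\nu^-_{i,k}(\dd z),
\]
with the same indexing as in the proof of Theorem~\ref{thm:bprei-critical-harmonic}. We write \(g_m(\nu):=\Ebf_\nu[g(Z_m);L_m\geq0]\).

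The first step is domination. Fix \(s:=\min(q,s_0)\). Because \(g\) is bounded and \(g(k)\leq C_gk^{-q}\), there is a constant \(C\) with \(g(k)\leq Ck^{-s}\) for all \(k\geq1\). Hence \(g_m(\nu)\leq C\,\Ebf_\nu[Z_m^{-s};Z_m>0,L_m\geq0]\leq C\overline a_m(s)\) uniformly over \(\nu\in\mathfrak N_i\), by (UIM).

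Next we split the sum at \(k=\lfloor n/2\rfloor\). The terms with \(k\leq n/2\) total at most \(C\sum_{k\le n/2}d_k^-\overline a_{n-k}(s)=o(d_n^-)\) by (LOC). For the terms with \(k>n/2\) we reindex by \(m=n-k\in[0,n/2)\). The ratio \(d_{n-m}^-/d_n^-\to1\) for each fixed \(m\) by the regular variation in (SD). By (EL) and boundedness of \(g\), \(|g_m(\nu^-_{i,n-m})-g_m(\nu_i^-)|\leq\|g\|_\infty\|\nu^-_{i,n-m}-\nu^-_i\|_{\mathrm{TV}}\to0\) for each fixed \(m\).

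The main obstacle is dominated convergence over \(m\), because \(d_{n-m}^-/d_n^-\) is not bounded by \(1\). For \(m<n/2\), however, Potter bounds (or simply monotonicity of \(d^-\) together with \(d^-_{\lceil n/2\rceil}/d_n^-\to2^{\varrho}\)) give \(\sup_{m<n/2}d_{n-m}^-/d_n^-\leq C\). The summable majorant \(C\overline a_m(s)\) then justifies exchanging limit and sum, so
\[
\frac{\Ebf_i[g(Z_n)]}{d_n^-}\to\sum_{m\geq0}g_m(\nu_i^-).
\]
The limit is finite because \(\sum_m\overline a_m(s)<\infty\), and it is nonnegative because \(g\geq0\). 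If the proof of Theorem~\ref{thm:bprei-critical-harmonic} already isolates this convolution lemma for a general dominated kernel, we simply invoke it with \(g\).
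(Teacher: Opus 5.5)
Your proposal is correct and follows the paper's proof. Both arguments use the exact finite-horizon minimum decomposition, fixed-$m$ convergence from (EL) via total variation, the uniform majorant $C\overline a_m(s)$ from (UIM) (valid since $g(0)=0$ and $g(k)\le C_g k^{-s}$), and (LOC) for the range $k\le n/2$. The only difference is cosmetic: you carry out the convolution limit directly by dominated convergence, using monotonicity of $d_n^-$ for the uniform ratio bound, whereas the paper packages exactly this split into Lemma~\ref{lem:bprei-critical-summable-convolution} and cites it.
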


Since the series on the right-hand side of
\eqref{eq:critical-bounded-kernel-limit} consists of nonnegative terms, its
limit is positive exactly when
\[
        \Ebf_{\nu_i^-}[g(Z_m);L_m\geq0]>0
\]
for some \(m\geq0\).  Equivalently, there exist \(m\geq0\) and \(k\geq1\)
such that
\[
        g(k)>0
        \quad\text{and}\quad
        \Pbf_{\nu_i^-}(Z_m=k,L_m\geq0)>0.
\]

\begin{theorem}[Critical order bounds without an entrance law]
\label{thm:bprei-critical-two-sided}
Assume Assumptions~\ref{ass:bprei-critical-SD},
\ref{ass:bprei-critical-UIM}, and
\ref{ass:bprei-critical-terminal-localization}, with
\eqref{eq:critical-UIM} uniform over
\(\mathfrak N_i^{\mathrm{fin}}\).  Then, for every \(r>0\),
\begin{equation}\label{eq:critical-harmonic-upper}
        \sup_{n\geq1}
        \frac{\Ebf_i[Z_n^{-r};Z_n>0]}{d_n^-}<\infty.
\end{equation}
If, in addition, \(Z_n\geq1\) almost surely and
\(\sup_n\Ebf_i[Z_n\mid\tau_n=n]<\infty\), then
\begin{equation}\label{eq:critical-harmonic-two-sided}
        0<\inf_{n\geq1}
        \frac{\Ebf_i[Z_n^{-r}]}{d_n^-}
        \leq
        \sup_{n\geq1}
        \frac{\Ebf_i[Z_n^{-r}]}{d_n^-}<\infty.
\end{equation}
\end{theorem}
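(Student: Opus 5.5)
The plan is to run the same finite-horizon-minimum decomposition that underlies Theorem~\ref{thm:bprei-critical-harmonic}, now without passing to a limit in the entrance law. Only uniform bounds are needed, and these come from (UIM) over \(\mathfrak N_i^{\mathrm{fin}}\) together with (LOC).

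\textbf{Decomposition at the minimum time.} Fix \(n\) and split according to the first minimum time \(\tau_n=k\). The event \(\{\tau_n=k\}\) is the intersection of \(\{\tau_k=k\}\), which depends on the first \(k\) marks, with the event that the shifted walk after time \(k\) satisfies \(L_{n-k}\geq0\). The Markov property at time \(k\) and independence of the marks then give
\[
\Ebf_i[Z_n^{-r};Z_n>0]=\sum_{k=0}^{n}d_k^-\,\Ebf_{\nu_{i,k}^-}[Z_{n-k}^{-r};Z_{n-k}>0,L_{n-k}\geq0].
\]
First reduce to small exponents. If \(r\leq s_0\), set \(s=r\). If \(r>s_0\), use \(Z^{-r}\leq Z^{-s_0}\) on \(\{Z\geq1\}\); this gives an upper bound with \(s=s_0\), which suffices for \eqref{eq:critical-harmonic-upper}. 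By (UIM), the \(k\)-th term is then at most \(d_k^-\overline a_{n-k}(s)\).

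\textbf{Bounding the convolution.} Split the sum at \(n/2\).
\begin{itemize}
\item For \(k\leq\lfloor n/2\rfloor\), (LOC) says the contribution is \(o(d_n^-)\).
\item For \(k>n/2\), regular variation \eqref{eq:critical-ladder-rv} gives \(d_k^-\leq Cd_n^-\) uniformly in \(n/2<k\leq n\), by Potter bounds. Hence this part is at most \(Cd_n^-\sum_m\overline a_m(s)\).
\end{itemize}
Together these bound the ratio for large \(n\). For each fixed \(n\), the ratio is finite because \(d_n^->0\) and the harmonic moment is at most \(1\). This proves \eqref{eq:critical-harmonic-upper}.

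\textbf{Lower bound.} Assume now \(Z_n\geq1\) almost surely. Keep only the term \(k=n\):
\[
\Ebf_i[Z_n^{-r}]\geq d_n^-\,\Ebf_i[Z_n^{-r}\mid\tau_n=n].
\]
Jensen's inequality for the convex map \(x\mapsto x^{-r}\) then gives
\[
\Ebf_i[Z_n^{-r}\mid\tau_n=n]\geq\bigl(\Ebf_i[Z_n\mid\tau_n=n]\bigr)^{-r},
\]
and the right side is bounded below by the assumed uniform bound on the conditional means. The upper bound in \eqref{eq:critical-harmonic-two-sided} is the previous part, since \(Z_n>0\) almost surely.

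\textbf{Main obstacle.} The delicate step is the uniform comparison \(d_k^-\leq Cd_n^-\) for \(k\in(n/2,n]\) together with the reduction \(r>s_0\Rightarrow s_0\). The first is standard for regularly varying sequences with slowly varying \(\ell_-\). The second is harmless because every \(Z\) appearing here is at least \(1\).
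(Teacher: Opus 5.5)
Your proof is correct and follows the paper's argument. You use the finite-horizon minimum decomposition with the (UIM) majorant, and you bound the convolution by the same split at \(n/2\) that the paper packages in its localized convolution lemma applied to the constant array \(\overline a_m(s)\); the lower bound comes from the single term \(k=n\) in both. The only difference is cosmetic: the paper uses Markov's inequality to get \(\Pbf_i(Z_n\leq2\kappa_i\mid\tau_n=n)\geq1/2\) and the constant \(\frac12(2\kappa_i)^{-r}\), whereas your conditional Jensen step gives the slightly cleaner constant \(\kappa_i^{-r}\), with \(\kappa_i:=\sup_n\Ebf_i[Z_n\mid\tau_n=n]\).
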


\subsubsection{The exact minimum decomposition}

\begin{lemma}[Exact finite-horizon minimum decomposition]
\label{lem:bprei-critical-minimum-decomposition}
For every bounded \(f:\Nzero\to\R\) and every \(n\geq0\),
\begin{equation}\label{eq:critical-minimum-decomposition}
\Ebf_i[f(Z_n);Z_n>0]
=\sum_{m=0}^{n}d_{n-m}^-
  \int_{\Nzero}
  \Ebf_z[f(Z_m);Z_m>0,L_m\geq0]\,
  \nu_{i,n-m}^-(\dd z).
\end{equation}
\end{lemma}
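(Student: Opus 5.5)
We partition the probability space according to the value of the first attainment time \(\tau_n\) of the finite-horizon minimum. Since \(\tau_n\in\{0,\ldots,n\}\), we write
\[
\Ebf_i[f(Z_n);Z_n>0]=\sum_{k=0}^{n}\Ebf_i[f(Z_n);Z_n>0,\tau_n=k],
\]
and set \(m:=n-k\). The task is then to factor each term at time \(k\).

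The key observation is a characterization of the event \(\{\tau_n=k\}\). It holds exactly when \(S_k<S_j\) for all \(j<k\) and \(S_j\geq S_k\) for all \(k\leq j\leq n\). The first condition is \(\{\tau_k=k\}\), which is measurable with respect to \(\cG_k=\sigma(\cE_0,\ldots,\cE_{k-1})\). The second condition is \(\min_{0\leq l\leq m}(S_{k+l}-S_k)\geq0\), that is, \(L_m\geq0\) for the shifted walk \(S^{(k)}_l:=\sum_{r=k}^{k+l-1}\log M_r\). This shifted walk is built from \(\cE_k,\ldots,\cE_{n-1}\) only.

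The strict/weak asymmetry of the two conditions is precisely what the first-attainment convention encodes. Checking it carefully, together with the identification \(\Pbf(\tau_k=k)=d_k^-\) already recorded after \eqref{eq:critical-dminus}, is the only delicate point. I expect it to be the main (though mild) obstacle, because it is what makes the decomposition exact rather than an inequality.

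Next we use the Markov property of the BPREI at time \(k\) with respect to the chronological filtration \(\cF_k\). This filtration contains the marks \(\cE_0,\ldots,\cE_{k-1}\), the offspring up to generation \(k-1\), and the immigration \(\eta_k\), hence \(Z_k\) and \(\{\tau_k=k\}\). Given \(\cF_k\) with \(Z_k=z\), the future \((Z_{k+l},S^{(k)}_l)_{l\leq m}\) has the law of \((Z_l,S_l)_{l\leq m}\) under \(\Pbf_z\). This is because the marks are i.i.d. and independent of the past, and immigration \(\eta_{k+l+1}\) belongs to mark \(\cE_{k+l}\), so the dependence between reproduction and immigration stays inside each future mark. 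Therefore
\[
\Ebf_i[f(Z_n);Z_n>0,\tau_n=k]
=\Ebf_i\bigl[\one_{\{\tau_k=k\}}\,\Psi^f_m(Z_k)\bigr],
\qquad
\Psi^f_m(z):=\Ebf_z[f(Z_m);Z_m>0,L_m\geq0].
\]
The function \(\Psi^f_m\) is bounded because \(f\) is bounded, so all expectations are finite.

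Finally we rewrite \(\Ebf_i[\one_{\{\tau_k=k\}}\Psi^f_m(Z_k)]=\Pbf(\tau_k=k)\int\Psi^f_m\,\dd\nu_{i,k}^-\), using the definition \eqref{eq:critical-entrance-law-finite} and \(\Pbf_i(\tau_k=k)=\Pbf(\tau_k=k)=d_k^-\), which holds because the environment is independent of \(Z_0\). If \(d_k^-=0\), both sides vanish. Note that \(d_0^-=1\), since \(\tau_0=0\) trivially. Substituting \(k=n-m\) and summing over \(m=0,\ldots,n\) gives \eqref{eq:critical-minimum-decomposition}.
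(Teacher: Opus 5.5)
Your proposal is correct and follows the paper's proof essentially step for step. You partition on \(\{\tau_n=k\}\), factor this event as \(\{\tau_k=k\}\) intersected with nonnegativity of the shifted walk, apply the Markov property at time \(k\) to obtain \(\Ebf_i[\one_{\{\tau_k=k\}}\Psi^f_m(Z_k)]\), and rewrite this as \(d_k^-\int\Psi^f_m\,\dd\nu_{i,k}^-\). Your explicit treatment of the case \(d_k^-=0\) and of the immigration indexing across the shift are small details the paper leaves implicit.
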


\begin{proof}
For \(m\leq n\), set \(k:=n-m\) and
\(\Phi_{m,f}(z):=\Ebf_z[f(Z_m);Z_m>0,L_m\geq0]\).  The first-attainment
convention gives
\[
\{\tau_n=k\}=\{\tau_k=k\}
\cap\left\{\min_{0\leq j\leq m}(S_{k+j}-S_k)\geq0\right\}.
\]
Conditional on the history through time \(k\) and on \(Z_k=z\), the shifted
environmental marks and reproduction--immigration variables are independent
of the past and have the original \(m\)-generation BPREI law.  Therefore
\[
\Ebf_i[f(Z_n);Z_n>0,\tau_n=k]
=\Ebf_i[\one_{\{\tau_k=k\}}\Phi_{m,f}(Z_k)]
=d_k^-\int_{\Nzero}\Phi_{m,f}(z)\,\nu_{i,k}^-(\dd z).
\]
Sum over \(k=n-m\), \(0\leq m\leq n\).
\end{proof}

\begin{lemma}[Localized convolution with a regularly varying sequence]
\label{lem:bprei-critical-summable-convolution}
Let \(d_n=n^{-\varrho}\ell(n)\), \(0<\varrho<1\), and let
\(a_{n,m}\geq0\), \(0\leq m\leq n\).  Suppose there are sequences
\(a_m,\overline a_m\geq0\) such that
\begin{enumerate}[label=\textup{(\roman*)}]
\item \(a_{n,m}\to a_m\) for every fixed \(m\);
\item \(a_{n,m}\leq\overline a_m\) and
      \(\sum_m\overline a_m<\infty\);
\item \(d_n^{-1}\sum_{k\leq n/2}d_k\overline a_{n-k}\to0\).
\end{enumerate}
Then
\[
        \frac1{d_n}\sum_{m=0}^{n}d_{n-m}a_{n,m}
        \longrightarrow\sum_{m=0}^{\infty}a_m.
\]
The conclusion is locally uniform in an auxiliary parameter when the
convergence in \textup{(i)} is locally uniform and one majorant works on
compact parameter sets.
\end{lemma}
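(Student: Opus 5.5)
The plan is to split the sum at \(m=n/2\), using regular variation of \(d_n\) on the first half and the localization hypothesis (iii) on the second. Put \(k:=n-m\). Then
\[
\frac1{d_n}\sum_{m=0}^{n}d_{n-m}a_{n,m}
=\frac1{d_n}\sum_{m<\lceil n/2\rceil}d_{n-m}a_{n,m}
+\frac1{d_n}\sum_{k\leq\lfloor n/2\rfloor}d_k a_{n,n-k}.
\]
The two index ranges are disjoint and cover \(\{0,\ldots,n\}\) up to the obvious parity bookkeeping. By (ii), the second term is bounded by \(d_n^{-1}\sum_{k\leq n/2}d_k\overline a_{n-k}\), and (iii) sends this to \(0\). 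Hence everything reduces to the first term.

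\textbf{First term.} Here \(m<n/2\), so \(n-m\in[n/2,n]\). By the uniform convergence theorem for slowly varying functions, \(\ell(n-m)/\ell(n)\to1\) uniformly over that range. Consequently
\[
\sup_{0\leq m\leq n/2}\frac{d_{n-m}}{d_n}\leq 2^{\varrho}\sup_{x\in[1/2,1]}\frac{\ell(xn)}{\ell(n)}\leq C
\qquad\text{for large }n,
\]
and for each fixed \(m\) we have \(d_{n-m}/d_n\to1\). The summand \(d_n^{-1}d_{n-m}a_{n,m}\mathbf 1_{\{m<n/2\}}\) is therefore dominated by \(C\overline a_m\), which is summable by (ii), and converges pointwise to \(a_m\) by (i). Dominated convergence for series then gives the limit \(\sum_m a_m\), which is finite because \(a_m\leq\overline a_m\).

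\textbf{Local uniformity.} Suppose the convergence in (i) is locally uniform in a parameter \(t\) and a single majorant works on compact parameter sets. Fix a compact set and \(\eta>0\). Choose \(M\) with \(\sum_{m>M}\overline a_m<\eta\), and treat the terms \(m\leq M\) uniformly in \(t\) using the uniform version of (i). The tail \(m>M\) is at most \(C\eta\) uniformly in \(t\), and the second term is parameter-free once the common majorant is used.

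The main obstacle is the uniform bound on \(d_{n-m}/d_n\) over \(m\leq n/2\), and this is exactly the uniform convergence theorem (Potter-type bounds) for the slowly varying function \(\ell\). The rest is routine dominated convergence.
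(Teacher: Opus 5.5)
Your proof is correct and is essentially the paper's argument. Both split the sum at $n/2$ and kill the block with $m\geq n/2$ by hypothesis (iii). Both bound $d_{n-m}/d_n$ uniformly for $m\leq n/2$ via the uniform convergence theorem and pass to the limit using the summable majorant $\overline a_m$. Your dominated convergence for series is the paper's explicit three-part split with truncation at a fixed $N$ (followed by $N\to\infty$), packaged differently, and your local-uniformity argument matches the paper's.
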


\begin{proof}
Let
\(T_n:=d_n^{-1}\sum_{m=0}^{n}d_{n-m}a_{n,m}\).  For fixed \(N\), split
\(T_n=T_{n,1}(N)+T_{n,2}(N)+T_{n,3}\) over
\(m\leq N\), \(N<m\leq n/2\), and \(m>n/2\).  For the first part,
\[
T_{n,1}(N)=\sum_{m=0}^{N}\frac{d_{n-m}}{d_n}a_{n,m}
\longrightarrow\sum_{m=0}^{N}a_m,
\]
because \(d_{n-m}/d_n\to1\) for fixed \(m\).  The uniform convergence
theorem for regularly varying sequences gives
\(\sup_{0\leq m\leq n/2}d_{n-m}/d_n\leq C\), so
\(T_{n,2}(N)\leq C\sum_{m>N}\overline a_m\).  Finally, with
\(k=n-m\),
\(T_{n,3}\leq d_n^{-1}\sum_{k<n/2}d_k\overline a_{n-k}\to0\) by
\textup{(iii)}.  Hence
\[
\limsup_{n\to\infty}
\left|T_n-\sum_{m\geq0}a_m\right|
\leq C\sum_{m>N}\overline a_m+\sum_{m>N}a_m.
\]
Let \(N\to\infty\).  The locally uniform statement follows from the same
three-part decomposition when the fixed-\(m\) convergence is uniform on
compact parameter sets.
\end{proof}

\begin{proof}[Proof of Theorem~\ref{thm:bprei-critical-harmonic}]
Fix a compact \(K\Subset(0,\infty)\), set \(t_K:=\min K\), choose
\(s\in(0,s_0]\), and define
\[
a_{n,m}(t):=\int_{\Nzero}\Phi^{\mathrm{min}}_{m,t}(z)
\,\nu_{i,n-m}^-(\dd z).
\]
By \eqref{eq:critical-minimum-decomposition},
\(G_{i,n}(t)=\sum_{m=0}^{n}d_{n-m}^-a_{n,m}(t)\).  For fixed \(m\),
\[
\sup_{t\in K}|a_{n,m}(t)-q_{i,m}(t)|
\leq\|\nu_{i,n-m}^--\nu_i^-\|_{\mathrm{TV}}\longrightarrow0.
\]
For \(z\geq1\) and \(t\in K\),
\(\e^{-tz}\leq(s/(\e t_K))^sz^{-s}\); hence
\(a_{n,m}(t)\leq(s/(\e t_K))^s\overline a_m(s)\).  Assumption
\ref{ass:bprei-critical-terminal-localization} supplies condition
\textup{(iii)} of Lemma~\ref{lem:bprei-critical-summable-convolution}.
Thus
\[
\sup_{t\in K}\left|\frac{G_{i,n}(t)}{d_n^-}-Q_i(t)\right|
\longrightarrow0.
\]
The series defining \(Q_i(t)\) is finite by the same majorant, and
\(q_{i,m_0}(t)>0\) by the nontriviality clause of
Assumption~\ref{ass:bprei-critical-entrance}.

For harmonic moments, fix \(r>0\), choose
\(s\in(0,\min\{r,s_0\})\), and set
\[
b_{n,m}(r):=\int_{\Nzero}\Psi_{m,r}(z)\,\nu_{i,n-m}^-(\dd z).
\]
Then
\(\Ebf_i[Z_n^{-r};Z_n>0]=\sum_{m=0}^{n}d_{n-m}^-b_{n,m}(r)\),
\(b_{n,m}(r)\to h_{i,m}(r)\) for fixed \(m\), and
\(0\leq b_{n,m}(r)\leq\overline a_m(s)\) because
\(z^{-r}\leq z^{-s}\) on \(\{z\geq1\}\).  The convolution lemma yields
\[
\frac{\Ebf_i[Z_n^{-r};Z_n>0]}{d_n^-}
\longrightarrow\sum_{m\geq0}h_{i,m}(r).
\]
Finally, Tonelli and the Gamma identity give
\[
\int_0^\infty Q_i(t)t^{r-1}\dd t
=\Gamma(r)\sum_{m\geq0}h_{i,m}(r).
\]
Summability gives finiteness, and the entrance-law nontriviality gives
positivity.
\end{proof}

\begin{proof}[Proof of Corollary~\ref{cor:bprei-critical-bounded-kernel}]
Set
\[
a_{n,m}^{g}:=\int_{\Nzero}\Ebf_z[g(Z_m);L_m\geq0]
\,\nu_{i,n-m}^-(\dd z).
\]
Then
\(\Ebf_i[g(Z_n)]=\sum_{m=0}^{n}d_{n-m}^-a_{n,m}^{g}\), and, for fixed
\(m\),
\(a_{n,m}^{g}\to a_m^{g}:=\Ebf_{\nu_i^-}[g(Z_m);L_m\geq0]\).
Choose \(s\in(0,\min\{q,s_0\})\).  Since \(g(0)=0\) and
\(g(k)\leq C_gk^{-q}\leq C_gk^{-s}\),
\(a_{n,m}^{g}\leq C_g\overline a_m(s)\).  The convolution lemma gives
\eqref{eq:critical-bounded-kernel-limit}.  Finiteness follows from the same
majorant, and positivity holds exactly when one of the terms \(a_m^g\) is
positive.
\end{proof}

\begin{proof}[Proof of Theorem~\ref{thm:bprei-critical-two-sided}]
Fix \(r>0\) and choose \(s\in(0,\min\{r,s_0\})\).  The minimum
decomposition and UIM give
\[
\Ebf_i[Z_n^{-r};Z_n>0]
\leq\sum_{m=0}^{n}d_{n-m}^-\overline a_m(s).
\]
Applying Lemma~\ref{lem:bprei-critical-summable-convolution} to the constant
array \(a_{n,m}=a_m=\overline a_m(s)\) gives
\[
\frac1{d_n^-}\sum_{m=0}^{n}d_{n-m}^-\overline a_m(s)
\longrightarrow\sum_{m\geq0}\overline a_m(s),
\]
which proves \eqref{eq:critical-harmonic-upper}.  For the lower bound, let
\(\kappa_i:=\sup_n\Ebf_i[Z_n\mid\tau_n=n]\).  Markov's inequality gives
\(\Pbf_i(Z_n\leq2\kappa_i\mid\tau_n=n)\geq1/2\), and therefore
\[
\Ebf_i[Z_n^{-r}]
\geq\Ebf_i[Z_n^{-r};\tau_n=n,Z_n\leq2\kappa_i]
\geq\frac12(2\kappa_i)^{-r}d_n^-.
\]
\end{proof}

\subsubsection{A verified centered finite-variance class}

Let \(\mathcal C_{\mathrm{fv}}\) be the class of critical BPREI satisfying:
\begin{enumerate}[label=\textup{(C\arabic*)}]
\item \(\Ebf\log M=0\) and
      \(0<\Var(\log M)<\infty\);
\item the normalized quenched offspring variance is bounded,
      \(\Var(\xi_{0,1}\mid\cE_0)/M_0^2\leq\overline C\) almost surely;
\item immigration is present in every generation and has bounded quenched
      mean: \(\eta_{n+1}\geq1\) and
      \(\Ebf[\eta_{n+1}\mid\cE_n]\leq\overline\eta\) almost surely;
\item \(\Ebf(\log M)_+^q<\infty\) for some \(q>12\).
\end{enumerate}

The stronger condition
\(\Ebf\exp\{\theta(\log M)_+\}<\infty\) for some \(\theta>0\)
implies \textup{(C4)} for every finite \(q\).  In that case
Proposition~\ref{prop:bprei-critical-fv-uim}\textup{(i)} gives the sharper
majorant \(C_sm^{-3/2}[1+\log(m+1)]^4\) for \(m\geq1\).

\begin{theorem}[Centered finite-variance critical BPREI]
\label{thm:bprei-critical-fv-class}
Let the BPREI belong to \(\mathcal C_{\mathrm{fv}}\).  Then the endpoint-
minimum entrance laws converge in total variation to a common law \(\nu^-\)
that does not depend on the fixed initial state \(i\), and the assumptions
\((\mathrm C)\) hold.  Consequently, for every \(r>0\),
\begin{equation}\label{eq:critical-harmonic-finite-variance}
        \sqrt n\,\Ebf_i[Z_n^{-r};Z_n>0]
        \longrightarrow
        \kappa_0 C_r,
        \qquad
        C_r:=\sum_{m=0}^{\infty}
        \Ebf_{\nu^-}[Z_m^{-r};Z_m>0,L_m\geq0]
        \in(0,\infty),
\end{equation}
where \(\kappa_0:=\lim_n\sqrt n\,d_n^-\in(0,\infty)\).  The constant
\(C_r\) is independent of \(i\).  If
\[
Q(t):=\sum_{m=0}^{\infty}
\Ebf_{\nu^-}[\e^{-tZ_m};Z_m>0,L_m\geq0],
\]
then, for every compact \(K\Subset(0,\infty)\),
\[
\sup_{t\in K}|\sqrt n\,G_{i,n}(t)-\kappa_0Q(t)|\longrightarrow0.
\]
\end{theorem}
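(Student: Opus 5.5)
The plan is to verify the four inputs of \((\mathrm C)\) for every BPREI in \(\mathcal C_{\mathrm{fv}}\), and then read off \eqref{eq:critical-harmonic-finite-variance} from Theorem~\ref{thm:bprei-critical-harmonic} together with Remark~\ref{rem:bprei-critical-finite-variance}. The input \((\mathrm{SD})\) is immediate from (C1): a centered finite-variance walk satisfies \(\Pbf(S_n>0)\to1/2\), so \(\varrho=1/2\) and \(\sqrt n\,d_n^-\to\kappa_0\in(0,\infty)\). Once \((\mathrm C)\) holds with a common entrance law \(\nu^-\), Theorem~\ref{thm:bprei-critical-harmonic} gives \(\Ebf_i[Z_n^{-r};Z_n>0]/d_n^-\to C_r\), where \(C_r\) is written with \(\nu^-\) in place of \(\nu_i^-\). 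Multiplying by \(\sqrt n\,d_n^-\to\kappa_0\) yields the displayed limit, and it also gives the locally uniform Laplace statement. The constant is independent of \(i\) precisely because the entrance law is. Positivity needs the nontriviality clause of (EL), and this follows from (C3) with \(m_0=0\): since \(\eta\geq1\), every entrance state is positive, so \(\Pbf_{\nu^-}(Z_0>0,L_0\geq0)=1\).

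For (UIM) and (LOC), I would bound \(\Ebf_\nu[Z_m^{-s};Z_m>0,L_m\geq0]\) uniformly in the starting law \(\nu\). By (C3) the population is at least one at every time, so it suffices to produce many individuals. I will lower bound \(Z_m\) by the immigrant clans started at the future-minimum epochs of the post-minimum walk, i.e.\ the times \(j\) with \(S_j=\min_{j\leq l\leq m}S_l\). By (C2), a clan started at \(j\) has normalized mean \(1\) and normalized second moment bounded by \(1+\overline C\sum_{l}\e^{-(S_l-S_j)}\). A Paley--Zygmund argument then shows that such a clan survives to time \(m\) with size of order \(\e^{S_m-S_j}\), with conditional probability bounded below by \(c/(1+\sum_{l\geq j}\e^{-(S_l-S_j)})\). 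On \(\{L_m\geq0\}\) the walk is a meander, so these exponential functionals are typically \(O(1)\). Combining the clans gives \(\Ebf[Z_m^{-s}\mid\text{env}]\lesssim \e^{-s\max_j(S_m-S_j)}\) plus the probability that all clans are small.

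Integrating against the meander law will use local bridge estimates: \(\Pbf(L_m\geq0,S_m\in[x,x+1])\leq C(1+x)m^{-3/2}\) together with integrated occupation bounds. This should give \(\overline a_m(s)\leq C_s m^{-3/2}(\log m)^4\), or \(m^{-1-\delta}\) under only the moment condition (C4), where \(q>12\) controls the large positive jumps in the truncation. Summability gives (UIM). Condition (LOC) then follows because \(n^{1/2}\sum_{k\leq n/2}d_k^-\overline a_{n-k}\lesssim n^{1/2}\,n^{1/2}\,n^{-1-\delta}\to0\).

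For (EL), I will reverse time on \(\{0,\dots,k\}\). Conditional on \(\tau_k=k\), the reversed walk is conditioned to stay strictly positive, and \(Z_k\) is dominated by the clans of immigrants arriving near time \(k\); earlier clans are killed along the descending path. Using the strict-persistence ratio limit \(\Pbf(\max_{j\leq k}S_j<0,\,\cdot\,)/d_k^-\), which converges to the Bertoin--Doney conditioned law, I will show that the law of \(Z_k\) given \(\tau_k=k\) converges in total variation. The contribution of the initial \(i\) ancestors and of the early immigrants vanishes in probability, and this is what makes the limit \(\nu^-\) independent of \(i\).

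I expect (EL) to be the main obstacle. It requires tightness plus identification of the limit under the conditioning on the rare event \(\{\tau_k=k\}\), uniformly in the portion of the history that is forgotten. The second hardest step will be the Paley--Zygmund/meander integration needed to obtain a summable \(\overline a_m\).
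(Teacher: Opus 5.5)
Your proposal follows the paper's proof essentially step for step: (SD) from (C1); (UIM) and (LOC) from Proposition~\ref{prop:bprei-critical-fv-uim}, via Paley--Zygmund on immigrant clans tagged at future-minimum epochs, the bridge and cubic occupation estimates, and a large-jump cutoff that uses \(q>12\); (EL) from Proposition~\ref{prop:bprei-critical-fv-entrance}, via reversal at the endpoint minimum, the strict-persistence ratio limit, and removal of old clans; and finally Theorem~\ref{thm:bprei-critical-harmonic}, with the \(m=0\) term giving positivity. One correction is needed in your UIM heuristic: the quenched bound should read \(2^s\e^{-s\min_{j\in\mathcal J}(S_m-S_j)}+\Pbf_{\cE}(\text{all tagged clans fail})\), not a bound with \(\max_j\). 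This is why the paper keeps only one-separated future minima at heights in \([1,S_m/2]\) (Lemma~\ref{lem:bprei-critical-future-minima}), so that any single successful clan forces \(Z_m\geq\e^{S_m/2}/2\).
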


\begin{proof}
Let
\[
        S_n=\sum_{j=0}^{n-1}\log M_j.
\]
By \textup{(C1)}, the increments of \(S\) are centered, nondegenerate, and
have finite variance.  Hence \(S\) oscillates,
\[
        \Pbf(S_n>0)\longrightarrow\frac12,
\]
and
\[
        d_n^-=\Pbf(\tau_n=n)\sim\kappa_0n^{-1/2}
\]
for some \(\kappa_0\in(0,\infty)\).  Thus
Assumption~\ref{ass:bprei-critical-SD} holds with \(\varrho=1/2\).

We next verify the post-minimum assumptions.
Lemma~\ref{lem:bprei-critical-fv-kernels} gives the local nonnegative-bridge
estimates, and Theorem~\ref{thm:bprei-critical-fv-occupation} gives the
corresponding integrated occupation estimate.  Under the normalized
quenched-variance bound in \textup{(C2)} and the persistent-immigration
condition in \textup{(C3)},
Proposition~\ref{prop:bprei-critical-fv-uim}\textup{(ii)}, applied with the
positive-tail moment in \textup{(C4)}, yields, for every \(s>0\) and every
initial law \(\nu\),
\[
\Ebf_\nu[Z_m^{-s};Z_m>0,L_m\geq0]
\leq C_s(1\vee m)^{-1-\delta_q},
\qquad
\delta_q:=\frac{q-12}{2(q+4)}>0.
\]
This bound is uniform in \(\nu\).  Its right-hand side is summable in \(m\),
and Proposition~\ref{prop:bprei-critical-fv-uim} also proves the
convolution-tail localization condition.  Consequently,
Assumptions~\ref{ass:bprei-critical-UIM} and
\ref{ass:bprei-critical-terminal-localization} hold for the complete family
of endpoint-minimum entrance laws.

It remains to verify the entrance-law assumption.  Conditions \textup{(C1)}
and \textup{(C3)} are precisely the centered finite-variance,
persistent-immigration, and bounded conditional-mean hypotheses used in
Proposition~\ref{prop:bprei-critical-fv-entrance}.  That proposition gives a
probability law \(\nu^-\) on \(\{1,2,\ldots\}\), independent of the fixed
initial state \(i\), such that
\[
        \|\nu_{i,n}^--\nu^-\|_{\mathrm{TV}}\longrightarrow0.
\]
Since \(\nu^-\) is supported on the positive integers, the nontriviality
clause of Assumption~\ref{ass:bprei-critical-entrance} holds already with
\(m_0=0\):
\[
        \Pbf_{\nu^-}(Z_0>0,L_0\geq0)=1.
\]
We have therefore verified all four assumptions in \((\mathrm C)\), with the
same limiting entrance law \(\nu^-\) for every fixed \(i\).

Applying Theorem~\ref{thm:bprei-critical-harmonic} now gives, locally uniformly
for \(t\in(0,\infty)\),
\[
\frac{G_{i,n}(t)}{d_n^-}
\longrightarrow
Q(t):=\sum_{m=0}^{\infty}
\Ebf_{\nu^-}[\e^{-tZ_m};Z_m>0,L_m\geq0],
\]
and, for every \(r>0\),
\[
\frac{\Ebf_i[Z_n^{-r};Z_n>0]}{d_n^-}
\longrightarrow
C_r:=\sum_{m=0}^{\infty}
\Ebf_{\nu^-}[Z_m^{-r};Z_m>0,L_m\geq0].
\]
Because the entrance law is common to all fixed initial states, neither
\(Q\) nor \(C_r\) depends on \(i\).  The uniform inverse-moment bound proves
that \(C_r<\infty\), while its \(m=0\) term gives
\[
        C_r\geq\Ebf_{\nu^-}[Z_0^{-r}]>0.
\]

Finally,
\[
\sqrt n\,\Ebf_i[Z_n^{-r};Z_n>0]
=(\sqrt n\,d_n^-)
\frac{\Ebf_i[Z_n^{-r};Z_n>0]}{d_n^-}
\longrightarrow\kappa_0C_r,
\]
which proves \eqref{eq:critical-harmonic-finite-variance}.  For a compact
\(K\Subset(0,\infty)\), the local uniform convergence in
Theorem~\ref{thm:bprei-critical-harmonic} and the boundedness of \(Q\) on
\(K\) give
\[
\begin{aligned}
&\sup_{t\in K}|\sqrt n\,G_{i,n}(t)-\kappa_0Q(t)|\\
&\quad\leq \sqrt n\,d_n^-
\sup_{t\in K}\left|\frac{G_{i,n}(t)}{d_n^-}-Q(t)\right|
+|\sqrt n\,d_n^--\kappa_0|\sup_{t\in K}Q(t)
\longrightarrow0.
\end{aligned}
\]
This proves the Laplace-transform conclusion.
\end{proof}

Combining Theorem~\ref{thm:bprei-critical-fv-class} with
Corollary~\ref{cor:bprei-critical-bounded-kernel}, every function \(g\)
satisfying the hypotheses of that corollary obeys
\[
        \sqrt n\,\Ebf_i[g(Z_n)]\longrightarrow\kappa_0K_g,
        \qquad
        K_g:=\sum_{m=0}^{\infty}
        \Ebf_{\nu^-}[g(Z_m);L_m\geq0].
\]
The constant \(K_g\) is independent of the fixed initial state \(i\).  It is
positive exactly when
\[
        \Ebf_{\nu^-}[g(Z_m);L_m\geq0]>0
\]
for some \(m\geq0\).

\begin{remark}[Status of the critical verification]
Theorem~\ref{thm:bprei-critical-harmonic} remains useful beyond centered
finite variance whenever EL, UIM, and LOC can be checked by another method.
For the class \(\mathcal C_{\mathrm{fv}}\), however, these are no longer
unverified shifted-law assumptions: Appendix~\ref{sec:critical-verification}
proves the bridge occupation, inverse-moment, localization, and entrance-law
inputs.  No Cram\'er condition is used in the entrance-law argument; the
positive-tail condition enters only in UIM through the control of the largest
positive environmental increment.  The sufficient requirement \(q>12\)
comes from the global cutoff and cubic occupation estimate used in that proof;
no optimality or threshold interpretation is asserted.  This verification is
carried out under
the original centered law and imposes no arithmetic or nonarithmetic
condition; the nonarithmetic Cram\'er hypothesis used for the sharp
upper-deviation prefactor is a separate requirement.
\end{remark}

\subsubsection{Conditional nonsummable post-minimum profiles}

Under \((\mathrm C)\), the post-minimum harmonic profile is summable and
Theorem~\ref{thm:bprei-critical-harmonic} places every positive harmonic
order on the scale \(d_n^-\).  We finally record a conditional alternative
for a regularly varying but nonsummable profile.  Its conclusion uses the
convolution-replacement hypothesis stated explicitly in
Proposition~\ref{prop:bprei-critical-divergent-profile}.

\begin{proposition}[Regularly varying nonsummable post-minimum profile]
\label{prop:bprei-critical-divergent-profile}
Assume Assumptions~\ref{ass:bprei-critical-SD} and
\ref{ass:bprei-critical-entrance}, and suppose
\begin{equation}\label{eq:critical-h-convolution-approx}
        \frac{\Ebf_i[Z_n^{-r};Z_n>0]}
        {\sum_{m=0}^{n}d_{n-m}^-h_{i,m}(r)}\longrightarrow1.
\end{equation}
Suppose also that, for some \(a_{i,r}>0\), \(\delta_r>0\), and a slowly
varying function \(L_r\),
\begin{equation}\label{eq:critical-h-rv}
        h_{i,m}(r)\sim a_{i,r}m^{-\delta_r}L_r(m).
\end{equation}
\begin{enumerate}[label=\textup{(\roman*)}]
\item If \(\delta_r=1\) and
\(A_r(n):=\sum_{m=1}^{n}m^{-1}L_r(m)\to\infty\), then, under eventual
monotonicity or an equivalent de Haan condition,
\begin{equation}\label{eq:critical-log-boundary}
        \frac{\Ebf_i[Z_n^{-r};Z_n>0]}{d_n^-A_r(n)}
        \longrightarrow a_{i,r}.
\end{equation}
If \(L_r(n)\to\ell_r\in(0,\infty)\), then
\(A_r(n)\sim\ell_r\log n\).
\item If \(1-\varrho\leq\delta_r<1\), then
\begin{equation}\label{eq:critical-beta-region}
        \frac{\Ebf_i[Z_n^{-r};Z_n>0]}
        {n^{1-\delta_r-\varrho}L_r(n)\ell_-(n)}
        \longrightarrow
        a_{i,r}B(1-\delta_r,1-\varrho).
\end{equation}
\end{enumerate}
\end{proposition}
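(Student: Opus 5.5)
By hypothesis \eqref{eq:critical-h-convolution-approx}, it suffices to find the asymptotics of the deterministic convolution
\[
T_n:=\sum_{m=0}^{n}d_{n-m}^-h_{i,m}(r),
\]
where \(d_k^-=k^{-\varrho}\ell_-(k)\) by \eqref{eq:critical-ladder-rv} and \(h_{i,m}(r)\sim a_{i,r}m^{-\delta_r}L_r(m)\) by \eqref{eq:critical-h-rv}. No probabilistic input is needed beyond these two regular-variation statements. The plan is a standard Abelian analysis of convolutions of two regularly varying sequences, with the split chosen according to which factor carries the mass.

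For case (ii), where \(\delta_r<1\) and both exponents lie in \((0,1)\), I would write
\[
T_n=\sum_{m=0}^{n}\Bigl(1-\frac mn\Bigr)^{-\varrho}\Bigl(\frac mn\Bigr)^{-\delta_r}\frac{\ell_-(n-m)L_r(m)}{n^{\varrho+\delta_r}}\,(1+o(1))
\]
and compare it with the Riemann sum of \(x^{-\delta_r}(1-x)^{-\varrho}\). On \(\eta n\leq m\leq(1-\eta)n\), the uniform convergence theorem gives \(\ell_-(n-m)/\ell_-(n)\to1\) and \(L_r(m)/L_r(n)\to1\) uniformly. This middle part therefore contributes
\[
n^{1-\delta_r-\varrho}\ell_-(n)L_r(n)\,a_{i,r}\int_\eta^{1-\eta}x^{-\delta_r}(1-x)^{-\varrho}\,dx.
\]
For the end pieces \(m\leq\eta n\) and \(m\geq(1-\eta)n\), I would use Potter bounds with a small exponent \(\epsilon<\min(1-\delta_r,1-\varrho)\). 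These dominate each end piece by \(C\eta^{1-\delta_r-\epsilon}\) or \(C\eta^{1-\varrho-\epsilon}\) times the same scale. This step needs the sums \(\sum_{m\leq\eta n}h_{i,m}\) and \(\sum_{k\leq\eta n}d_k^-\) to be regularly varying of index \(1-\delta_r\) and \(1-\varrho\), which follows from Karamata's theorem since both indices exceed \(-1\). Letting \(n\to\infty\) and then \(\eta\downarrow0\) gives the Beta constant \(B(1-\delta_r,1-\varrho)\).

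The restriction \(\delta_r\geq1-\varrho\) is not used for the limit itself. I would note that it only records the regime where this scale dominates the \(d_n^-\) scale, since \(n^{1-\delta_r-\varrho}\) then decays no faster than \(n^{-\varrho}\).

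For case (i), where \(\delta_r=1\), I would split at \(m\leq(1-\eta)n\) and \(m>(1-\eta)n\). On the first range, \(d_{n-m}^-/d_n^-\in[1,C_\eta]\) uniformly, with ratio tending to \(1\) uniformly for \(m\leq\eta' n\). The bulk of \(A_r(n)\) sits at \(m=o(n)\) because \(A_r\) is slowly varying and divergent. Since \(A_r\) is slowly varying (de Haan), the contribution of \(\eta' n\leq m\leq n\) to \(\sum_m m^{-1}L_r(m)\) is
\[
A_r(n)-A_r(\eta' n)=o(A_r(n)).
\]
This is where eventual monotonicity or the de Haan condition enters: it gives \(L_r(n)=o(A_r(n))\) and the slow variation of \(A_r\). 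Hence \(\sum_{m\leq\eta'n}d_{n-m}^-h_{i,m}\sim a_{i,r}d_n^-A_r(n)\). The middle range \(\eta'n\leq m\leq(1-\eta)n\) contributes \(O(d_n^-L_r(n))=o(d_n^-A_r(n))\).

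The terminal piece \(m>(1-\eta)n\) is bounded by
\[
h_{i,m}\sum_{k<\eta n}d_k^-\asymp n^{-1}L_r(n)\,(\eta n)^{1-\varrho}\ell_-(\eta n),
\]
which is \(O(\eta^{1-\varrho}d_n^-L_r(n))=o(d_n^-A_r(n))\). The final claim \(A_r(n)\sim\ell_r\log n\) when \(L_r\to\ell_r\) is the Cesàro computation.

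The main obstacle is this boundary case \(\delta_r=1\). Showing that \(L_r(n)\) and the mass of \(h\) near \(m\asymp n\) are \(o(A_r(n))\) is exactly what requires the auxiliary monotonicity/de Haan hypothesis, and I would state precisely which of these two standard facts about \(\Pi\)-variation is invoked. Throughout, the finitely many small-\(m\) terms where the asymptotic equivalence \eqref{eq:critical-h-rv} is not yet accurate are absorbed, since fixed \(m\) contributes \(O(d_n^-)\), which is negligible on both target scales.
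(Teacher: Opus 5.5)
Your part (ii) is essentially the paper's argument: remove the endpoint terms, apply the uniform convergence theorem on $\eta n\le m\le(1-\eta)n$, control both end ranges by Potter/Karamata bounds (which need only $\delta_r<1$ and $\varrho<1$), and let $\eta\downarrow0$.

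Part (i) takes a different route. The paper does not argue it directly; it quotes a boundary convolution theorem for regularly varying sequences from Bingham--Goldie--Teugels. You prove it by hand with a three-range split, which makes the proof self-contained, whereas the citation buys brevity. The split is sound, with one small caveat. For fixed $\eta'$, the ratio $d_{n-m}^-/d_n^-$ on $m\le\eta' n$ does not tend to $1$ uniformly; it only lies asymptotically in $[1,(1-\eta')^{-\varrho}]$. So you must let $\eta'\downarrow0$ after $n\to\infty$.

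You misplace the role of the monotonicity/de Haan proviso. You say it supplies slow variation of $A_r$ and $L_r(n)=o(A_r(n))$, but both facts hold for every slowly varying $L_r$ with $A_r(n)\to\infty$:
\begin{itemize}
\item By uniform convergence, $A_r(n)-A_r(\eta n)\sim L_r(n)\log(1/\eta)$.
\item For every $K$, $A_r(n)\ge\sum_{ne^{-K}<m\le n}m^{-1}L_r(m)\sim K\,L_r(n)$.
\end{itemize}
Your direct argument therefore shows that this Abelian direction needs no extra hypothesis.

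Your gloss on the restriction $\delta_r\ge1-\varrho$ is also off. The scale $n^{1-\delta_r-\varrho}$ dominates $d_n^-$ for every $\delta_r<1$, so domination is not what that restriction encodes. As the paper notes, the lower endpoint is forced by $h_{i,m}(r)\le\Pbf(L_m\ge0)=m^{-(1-\varrho)}\ell_+(m)$, which rules out a regularly varying profile with $\delta_r<1-\varrho$.
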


The lower endpoint \(1-\varrho\) in part \textup{(ii)} is forced by the
environmental persistence probability.  Indeed, since \(Z_m^{-r}\leq1\) on
\(\{Z_m>0\}\), for every initial state \(z\),
\[
        \Psi_{m,r}(z)
        \leq\Pbf_z(L_m\geq0)
        =\Pbf(L_m\geq0).
\]
Consequently,
\[
        h_{i,m}(r)
        \leq\Pbf(L_m\geq0)
        =m^{-(1-\varrho)}\ell_+(m).
\]
If \eqref{eq:critical-h-rv} held with \(\delta_r<1-\varrho\), then
\[
\frac{h_{i,m}(r)}{\Pbf(L_m\geq0)}
\sim a_{i,r}m^{1-\varrho-\delta_r}
\frac{L_r(m)}{\ell_+(m)}
\longrightarrow\infty,
\]
because \(L_r/\ell_+\) is slowly varying.  This contradicts the preceding
upper bound.  Thus a regularly varying profile of the form
\eqref{eq:critical-h-rv} cannot have \(\delta_r<1-\varrho\).

\begin{proof}
By \eqref{eq:critical-h-convolution-approx}, it is enough to analyze
\(\sum_{m=0}^{n}d_{n-m}^-h_{i,m}(r)\).  If \(\delta_r=1\), the boundary
convolution theorem for regularly varying sequences
\cite{BinghamGoldieTeugels1987} gives
\[
\sum_{m=1}^{n}d_{n-m}^-h_{i,m}(r)
\sim a_{i,r}d_n^-A_r(n),
\]
which proves \eqref{eq:critical-log-boundary}.

Suppose \(1-\varrho\leq\delta_r<1\).  After removing the endpoint terms,
write the normalized convolution as
\[
\begin{aligned}
&\frac{\sum_{m=1}^{n-1}d_{n-m}^-h_{i,m}(r)}
{n^{1-\delta_r-\varrho}L_r(n)\ell_-(n)}\\
&\quad=\frac{a_{i,r}}n\sum_{m=1}^{n-1}
\left(1-\frac mn\right)^{-\varrho}
\left(\frac mn\right)^{-\delta_r}
\frac{\ell_-(n-m)}{\ell_-(n)}
\frac{h_{i,m}(r)}{a_{i,r}m^{-\delta_r}L_r(m)}
\frac{L_r(m)}{L_r(n)}.
\end{aligned}
\]
Truncate the sum to \(\eps n\leq m\leq(1-\eps)n\).  The uniform
convergence theorem for slowly varying functions gives convergence there to
\(a_{i,r}\int_\eps^{1-\eps}x^{-\delta_r}(1-x)^{-\varrho}\dd x\).
Potter bounds \cite[Theorem~1.5.6]{BinghamGoldieTeugels1987} control both
endpoint ranges because \(\delta_r<1\) and \(\varrho<1\).  Letting
\(\eps\downarrow0\) gives
\[
a_{i,r}\int_0^1x^{-\delta_r}(1-x)^{-\varrho}\dd x
=a_{i,r}B(1-\delta_r,1-\varrho).
\]
The replacement hypothesis then proves \eqref{eq:critical-beta-region}.
\end{proof}

\subsection{Negative tilts and lower-deviation directions}
\label{subsec:bprei-negative-tilt}

Suppose \(\mu:=\Lambda'(0)>0\).  For \(q>0\) with
\(-q\in\operatorname{int}\cD_\Lambda\), the change of measure gives
\begin{equation}\label{eq:bprei-harmonic-negative-tilt}
\begin{aligned}
\Ebf_{-q}[V_n^{-q};Z_n>0]
&=\e^{-n\Lambda(-q)}
\Ebf[\e^{-qS_n}V_n^{-q};Z_n>0]\\
&=\e^{-n\Lambda(-q)}\Ebf[Z_n^{-q};Z_n>0].
\end{aligned}
\end{equation}
Thus
\(\Ebf[Z_n^{-q};Z_n>0]
 =\e^{n\Lambda(-q)}\Ebf_{-q}[V_n^{-q};Z_n>0]\).

If \(q_c>0\) satisfies \(\Lambda'(-q_c)=0\), the environmental walk is
critical under \(\Pbf_{-q_c}\), but
\(V_n^{-q_c}=\e^{q_cS_n}Z_n^{-q_c}\).  Hence
Theorem~\ref{thm:bprei-critical-harmonic} does not apply directly: its
terminal functions depend only on \(Z_n\), whereas the tilted functional also
contains the terminal environmental position.  The corresponding weighted
post-minimum profile is
\[
\Psi^{\mathrm w}_{m,q_c}(z)
:=\Ebf_{-q_c,z}[\e^{q_cS_m}Z_m^{-q_c};Z_m>0,L_m\geq0].
\]
A sharp result at \(q_c\) requires weighted analogues of EL, UIM, and LOC for
this profile.

For a linear-fractional BPRE without immigration, the composition formula
singles out \(q=1\).  Provided \(-1\in\operatorname{int}\cD_\Lambda\), the
signs \(\Lambda'(-1)>0\), \(=0\), and \(<0\) give the strongly,
intermediately, and weakly supercritical regimes, respectively.  For a BPREI,
the distinguished negative power depends on the immigration transform; no
universal value \(1\) is asserted.  Outside the no-immigration
linear-fractional comparison, the critical negative tilt is determined by a
solution of \(\Lambda'(-q_c)=0\), when one exists.
\section{Random-slope controlled processes: proofs and transfer}
\label{sec:controlled-process}

We now return to the controlled branching process and prove the random-slope
results stated in Section~\ref{sec:main-results}.  Section~\ref{sec:bprei}
is used here in two distinct ways.  The first use is analytic and does not
require a BPREI representation.  For the random asymptotically affine
class, the exact recursion \(X_{n+1}=M_nX_n+D_{n+1}\) allows us to adapt the
normalized-recursion and logarithmic-block arguments and to apply the
integrated Stone kernel and random-prefactor lemma developed in
Section~\ref{sec:bprei}.  Together with the upper-deviation hypotheses, these
tools yield normalized convergence and the sharp upper-deviation estimate
without using the branching property.

The second use is structural.  Under the individual-sum representation, the
controlled process is itself an exact BPREI.  The supercritical and critical
harmonic-moment and positive generating-function profiles of
Section~\ref{sec:bprei} can then be combined with the one-step kernel bounds
to determine the unconditional ratio-deviation scales.  We therefore first
treat the larger random-map class and specialize to the individual-sum class
only after the upper-deviation and one-step reductions are complete.

Throughout this section, all occurrences of $O(\cdot)$ and $o(\cdot)$ that
involve the generation index are understood as $n\to\infty$ unless another
limiting regime is stated locally.

\subsection{Exact affine recursion and primitive innovation estimates}
\label{subsec:controlled-linearization}

Recall that \(\lambda\in(0,\infty)\) is the deterministic reference slope of
the random maps \(C_n\), and write
\(\mathfrak r_n(k):=C_n(k)-\lambda k\).  Thus \(\mathfrak r_n(k)\)
measures the departure of the control map from its first-order linear part.
In the individual-sum subclass
\(C_n(k)=\sum_{j=1}^{k}L_{n,j}\), the natural choice is
\(\lambda=\Ebf L\).

Put \(N_n:=\phi_n(X_n)\), \(M_n:=m\lambda A_n\), and define the random
population map
\begin{equation}\label{eq:controlled-random-map}
        \Psi_n(k):=\sum_{j=1}^{A_nC_n(k)+B_n}\xi_{n,j},
        \qquad k\in\Nzero.
\end{equation}
Then \(X_{n+1}=\Psi_n(X_n)\), and processes started from different
deterministic states can be coupled through the same sequence of maps.

The next proposition rewrites the controlled process as a random-coefficient
affine recursion.  The coefficients \(M_n\) describe the first-order
pathwise growth, while the innovations collect the offspring fluctuation, the
control remainder, and the additive contribution.  After normalization by
the product of the \(M_n\)'s, the recursion becomes an innovation series,
which is the starting point of the path analysis in the next subsection.

For a generic one-generation mark and a deterministic state
\(k\in\Nzero\), put \(N(k):=AC(k)+B\), and define
\[
\begin{aligned}
D_{n+1}
&:=\sum_{j=1}^{N_n}(\xi_{n,j}-m)
   +mA_n\mathfrak r_n(X_n)+mB_n,\\
D(k)
&:=\sum_{j=1}^{N(k)}(\xi_j-m)
   +mA\{C(k)-\lambda k\}+mB.
\end{aligned}
\]
Conditional on \(X_n=k\), the pair \((M_n,D_{n+1})\) has the same law as
\((M,D(k))\).  We call \(\{D_{n+1}:n\geq0\}\) the innovation sequence.  It
need not be centered.  For \(0\leq j\leq n\), write
\(\Delta_{j,n}:=\prod_{r=j}^{n-1}M_r\), with
\(\Delta_n:=\Delta_{0,n}\) and \(\Delta_{n,n}:=1\).

\begin{proposition}[Exact environmental linearization]
\label{prop:controlled-linearization}
For every \(n\geq0\),
\begin{equation}\label{eq:controlled-linear-recursion}
        X_{n+1}=M_nX_n+D_{n+1}.
\end{equation}
Moreover, for \(0\leq\ell<n\),
\begin{equation}\label{eq:controlled-exact-telescope}
        X_n-X_\ell\Delta_{\ell,n}
        =\sum_{j=\ell+1}^{n}D_j\Delta_{j,n}.
\end{equation}
In particular,
\(X_n/\Delta_n=X_0+\sum_{j=1}^{n}D_j/\Delta_j\).
\end{proposition}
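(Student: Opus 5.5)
The identity \eqref{eq:controlled-linear-recursion} is an algebraic rearrangement of the definition of \(X_{n+1}\), so I verify it by direct expansion and then iterate. On the event \(\{X_n=k\}\), write \(N_n=A_nC_n(X_n)+B_n\). Then
\[
X_{n+1}=\sum_{j=1}^{N_n}\xi_{n,j}
=\sum_{j=1}^{N_n}(\xi_{n,j}-m)+mN_n .
\]
Next decompose \(mN_n=mA_nC_n(X_n)+mB_n\). Substituting \(C_n(X_n)=\lambda X_n+\mathfrak r_n(X_n)\) gives
\[
mN_n=m\lambda A_nX_n+mA_n\mathfrak r_n(X_n)+mB_n .
\]
Since \(M_n=m\lambda A_n\), the first term is exactly \(M_nX_n\), and the remaining three pieces are precisely the definition of \(D_{n+1}\). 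This proves \eqref{eq:controlled-linear-recursion}.

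No case distinction is needed. If \(N_n=0\), the empty-sum convention makes both \(X_{n+1}\) and the centered sum vanish, and the identity still holds termwise. The fact that \(\phi_n(0)=B_n\) need not vanish causes no difficulty either, because \(B_n\) sits inside \(D_{n+1}\). I will also note briefly why \((M_n,D_{n+1})\) given \(X_n=k\) has the law of \((M,D(k))\): the mark \(\cE_n\) and the row \((\xi_{n,j})_j\) are independent of \(\cF_n\), hence of \(X_n\). This is not part of the claim, but it justifies the earlier remark.

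For \eqref{eq:controlled-exact-telescope}, I argue by induction on \(n>\ell\). When \(n=\ell+1\), the claim is the recursion itself, since \(\Delta_{\ell,\ell+1}=M_\ell\) and \(\Delta_{\ell+1,\ell+1}=1\). For the inductive step, multiply the hypothesis at \(n\) by \(M_n\) and add \(D_{n+1}\). The identities \(M_n\Delta_{j,n}=\Delta_{j,n+1}\) and \(\Delta_{n+1,n+1}=1\) then give
\[
X_{n+1}-X_\ell\Delta_{\ell,n+1}=\sum_{j=\ell+1}^{n+1}D_j\Delta_{j,n+1}.
\]

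The normalized form follows from the case \(\ell=0\). Each \(M_r\) is positive because \(A_r\in\N\), \(m>0\) and \(\lambda>0\), so \(\Delta_n>0\) and I may divide by it. Using \(\Delta_{j,n}/\Delta_n=1/\Delta_j\), this yields \(X_n/\Delta_n=X_0+\sum_{j=1}^nD_j/\Delta_j\). The case \(n=0\) is trivial with the empty sum.

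None of these steps is a genuine obstacle. The only point needing care is bookkeeping: the innovation must collect the control remainder \(mA_n\mathfrak r_n(X_n)\) and the additive part \(mB_n\) with the correct factor \(m\), and \(\Delta_j\) must be strictly positive so that the normalization is legitimate.
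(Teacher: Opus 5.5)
Your proposal is correct and follows the paper's own argument: you center the offspring sum at \(m\), split \(mN_n\) into \(M_nX_n\) plus the control remainder and additive term, iterate to obtain the telescoped identity, and divide by \(\Delta_n\) at \(\ell=0\). Your explicit induction and the check that \(\Delta_n>0\), using \(A_r\geq1\), \(m>0\) and \(\lambda>0\), fill in details that the paper leaves implicit.
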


\begin{proof}
Since \(N_n=A_nC_n(X_n)+B_n\), centering the offspring sum at its mean gives
\[
X_{n+1}
=\sum_{j=1}^{N_n}(\xi_{n,j}-m)+mN_n
=M_nX_n+D_{n+1}.
\]
Iterating this identity from generation \(\ell\) to generation \(n\) gives
\eqref{eq:controlled-exact-telescope}.  Taking \(\ell=0\) and dividing by
\(\Delta_n\) gives the normalized series representation.
\end{proof}

Proposition~\ref{prop:main-random-normalized-limit} uses only the abstract
sublinear bound in \eqref{eq:main-random-sublinear}.  We next derive that
bound from moment conditions on the offspring law and the control functions.
Fix \(s>0\).  When \(0<s\leq1\), choose \(r_s\in(1,2]\), and define
\begin{equation}\label{eq:controlled-bs}
        \mathfrak b_s:=
        \begin{cases}
        s/r_s, & 0<s\leq1,\\[1mm]
        1, & 1<s\leq2,\\[1mm]
        s/2, & s>2.
        \end{cases}
\end{equation}
Then \(0<\mathfrak b_s<s\).

\begin{assumption}[Sum-scale control innovations]
\label{ass:controlled-sum-scale}
Assume that there is \(K_s<\infty\) such that, for every
\(k\in\Nzero\),
\begin{align}
        \Ebf[N(k)^{\mathfrak b_s}]
        &\leq K_s(1+k^{\mathfrak b_s}),
        \label{eq:controlled-N-moment}\\
        \Ebf\left|A\{C(k)-\lambda k\}+B\right|^s
        &\leq K_s(1+k^{\mathfrak b_s}).
        \label{eq:controlled-control-remainder-moment}
\end{align}
If \(0<s\leq1\), assume in addition
\(\Ebf|\xi-m|^{r_s}<\infty\); if \(s>1\), assume
\(\Ebf|\xi-m|^s<\infty\).
\end{assumption}

The first bound controls the random length of the centered offspring sum,
whereas the second controls the \(s\)th moment of the control remainder.
Since \(\mathfrak b_s<s\), neither estimate follows from the other under the
assumptions as stated.

\begin{lemma}[Sublinear innovation bound]
\label{lem:controlled-innovation-bound}
Under Assumption~\ref{ass:controlled-sum-scale},
\begin{equation}\label{eq:controlled-D-moment}
        \Ebf|D(k)|^s
        \leq C_s(1+k^{\mathfrak b_s}),
        \qquad k\in\Nzero,
\end{equation}
for a finite constant \(C_s\).  Moreover,
\begin{equation}\label{eq:controlled-slope-Ls}
        \lim_{k\to\infty}
        \Ebf\left|\frac{\phi(k)}{k}-\lambda A\right|^s=0.
\end{equation}
\end{lemma}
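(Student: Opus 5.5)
The proof has two parts: the sublinear moment bound \eqref{eq:controlled-D-moment} and the slope convergence \eqref{eq:controlled-slope-Ls}.

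\emph{The moment bound.} Split the innovation as \(D(k)=S(k)+mR(k)\), where
\[
S(k):=\sum_{j=1}^{N(k)}(\xi_j-m),\qquad R(k):=A\{C(k)-\lambda k\}+B.
\]
Either \(c_s\)-inequality (subadditivity for \(s\leq1\), convexity for \(s>1\)) gives \(\Ebf|D(k)|^s\leq C(\Ebf|S(k)|^s+\Ebf|R(k)|^s)\). The term \(\Ebf|R(k)|^s\) is bounded by \(K_s(1+k^{\mathfrak b_s})\) directly from \eqref{eq:controlled-control-remainder-moment}. For \(S(k)\), condition on \(N(k)=\ell\). This is legitimate because the offspring variables are independent of the mark \((A,B,C)\), so given \(N(k)=\ell\) the sum is a centered i.i.d.\ sum of length \(\ell\). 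The standard moment inequalities then give, exactly as in the proof of Proposition~\ref{prop:bprei-primitive}, a bound \(\Ebf|\sum_{j\le\ell}(\xi_j-m)|^s\leq C\ell^{\mathfrak b_s}\) for \(\ell\geq1\):
\begin{itemize}
\item for \(0<s\leq1\), Jensen plus Marcinkiewicz--Zygmund at exponent \(r_s\) gives \(C\ell^{s/r_s}\);
\item for \(1<s\leq2\), von Bahr--Esseen gives \(C\ell\);
\item for \(s>2\), Rosenthal gives \(C(\ell+\ell^{s/2})\leq C\ell^{s/2}\).
\end{itemize}
Integrating over the law of \(N(k)\) yields \(\Ebf|S(k)|^s\leq C\,\Ebf[N(k)^{\mathfrak b_s}]\), and \eqref{eq:controlled-N-moment} bounds this by \(CK_s(1+k^{\mathfrak b_s})\). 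This proves \eqref{eq:controlled-D-moment}.

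\emph{The slope limit.} Write
\[
\frac{\phi(k)}{k}-\lambda A=\frac{R(k)}{k},
\]
since \(\phi(k)=N(k)=AC(k)+B\). Then \eqref{eq:controlled-control-remainder-moment} gives
\[
\Ebf\bigl|\phi(k)/k-\lambda A\bigr|^s\leq K_s(1+k^{\mathfrak b_s})k^{-s}\longrightarrow0,
\]
because \(\mathfrak b_s<s\).

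The only step needing care is the conditioning that decouples the random length \(N(k)\) from the offspring summands. The independence of the offspring array from the generation mark justifies it, and the case \(\ell=0\) contributes zero.
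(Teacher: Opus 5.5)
Your proof is correct and follows the paper's argument essentially step for step. You use the same split $D(k)=S_{N(k)}+mR(k)$, and you condition on $N(k)$, using Marcinkiewicz--Zygmund with Jensen, von Bahr--Esseen, or Rosenthal according to the range of $s$. You then combine this with the two bounds of Assumption~\ref{ass:controlled-sum-scale}, and for the slope limit you use $\phi(k)/k-\lambda A=R(k)/k$ together with $\mathfrak b_s<s$.
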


\begin{proof}
Put \(S_{N(k)}:=\sum_{j=1}^{N(k)}(\xi_j-m)\) and
\(R(k):=A\{C(k)-\lambda k\}+B\), so that
\(D(k)=S_{N(k)}+mR(k)\).

Suppose first that \(0<s\leq1\).  Subadditivity gives
\[
        \Ebf|D(k)|^s
        \leq \Ebf|S_{N(k)}|^s+m^s\Ebf|R(k)|^s.
\]
Conditional Marcinkiewicz--Zygmund \cite{MarcinkiewiczZygmund1937} at exponent \(r_s\), followed by
conditional Jensen, gives
\[
\begin{aligned}
\Ebf[|S_{N(k)}|^{r_s}\mid N(k)]
&\leq C_{r_s}N(k)\Ebf|\xi-m|^{r_s},\\
\Ebf[|S_{N(k)}|^s\mid N(k)]
&\leq
\{\Ebf[|S_{N(k)}|^{r_s}\mid N(k)]\}^{s/r_s}
\leq C_sN(k)^{s/r_s}.
\end{aligned}
\]
Since \(s/r_s=\mathfrak b_s\), the two moment bounds in
Assumption~\ref{ass:controlled-sum-scale} prove
\eqref{eq:controlled-D-moment}.

For \(1<s\leq2\), conditional von Bahr--Esseen \cite{vonBahrEsseen1965} gives
\[
        \Ebf[|S_{N(k)}|^s\mid N(k)]
        \leq C_sN(k)\Ebf|\xi-m|^s.
\]
For \(s>2\), conditional Rosenthal \cite{Rosenthal1970} and Lyapunov give
\[
\begin{aligned}
\Ebf[|S_{N(k)}|^s\mid N(k)]
&\leq C_s\left\{N(k)\Ebf|\xi-m|^s
+N(k)^{s/2}(\Ebf|\xi-m|^2)^{s/2}\right\}\\
&\leq C_s\{N(k)+N(k)^{s/2}\}.
\end{aligned}
\]
Since \(N(k)\leq1+N(k)^{s/2}\), the choices
\(\mathfrak b_s=1\) and \(\mathfrak b_s=s/2\) give
\eqref{eq:controlled-D-moment} in the two remaining cases.
Finally,
\[
\begin{aligned}
\Ebf\left|\frac{\phi(k)}k-\lambda A\right|^s
&=k^{-s}\Ebf|R(k)|^s\\
&\leq K_s\{k^{-s}+k^{\mathfrak b_s-s}\}\longrightarrow0,
\end{aligned}
\]
because \(\mathfrak b_s<s\).
\end{proof}

\begin{remark}[Scope of the sum-scale conditions]
\label{rem:controlled-sum-assumptions}
The exponents in Assumption~\ref{ass:controlled-sum-scale} are matched to
the classical random-sum inequalities and produce the strict gap
\(\mathfrak b_s<s\) required by the normalized recursion.  With weaker
moments one may work at a smaller exponent or use truncation, at the cost of
a weaker polynomial ratio kernel.  If no useful power moment is available,
the polynomial transfer must be replaced by a tail-specific estimate.
\end{remark}

The role and primitive verification of finite-level accessibility were
discussed in Subsection~\ref{subsec:main-random-slope}.  In the proof below,
it is used only after the large-state estimate, to transfer nondegeneracy to
the prescribed initial law.

\subsection{Normalized convergence, nondegeneracy, and the sharp population tail}
\label{subsec:controlled-normalized-sharp}

This subsection has two parts.  We first prove the normalized-limit and
moment assertions of Proposition~\ref{prop:main-random-normalized-limit}.
The argument has three stages: summability of the normalized innovations
gives convergence under the \(\alpha\)-tilted law; a large-state product
argument, followed by accessibility, proves that the limit is nontrivial;
and tilt cancellation identifies its \(\alpha\)th moment under the original
law.

We then prove the sharp population estimate.  For this purpose, we condition
at a logarithmic generation, show that the subsequent innovations are
negligible on the sharp scale, and average the random early-generation shift
through the integrated Stone kernel developed in Section~\ref{sec:bprei}.

For $V_n:=X_n/\Delta_n$, the change of measure in
\eqref{eq:main-random-tilt} gives the same cancellations as in
Lemma~\ref{lem:bprei-tilt-cancellation}.

\begin{lemma}[Tilt cancellation, normalized convergence, and innovation decay]
\label{lem:controlled-tilt-cancellation}
For every \(n\geq0\),
\begin{equation}\label{eq:controlled-moment-tilt}
        \Ebf_\alpha[V_n^\alpha]
        =\e^{-n\Lambda(\alpha)}\Ebf[X_n^\alpha]
\end{equation}
and
\begin{equation}\label{eq:controlled-increment-tilt}
        \Ebf_\alpha|V_{n+1}-V_n|^\alpha
        =\e^{-(n+1)\Lambda(\alpha)}\Ebf|D_{n+1}|^\alpha.
\end{equation}
For every deterministic \(k\geq1\),
\begin{equation}\label{eq:controlled-relative-tilt}
        \Ebf_\alpha[M^{-\alpha}|D(k)|^\alpha]
        =\e^{-\Lambda(\alpha)}\Ebf|D(k)|^\alpha.
\end{equation}
Under the upper-deviation hypotheses, \(V_n=X_n/\Delta_n\) converges
\(\Pbf_\alpha\)-almost surely and in \(L^\alpha(\Pbf_\alpha)\) to a finite
random variable \(V_\infty\).  Moreover, for some \(C_0<\infty\) and
\(1\leq\chi_\alpha<\e^{\Lambda(\alpha)}\),
\(\Ebf|D_n|^\alpha\leq C_0\chi_\alpha^n\) for \(n\geq1\).  The
convergence holds from every deterministic initial state \(x\in\Nzero\), and
\[
        \sup_{n\geq0}
        \Ebf_{\alpha,x}\left[
        \left(\frac{X_n}{\Delta_n}\right)^\alpha\right]
        \leq C_\alpha(1+x^\alpha).
\]
\end{lemma}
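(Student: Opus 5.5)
The plan follows the proof of Lemma~\ref{lem:bprei-tilt-cancellation} and Proposition~\ref{prop:bprei-normalized-limit} almost verbatim, replacing the BPREI innovation by the controlled-process innovation \(D_{n+1}\) of Proposition~\ref{prop:controlled-linearization}. Only two structural facts are needed. First, \(X_n\) is \(\cF_n\)-measurable, where \(\cF_n\) contains the marks \(\cE_0,\ldots,\cE_{n-1}\) and the offspring rows up to generation \(n-1\). Second, conditional on \(\cF_n\) and \(X_n=k\), the pair \((M_n,D_{n+1})\) has the law of \((M,D(k))\), and the mark \(\cE_n\) is independent of \(\cF_n\).

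For the tilt identities I will use the definition \eqref{eq:main-random-tilt}. Since \(V_n^\alpha\Delta_n^\alpha=X_n^\alpha\), applying the Radon--Nikodym factor on the first \(n\) marks gives \eqref{eq:controlled-moment-tilt}. The offspring array and \(X_0\) are untouched by the tilt, which only reweights marks. The identity \(V_{n+1}-V_n=D_{n+1}/\Delta_{n+1}\) from Proposition~\ref{prop:controlled-linearization} gives \eqref{eq:controlled-increment-tilt} in the same way at time \(n+1\). Finally, \eqref{eq:controlled-relative-tilt} is the one-generation version: the density \(M^\alpha\e^{-\Lambda(\alpha)}\) cancels \(M^{-\alpha}\).

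For convergence, the key input is the conditional bootstrap. By the Markov structure and \eqref{eq:main-random-sublinear},
\[
\Ebf\bigl[|D_{n+1}|^\alpha\mid\cF_n\bigr]\leq C(1+X_n^\beta).
\]
Taking expectations and using Lyapunov's inequality (\(\beta<\alpha\)) together with \eqref{eq:controlled-moment-tilt} reproduces \eqref{eq:bprei-D-bootstrap}:
\[
\Ebf|D_{n+1}|^\alpha\leq C\bigl\{1+\e^{n\Lambda(\alpha)\beta/\alpha}(\Ebf_\alpha V_n^\alpha)^{\beta/\alpha}\bigr\}.
\]
Note that \(\Lambda(\alpha)>0\), since \(\Lambda(0)=0\) and \(\Lambda'>0\) on \([0,\alpha]\) because \(\Lambda'(0)=\mu\geq0\) and \(\Lambda\) is convex with \(\Lambda'(\alpha)=\rho>\mu\).

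I then split into two cases.
\begin{itemize}
\item If \(0<\alpha\leq1\), apply subadditivity to \(a_n=\Ebf_\alpha V_n^\alpha\).
\item If \(\alpha>1\), apply Minkowski to \(a_n=\|V_n\|_{L^\alpha(\Pbf_\alpha)}\).
\end{itemize}
In either case we obtain \(a_{n+1}\leq a_n+Cq^n(1+a_n^{\beta/\alpha})\), so Lemma~\ref{lem:bprei-recursion-bound} gives \(\sup_na_n<\infty\). Summability of \(\sum_n\Ebf_\alpha|V_{n+1}-V_n|^\alpha\) (or of the \(L^\alpha\) norms) then yields almost sure and \(L^\alpha(\Pbf_\alpha)\) convergence. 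For the almost sure part, \(\sum_n|V_{n+1}-V_n|\) is finite a.s.: for \(\alpha\leq1\) use \(\sum|x_n|\leq(\sum|x_n|^\alpha)^{1/\alpha}\), and for \(\alpha>1\) use \(\Ebf_\alpha\sum_n|V_{n+1}-V_n|\leq\sum_n\|V_{n+1}-V_n\|_\alpha\). Substituting the bounded tilted moments back into the bootstrap gives \(\Ebf|D_{n+1}|^\alpha\leq C(1+\e^{n\Lambda(\alpha)\beta/\alpha})\). Choosing \(\chi_\alpha\in(1\vee\e^{\Lambda(\alpha)\beta/\alpha},\e^{\Lambda(\alpha)})\) then gives the innovation decay.

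The main point needing care is the uniform bound \(C_\alpha(1+x^\alpha)\) from deterministic initial states, since the constant in Lemma~\ref{lem:bprei-recursion-bound} depends on \(b_0=1\vee a_0\) multiplicatively. With \(a_0=x^\alpha\) when \(\alpha\leq1\), or \(a_0=x\) when \(\alpha>1\), the product bound \(b_n\leq b_0\prod_j(1+2Cq^j)\) gives exactly \(\sup_na_n\leq C(1\vee x^\alpha)\) in the first case and \(\sup_na_n\leq C(1\vee x)\) in the second. Raising to the power \(\alpha\) in the second case yields \(C_\alpha(1+x^\alpha)\). The constants \(C\) and \(q\) must not depend on \(x\). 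They depend only on the constant in \eqref{eq:main-random-sublinear} and on \(\Lambda(\alpha)\), because the bootstrap bound is state-uniform, so this holds. For a general \(X_0\) with \(\Ebf X_0^\alpha<\infty\), the same argument with \(a_0=\Ebf X_0^\alpha\) (or its \(\alpha\)th root) applies, using that \(X_0\) is independent of the marks and hence untouched by the tilt.
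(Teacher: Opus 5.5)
Your proposal is correct and follows the paper's proof essentially step for step: the same tilt cancellations, the same Lyapunov bootstrap for $\Ebf|D_{n+1}|^\alpha$, the split into subadditivity for $\alpha\le1$ and Minkowski for $\alpha>1$ feeding Lemma~\ref{lem:bprei-recursion-bound}, and the same statewise rerun with $a_0(x)=x^\alpha$ or $a_0(x)=x$. Your explicit checks fill in points the paper leaves implicit: that $\Lambda(\alpha)>0$, so that $q<1$ and a valid $\chi_\alpha$ exists, and that $C,q$ are uniform in $x$, so that the multiplicative dependence on $b_0$ gives $C_\alpha(1+x^\alpha)$; the only slip is that $\Lambda'(0)=\mu$ may vanish, so $\Lambda'>0$ holds on $(0,\alpha]$ rather than $[0,\alpha]$, which does not affect the conclusion.
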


\begin{proof}
Since \(V_n=X_n/\Delta_n\),
\[
\begin{aligned}
\Ebf_\alpha[V_n^\alpha]
&=\e^{-n\Lambda(\alpha)}
  \Ebf\left[\Delta_n^\alpha\frac{X_n^\alpha}{\Delta_n^\alpha}\right]
 =\e^{-n\Lambda(\alpha)}\Ebf[X_n^\alpha],
\end{aligned}
\]
which proves \eqref{eq:controlled-moment-tilt}.  By
Proposition~\ref{prop:controlled-linearization},
\(V_{n+1}-V_n=D_{n+1}/\Delta_{n+1}\); the same calculation at time
\(n+1\) proves \eqref{eq:controlled-increment-tilt}.  Finally, under the
one-generation density \(M^\alpha\e^{-\Lambda(\alpha)}\),
\[
        \Ebf_\alpha[M^{-\alpha}|D(k)|^\alpha]
        =\e^{-\Lambda(\alpha)}\Ebf|D(k)|^\alpha,
\]
which proves \eqref{eq:controlled-relative-tilt}.

Retain \(0\leq\beta<\alpha\) from
\eqref{eq:main-random-sublinear}.  Lyapunov's inequality and
\eqref{eq:controlled-moment-tilt} give
\begin{equation}\label{eq:controlled-D-bootstrap}
\begin{aligned}
\Ebf|D_{n+1}|^\alpha
&\leq C\{1+\Ebf X_n^\beta\}\\
&\leq C\left\{1+
\e^{n\Lambda(\alpha)\beta/\alpha}
(\Ebf_\alpha V_n^\alpha)^{\beta/\alpha}\right\}.
\end{aligned}
\end{equation}
Suppose first that \(0<\alpha\leq1\) and set
\(a_n:=\Ebf_\alpha V_n^\alpha\).  Subadditivity,
\eqref{eq:controlled-increment-tilt}, and
\eqref{eq:controlled-D-bootstrap} give
\(a_{n+1}\leq a_n+Cq^n(1+a_n^{\beta/\alpha})\) for some
\(q\in(0,1)\).  Lemma~\ref{lem:bprei-recursion-bound} yields
\(\sup_na_n<\infty\), and the same estimates give
\(\sum_n\Ebf_\alpha|V_{n+1}-V_n|^\alpha<\infty\).  Hence the increments are
absolutely summable almost surely and \(V_n\) converges in
\(L^\alpha(\Pbf_\alpha)\).

If \(\alpha>1\), set
\(a_n:=\|V_n\|_{L^\alpha(\Pbf_\alpha)}\).  Minkowski's inequality and
\eqref{eq:controlled-D-bootstrap} give
\[
\begin{aligned}
a_{n+1}
&\leq a_n+\|V_{n+1}-V_n\|_{L^\alpha(\Pbf_\alpha)}\\
&\leq a_n+C\e^{-n\Lambda(\alpha)/\alpha}
 +C\e^{-n\Lambda(\alpha)(\alpha-\beta)/\alpha^2}
 a_n^{\beta/\alpha}\\
&\leq a_n+Cq^n\{1+a_n^{\beta/\alpha}\},
\end{aligned}
\]
where \(q:=\exp\{-\Lambda(\alpha)(\alpha-\beta)/\alpha^2\}\in(0,1)\).
The recursion lemma gives \(\sup_na_n<\infty\), and the preceding estimate
then shows that
\(\sum_n\|V_{n+1}-V_n\|_{L^\alpha(\Pbf_\alpha)}<\infty\).  Thus \(V_n\)
converges almost surely and in \(L^\alpha(\Pbf_\alpha)\).

In either case, returning to \eqref{eq:controlled-D-bootstrap} gives
\begin{equation}\label{eq:controlled-innovation-decay}
        \Ebf|D_n|^\alpha\leq C_0\chi_\alpha^n,
        \qquad n\geq1,
\end{equation}
where one may take
\(\chi_\alpha:=\exp\{\beta\Lambda(\alpha)/\alpha\}\), so that
\(1\leq\chi_\alpha<\e^{\Lambda(\alpha)}\).

For later use in the nondegeneracy argument, start from a deterministic
state and, on \(\{X_n>0\}\), put
\(R_{n+1}:=D_{n+1}/(M_nX_n)\).  On \(\{X_n=k\}\), Markov's inequality,
\eqref{eq:controlled-relative-tilt}, and
\eqref{eq:main-random-sublinear} give, for \(a\in(0,1)\),
\begin{equation}\label{eq:controlled-relative-error}
\begin{aligned}
\Pbf_\alpha(R_{n+1}<-a\mid\cF_n)
&\leq a^{-\alpha}k^{-\alpha}
\Ebf_\alpha[M_n^{-\alpha}|D_{n+1}|^\alpha\mid X_n=k,\cF_n]\\
&=a^{-\alpha}k^{-\alpha}\e^{-\Lambda(\alpha)}
  \Ebf|D(k)|^\alpha
\leq Ca^{-\alpha}k^{\beta-\alpha}.
\end{aligned}
\end{equation}

The same convergence argument may be run from every deterministic state
\(x\in\Nzero\).  If
\(a_n(x):=\Ebf_{\alpha,x}[(X_n/\Delta_n)^\alpha]\) for
\(0<\alpha\leq1\), the recursion starts from \(a_0(x)=x^\alpha\); for
\(\alpha>1\), use
\(a_n(x):=\|X_n/\Delta_n\|_{L^\alpha(\Pbf_{\alpha,x})}\), with
\(a_0(x)=x\).  Lemma~\ref{lem:bprei-recursion-bound} gives
\begin{equation}\label{eq:controlled-statewise-moment-bound}
        \sup_{n\geq0}
        \Ebf_{\alpha,x}\left[
        \left(\frac{X_n}{\Delta_n}\right)^\alpha\right]
        \leq C_\alpha(1+x^\alpha),
        \qquad x\in\Nzero.
\end{equation}
At the first transition, \(X_1^{(x)}=\Psi_0(x)=Mx+D(x)\), and
\[
        \Ebf[\Psi_0(x)^\alpha]
        \leq C_\alpha\{x^\alpha\Ebf M^\alpha+
                         \Ebf|D(x)|^\alpha\}
        \leq C_\alpha(1+x^\alpha),
\]
so the same assumptions control the initial step explicitly.
\end{proof}

Proposition~\ref{prop:main-random-normalized-limit} requires, in addition,
nondegeneracy of the normalized limit and the sharp population tail.  Before
proving those conclusions, we record the logarithmic-block approximation,
which uses the innovation estimate in
Lemma~\ref{lem:controlled-tilt-cancellation}.

\begin{proposition}[Logarithmic-block approximation]
\label{prop:controlled-block}
Let \(j_n=\lceil K\log n\rceil\).  Under the hypotheses of
Proposition~\ref{prop:main-random-normalized-limit}, if \(K\) is sufficiently
large, then for every \(\eps>0\),
\begin{equation}\label{eq:controlled-block-approximation}
        \Pbf\left(
        |X_n-X_{j_n}\Delta_{j_n,n}|>\eps\e^{\rho n}
        \right)
        =o\left(n^{-1/2}\e^{-nI(\rho)}\right)
        \quad(n\to\infty).
\end{equation}
\end{proposition}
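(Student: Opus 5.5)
The plan is to repeat the proof of Proposition~\ref{prop:bprei-block} almost verbatim, with the branching-free ingredients of Proposition~\ref{prop:controlled-linearization} and Lemma~\ref{lem:controlled-tilt-cancellation} in place of the BPREI ones.

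\textbf{Step 1: telescoping identity.} By \eqref{eq:controlled-exact-telescope} with \(\ell=j\),
\[
X_n-X_j\Delta_{j,n}=\sum_{k=j+1}^{n}D_k\Delta_{k,n}.
\]
Fix \(c_0>0\) with \(c_0\sum_{r\geq1}r^{-2}\leq1\). Then, for \(t>0\), the event \(\{|X_n-X_j\Delta_{j,n}|>t\}\) is contained in the union over \(k\) of
\[
\{|D_k|\Delta_{k,n}>c_0t(k-j)^{-2}\}.
\]

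\textbf{Step 2: union bound, Markov and independence.} Apply the union bound and Markov's inequality at exponent \(\alpha\) to each event. The key structural fact is independence: \(D_k\) is measurable with respect to the marks \(\cE_0,\dots,\cE_{k-1}\), the offspring rows up to \(k-1\) and \(X_0\). The product \(\Delta_{k,n}\) depends only on the later marks \(A_k,\dots,A_{n-1}\). Since the marks are i.i.d. and independent of the offspring array and of \(X_0\), the two factors are independent, and
\[
\Ebf\bigl[|D_k|^\alpha\Delta_{k,n}^\alpha\bigr]=\Ebf|D_k|^\alpha\,\e^{(n-k)\Lambda(\alpha)}.
\]

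\textbf{Step 3: innovation decay.} Insert the bound \(\Ebf|D_k|^\alpha\leq C_0\chi_\alpha^k\), with \(\chi_\alpha<\e^{\Lambda(\alpha)}\), from Lemma~\ref{lem:controlled-tilt-cancellation}. This gives
\[
\Pbf\bigl(|X_n-X_j\Delta_{j,n}|>t\bigr)\leq Ct^{-\alpha}\e^{n\Lambda(\alpha)}\sum_{k>j}(k-j)^{2\alpha}\vartheta^k,
\qquad \vartheta:=\chi_\alpha\e^{-\Lambda(\alpha)}\in(0,1).
\]
The sum is at most \(C\vartheta^{j}\); any polynomial factor \(j^d\) is harmless.

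\textbf{Step 4: choice of \(t\) and \(j_n\).} Take \(t=\eps\e^{\rho n}\). Since \(\alpha\rho-\Lambda(\alpha)=I(\rho)\), we have \(t^{-\alpha}\e^{n\Lambda(\alpha)}=\eps^{-\alpha}\e^{-nI(\rho)}\), so the bound becomes \(C_\eps j^d\vartheta^{j}\e^{-nI(\rho)}\). With \(j=j_n=\lceil K\log n\rceil\), this is \(O\bigl((\log n)^d n^{-K\log(1/\vartheta)}\e^{-nI(\rho)}\bigr)\). Choosing \(K\log(1/\vartheta)>1/2\) gives \(o(n^{-1/2}\e^{-nI(\rho)})\).

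\textbf{Main obstacle.} The only step needing real care is the independence in Step 2 for the general random-map class. One has to check that \(D_k\), which involves \(C_{k-1}\), \(B_{k-1}\), \(A_{k-1}\) and the offspring row \(k-1\), is independent of \(A_k,\dots,A_{n-1}\). This holds because the marks are i.i.d. and are independent of the offspring array and of \(X_0\). The bound on \(\Ebf|D_k|^\alpha\) comes straight from Lemma~\ref{lem:controlled-tilt-cancellation}; that lemma in turn rests on the sublinear bound \eqref{eq:main-random-sublinear} and on \(\Ebf X_0^\alpha<\infty\).
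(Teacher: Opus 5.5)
Your proposal is correct and follows the paper's proof essentially line by line. It uses the same telescoping identity with the weights \(c_0(k-j)^{-2}\), the union bound with Markov's inequality at exponent \(\alpha\), independence of \(D_k\) from \(\Delta_{k,n}\), the innovation decay \(\Ebf|D_k|^\alpha\leq C_0\chi_\alpha^k\) from Lemma~\ref{lem:controlled-tilt-cancellation}, and the choice \(t=\eps\e^{\rho n}\), \(j_n=\lceil K\log n\rceil\). Your observation that \(\sum_{k>j}(k-j)^{2\alpha}\vartheta^k=\vartheta^j\sum_{r\geq1}r^{2\alpha}\vartheta^r\) removes the polynomial factor \(j^d\) kept in the paper and gives the explicit sufficient condition \(K\log(1/\vartheta)>1/2\).
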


\begin{proof}
Choose \(c_0>0\) so that
\(c_0\sum_{r\geq1}r^{-2}\leq1\).  From
\eqref{eq:controlled-exact-telescope},
\[
\{|X_n-X_j\Delta_{j,n}|>t\}
\subseteq
\bigcup_{k=j+1}^{n}
\{|D_k|\Delta_{k,n}>c_0t(k-j)^{-2}\}.
\]
Indeed, if none of the events on the right occurs, then
\(\sum_{k=j+1}^{n}|D_k|\Delta_{k,n}\leq t\).  The union bound, Markov's
inequality, independence of \(D_k\) from the future product, and
\eqref{eq:controlled-innovation-decay} give
\[
\begin{aligned}
\Pbf(|X_n-X_j\Delta_{j,n}|>t)
&\leq Ct^{-\alpha}
\sum_{k=j+1}^{n}(k-j)^{2\alpha}
\Ebf|D_k|^\alpha\Ebf[\Delta_{k,n}^\alpha]\\
&\leq Ct^{-\alpha}\e^{n\Lambda(\alpha)}
\sum_{k=j+1}^{n}(k-j)^{2\alpha}
\left(\frac{\chi_\alpha}{\e^{\Lambda(\alpha)}}\right)^k.
\end{aligned}
\]
The last sum is bounded by \(Cj^d\vartheta^j\) for some \(d<\infty\) and
\(\vartheta\in(0,1)\).  Taking \(t=\eps\e^{\rho n}\), then
\(j=j_n=\lceil K\log n\rceil\), and choosing \(K\) so that
\((\log n)^d\vartheta^{j_n}=o(n^{-1/2})\) proves
\eqref{eq:controlled-block-approximation}.
\end{proof}

\begin{proof}[Proof of Proposition~\ref{prop:main-random-normalized-limit}]
Lemma~\ref{lem:controlled-tilt-cancellation} gives the almost-sure and
\(L^\alpha(\Pbf_\alpha)\) convergence in
\eqref{eq:main-random-V-limit}.  It remains to prove nondegeneracy,
the moment limit, and the sharp population estimate.

\medskip
\noindent\emph{Nondegeneracy.}
We construct an event on which the innovations never remove more than a
summable fraction of the current environmental growth.  Starting from a
large state, this keeps \(X_n\) above a positive multiple of
\(x\Delta_n\) and forces the normalized limit to be positive.  The
sublinear innovation exponent makes the probability of any failure tend to
zero as \(x\to\infty\); accessibility then transfers the conclusion to the
prescribed initial law.

Start from a deterministic state \(x\geq1\), and on \(\{X_n>0\}\) put
\(R_{n+1}:=D_{n+1}/(M_nX_n)\).  The estimate
\eqref{eq:controlled-relative-error} gives the required one-step failure
bound.  Set
\(q_\beta:=\Ebf_\alpha[M^{\beta-\alpha}]
=\exp\{\Lambda(\beta)-\Lambda(\alpha)\}\).  Convexity of \(\Lambda\),
together with \(\Lambda'(0)=\mu\geq0\) and
\(\Lambda'(\alpha)=\rho>\mu\), gives \(0<q_\beta<1\).  Choose
\(r\in(q_\beta^{1/\alpha},1)\), \(a_0\in(0,1)\), put
\(a_n:=a_0r^n\), and write
\(p_a:=\prod_{n\geq1}(1-a_n)>0\).  If
\(\mathcal A_n:=\bigcap_{j=1}^{n}\{R_j\geq-a_j\}\), then
\(X_n\geq xp_a\Delta_n\) on \(\mathcal A_n\).  Therefore
\[
\Pbf_{\alpha,x}(\mathcal A_n,R_{n+1}<-a_{n+1})
\leq Ca_{n+1}^{-\alpha}(xp_a)^{\beta-\alpha}q_\beta^n.
\]
Since \(q_\beta r^{-\alpha}<1\), summation over \(n\) bounds the probability
of any failure by \(C_1x^{-(\alpha-\beta)}\).  For all sufficiently large
\(K\), this is strictly less than one uniformly over \(x\geq K\).  On the
complementary event, \(V_n\geq xp_a\) for every \(n\), and hence
\(\inf_{x\geq K}\Pbf_{\alpha,x}(V_\infty>0)>0\).  Finite-horizon
equivalence transfers \eqref{eq:main-random-accessibility} to \(\Pbf_\alpha\);
the strong Markov property at \(\tau_K\) then gives
\(\Pbf_\alpha(V_\infty>0)>0\) for the prescribed initial law.

\medskip
\noindent\emph{Moment limit.}
Equation~\eqref{eq:controlled-moment-tilt} and
\(L^\alpha(\Pbf_\alpha)\)-convergence give
\[
        \e^{-n\Lambda(\alpha)}\Ebf X_n^\alpha
        \longrightarrow
        \Ebf_\alpha V_\infty^\alpha=c_X(\alpha)\in(0,\infty).
\]

\medskip
\noindent\emph{Sharp population tail.}
Let \(j_n=\lceil K\log n\rceil\), with \(K\) as in
Proposition~\ref{prop:controlled-block}, and put
\(Y_n:=X_{j_n}\Delta_{j_n,n}\), \(t_n:=\e^{\rho n+y}\).
The moment limit just proved gives
\(\e^{-j_n\Lambda(\alpha)}\Ebf X_{j_n}^\alpha\to c_X(\alpha)\), and the
\(L^\alpha(\Pbf_\alpha)\)-convergence verifies the size-biased small-shift
condition as in Corollary~\ref{cor:bprei-early-prefactor}.  Hence
Lemma~\ref{lem:bprei-random-prefactor} gives
\[
        \sqrt n\,\e^{nI(\rho)}\Pbf(Y_n>t_n)
        \longrightarrow
        C_y:=\frac{c_X(\alpha)\e^{-\alpha y}}
        {\alpha v_\alpha\sqrt{2\pi}}.
\]
For every \(\delta\in(0,1)\),
\[
\begin{aligned}
\Pbf\{Y_n>(1+\delta)t_n\}
-\Pbf\{|X_n-Y_n|>\delta t_n\}
&\leq \Pbf\{X_n>t_n\}\\
&\leq \Pbf\{Y_n>(1-\delta)t_n\}
+\Pbf\{|X_n-Y_n|>\delta t_n\}.
\end{aligned}
\]
Therefore \eqref{eq:controlled-block-approximation} and the preceding
limit give
\[
\begin{aligned}
(1+\delta)^{-\alpha}C_y
&\leq \liminf_{n\to\infty}
\sqrt n\,\e^{nI(\rho)}\Pbf(X_n>t_n)\\
&\leq \limsup_{n\to\infty}
\sqrt n\,\e^{nI(\rho)}\Pbf(X_n>t_n)
\leq (1-\delta)^{-\alpha}C_y.
\end{aligned}
\]
Letting \(\delta\downarrow0\) proves the sharp population estimate in
\eqref{eq:main-random-moment-constant}.
\end{proof}

\begin{proof}[Proof of Proposition~\ref{prop:main-random-SFPE}]
For every \(z\in\Nzero\), let
\[
        \Omega_{\alpha,z}:=
        \left\{\frac{X_n^{(z)}}{\Delta_n}
        \longrightarrow\mathcal V_\alpha(z)\right\},
        \qquad
        \Omega_\alpha:=\bigcap_{z\in\Nzero}\Omega_{\alpha,z}.
\]
Since \(\Pbf_\alpha(\Omega_{\alpha,z})=1\) for every \(z\) and
\(\Nzero\) is countable, \(\Pbf_\alpha(\Omega_\alpha)=1\).

Let \(X_n^{\langle1\rangle,z}\) be the iteration started from \(z\) and
driven by \(\Psi_1,\Psi_2,\ldots\), and put
\(V_n^{\langle1\rangle}(z):=X_n^{\langle1\rangle,z}/\Delta_{1,n+1}\).
Then
\[
        \frac{X_{n+1}^{(x)}}{\Delta_{n+1}}
        =M_0^{-1}V_n^{\langle1\rangle}(\Psi_0(x)).
\]
For \(z\in\Nzero\), let
\[
        \mathcal V_\alpha^{\langle1\rangle}(z)
        :=\lim_{n\to\infty}V_n^{\langle1\rangle}(z).
\]
Since \(\Nzero\) is countable, these shifted limits exist simultaneously
for all \(z\in\Nzero\) on a \(\Pbf_\alpha\)-full event.  In particular, the
limit may be evaluated at the random state \(z=\Psi_0(x)\).  Therefore, on
the intersection of the original and shifted full-measure events, letting
\(n\to\infty\) in the preceding identity gives
\[
        \mathcal V_\alpha(x)
        =M_0^{-1}\mathcal V_\alpha^{\langle1\rangle}(\Psi_0(x)),
        \qquad x\in\Nzero.
\]
Under \(\Pbf_\alpha\), the generation maps from time \(1\) onward are
independent of \((M_0,\Psi_0)\) and have the same law as the original
sequence of generation maps.  Hence
\((\mathcal V_\alpha^{\langle1\rangle}(x):x\in\Nzero)\) is an independent
copy of \((\mathcal V_\alpha(x):x\in\Nzero)\), and the preceding pathwise
identity yields \eqref{eq:main-random-joint-SFPE}.

Taking \(\alpha\)th moments and using the one-generation tilt density gives
\[
        c_\alpha(x)
        =\Ebf_\alpha[M^{-\alpha}c_\alpha(\Psi(x))]
        =\e^{-\Lambda(\alpha)}(Pc_\alpha)(x),
\]
and hence \eqref{eq:main-random-eigenfunction}.  If
\(P^n(x,y)>0\) and \(c_\alpha(y)>0\), then
\(\e^{n\Lambda(\alpha)}c_\alpha(x)
=(P^nc_\alpha)(x)\geq P^n(x,y)c_\alpha(y)>0\).

If \(X_0\sim\nu\), then tilt cancellation and conditioning on \(X_0\) give
\[
\e^{-n\Lambda(\alpha)}\Ebf_\nu[X_n^\alpha]
=\sum_{x\in\Nzero}\nu(x)
\Ebf_{\alpha,x}\left[\left(\frac{X_n}{\Delta_n}\right)^\alpha\right].
\]
For each \(x\), the summand converges to \(c_\alpha(x)\), and
\eqref{eq:controlled-statewise-moment-bound} supplies the dominating bound
\(C_\alpha(1+x^\alpha)\).  Dominated convergence therefore gives
\[
        c_\nu(\alpha)=\sum_{x\in\Nzero}\nu(x)c_\alpha(x).
\]
\end{proof}

\subsection{One-step kernels and the sharp ratio theorem}
\label{subsec:controlled-ratio-kernels}

We now condition on the current generation size and derive the one-step
kernels needed for the ratio part of
Theorem~\ref{thm:main-random-sharp-ratio}.  The sharp population tail has
already been proved in Proposition~\ref{prop:main-random-normalized-limit};
the estimates below show that, on that upper-deviation event, each one-step
ratio failure is negligible.  Conditional on \(X_n\), the control
fluctuation, the innovation, and the offspring empirical mean are all
one-generation random-sum events.

For $k\geq1$ and $\eps>0$, define
\begin{align}
\psi_\phi(k;\eps)
&:=\Pbf\left(\left|\frac{\phi(k)}k-\lambda A\right|>\eps\right),
\label{eq:controlled-kernel-control}\\
\psi_D(k;\eps)
&:=\Pbf(|D(k)|>\eps k),
\label{eq:controlled-kernel-D}\\
\psi_\xi(k;\eps)
&:=\Pbf\left(
\left|\frac1k\sum_{j=1}^{k}\xi_j-m\right|>\eps\right).
\label{eq:controlled-kernel-offspring}
\end{align}
For the offspring ratio, the progenitor count is itself random conditional on
$X_n$.  We therefore write
\begin{equation}\label{eq:controlled-kernel-random-denominator}
        \psi_N(k;\eps)
        :=\Pbf\{N(k)=0\}
          +\Ebf[\psi_\xi(N(k);\eps);N(k)>0].
\end{equation}

\begin{proposition}[Environmentally centered ratio identities]
\label{prop:controlled-ratio-identities}
For every $n\geq0$ and $\eps>0$,
\begin{align}
&\Pbf\left(
 \left|\frac{N_n}{X_n}-\lambda A_n\right|>\eps,
 X_n>0\right)
 =\Ebf[\psi_\phi(X_n;\eps);X_n>0],
\label{eq:controlled-ratio-identity-control}\\
&\Pbf\left(
 \left|\frac{X_{n+1}}{X_n}-m\lambda A_n\right|>\eps,
 X_n>0\right)
 =\Ebf[\psi_D(X_n;\eps);X_n>0],
\label{eq:controlled-ratio-identity-X}\\
&\Pbf\left(
 X_n>0,
 \left\{N_n=0\ \text{or}\
 \left|\frac{X_{n+1}}{N_n}-m\right|>\eps\right\}\right)
 =\Ebf[\psi_N(X_n;\eps);X_n>0].
\label{eq:controlled-ratio-identity-offspring}
\end{align}
On $\{X_n>0,N_n>0\}$,
\begin{equation}\label{eq:controlled-ratio-product-decomposition}
\frac{X_{n+1}}{X_n}-m\lambda A_n
=
\left(\frac{X_{n+1}}{N_n}-m\right)\frac{N_n}{X_n}
+m\left(\frac{N_n}{X_n}-\lambda A_n\right).
\end{equation}
\end{proposition}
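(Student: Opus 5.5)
The three identities will all come from conditioning on \(\cF_n\) and using the Markov structure of the random-map recursion. Conditional on \(\cF_n\), the generation mark \(\cE_n\) and the offspring row \((\xi_{n,j})_{j\geq1}\) are independent of \(\cF_n\). They have the generic laws of \((A,B,C)\) and \((\xi_j)_{j\geq1}\).

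Hence on \(\{X_n=k\}\) with \(k\geq1\) we have the following identifications in law:
\[
(A_n,N_n,X_{n+1})\stackrel d=(A,N(k),\Psi(k)),
\qquad
X_{n+1}-M_nX_n\stackrel d=D(k).
\]
The second uses the realized innovation \(D_{n+1}\) from Proposition~\ref{prop:controlled-linearization}.

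For \eqref{eq:controlled-ratio-identity-control}, write the event as \(\{|N_n/X_n-\lambda A_n|>\eps\}\cap\{X_n>0\}\). Conditioning on \(\cF_n\) gives \(\psi_\phi(X_n;\eps)\) on \(\{X_n>0\}\), since \(\phi(k)=N(k)\) in law. Taking expectations yields the identity.

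For \eqref{eq:controlled-ratio-identity-X}, note that \(X_{n+1}/X_n-m\lambda A_n=D_{n+1}/X_n\). The event therefore becomes \(\{|D_{n+1}|>\eps X_n,\ X_n>0\}\). Conditioning yields \(\psi_D(X_n;\eps)\).

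For \eqref{eq:controlled-ratio-identity-offspring}, I will condition in two stages.

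First, condition on \(\cF_n\vee\sigma(\cE_n)\). On \(\{X_n=k\}\) this fixes \(N_n=N(k)\) as a function of the mark. The offspring row is still independent and i.i.d.\ with law \(\xi\), so on \(\{N_n=\ell\geq1\}\) the sum \(X_{n+1}=\sum_{j=1}^{\ell}\xi_{n,j}\) is an empirical sum of \(\ell\) variables. The conditional probability of \(|X_{n+1}/N_n-m|>\eps\) is then exactly \(\psi_\xi(N_n;\eps)\). On \(\{N_n=0\}\) the indicator of the event is \(1\).

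Second, integrate over the mark given \(X_n=k\). This gives \(\Pbf\{N(k)=0\}+\Ebf[\psi_\xi(N(k);\eps);N(k)>0]=\psi_N(k;\eps)\). The independence of the offspring array from the marks is what makes this stage legitimate.

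The product decomposition \eqref{eq:controlled-ratio-product-decomposition} is pure algebra on \(\{X_n>0,N_n>0\}\). Expanding the right-hand side gives \(X_{n+1}/X_n-mN_n/X_n+mN_n/X_n-m\lambda A_n\), and the middle terms cancel.

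The only real care point is measure-theoretic. I need a regular conditional law, or equivalently a disintegration over the countable state space and the i.i.d.\ marks. I also need to confirm that \(N(k)\) and the offspring row are independent, which the model specification provides. I do not expect a substantive obstacle beyond this.
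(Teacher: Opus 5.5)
Your proposal is correct and follows essentially the same route as the paper. The paper conditions on $\{X_n=k\}$ rather than on $\cF_n$, which is equivalent here; it identifies the one-step conditional probabilities as $\psi_\phi(k;\eps)$, as $\psi_D(k;\eps)$ (using $X_{n+1}/X_n-m\lambda A_n=D_{n+1}/X_n$), and as $\Pbf\{N(k)=0\}+\sum_{\ell\geq1}\Pbf\{N(k)=\ell\}\psi_\xi(\ell;\eps)=\psi_N(k;\eps)$, then averages over $X_n$ on $\{X_n>0\}$ and checks the product decomposition by the same algebraic cancellation you describe.
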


\begin{proof}
For every \(k\geq1\),
\[
\begin{aligned}
&\Pbf\left(
 \left|\frac{N_n}{X_n}-\lambda A_n\right|>\eps
 \Bigm|X_n=k\right)
 =\Pbf\left(
 \left|\frac{N(k)}k-\lambda A\right|>\eps\right)
 =\psi_\phi(k;\eps),\\
&\Pbf\left(
 \left|\frac{X_{n+1}}{X_n}-m\lambda A_n\right|>\eps
 \Bigm|X_n=k\right)
 =\Pbf(|D(k)|>\eps k)
 =\psi_D(k;\eps),\\
&\Pbf\left(
 N_n=0\ \text{or}\
 \left|\frac{X_{n+1}}{N_n}-m\right|>\eps
 \Bigm|X_n=k\right)\\
&\qquad=
 \Pbf\{N(k)=0\}
 +\sum_{\ell\geq1}\Pbf\{N(k)=\ell\}\psi_\xi(\ell;\eps)
 =\psi_N(k;\eps).
\end{aligned}
\]
Taking expectations over \(X_n\) on \(\{X_n>0\}\) proves
\eqref{eq:controlled-ratio-identity-control}--
\eqref{eq:controlled-ratio-identity-offspring}.  Finally, on
\(\{X_n>0,N_n>0\}\),
\[
\left(\frac{X_{n+1}}{N_n}-m\right)\frac{N_n}{X_n}
+m\left(\frac{N_n}{X_n}-\lambda A_n\right)
=\frac{X_{n+1}}{X_n}-m\lambda A_n,
\]
which proves \eqref{eq:controlled-ratio-product-decomposition}.
\end{proof}

\begin{lemma}[Kernel transfer]
\label{lem:controlled-kernel-transfer}
Let $Y$ be a nonnegative integer-valued random variable.
\begin{enumerate}[label=\textup{(\roman*)}]
\item If $a_k\leq Cs^k$ for $k\geq1$ and $s\in(0,1)$, then
\[
        \Ebf[a_Y;Y>0]\leq C\Ebf[s^Y;Y>0].
\]
\item If $a_k\leq Ck^{-q}$ for $k\geq1$, then
\[
        \Ebf[a_Y;Y>0]\leq C\Ebf[Y^{-q};Y>0].
\]
The reverse inequality holds under the corresponding pointwise lower bound.
\item If
$a_{n,k}\leq C\min\{1,\eps_n^{-p}k^{-q}\}$, then, for every
$K_n\geq1$,
\begin{equation}\label{eq:controlled-kernel-transfer-cutoff}
        \Ebf[a_{n,Y};Y>0]
        \leq C\Pbf(0<Y\leq K_n)
             +C\eps_n^{-p}K_n^{-q}.
\end{equation}
\end{enumerate}
\end{lemma}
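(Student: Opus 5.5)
The three parts of Lemma~\ref{lem:controlled-kernel-transfer} are pointwise bounds integrated against the law of \(Y\) on \(\{Y>0\}\). We write \(\Ebf[a_Y;Y>0]=\sum_{k\geq1}a_k\Pbf(Y=k)\) and insert the assumed bound on \(a_k\) term by term.

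For (i), the hypothesis \(a_k\leq Cs^k\) for \(k\geq1\) gives
\[
\sum_{k\geq1}a_k\Pbf(Y=k)\leq C\sum_{k\geq1}s^k\Pbf(Y=k)=C\Ebf[s^Y;Y>0].
\]
For (ii), we argue the same way with \(k^{-q}\) in place of \(s^k\). For the reverse inequality we assume \(a_k\geq ck^{-q}\) (respectively \(a_k\geq cs^k\)) for \(k\geq1\) with \(c>0\), and obtain \(\Ebf[a_Y;Y>0]\geq c\Ebf[Y^{-q};Y>0]\) (respectively \(\geq c\Ebf[s^Y;Y>0]\)). Since all terms are nonnegative, Tonelli's theorem justifies the interchange, and no integrability assumption is needed.

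For (iii), we split the sum at the cutoff \(K_n\). On \(\{1\leq k\leq K_n\}\) we use \(a_{n,k}\leq C\), which bounds that part by \(C\Pbf(0<Y\leq K_n)\). On \(\{k>K_n\}\) we use \(a_{n,k}\leq C\eps_n^{-p}k^{-q}\leq C\eps_n^{-p}K_n^{-q}\), since \(q>0\) and \(k>K_n\). Summing against \(\Pbf(Y=k)\) gives at most \(C\eps_n^{-p}K_n^{-q}\Pbf(Y>K_n)\leq C\eps_n^{-p}K_n^{-q}\). Adding the two pieces yields \eqref{eq:controlled-kernel-transfer-cutoff}.

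None of these steps is a real obstacle. The only care needed is that the constant \(C\) is the same one as in the hypothesis, so that later uniformity in \(n\) holds when \(Y=X_n\) or \(Y=N_n\).
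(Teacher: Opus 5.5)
Your proposal is correct and follows the paper's proof: parts (i) and (ii) by summing the pointwise kernel bounds against the law of \(Y\) on \(\{Y>0\}\), and part (iii) by splitting at the cutoff \(K_n\), using the bound \(C\) below it and \(C\eps_n^{-p}K_n^{-q}\) above it. Your Tonelli remark relies on the kernels being nonnegative, which the lemma leaves implicit but which holds in every application here, since the kernels are probabilities.
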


\begin{proof}
Parts \textup{(i)} and \textup{(ii)} follow by conditioning on \(Y\) and
summing the pointwise bounds against its law.  For part \textup{(iii)},
\[
\begin{aligned}
\Ebf[a_{n,Y};Y>0]
&=\Ebf[a_{n,Y};0<Y\leq K_n]
  +\Ebf[a_{n,Y};Y>K_n]\\
&\leq C\Pbf(0<Y\leq K_n)
  +C\eps_n^{-p}K_n^{-q}.
\end{aligned}
\]
\end{proof}

To apply Lemma~\ref{lem:controlled-kernel-transfer} to the three identities
in Proposition~\ref{prop:controlled-ratio-identities}, we now verify that
\((\mathrm{RM}_p)\) places all three population-conditioned kernels on the
common scale \(k^{-q}\), where \(q=p/2\).  The first two lines of
\eqref{eq:main-random-ratio-moments} control the control and innovation
kernels, while the last line is specific to the random denominator in the
offspring ratio.

\begin{proposition}[Polynomial one-step kernels]
\label{prop:controlled-poly-kernels}
Assume \eqref{eq:main-random-ratio-moments}, with $q=p/2$.  For every
$0<\eps\leq1$,
\begin{equation}\label{eq:controlled-poly-kernel-bound}
        \psi_\phi(k;\eps)+\psi_D(k;\eps)+\psi_N(k;\eps)
        \leq C\eps^{-p}k^{-q},
        \qquad k\geq1.
\end{equation}
\end{proposition}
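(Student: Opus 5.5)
We bound the three kernels separately with Markov's inequality at exponent \(p\) or \(q\), using the three lines of \eqref{eq:main-random-ratio-moments}. Since \(0<\eps\leq1\), any factor \(\eps^{-p'}\) with \(p'\leq p\) is at most \(\eps^{-p}\), and any \(k^{-q'}\) with \(q'\geq q\) is at most \(k^{-q}\) for \(k\geq1\). So it suffices to get each kernel on the scale \(\eps^{-p}k^{-q}\).

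\emph{Control kernel.} Since \(\phi(k)=N(k)\), Markov at exponent \(p\) and the middle line of \eqref{eq:main-random-ratio-moments} give
\[
\psi_\phi(k;\eps)\leq \eps^{-p}k^{-p}\Ebf|N(k)-\lambda Ak|^p\leq C\eps^{-p}k^{-p}(1+k^q)\leq C\eps^{-p}k^{-q}.
\]

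\emph{Offspring kernel.} For fixed \(\ell\geq1\), Markov at exponent \(p\) and Rosenthal's (or Marcinkiewicz--Zygmund's) inequality with \(\Ebf|\xi-m|^p<\infty\) give
\[
\psi_\xi(\ell;\eps)\leq \eps^{-p}\ell^{-p}\Ebf\Bigl|\sum_{j=1}^{\ell}(\xi_j-m)\Bigr|^p\leq C\eps^{-p}\ell^{-p/2}=C\eps^{-p}\ell^{-q}.
\]
Substituting \(\ell=N(k)\) in \eqref{eq:controlled-kernel-random-denominator}, and using the independence of the offspring array from the control, we get
\[
\psi_N(k;\eps)\leq \Pbf\{N(k)=0\}+C\eps^{-p}\Ebf[N(k)^{-q};N(k)>0].
\]
The last line of \eqref{eq:main-random-ratio-moments} bounds both terms by \(C\eps^{-p}k^{-q}\).

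\emph{Innovation kernel.} Write \(D(k)=S_{N(k)}+mR(k)\), where \(S_{N(k)}:=\sum_{j\leq N(k)}(\xi_j-m)\) and \(R(k):=N(k)-\lambda Ak\). The event \(\{|D(k)|>\eps k\}\) is contained in the union of \(\{|S_{N(k)}|>\eps k/2\}\) and \(\{m|R(k)|>\eps k/2\}\). The second piece is the control-kernel estimate with \(\eps\) replaced by \(\eps/(2m)\), giving \(C\eps^{-p}k^{-q}\). For the first piece we apply conditional Rosenthal given \(N(k)\):
\[
\Ebf[|S_{N(k)}|^p\mid N(k)]\leq C\{N(k)+N(k)^{q}\}\leq C(1+N(k)^q),
\]
so the first piece is at most \(C\eps^{-p}k^{-p}(1+\Ebf N(k)^q)\). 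The middle line of \eqref{eq:main-random-ratio-moments} gives \(\Ebf N(k)^q\leq C(1+k^q)\), hence a bound \(C\eps^{-p}k^{-p}(1+k^q)\leq C\eps^{-p}k^{-q}\). We will check the constant in the Rosenthal inequality for the random index: it is legitimate because \(N(k)\) is independent of the \(\xi_j\).

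The main obstacle is this last random-index step. It needs exactly the moment \(\Ebf N(k)^q\) rather than a \(p\)-th moment of \(N(k)\), and this is where the choice \(q=p/2\) matters: Rosenthal's leading term is \(N(k)^{p/2}\), which matches the available moment. Summing the three bounds gives \eqref{eq:controlled-poly-kernel-bound}.
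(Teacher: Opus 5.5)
Your proposal is correct and follows essentially the same route as the paper. You use Markov's inequality with the middle line of \((\mathrm{RM}_p)\) for \(\psi_\phi\), conditional Rosenthal with \(\Ebf[N(k)^q]\) and \(\Ebf|N(k)-\lambda Ak|^p\) for \(\psi_D\), and Rosenthal at a fixed sample size combined with the inverse-moment line for \(\psi_N\). The only cosmetic difference is that you split \(\{|D(k)|>\eps k\}\) by a union bound, whereas the paper bounds \(\Ebf|D(k)|^p\leq C(1+k^q)\) directly before applying Markov.
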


\begin{proof}
By the second line of \eqref{eq:main-random-ratio-moments},
\[
\begin{aligned}
\psi_\phi(k;\eps)
&\leq \eps^{-p}k^{-p}\Ebf|N(k)-\lambda Ak|^p
\leq C\eps^{-p}k^{-q}.
\end{aligned}
\]
Conditional Rosenthal's inequality and the first two lines of
\eqref{eq:main-random-ratio-moments} give
\[
        \Ebf|D(k)|^p
        \leq C\left\{\Ebf[N(k)^q]
        +\Ebf|N(k)-\lambda Ak|^p\right\}
        \leq C(1+k^q),
\]
and therefore
\(\psi_D(k;\eps)\leq C\eps^{-p}k^{-q}\), since \(p=2q\).
Similarly, conditional Rosenthal gives
\(\psi_\xi(\ell;\eps)\leq C\eps^{-p}\ell^{-q}\) for \(\ell\geq1\).
Thus
\[
\begin{aligned}
\psi_N(k;\eps)
&\leq \Pbf\{N(k)=0\}
 +C\eps^{-p}\Ebf[N(k)^{-q};N(k)>0]\\
&\leq C\eps^{-p}k^{-q},
\end{aligned}
\]
by the last line of \eqref{eq:main-random-ratio-moments}.  Combining the
three estimates proves \eqref{eq:controlled-poly-kernel-bound}.
\end{proof}

\stepcounter{equation}
\begin{proof}[Proof of Theorem~\ref{thm:main-random-sharp-ratio}]
Put
\[
        A_n(y):=\{X_n>\e^{\rho n+y}\}.
\]
By Proposition~\ref{prop:controlled-poly-kernels} and the ratio identities,
\[
\begin{aligned}
\Pbf\bigl(A_n(y)\cap\mathcal B_n(\eps_n)\bigr)
&\leq C\eps_n^{-p}\Ebf[X_n^{-q};A_n(y)]\\
&\leq C_y\eps_n^{-p}\e^{-q\rho n}\Pbf(A_n(y)),
\end{aligned}
\]
which proves \eqref{eq:main-random-sharp-ratio-error}.  If
\[
        \eps_n^{-p}\e^{-q\rho n}\longrightarrow0,
\]
then
\[
\Pbf\bigl(A_n(y)\cap\mathcal B_n(\eps_n)\bigr)
=o\bigl(\Pbf(A_n(y))\bigr).
\]
Hence
\[
\Pbf\bigl(A_n(y)\cap\mathcal B_n(\eps_n)^c\bigr)
=\Pbf(A_n(y))\{1+o(1)\},
\]
and the sharp population estimate in
Proposition~\ref{prop:main-random-normalized-limit} gives
\eqref{eq:main-random-sharp-joint}.
\end{proof}

\begin{remark}[Conditions tied to the sharp prefactor]
\label{rem:controlled-sharp-tools}
The non-arithmetic hypothesis and \(0<v_\alpha^2<\infty\) are the standard
inputs for the Stone--Petrov local expansion used to identify the
\(n^{-1/2}\) prefactor; they are not needed merely to identify the Cram\'er
rate \(I(\rho)\).  In the arithmetic case the corresponding lattice
expansion produces a phase-dependent prefactor, and the integrated kernel
must be replaced by its lattice analogue.  The size-biased small-shift
condition is not an additional primitive model assumption here: it follows
from the \(L^\alpha(\Pbf_\alpha)\)-convergence proved above.
\end{remark}

Assumption \((\mathrm{RM}_p)\) enters
Theorem~\ref{thm:main-random-sharp-ratio} only through
\eqref{eq:controlled-poly-kernel-bound}.  If the same kernel estimates hold
at a smaller exponent \(p'\geq2\), the proof remains valid with
\(q'=p'/2\) and tolerance condition
\(\eps_n^{-p'}\e^{-q'\rho n}\to0\).  If no common power bound is available,
Lemma~\ref{lem:controlled-kernel-transfer}\textup{(iii)} reduces the problem
to a cutoff estimate involving \(\Pbf(0<X_n\leq K_n)\); the resulting bound
then depends on a separate lower-population estimate.

\subsection{The individual-sum subclass: primitive verification and exact BPREI structure}
\label{subsec:controlled-sum-subclass}

The preceding arguments apply to the random asymptotically affine class
and do not use a branching decomposition.  We now impose the individual-sum
representation for two additional purposes.  It verifies the abstract
sum-scale and denominator conditions from primitive moments, and it converts
the controlled process into an exact BPREI.  The first role completes the
one-step estimates; the second makes the harmonic-moment and generating-
function theory of Section~\ref{sec:bprei} available.

Assume
\(C_n(k)=\sum_{j=1}^{k}L_{n,j}\), \(k\in\Nzero\), where the
\(L_{n,j}\)'s are independent copies of a nonnegative integer-valued random
variable \(L\), independent of \((A_n,B_n)\) and of the offspring array, and
where \(\lambda:=\Ebf L\in(0,\infty)\).  The variables \(A_n\) and \(B_n\)
may remain dependent through the same generation mark.  For a generic
generation, write \(C(k):=\sum_{j=1}^{k}L_j\), so that
\(N(k)=AC(k)+B\).

\begin{proposition}[Verification of the sum-scale conditions]
\label{prop:controlled-sum-verification}
Fix \(s>0\) and let \(\mathfrak b_s\) be defined by
\eqref{eq:controlled-bs}.  If \(0<s\leq1\), assume
\(\Ebf A^s+\Ebf B^s+\Ebf L^{r_s}<\infty\); if \(s>1\), assume
\(\Ebf A^s+\Ebf B^s+\Ebf L^s<\infty\).  Then, for every \(k\geq0\),
\begin{align}
\Ebf[(AC(k)+B)^{\mathfrak b_s}]
&\leq K_s(1+k^{\mathfrak b_s}),
\label{eq:controlled-sum-N-bound}\\
\Ebf|A\{C(k)-\lambda k\}+B|^s
&\leq K_s(1+k^{\mathfrak b_s}).
\label{eq:controlled-sum-remainder-bound}
\end{align}
Consequently, Assumption~\ref{ass:controlled-sum-scale} holds if, in
addition, \(\Ebf|\xi-m|^{r_s}<\infty\) when \(0<s\leq1\), and if
\(\Ebf|\xi-m|^s<\infty\) when \(s>1\).
\end{proposition}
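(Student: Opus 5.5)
The plan is to condition on the environmental mark \((A,B)\) throughout, since \(L_1,L_2,\ldots\) are i.i.d. and independent of \((A,B)\). Then \(C(k)-\lambda k=\sum_{j=1}^k(L_j-\lambda)\) is a centered i.i.d. sum of fixed length \(k\). Every \(k\)-dependence comes from moments of such a sum, and \(A\) and \(B\) enter only through \(\Ebf A^s\) and \(\Ebf B^s\), with no joint moments required. The bound \eqref{eq:controlled-sum-remainder-bound} is the substantive one; \eqref{eq:controlled-sum-N-bound} will follow from cruder estimates.

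For \eqref{eq:controlled-sum-remainder-bound}, independence of \(A\) from the \(L_j\) gives \(\Ebf|A\{C(k)-\lambda k\}|^s=\Ebf A^s\,\Ebf|\sum_{j\le k}(L_j-\lambda)|^s\). I will combine this with \(|x+y|^s\le c_s(|x|^s+|y|^s)\) and the finiteness of \(\Ebf B^s\). The centered-sum moment is handled by the same case split used in Proposition~\ref{prop:bprei-primitive} and Lemma~\ref{lem:controlled-innovation-bound}.
\begin{itemize}
\item For \(0<s\le1\), I apply Jensen and then Marcinkiewicz--Zygmund at exponent \(r_s\), which gives \(\{C_{r_s}k\,\Ebf|L-\lambda|^{r_s}\}^{s/r_s}=Ck^{\mathfrak b_s}\). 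Finiteness of \(\Ebf|L-\lambda|^{r_s}\) follows from \(\Ebf L^{r_s}<\infty\).
\item For \(1<s\le2\), von Bahr--Esseen gives \(Ck\) with \(\mathfrak b_s=1\).
\item For \(s>2\), Rosenthal gives \(C(k+k^{s/2})\le C(1+k^{s/2})\) with \(\mathfrak b_s=s/2\), using \(\Ebf|L-\lambda|^s<\infty\).
\end{itemize}

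For \eqref{eq:controlled-sum-N-bound}, write \(b:=\mathfrak b_s\) and use \(AC(k)+B\le A\{C(k)-\lambda k\}+B+\lambda Ak\), which is in fact an equality. This yields
\[
\Ebf[(AC(k)+B)^{b}]\le c_b\bigl(\Ebf|A\{C(k)-\lambda k\}+B|^{b}+\lambda^b k^b\Ebf A^b\bigr).
\]
Since \(b<s\), Lyapunov gives \(\Ebf A^b\le1+\Ebf A^s\). It also gives \(\Ebf|R(k)|^b\le(\Ebf|R(k)|^s)^{b/s}\le C(1+k^{b})^{b/s}\le C(1+k^b)\), where \(R(k):=A\{C(k)-\lambda k\}+B\). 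This closes \eqref{eq:controlled-sum-N-bound}. The final assertion then follows directly, because the two displayed bounds are exactly \eqref{eq:controlled-N-moment}--\eqref{eq:controlled-control-remainder-moment} and the offspring moment is assumed separately.

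I do not expect a genuine obstacle. The only point needing care is the case \(0<s\le1\), where the \(L\)-moment must be taken at \(r_s>1\) so that the exponent \(s/r_s\) is strictly below \(s\). I also need to check that \(k=0\) is covered, which it is because \(C(0)=0\) and the constant \(K_s\) absorbs \(\Ebf B^s\).
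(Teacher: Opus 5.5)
Your proof is correct. For \eqref{eq:controlled-sum-remainder-bound} you argue as the paper does: independence of $A$ from $R_k:=C(k)-\lambda k$, the inequality $|AR_k+B|^s\le c_s(A^s|R_k|^s+B^s)$, and the Marcinkiewicz--Zygmund/von Bahr--Esseen/Rosenthal case split giving $\Ebf|R_k|^s\le C_sk^{\mathfrak b_s}$.

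For \eqref{eq:controlled-sum-N-bound} your route is different.
\begin{itemize}
\item You write $N(k)=R(k)+\lambda Ak$ and derive the bound from the remainder estimate already proved. This uses Lyapunov, $\Ebf|R(k)|^b\le(\Ebf|R(k)|^s)^{b/s}\le C(1+k^b)$, together with $\Ebf A^b\le 1+\Ebf A^s$.
\item The paper instead bounds $\Ebf[C(k)^b]$ directly. It uses concavity, $\Ebf[C(k)^b]\le(\lambda k)^b$, when $b\le1$, and Minkowski when $b>1$. It then combines independence of $A$ and $C(k)$ with $(x+y)^b\le c_b(x^b+y^b)$.
\end{itemize}

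Your version is shorter and needs no second case split, because it reuses the substantive estimate. The paper's version shows that the progenitor bound by itself needs only $b$th moments of $A$, $B$ and $L$, and only $\Ebf L<\infty$ when $b\le1$. So that bound does not depend on the stronger moment hypotheses that drive the remainder bound. Both estimates are required in Assumption~\ref{ass:controlled-sum-scale} anyway, so the two routes prove the same proposition under the same hypotheses.
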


\begin{proof}
Write \(R_k:=C(k)-\lambda k=\sum_{j=1}^{k}(L_j-\lambda)\).  The same
random-sum inequalities used in Lemma~\ref{lem:controlled-innovation-bound}
give \(\Ebf|R_k|^s\leq C_sk^{\mathfrak b_s}\).  Independence of \(R_k\)
and \((A,B)\), followed by
\(|AR_k+B|^s\leq c_s(A^s|R_k|^s+B^s)\), proves
\eqref{eq:controlled-sum-remainder-bound}.

For the progenitor-count estimate, put \(b:=\mathfrak b_s\).  Since
\(b<s\), the stated assumptions give finite \(b\)th moments of \(A\),
\(B\), and \(L\).  If \(b\leq1\), concavity gives
\(\Ebf[C(k)^b]\leq(\Ebf C(k))^b=(k\lambda)^b\); if \(b>1\), Minkowski's
inequality gives the same order \(O(k^b)\).  Since \(N(k)=AC(k)+B\), the
inequality \((x+y)^b\leq c_b(x^b+y^b)\) now proves
\eqref{eq:controlled-sum-N-bound}.
\end{proof}

\begin{proposition}[Inverse progenitor counts]
\label{prop:controlled-denominator-sum}
For every $q>0$, there is $C_q<\infty$ such that
\begin{equation}\label{eq:controlled-denominator-sum-bound}
        \Pbf\{N(k)=0\}
        +\Ebf[N(k)^{-q};N(k)>0]
        \leq C_qk^{-q},
        \qquad k\geq1.
\end{equation}
\end{proposition}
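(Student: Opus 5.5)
The plan is to reduce everything to the individual sum \(C(k)=\sum_{j=1}^kL_j\), using only \(A\geq1\), \(B\geq0\), and \(\lambda=\Ebf L\in(0,\infty)\). No moment hypotheses beyond the finite mean of \(L\) should be needed, which is consistent with the statement holding for every \(q>0\). The basic pathwise observation is \(N(k)=AC(k)+B\geq C(k)\). Hence \(\{N(k)=0\}\subseteq\{C(k)=0\}\), and on \(\{C(k)>0\}\) we have \(N(k)^{-q}\leq C(k)^{-q}\).

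First, the zero event. Since \(\Ebf L=\lambda>0\), we have \(\pi_0:=\Pbf(L=0)<1\). Then
\[
\Pbf\{N(k)=0\}\leq\Pbf\{C(k)=0\}=\pi_0^k,
\]
which is \(O(k^{-q})\) for every \(q\). Next, decompose the inverse moment according to whether \(C(k)>0\). On \(\{N(k)>0,\ C(k)=0\}\) the integrand is at most \(1\), and this event has probability at most \(\pi_0^k\). So it remains to prove
\[
\Ebf[C(k)^{-q};C(k)>0]\leq C_qk^{-q}.
\]

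For this, I would split at the level \(\lambda k/2\). On \(\{C(k)>\lambda k/2\}\) the integrand is at most \((2/\lambda)^qk^{-q}\). On \(\{0<C(k)\leq\lambda k/2\}\) the integrand is at most \(1\), so this part is bounded by \(\Pbf(C(k)\leq\lambda k/2)\).

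The only step needing care is to show that this lower-tail probability decays geometrically without any moment beyond the mean. I would use the Chernoff bound with a negative exponent, since \(\Ebf\e^{-tL}\leq1\) is always finite:
\[
\Pbf\bigl(C(k)\leq\lambda k/2\bigr)\leq\bigl(\e^{t\lambda/2}\,\Ebf\e^{-tL}\bigr)^k .
\]
Because \(L\geq0\) has a finite mean, dominated convergence gives \(\Ebf\e^{-tL}=1-t\lambda+o(t)\) as \(t\downarrow0\). Consequently the base equals \(1-t\lambda/2+o(t)<1\) for some small \(t>0\), which gives geometric decay \(\varrho^k\) with \(\varrho<1\).

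Collecting the three pieces bounds the left-hand side of \eqref{eq:controlled-denominator-sum-bound} by
\[
2\pi_0^k+\varrho^k+(2/\lambda)^qk^{-q}\leq C_qk^{-q}.
\]
This proves \eqref{eq:controlled-denominator-sum-bound}. The dependence between \(A\) and \(B\) never enters, because only the lower bound \(N(k)\geq C(k)\) and the independence of the \(L_j\) are used.
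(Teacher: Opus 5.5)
Your proof is correct and follows essentially the paper's argument: dominate $N(k)$ pathwise from below by an i.i.d.\ sum, split at a level linear in $k$, and bound both the zero event and the lower-tail event geometrically. The only difference is minor: the paper uses $N(k)\geq\ell_0J_k$ with $J_k=\sum_{j=1}^k\one_{\{L_j\geq\ell_0\}}$ and a binomial tail bound, which would not even need $\Ebf L<\infty$, whereas you apply a negative-exponent Chernoff bound to $C(k)$ directly and use $\lambda<\infty$.
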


\begin{proof}
Choose \(\ell_0\geq1\) such that
\(p_*:=\Pbf(L\geq\ell_0)>0\), and put
\(J_k:=\sum_{j=1}^{k}\one_{\{L_j\geq\ell_0\}}\).  Since
\(A\geq1\) and \(B\geq0\),
\(N(k)=AC(k)+B\geq C(k)\geq\ell_0J_k\).  Hence
\[
\Ebf\left[N(k)^{-q};N(k)>0,J_k\geq\frac{p_*k}{2}\right]
\leq\left(\frac{2}{\ell_0p_*}\right)^qk^{-q},
\]
while
\[
\Ebf\left[N(k)^{-q};N(k)>0,J_k<\frac{p_*k}{2}\right]
\leq\Pbf\left(J_k<\frac{p_*k}{2}\right)
\leq\e^{-c_*k}
\]
for some \(c_*>0\).  Moreover,
\(\{N(k)=0\}\subseteq\{J_k=0\}\), and therefore
\[
        \Pbf\{N(k)=0\}
        \leq(1-p_*)^k\leq\e^{-p_*k}.
\]
Combining the three estimates proves
\eqref{eq:controlled-denominator-sum-bound}.
\end{proof}

The inverse-progenitor estimate is specific to the offspring ratio.  It is not
a generic consequence of the offspring moment and is the reason the random
denominator appears separately in \eqref{eq:main-random-ratio-moments}.  In
the individual-sum model it is automatic.

In particular, for $p\geq2$, if
$\Ebf A^p+\Ebf B^p+\Ebf L^p+\Ebf|\xi-m|^p<\infty$, then
Propositions~\ref{prop:controlled-sum-verification} and
\ref{prop:controlled-denominator-sum}, applied with
$s=p$ and $q=p/2=\mathfrak b_p$, verify
$(\mathrm{RM}_p)$ in \eqref{eq:main-random-ratio-moments}.

\begin{proposition}[Exact BPRE with immigration representation]
\label{prop:controlled-BPREI-representation}
Conditional on the environmental sequence \(\{(A_n,B_n):n\geq0\}\), the
individual-sum controlled process is a BPRE with immigration:
\begin{equation}\label{eq:controlled-BPREI-grouped}
        X_{n+1}
        =\sum_{i=1}^{X_n}Y_{n,i}^{(A_n)}
         +\eta_{n+1}^{(B_n)}.
\end{equation}
Given \(A_n=a\) and \(B_n=b\),
\(Y_{n,i}^{(a)}\) has the law of
\(\sum_{r=1}^{aL_{n,i}}\xi_{n,i,r}\), while
\(\eta_{n+1}^{(b)}\) has the law of
\(\sum_{r=1}^{b}\xi'_{n,r}\).  Their conditional p.g.f.s are
\[
\Ebf[s^{Y_{n,i}^{(a)}}\mid A_n=a]=\ell(f(s)^a),
\qquad
\Ebf[s^{\eta_{n+1}^{(b)}}\mid B_n=b]=f(s)^b.
\]
The induced quenched reproduction mean is \(m\lambda A_n=M_n\), and the
quenched p.g.f. recursion is
\begin{equation}\label{eq:controlled-BPREI-pgf-recursion}
        F_{n+1}^{\cE}(s)
        =f(s)^{B_n}F_n^{\cE}(\ell(f(s)^{A_n})).
\end{equation}
\end{proposition}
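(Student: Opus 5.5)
The plan is to regroup the progenitors according to which current individual (or the additive block) they were selected for, and then read off the conditional laws from the independence structure. Condition on the full environmental sequence \(\{(A_n,B_n)\}\) and on \(X_n=k\). Since \(N_n=A_n\sum_{i=1}^{k}L_{n,i}+B_n\), we partition the index set \(\{1,\ldots,N_n\}\) of the offspring row \((\xi_{n,j})_{j\ge1}\) into \(k+1\) consecutive blocks. For \(i\le k\), the \(i\)th block has length \(A_nL_{n,i}\), and the last block has length \(B_n\). Define \(Y_{n,i}^{(A_n)}\) as the sum of the \(\xi_{n,j}\) over the \(i\)th block, and \(\eta_{n+1}^{(B_n)}\) as the sum over the final block. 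Then \eqref{eq:controlled-BPREI-grouped} is just a rearrangement of the finite sum \eqref{eq:intro-controlled-recursion}.

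The substantive step is distributional. Block boundaries are random, determined by \(L_{n,1},\ldots,L_{n,k}\), which are independent of the offspring array. So, conditional on \((A_n,B_n)=(a,b)\) and on the \(L\)'s, the blocks consist of disjoint sets of i.i.d.\ \(\xi\)'s with deterministic lengths. Hence the block sums are conditionally independent, and each has the law of a sum of that many i.i.d.\ copies of \(\xi\). Integrating out \(L_{n,i}\), which are i.i.d., independent across \(i\), and independent of \((A_n,B_n)\), gives three conclusions:
\begin{itemize}
\item the \(Y_{n,i}^{(a)}\) are i.i.d.\ with p.g.f.\ \(\Ebf[f(s)^{aL}]=\ell(f(s)^a)\);
\item \(\eta_{n+1}^{(b)}\) has p.g.f.\ \(f(s)^b\) and is independent of them;
\item all of these are independent of \(\cF_n\), because row \(n\) of the offspring array and \(L_{n,\cdot}\) are fresh.
\end{itemize}
To make the "same law" statement clean, one can equivalently replace the block sums by sums over an auxiliary doubly indexed array \(\xi_{n,i,r}\), \(\xi'_{n,r}\). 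This yields a process equal in law to \(X\), which is all the proposition asserts.

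The mean then follows by differentiating at \(s=1\): \(\frac{d}{ds}\ell(f(s)^a)|_{s=1}=\ell'(1)\,a f'(1)=\lambda a m=M_n\). For the quenched p.g.f. recursion, condition on the environment and on \(X_n\) and use the conditional independence just established:
\[
\Ebf[s^{X_{n+1}}\mid \cE,\cF_n]=\ell(f(s)^{A_n})^{X_n}f(s)^{B_n}.
\]
Taking the quenched expectation over \(X_n\) gives \(F_{n+1}^{\cE}(s)=f(s)^{B_n}F_n^{\cE}(\ell(f(s)^{A_n}))\), which is \eqref{eq:controlled-BPREI-pgf-recursion}.

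The main care point is measurability and independence bookkeeping. The blocks are delimited by random indices, so we have to verify that the conditional joint law of the block sums given \((A_n,B_n,L_{n,\cdot})\) factorizes. We do this by summing over the possible deterministic partitions, which is where the independence of \(L\) from the offspring array is used. We also note that the immigration and reproduction laws are conditionally independent given the mark, but may be dependent through \((A_n,B_n)\); this matches the BPREI framework of Section~\ref{sec:bprei}.
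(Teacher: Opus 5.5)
Your proposal is correct and follows essentially the same route as the paper. Like the paper's proof, you regroup the \(N_n\) progenitors into \(X_n\) blocks of sizes \(A_nL_{n,i}\) plus an additive block of size \(B_n\), use the independence of the \(L\)'s from the offspring row to obtain conditionally independent block sums with p.g.f.s \(\ell(f(s)^a)\) and \(f(s)^b\), differentiate at one for the mean, and condition on \(X_n\) for the quenched recursion. The only difference is that you spell out the independence bookkeeping (conditioning on the \(L\)'s, then integrating them out), which the paper's short proof leaves implicit.
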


\begin{proof}
Fix \(X_n=k\) and \((A_n,B_n)=(a,b)\).  Relabel the offspring array so that
the first \(aL_{n,i}\) progenitors assigned to the \(i\)th current
individual form one group, \(1\leq i\leq k\), and the final \(b\) additive
progenitors form the immigration group.  Conditional on the mark, these group
sums are independent.  Their p.g.f.s are the two functions displayed above,
and summing the groups gives \eqref{eq:controlled-BPREI-grouped}.
Differentiating the reproduction p.g.f. at one gives \(m\lambda a\), while
conditioning on \(X_n\) yields
\eqref{eq:controlled-BPREI-pgf-recursion}.
\end{proof}

Thus, in the notation of Section~\ref{sec:bprei},
\(f_n^{\mathrm{ind}}(s)=\ell(f(s)^{A_n})\),
\(g_n^{\mathrm{ind}}(s)=f(s)^{B_n}\), and
\(M_n=m\lambda A_n\).  The reproduction and immigration laws may be
dependent through the common mark \((A_n,B_n)\), exactly as allowed in
Section~\ref{sec:bprei}.

Recall from Section~\ref{subsec:main-random-slope} that
\(A\in\N=\{1,2,\ldots\}\) almost surely.

\subsubsection{Primitive checks for the induced critical BPREI}
\label{subsec:controlled-induced-checks}

The exact representation above lets us translate the centered finite-variance
critical class of Theorem~\ref{thm:bprei-critical-fv-class} into the
primitive controlled-process variables.  For a generic generation, let
\(Y^{(A)}:=\sum_{r=1}^{AL}\xi_r\) be the induced offspring variable.  Its
quenched mean is \(M=m\lambda A\), and, when \(\xi\) and \(L\) have finite
variances,
\begin{equation}\label{eq:controlled-induced-normalized-variance}
\frac{\Var(Y^{(A)}\mid A)}{M^2}
 =\frac{\Var(\xi)}{m^2\lambda A}
  +\frac{\Var(L)}{\lambda^2}
 \leq
 \frac{\Var(\xi)}{m^2\lambda}
  +\frac{\Var(L)}{\lambda^2}.
\end{equation}
Thus the normalized quenched offspring variance remains bounded for both
bounded and unbounded environmental multipliers.  Also write
\(p_0^{\mathrm{ind}}:=\ell(f(0)^A)\) for the induced quenched probability of
zero offspring.

\begin{proposition}[Primitive critical-class verification]
\label{prop:controlled-induced-critical-verification}
Assume
\[
        \Ebf\log(m\lambda A)=0,
        \qquad
        0<\Var(\log(m\lambda A))<\infty,
        \qquad
        \Ebf[(\log(m\lambda A))_+^q]<\infty
\]
for some \(q>12\).  Suppose also that
\(\Ebf\xi^2+\Ebf L^2<\infty\), \(\Pbf(\xi=0)=0\), and, for some
\(\overline b<\infty\),
\[
        1\leq B\leq\overline b
        \qquad\text{almost surely}.
\]
Then the induced BPREI belongs to \(\mathcal C_{\mathrm{fv}}\) of
Theorem~\ref{thm:bprei-critical-fv-class}.  Consequently, its endpoint-
minimum entrance laws converge to a common law and, for every fixed initial
state \(i\geq1\) and every \(r>0\),
\[
        \sqrt n\,\Ebf_i[X_n^{-r};X_n>0]
        \longrightarrow\kappa_0C_r\in(0,\infty),
\]
where \(C_r\) is independent of \(i\).
\end{proposition}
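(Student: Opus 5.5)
The plan is to check conditions (C1)--(C4) for the induced BPREI of Proposition~\ref{prop:controlled-BPREI-representation} and then quote Theorem~\ref{thm:bprei-critical-fv-class}. In that representation the quenched reproduction p.g.f. is \(\ell(f(s)^{A_n})\), the immigration p.g.f. is \(f(s)^{B_n}\), and the quenched mean is \(M_n=m\lambda A_n\). The environmental marks \((A_n,B_n)\) are i.i.d. Given its mark, the induced reproduction law is the law of \(Y^{(A)}=\sum_{r=1}^{AL}\xi_r\), and the immigration law is the law of \(\sum_{r=1}^{B}\xi'_r\). These two laws are conditionally independent, so the induced process fits the BPREI framework of Section~\ref{sec:bprei}.

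\emph{(C1) and (C4).} With \(M=m\lambda A\), the three hypotheses on \(\log(m\lambda A)\) are exactly (C1) and (C4). The environmental walk is \(S_n=\sum_{j<n}\log(m\lambda A_j)\).

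\emph{(C2).} The bound \eqref{eq:controlled-induced-normalized-variance} gives this. The conditional variance formula for a random sum with \(AL\) summands yields
\[
\Var(Y^{(A)}\mid A)=A\lambda\Var(\xi)+m^2A^2\Var(L).
\]
Dividing by \(M^2=m^2\lambda^2A^2\) and using \(A\geq1\) gives the uniform constant \(\overline C=\Var(\xi)/(m^2\lambda)+\Var(L)/\lambda^2\). Both variances are finite because \(\Ebf\xi^2+\Ebf L^2<\infty\).

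\emph{(C3).} Since \(\Pbf(\xi=0)=0\), each \(\xi'_r\geq1\), so \(\eta_{n+1}=\sum_{r=1}^{B_n}\xi'_r\geq B_n\geq1\) almost surely. Its conditional mean is \(\Ebf[\eta_{n+1}\mid\cE_n]=mB_n\leq m\overline b=:\overline\eta\).

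\emph{Nondegeneracy and conclusion.} The critical environment must be nondegenerate, which needs \(\Pbf(p_0>0)>0\) as in \eqref{eq:critical-p0-positive}. This follows from \(\Var\log M>0\) and is not a separate hypothesis. Here \(p_0^{\mathrm{ind}}=\ell(f(0)^A)=\ell(0)=\Pbf(L=0)\), because \(f(0)=0\). So positivity of the zero-offspring probability comes from \(L\), matching the remark after Theorem~\ref{thm:main-random-harmonic-transfer}. With all four conditions checked, the induced BPREI lies in \(\mathcal C_{\mathrm{fv}}\). Theorem~\ref{thm:bprei-critical-fv-class} then gives a common entrance law \(\nu^-\) independent of \(i\) and
\[
\sqrt n\,\Ebf_i[X_n^{-r};X_n>0]\to\kappa_0C_r\in(0,\infty).
\]
This holds because \(X_n=Z_n\) in law under \(\Pbf_i\) for the controlled process.

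The only point needing care is that the individual-sum process really is a BPREI in the sense of Section~\ref{sec:bprei}. That means i.i.d. marks carrying a reproduction law and an immigration law, with immigration independent of the current offspring given the mark. Proposition~\ref{prop:controlled-BPREI-representation} already supplies this, because the offspring array is split into disjoint groups; everything else is direct computation.
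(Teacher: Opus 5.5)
Your proposal is correct and follows the paper's proof. Both verify (C1) and (C4) directly from the hypotheses on \(\log(m\lambda A)\), (C2) from the random-sum variance formula \eqref{eq:controlled-induced-normalized-variance}, and (C3) from \(\eta\geq B\geq1\) with \(\Ebf[\eta\mid A,B]=mB\leq m\overline b\), and then invoke Theorem~\ref{thm:bprei-critical-fv-class}. The only cosmetic difference is in the side remark that \(\Pbf(L=0)>0\), which is not needed for membership in \(\mathcal C_{\mathrm{fv}}\): the paper argues directly that \(\Pbf(L=0)=0\) would force \(m\lambda A\geq1\) and hence \(m\lambda A\equiv1\), whereas you combine \(p_0^{\mathrm{ind}}=\Pbf(L=0)\) with the general observation \eqref{eq:critical-p0-positive}, which amounts to the same argument.
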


\begin{proof}
Conditions \textup{(C1)} and \textup{(C4)} follow from the first three
assumptions, while \eqref{eq:controlled-induced-normalized-variance} gives
\textup{(C2)}.  For \(\eta:=\sum_{j=1}^{B}\xi'_j\),
\[
        \eta\geq1\quad\text{a.s.},
        \qquad
        1\leq\Ebf[\eta\mid A,B]=mB\leq m\overline b.
\]
Thus \textup{(C3)} holds, and the conclusion follows from
Theorem~\ref{thm:bprei-critical-fv-class}.
Finally, \(A\geq1\) and \(\Pbf(\xi=0)=0\) imply \(m\geq1\).
If \(\Pbf(L=0)=0\), then \(\lambda\geq1\), and hence
\(m\lambda A\geq1\) almost surely.  The equality
\(\Ebf\log(m\lambda A)=0\) would then force
\(m\lambda A=1\) almost surely, contradicting
\(\Var(\log(m\lambda A))>0\).  Thus \(\Pbf(L=0)>0\).
\end{proof}

\begin{remark}[Bounded and unbounded environmental multipliers]
Suppose first that \(A\leq\overline a\) almost surely and that \(L\), \(B\),
and \(\xi\) have finite moment-generating functions in a neighborhood of the
origin.  Since
\[
        N(k)-\lambda Ak=A\{C(k)-\lambda k\}+B,
        \qquad C(k)=\sum_{j=1}^{k}L_j,
\]
Chernoff bounds give, for every fixed \(\eps>0\),
\[
\Pbf(|N(k)-\lambda Ak|>\eps k)+\Pbf(N(k)>K_\eps k)
\leq C_\eps\e^{-c_\eps k}
\]
for a suitable \(K_\eps<\infty\).  Conditional Chernoff bounds for the
offspring sum on \(\{N(k)\leq K_\eps k\}\) then give the geometric
innovation and offspring kernels, while, since \(A\geq1\) and \(B\geq0\),
\(\Pbf\{N(k)=0\}\leq\Pbf(L=0)^k\).  Thus all four geometric one-step
kernels used later are verified directly; no finite-horizon bridge estimate
is involved.

When \(A\) is unbounded, marginal exponential moments do not in general give
a geometric rate uniformly in the current population size.  The critical
result above uses a different route.  The normalized variance remains
uniformly bounded by \eqref{eq:controlled-induced-normalized-variance}, and
the future-minimum record argument of
Appendix~\ref{sec:critical-verification} gives UIM and localization under the
stated positive-tail condition.  This verifies the post-minimum inputs used by the Laplace and
harmonic-moment limits.
\end{remark}

\subsection{Harmonic-moment and generating-function transfer}
\label{subsec:controlled-transfer}

With the one-step identities and kernel estimates of
Subsection~\ref{subsec:controlled-ratio-kernels} in hand, it remains to
average the kernels over their random denominators.  A polynomial bound of
order \(k^{-q}\) is averaged through the harmonic moment
\(\Ebf[X_n^{-q};X_n>0]\), whereas a geometric bound is averaged through a
positive population or progenitor transform.  In the individual-sum
subclass, the exact BPREI representation identifies these profiles.  We
first complete the polynomial and geometric transfer arguments and then
apply the supercritical and critical BPREI results.

For \(q>0\), write
\(\mathcal H_n^X(q):=\Ebf[X_n^{-q};X_n>0]\).  The ratio identities give
\[
\begin{aligned}
\mathcal R_{n,1}(\eps)
&=\Ebf[\psi_\phi(X_n;\eps);X_n>0],\\
\mathcal R_{n,2}(\eps)
&=\Ebf[\psi_D(X_n;\eps);X_n>0],\\
\mathcal R_{n,3}(\eps)
&=\Ebf[\psi_N(X_n;\eps);X_n>0].
\end{aligned}
\]
For \(s\in(0,1)\), also put
\[
        F_n^+(s):=\Ebf[s^{X_n};X_n>0],
        \qquad
        H_n^+(s):=\Ebf[s^{N_n};N_n>0].
\]

\begin{assumption}[Geometric one-step kernels]
\label{ass:controlled-exp-kernels}
For every fixed \(\eps>0\), there are constants \(C_\eps<\infty\) and
\(s_{\phi,\eps},s_{D,\eps},s_{\xi,\eps},s_{0,\eps}\in(0,1)\) such that
\begin{equation}\label{eq:controlled-exp-kernel-bounds}
\begin{gathered}
        \psi_\phi(k;\eps)\leq C_\eps s_{\phi,\eps}^k,
        \qquad
        \psi_D(k;\eps)\leq C_\eps s_{D,\eps}^k,\\
        \psi_\xi(k;\eps)\leq C_\eps s_{\xi,\eps}^k,
        \qquad
        \Pbf\{N(k)=0\}\leq C_\eps s_{0,\eps}^k,
        \qquad k\geq1.
\end{gathered}
\end{equation}
\end{assumption}

A transparent primitive verification is available when \(A\) is essentially
bounded and \(L\), \(B\), and \(\xi\) have moment-generating functions in a
neighborhood of the origin.  Chernoff bounds then give all four inequalities.
The fourth is stated separately because the full offspring-ratio bad event
contains \(N_n=0\).

\begin{proposition}[Generating-function transfer]
\label{prop:controlled-exponential-transfer}
Under Assumption~\ref{ass:controlled-exp-kernels},
\begin{align}
\Pbf\left(
 \left|\frac{N_n}{X_n}-\lambda A_n\right|>\eps,X_n>0\right)
&\leq C_\eps F_n^+(s_{\phi,\eps}),
\label{eq:controlled-exp-transfer-control}\\
\Pbf\left(
 \left|\frac{X_{n+1}}{X_n}-m\lambda A_n\right|>\eps,X_n>0\right)
&\leq C_\eps F_n^+(s_{D,\eps}),
\label{eq:controlled-exp-transfer-X}\\
\Pbf\left(
 X_n>0,\left\{N_n=0\ \text{or}\
 \left|\frac{X_{n+1}}{N_n}-m\right|>\eps\right\}\right)
&\leq C_\eps F_n^+(s_{0,\eps})
     +C_\eps H_n^+(s_{\xi,\eps}).
\label{eq:controlled-exp-transfer-offspring}
\end{align}
\end{proposition}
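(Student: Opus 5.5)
The plan is to combine the ratio identities of Proposition~\ref{prop:controlled-ratio-identities} with part~\textup{(i)} of Lemma~\ref{lem:controlled-kernel-transfer}, one ratio at a time.

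\emph{Control ratio.} By \eqref{eq:controlled-ratio-identity-control}, the probability equals \(\Ebf[\psi_\phi(X_n;\eps);X_n>0]\). Assumption~\ref{ass:controlled-exp-kernels} gives \(\psi_\phi(k;\eps)\leq C_\eps s_{\phi,\eps}^k\) for every \(k\geq1\). Lemma~\ref{lem:controlled-kernel-transfer}\textup{(i)}, applied with \(Y=X_n\), therefore yields \eqref{eq:controlled-exp-transfer-control}.

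\emph{Growth ratio.} The same argument, using \eqref{eq:controlled-ratio-identity-X} together with the bound on \(\psi_D\), gives \eqref{eq:controlled-exp-transfer-X}.

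\emph{Offspring ratio.} This is the only step requiring care, because the denominator \(N_n\) is random given \(X_n\). I would avoid routing this through the composite kernel \(\psi_N\). Instead, split the event directly into \(\{X_n>0,N_n=0\}\) and \(\{X_n>0,N_n>0,|X_{n+1}/N_n-m|>\eps\}\).
\begin{itemize}
\item For the first piece, conditioning on \(X_n=k\) gives \(\Pbf\{N(k)=0\}\leq C_\eps s_{0,\eps}^k\). Lemma~\ref{lem:controlled-kernel-transfer}\textup{(i)} then bounds this piece by \(C_\eps F_n^+(s_{0,\eps})\).
\item For the second piece, condition on \(\cF_n\) together with the generation-\(n\) mark, so that \(N_n\) is determined. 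The offspring variables \(\xi_{n,j}\) are independent of \(\cF_n\) and of the control, so, given \(N_n=\ell\geq1\), \(X_{n+1}\) is a sum of \(\ell\) i.i.d.\ copies of \(\xi\). The conditional probability of the deviation is therefore \(\psi_\xi(\ell;\eps)\leq C_\eps s_{\xi,\eps}^\ell\). Taking expectations gives
\[
\Pbf\bigl(X_n>0,N_n>0,|X_{n+1}/N_n-m|>\eps\bigr)\leq C_\eps\Ebf[s_{\xi,\eps}^{N_n};X_n>0,N_n>0]\leq C_\eps H_n^+(s_{\xi,\eps}).
\]
The last step simply drops the indicator of \(\{X_n>0\}\).
\end{itemize}
Adding the two pieces proves \eqref{eq:controlled-exp-transfer-offspring}.

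The only point needing justification is the conditional independence of the generation-\(n\) offspring row from \(N_n\). This holds because \(N_n=\phi_n(X_n)\) depends only on \(\cF_n\) and the mark \(\cE_n\), both of which are independent of that row.
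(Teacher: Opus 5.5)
Your proposal is correct and follows the paper's proof essentially step for step. The first two bounds come from the ratio identities combined with the pointwise geometric kernel bounds. For the offspring ratio, the paper likewise splits into \(\{X_n>0,N_n=0\}\) and \(\{X_n>0,N_n>0\}\), conditions on the progenitor count to obtain \(\psi_\xi(N_n;\eps)\), and drops the indicator of \(\{X_n>0\}\) to reach \(H_n^+(s_{\xi,\eps})\). The paper also bypasses \(\psi_N\) at this point, so your choice there is not a departure from its argument.
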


\begin{proof}
By the ratio identities and \eqref{eq:controlled-exp-kernel-bounds},
\[
\begin{aligned}
\mathcal R_{n,1}(\eps)
&\leq C_\eps\Ebf[s_{\phi,\eps}^{X_n};X_n>0]
 =C_\eps F_n^+(s_{\phi,\eps}),\\
\mathcal R_{n,2}(\eps)
&\leq C_\eps\Ebf[s_{D,\eps}^{X_n};X_n>0]
 =C_\eps F_n^+(s_{D,\eps}).
\end{aligned}
\]
For the offspring ratio, condition on the progenitor count:
\[
\begin{aligned}
\mathcal R_{n,3}(\eps)
&=\Pbf(X_n>0,N_n=0)
   +\Ebf[\psi_\xi(N_n;\eps);X_n>0,N_n>0]\\
&\leq \Ebf[\Pbf\{N(X_n)=0\mid X_n\};X_n>0]
   +\Ebf[\psi_\xi(N_n;\eps);N_n>0]\\
&\leq C_\eps F_n^+(s_{0,\eps})
   +C_\eps H_n^+(s_{\xi,\eps}).
\end{aligned}
\]
\end{proof}

The same ratio identities also give the corresponding reverse transfers.  For
\(s\in(0,1)\), put
\[
        H_{n,>0}^+(s):=\Ebf[s^{N_n};X_n>0,N_n>0].
\]
If, for some \(c>0\),
\[
        \psi_\phi(k;\eps)\geq cs^k,
        \qquad k\geq1,
\]
then \(\mathcal R_{n,1}(\eps)\geq cF_n^+(s)\).  The same conclusion holds
for \(\mathcal R_{n,2}(\eps)\) when
\(\psi_D(k;\eps)\geq cs^k\).  If
\[
        \Pbf\{N(k)=0\}\geq cs^k,
        \qquad k\geq1,
\]
then \(\mathcal R_{n,3}(\eps)\geq cF_n^+(s)\), whereas
\[
        \psi_\xi(\ell;\eps)\geq cs^\ell,
        \qquad \ell\geq1,
\]
implies \(\mathcal R_{n,3}(\eps)\geq cH_{n,>0}^+(s)\).  Since the current
generation mark is independent of \(X_n\) and \(C_n(0)=0\),
\[
        H_{n,>0}^+(s)
        =H_n^+(s)-\Pbf(X_n=0)\Ebf[s^B;B>0].
\]
In particular, \(H_{n,>0}^+(s)=H_n^+(s)\) whenever \(X_n>0\) almost surely.

\begin{proof}[Proof of Proposition~\ref{prop:main-random-one-step-transfer}]
Under \((\mathrm{RM}_p)\), Proposition~\ref{prop:controlled-poly-kernels}
and the ratio identities give
\[
        \mathcal R_n(\eps)
        \leq C_\eps\mathcal H_n^X(q),
\]
which proves \eqref{eq:main-random-polynomial-transfer}.  The cutoff bound
\eqref{eq:main-random-cutoff-transfer} follows from
Lemma~\ref{lem:controlled-kernel-transfer}\textup{(iii)}.  Under
Assumption~\ref{ass:controlled-exp-kernels},
Proposition~\ref{prop:controlled-exponential-transfer} gives
\eqref{eq:main-random-exponential-transfer}.  The reverse polynomial and
geometric transfers follow respectively from
Lemma~\ref{lem:controlled-kernel-transfer}\textup{(ii)} and the
reverse-transfer consequences following
Proposition~\ref{prop:controlled-exponential-transfer}.
\end{proof}

\begin{lemma}[Induced persistence parameters]
\label{lem:controlled-induced-parameters}
In the nondecreasing individual-sum regime
\(\Pbf(\xi=0)=\Pbf(L=0)=0\), let
\(a_1:=\Pbf(L=1)\Pbf(\xi=1)\).  Then
\[
        p_0^{\mathrm{ind}}=0,
        \qquad
        h_0^{\mathrm{ind}}=\one_{\{B=0\}},
        \qquad
        p_1^{\mathrm{ind}}=\one_{\{A=1\}}a_1,
\]
and consequently
\begin{equation}\label{eq:controlled-induced-cr-gamma}
        c_r=\Ebf[(m\lambda A)^{-r}],
        \qquad
        \gamma_i=\Pbf(A=1,B=0)a_1^i.
\end{equation}
\end{lemma}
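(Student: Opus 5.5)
The plan is to read everything off the exact BPREI representation of Proposition~\ref{prop:controlled-BPREI-representation}. There the induced reproduction p.g.f. is \(\ell(f(s)^{A})\) and the immigration p.g.f. is \(f(s)^{B}\). Each parameter is a coefficient of one of these p.g.f.s, computed conditionally on the generation mark \((A,B)\). The identities for \(c_r\) and \(\gamma_i\) then follow by annealing over the mark, using the definitions \(c_r=\Ebf[M^{-r}]\) and \(\gamma_i=\Ebf[h_0p_1^i]\) from Subsection~\ref{subsec:bprei-supercritical-harmonic}.

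First, the zero coefficients. With \(\Pbf(\xi=0)=0\) we have \(f(0)=0\), so \(f(0)^A=0\) because \(A\geq1\). Since \(\Pbf(L=0)=0\) gives \(\ell(0)=0\), it follows that \(p_0^{\mathrm{ind}}=\ell(f(0)^A)=\ell(0)=0\). Similarly, \(h_0^{\mathrm{ind}}=f(0)^B\) equals \(1\) if \(B=0\) (empty sum, with convention \(0^0=1\)) and \(0\) if \(B\geq1\). Hence \(h_0^{\mathrm{ind}}=\one_{\{B=0\}}\).

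Second, the one-child probability. The induced offspring is \(Y^{(A)}=\sum_{r=1}^{AL}\xi_r\) with every \(\xi_r\geq1\) and \(L\geq1\). Therefore \(Y^{(A)}\geq AL\geq A\), and \(Y^{(A)}=1\) forces \(AL=1\), that is \(A=1\) and \(L=1\), together with \(\xi_1=1\). Conditionally on \(A\), this gives \(p_1^{\mathrm{ind}}=\one_{\{A=1\}}\Pbf(L=1)\Pbf(\xi=1)=\one_{\{A=1\}}a_1\). Equivalently, it is the coefficient of \(s\) in \(\ell(f(s)^A)\), using \(f(s)=\Pbf(\xi=1)s+O(s^2)\) and \(\ell(u)=\Pbf(L=1)u+O(u^2)\).

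Finally, annealing. The quenched mean is \(M=m\lambda A\), so \(c_r=\Ebf[(m\lambda A)^{-r}]\) directly. The mark may make \(A\) and \(B\) dependent, but \(L\) and \(\xi\) are independent of it, so the product \(h_0p_1^i=\one_{\{B=0\}}\one_{\{A=1\}}a_1^i\) has expectation \(\Pbf(A=1,B=0)a_1^i\). This also agrees with \(\gamma_i=\mathsf p_{ii}\), since persistence at \(i\) requires no immigrants and exactly one child per individual. The only point needing care is that \(p_1\) and \(h_0\) are quenched quantities attached to the same mark, so their joint expectation must be taken over the joint law of \((A,B)\), not factorized. There is no real obstacle beyond this bookkeeping.
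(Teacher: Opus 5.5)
Your proposal is correct and takes the same route as the paper. The paper simply notes that one induced child forces \(A=1\), \(L=1\), \(\xi=1\), and that zero immigration is equivalent to \(B=0\); it then reads off \(c_r\) and \(\gamma_i=\Ebf[h_0p_1^i]\) from the definitions. Your version just writes out the p.g.f.\ coefficient computations and the annealing over the joint law of \((A,B)\) in more detail.
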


\begin{proof}
One induced child is possible only when \(A=1\), \(L=1\), and
\(\xi=1\); zero immigration is equivalent to \(B=0\).  The formulas follow
from the definitions of \(c_r\) and
\(\gamma_i=\Ebf[h_0(p_1)^i]\).
\end{proof}

\begin{proof}[Proof of Theorem~\ref{thm:main-random-harmonic-transfer}]
Suppose first that the induced BPREI is nondecreasing and satisfies
\((\mathrm H)\).  Proposition~\ref{prop:main-random-one-step-transfer} gives
\[
        \mathcal R_n(\eps)
        \leq C_\eps\Ebf_i[X_n^{-q}],
\]
and Theorem~\ref{thm:bprei-supercritical-harmonic} gives
\[
        0<\lim_{n\to\infty}
        \frac{\Ebf_i[X_n^{-q}]}{b_{i,n}^{\mathrm{sc}}(q)}<\infty.
\]
Hence \eqref{eq:main-induced-super-ratio-profile} follows.  If one of
\(\psi_\phi,\psi_D,\psi_N\) is bounded below by \(ck^{-q}\) for every
\(k\geq1\), the reverse transfer gives the corresponding
positive normalized \(\liminf\).

Now suppose the induced BPREI is critical and satisfies \((\mathrm C)\).
For \(j=1,2,3\), define
\(g_1:=\psi_\phi(\,\cdot\,;\eps)\),
\(g_2:=\psi_D(\,\cdot\,;\eps)\), and
\(g_3:=\psi_N(\,\cdot\,;\eps)\), with \(g_j(0)=0\).  By
Proposition~\ref{prop:controlled-poly-kernels},
\(0\leq g_j(k)\leq C_\eps k^{-q}\).  Corollary
\ref{cor:bprei-critical-bounded-kernel} therefore gives
\begin{equation}\label{eq:controlled-critical-transfer-limit}
        \frac{\mathcal R_{n,j}(\eps)}{d_n^-}
        \longrightarrow
        K_{i,j}(\eps)
        :=\sum_{m=0}^{\infty}
        \Ebf_{\nu_i^-}[g_j(Z_m);L_m\geq0].
\end{equation}
Summing over \(j\) proves the critical clause.  Under the centered
finite-variance hypotheses of
Theorem~\ref{thm:bprei-critical-fv-class}, the entrance law is common to all
fixed initial states and
\(\sqrt n\,\mathcal R_{n,j}(\eps)\to\kappa_0K_j(\eps)\).
\end{proof}

For an exact supercritical refinement, let
\(\psi_j\in\{\psi_\phi,\psi_D,\psi_N\}\) be the corresponding
population-conditioned kernel and suppose
\(k^q\psi_j(k;\eps)\to c_{\eps,j}\).  For every \(\delta>0\), choose
\(K\) so that
\(|\psi_j(k;\eps)-c_{\eps,j}k^{-q}|\leq\delta k^{-q}\) for \(k>K\).
Then
\[
\begin{aligned}
\left|\Ebf_i[\psi_j(X_n;\eps);X_n>0]
-c_{\eps,j}\Ebf_i[X_n^{-q};X_n>0]\right|
&\leq \delta\Ebf_i[X_n^{-q};X_n>K]
 +C_{\eps,K}\Pbf_i(0<X_n\leq K).
\end{aligned}
\]
In the environmental regime, choose \(q'\in(q,r_i)\) to obtain
\(\Pbf_i(0<X_n\leq K)=o(c_q^n)\).  At the boundary, choose
\(q'>q=r_i\) to obtain
\(\Pbf_i(0<X_n\leq K)=O(\gamma_i^n)=o(n\gamma_i^n)\).
After division by the appropriate normalization and letting
\(\delta\downarrow0\), the limiting constants are
\(c_{\eps,j}C^{\mathrm{env}}_{i,q}\) and
\(c_{\eps,j}C_i^{\mathrm{bd}}\), respectively.  In the persistence regime,
finite states need not be negligible.

The two regimes are disjoint in the individual-sum primitives.  Indeed,
\begin{equation}\label{eq:controlled-induced-p0-equivalence}
\begin{aligned}
        p_0^{\mathrm{ind}}=0\ \text{almost surely}
        &\quad\Longleftrightarrow\quad
        \Pbf(\xi=0)=\Pbf(L=0)=0,\\
        \Pbf(p_0^{\mathrm{ind}}>0)>0
        &\quad\Longleftrightarrow\quad
        \Pbf(\xi=0)>0\ \text{or}\ \Pbf(L=0)>0.
\end{aligned}
\end{equation}
If \(p_0^{\mathrm{ind}}=0\) almost surely, then \(M=m\lambda A\geq1\)
almost surely.  Criticality forces \(M=1\) almost surely and the induced
reproduction law to be concentrated at one.  In the persistent-immigration
primitive class of
Proposition~\ref{prop:controlled-induced-critical-verification},
\(\Pbf(\xi=0)=0\), so a nondegenerate critical law necessarily has
\(\Pbf(L=0)>0\).

In the individual-sum model, the progenitor transform can be reduced to the
population transform.  Since \(A\geq1\), for \(s\in(0,1)\),
\begin{equation}\label{eq:controlled-sum-H-upper}
\begin{aligned}
        \Ebf[s^{N(k)}]
        &=\Ebf[s^B\ell(s^A)^k]\leq\ell(s)^k,\\
        \Pbf\{N(k)=0\}
        &\leq\Pbf(L=0)^k.
\end{aligned}
\end{equation}
Thus all three geometric transfers reduce to positive population transforms.

\subsection{Negative tilts and the deterministic-slope boundary}
\label{subsec:controlled-boundaries}

For completeness, we verify the negative-tilt identity from
Proposition~\ref{prop:main-negative-tilt-boundary}.

\begin{proof}[Proof of Proposition~\ref{prop:main-negative-tilt-boundary}]
By the definition of \(\Pbf_{-r}\),
\[
\begin{aligned}
\Ebf_{-r}[V_n^{-r};Z_n>0]
&=\e^{-n\Lambda(-r)}
  \Ebf[\e^{-rS_n}V_n^{-r};Z_n>0]\\
&=\e^{-n\Lambda(-r)}
  \Ebf[Z_n^{-r};Z_n>0].
\end{aligned}
\]
Multiplying by \(\e^{n\Lambda(-r)}\) proves
\eqref{eq:main-negative-tilt-identity}.
\end{proof}

We close by identifying the boundary with the asymptotically deterministic
class.  If $A_n\equiv a_0$, then
\[
        M_n\equiv m\lambda a_0,
        \qquad
        \frac{\phi_n(k)}k
        =a_0\frac{C_n(k)}k+\frac{B_n}{k}
        \longrightarrow a_0\lambda=:\tau
\]
in every mode supplied by the linearization assumptions.  At the same time,
$\Lambda''(\alpha)=0$, so the nondegenerate environmental Stone--Petrov
mechanism disappears.  If the individual-sum representation is retained, the
process becomes a branching process with immigration in a deterministic
environment; more generally, the control need not recover any exact branching
decomposition.  Section~\ref{sec:ad-controls} treats this boundary and the
larger class in which the normalized control has deterministic first-order
slope.
\section{Asymptotically deterministic controls: sandwich and transfer calculus}
\label{sec:ad-controls}

We now consider controls whose first-order slope is deterministic.  The
original controlled process need not satisfy the branching property, and its
population p.g.f.\ need not obey a closed recursion.  Under condition
\textup{(AD0)}, evaluating each factorized envelope at the offspring p.g.f.\
gives auxiliary transforms satisfying
\[
        F_{n+1}^e(s)=R_e(s)F_n^e(u_e(s)),
        \qquad u_e:=l_e\circ f,\quad R_e:=d_e\circ f,
        \qquad e\in\{L,U\}.
\]
This is the p.g.f.\ recursion of a branching process with immigration at the
transformed argument \(u_e(s)\).  The transformation is central:
\(u_e'(1)=m\tau\) determines whether the auxiliary recursion is
supercritical or critical.  Iterating the envelope comparisons propagates
the sandwich through the iterates of \(u_L\) and \(u_U\); we analyze the
resulting auxiliary profiles and then transfer them to the original
controlled process.

Lower and upper control p.g.f.\ comparisons are part of the
controlled-branching-process theory developed by Gonz\'alez, Molina and del
Puerto
\cite{GonzalezMolinaDelPuerto2002,GonzalezMolinaDelPuerto2003,
GonzalezMolinaDelPuerto2005L2,GonzalezMolinaDelPuerto2005Critical,
GonzalezMolinaDelPuerto2006Growth,GonzalezMolinaDelPuerto2006Limits}.
For the large-deviation problem, the comparison must also remain stable under
iteration: the envelope is reapplied at transformed arguments, its additive
factor accumulates, and any nonfactorized remainder must be tracked
generation by generation.

The one-generation ratio-mixture viewpoint and the harmonic-moment method
used below originate in the Galton--Watson large-deviation programme of
Athreya and Vidyashankar and its subsequent refinements
\cite{AthreyaVidyashankar1993,Athreya1994,
AthreyaVidyashankar1995,AthreyaVidyashankar1997,
NeyVidyashankar2003,NeyVidyashankar2004}.  In the Galton--Watson setting, the
branching property supplies the exact random-sum reduction.  Here the control
sandwich supplies the transfer mechanism instead.  We keep the population
and progenitor transforms separate because the three ratios are indexed by
two different random denominators.

Throughout this section, \(O(\cdot)\) and \(o(\cdot)\) terms involving the
generation index refer to \(n\to\infty\) unless another limit is stated
locally.  We work throughout under the theorem-level conditions
\textup{(AD0)}, \textup{(AD-S)}, \textup{(AD-C)}, and \textup{(AD-E)} stated
in Subsection~\ref{subsec:main-ad}; the corresponding condition is recalled
only when it is used.

\subsection{The control sandwich and the two transforms}
\label{subsec:ad-sandwich}

Recall that \(X_{n+1}=\sum_{j=1}^{\phi_n(X_n)}\xi_{n,j}\), where the
controls are independent across generations and independent of the offspring
array.  Let \(f(s):=\Ebf[s^\xi]\), \(h_k(s):=\Ebf[s^{\phi(k)}]\),
\(F_n(s):=\Ebf[s^{X_n}]\), and \(H_n(s):=\Ebf[s^{N_n}]\), where
\(N_n:=\phi_n(X_n)\).  Conditioning on the current generation gives
\begin{equation}\label{eq:ad-basic-pgf-identities}
        F_{n+1}(s)=\Ebf[h_{X_n}(f(s))],
        \qquad H_n(s)=\Ebf[h_{X_n}(s)].
\end{equation}
The first transform is used when the random denominator is \(X_n\); the
second is needed for the offspring ratio with denominator \(N_n\).

Under \textup{(AD0)},
\begin{equation}\label{eq:ad-envelope}
        d_L(s)l_L(s)^k\leq h_k(s)\leq d_U(s)l_U(s)^k,
        \qquad k\in\Nzero,\quad 0\leq s\leq1,
\end{equation}
where \(d_e(1)=l_e(1)=1\) for \(e\in\{L,U\}\),
\(l_L'(1)=l_U'(1)=\tau\in(0,\infty)\), and
\(l_L''(1)+l_U''(1)<\infty\) whenever harmonic moments of \(N_n\) are
asserted.  Set \(u_e:=l_e\circ f\), \(R_e:=d_e\circ f\), and let
\begin{equation}\label{eq:ad-envelope-recursion}
        F_{n+1}^e(s)=R_e(s)F_n^e(u_e(s)),
        \qquad F_0^e=F_0,
        \qquad e\in\{L,U\}.
\end{equation}
Both recursions have mean \(u_e'(1)=m\tau=:a\).  We call the sandwich
normalized because \(h_k(1)=d_e(1)l_e(1)^k=1\); each envelope therefore
defines a probability-generating-function recursion.

\begin{proposition}[Population and progenitor envelope calculus]
\label{prop:ad-FH-calculus}
Under \textup{(AD0)}, for every \(n\geq0\) and \(s\in[0,1]\),
\begin{equation}\label{eq:ad-F-sandwich}
        F_n^L(s)\leq F_n(s)\leq F_n^U(s),
\end{equation}
and
\begin{equation}\label{eq:ad-H-sandwich}
        d_L(s)F_n^L(l_L(s))\leq H_n(s)\leq d_U(s)F_n^U(l_U(s)).
\end{equation}
Moreover,
\begin{equation}\label{eq:ad-F-product}
        F_n^e(s)=F_0(u_e^{(n)}(s))
        \prod_{j=0}^{n-1}R_e(u_e^{(j)}(s)),
\end{equation}
and, with \(H_n^e(s):=d_e(s)F_n^e(l_e(s))\),
\begin{equation}\label{eq:ad-H-product}
        H_n^e(s)=d_e(s)F_0(u_e^{(n)}(l_e(s)))
        \prod_{j=0}^{n-1}R_e(u_e^{(j)}(l_e(s))).
\end{equation}
Finally, \(H_n^e(f(s))=F_{n+1}^e(s)\).
\end{proposition}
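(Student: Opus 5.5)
The proof is an induction on \(n\) for the population sandwich, using the basic identities \eqref{eq:ad-basic-pgf-identities} together with the monotonicity of p.g.f.s on \([0,1]\).

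\emph{The sandwich \eqref{eq:ad-F-sandwich}.} At \(n=0\) all three transforms equal \(F_0\). Suppose \(F_n^L\leq F_n\leq F_n^U\) on \([0,1]\). Take \(s\in[0,1]\), so that \(f(s)\in[0,1]\). Evaluating \eqref{eq:ad-envelope} at \(f(s)\) and averaging over \(X_n\) gives
\[
R_L(s)\Ebf[u_L(s)^{X_n}]\leq F_{n+1}(s)\leq R_U(s)\Ebf[u_U(s)^{X_n}],
\]
that is,
\[
R_L(s)F_n(u_L(s))\leq F_{n+1}(s)\leq R_U(s)F_n(u_U(s)).
\]
Since \(u_e=l_e\circ f\) maps \([0,1]\) into \([0,1]\) and \(R_e\geq0\), the induction hypothesis can be applied at the argument \(u_e(s)\). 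This yields \(R_L(s)F_n^L(u_L(s))\leq F_{n+1}(s)\leq R_U(s)F_n^U(u_U(s))\), which is \eqref{eq:ad-envelope-recursion} at step \(n+1\). The key point is that the induction hypothesis must hold on all of \([0,1]\), not only at a fixed argument, because each step evaluates it at the transformed argument \(u_e(s)\). This is why the statement is proved uniformly in \(s\).

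\emph{The progenitor sandwich \eqref{eq:ad-H-sandwich}.} Start from \(H_n(s)=\Ebf[h_{X_n}(s)]\) and apply \eqref{eq:ad-envelope} directly at \(s\). This gives \(d_L(s)F_n(l_L(s))\leq H_n(s)\leq d_U(s)F_n(l_U(s))\). Since \(l_e(s)\in[0,1]\), the population sandwich evaluated at \(l_e(s)\) closes the bound.

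\emph{The product formulas and the final identity.} Formula \eqref{eq:ad-F-product} follows by induction from \(F_{n+1}^e(s)=R_e(s)F_n^e(u_e(s))\). Substituting the product for \(F_n^e\) at \(u_e(s)\) shifts every iterate index by one and prepends the factor \(R_e(u_e^{(0)}(s))\). Formula \eqref{eq:ad-H-product} is \eqref{eq:ad-F-product} evaluated at \(l_e(s)\), multiplied by \(d_e(s)\). For the last identity, \(H_n^e(f(s))=d_e(f(s))F_n^e(l_e(f(s)))=R_e(s)F_n^e(u_e(s))=F_{n+1}^e(s)\) by the definitions of \(R_e\) and \(u_e\).

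There is no substantial obstacle here. The only care needed is in the two steps flagged above: the induction must be uniform in \(s\), and nonnegativity of \(d_e\) (as a p.g.f. on \([0,1]\)) is what preserves the inequalities when multiplying by \(R_e\).
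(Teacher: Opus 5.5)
Your proof is correct and follows the paper's argument. The paper also obtains the one-step bounds \(R_L(s)F_n(u_L(s))\leq F_{n+1}(s)\leq R_U(s)F_n(u_U(s))\) from the envelope at \(f(s)\), inducts uniformly in \(s\), applies the envelope at \(s\) to \(H_n\), and derives the product formulas and the shift identity by direct iteration. One small point: the monotonicity of p.g.f.s that you cite at the outset is never used; the argument needs only nonnegativity of \(R_e\) and \(d_e\), and that \(u_e\) and \(l_e\) map \([0,1]\) into itself.
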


\begin{proof}
The control sandwich and \eqref{eq:ad-basic-pgf-identities} give
\(R_L(s)F_n(u_L(s))\leq F_{n+1}(s)\leq R_U(s)F_n(u_U(s))\), and induction
proves \eqref{eq:ad-F-sandwich}.  Applying the same control sandwich to
\(H_n(s)=\Ebf[h_{X_n}(s)]\), and then using \eqref{eq:ad-F-sandwich}, gives
\eqref{eq:ad-H-sandwich}.  Iteration of \eqref{eq:ad-envelope-recursion}
gives \eqref{eq:ad-F-product}; substitution of \(l_e(s)\) gives
\eqref{eq:ad-H-product} and the shift identity.
\end{proof}

\begin{remark}[Role of the accumulated envelope factor]
\label{rem:ad-product-mechanism}
The product \(\prod_{j=0}^{n-1}R_e(u_e^{(j)}(s))\) cannot generally be
absorbed into a multiplicative constant.  In the supercritical auxiliary
recursion it contributes to the exponential normalization of the positive
transform.  At criticality, the parabolic iteration below gives, for every
fixed \(s\in(0,1)\),
\[
        \sum_{j=0}^{n-1}\log R_e(u_e^{(j)}(s))
        =-\frac{\beta_e}{\gamma_e}\log n+O(1).
\]
Consequently, the product is of order \(n^{-\beta_e/\gamma_e}\); the
additive envelope component therefore determines the critical index
\(\sigma_e=\beta_e/\gamma_e\).
\end{remark}

The normalized factorization is essential for exact iteration.  If a
one-step comparison contains additive errors, repeated substitution
accumulates them with the envelope products already generated.

\begin{proposition}[Accumulated remainder terms]
\label{prop:ad-raw-remainders}
For each \(k\in\Nzero\), let
\(r_k^L,r_k^U:[0,1]\to[0,\infty)\) be nonnegative and suppose
\begin{equation}\label{eq:ad-raw-envelope}
        d_L(s)l_L(s)^k-r_k^L(s)
        \leq h_k(s)
        \leq d_U(s)l_U(s)^k+r_k^U(s).
\end{equation}
Set \(\mathcal V_n^e(s):=\Ebf[r_{X_n}^e(f(s))]\) and
\[
\mathcal E_n^e(s):=
\sum_{t=0}^{n-1}
\left\{\prod_{j=0}^{t-1}R_e(u_e^{(j)}(s))\right\}
\mathcal V_{n-1-t}^e(u_e^{(t)}(s)).
\]
Then
\begin{align}
F_n(s)&\leq F_n^U(s)+\mathcal E_n^U(s),
\label{eq:ad-raw-upper}\\
F_n(s)&\geq F_n^L(s)-\mathcal E_n^L(s).
\label{eq:ad-raw-lower}
\end{align}
\end{proposition}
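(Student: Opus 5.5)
The plan is to prove both bounds by induction on \(n\), starting from the one-step identity \eqref{eq:ad-basic-pgf-identities}. The inductive claim is that for every \(n\geq0\) and \(s\in[0,1]\),
\[
F_n(s)\leq F_n^U(s)+\mathcal E_n^U(s),
\qquad
F_n(s)\geq F_n^L(s)-\mathcal E_n^L(s),
\]
with \(\mathcal E_0^e\equiv0\). The base case holds because \(F_0^e=F_0\).

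For the inductive step in the upper bound, insert the raw envelope \eqref{eq:ad-raw-envelope} at the argument \(f(s)\) into \(F_{n+1}(s)=\Ebf[h_{X_n}(f(s))]\). This gives
\[
F_{n+1}(s)\leq R_U(s)F_n(u_U(s))+\mathcal V_n^U(s).
\]
Apply the induction hypothesis at the argument \(u_U(s)\in[0,1]\), using \(R_U(s)\geq0\), to obtain
\[
F_{n+1}(s)\leq R_U(s)F_n^U(u_U(s))+R_U(s)\,\mathcal E_n^U(u_U(s))+\mathcal V_n^U(s).
\]
The first term equals \(F_{n+1}^U(s)\) by \eqref{eq:ad-envelope-recursion}.

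It remains to check the one-step remainder identity
\[
\mathcal E_{n+1}^U(s)=\mathcal V_n^U(s)+R_U(s)\,\mathcal E_n^U(u_U(s)).
\]
To see this, separate the \(t=0\) term of \(\mathcal E_{n+1}^U(s)\); it is \(\mathcal V_n^U(s)\), since the empty product equals one. In the remaining terms, reindex with \(t=t'+1\) and use
\[
\prod_{j=0}^{t'}R_U(u_U^{(j)}(s))=R_U(s)\prod_{j=0}^{t'-1}R_U\bigl(u_U^{(j)}(u_U(s))\bigr)
\]
together with \(u_U^{(t'+1)}(s)=u_U^{(t')}(u_U(s))\). The sum over \(t'\) from \(0\) to \(n-1\) is then exactly \(R_U(s)\,\mathcal E_n^U(u_U(s))\).

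The lower bound follows the same steps with signs reversed:
\[
F_{n+1}(s)\geq R_L(s)F_n(u_L(s))-\mathcal V_n^L(s).
\]
Substituting the lower induction hypothesis again needs only \(R_L\geq0\).

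The only points needing care are the following:
\begin{itemize}
\item \(\mathcal V_n^e\) may be infinite. In that case the upper inequality is trivial, and the lower one is trivial after adopting the convention that \(\infty-\infty\) does not occur, since \(F_n^L\) is finite.
\item The envelope is evaluated at \(f(s)\in[0,1]\), which lies in its domain.
\item The recursive identity for \(\mathcal E^e\) is the one step where index bookkeeping matters.
\end{itemize}
I expect no other obstacle: monotonicity is not required, only the nonnegativity of the factors \(R_e\).
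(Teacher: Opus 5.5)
Your proposal is correct and follows the paper's proof essentially step for step. Both arguments use the one-step bounds $F_{n+1}(s)\leq R_U(s)F_n(u_U(s))+\mathcal V_n^U(s)$ and $F_{n+1}(s)\geq R_L(s)F_n(u_L(s))-\mathcal V_n^L(s)$, the recursion $\mathcal E_{n+1}^e(s)=\mathcal V_n^e(s)+R_e(s)\mathcal E_n^e(u_e(s))$ obtained by splitting off the $t=0$ term, and induction on $n$ that needs only $R_e\geq0$. Your remark about possibly infinite $\mathcal V_n^e$ is a harmless addition the paper does not discuss.
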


\begin{proof}
For \(e\in\{L,U\}\), set \(\mathcal E_0^e(s):=0\), and interpret an empty
product as one.  By \eqref{eq:ad-basic-pgf-identities} and the upper inequality
in \eqref{eq:ad-raw-envelope},
\[
\begin{aligned}
F_{n+1}(s)
&=\Ebf[h_{X_n}(f(s))]\\
&\leq d_U(f(s))\Ebf[l_U(f(s))^{X_n}]
  +\Ebf[r_{X_n}^U(f(s))]\\
&=R_U(s)F_n(u_U(s))+\mathcal V_n^U(s).
\end{aligned}
\]
Similarly, the lower inequality in \eqref{eq:ad-raw-envelope} gives
\[
        F_{n+1}(s)
        \geq R_L(s)F_n(u_L(s))-\mathcal V_n^L(s).
\]
Thus the one-generation remainder at time \(n\) enters additively, while
every error accumulated at an earlier generation is subsequently multiplied
by the envelope factors encountered during the remaining iterations.

The definition of \(\mathcal E_n^e\) yields the recursion
\[
        \mathcal E_{n+1}^e(s)
        =\mathcal V_n^e(s)+R_e(s)\mathcal E_n^e(u_e(s)),
        \qquad e\in\{L,U\}.
\]
Indeed, separating the \(t=0\) term in the defining sum and reindexing the
remaining terms gives
\[
\begin{aligned}
\mathcal E_{n+1}^e(s)
&=\mathcal V_n^e(s)
 +\sum_{t=1}^{n}
 \left\{\prod_{j=0}^{t-1}R_e(u_e^{(j)}(s))\right\}
 \mathcal V_{n-t}^e(u_e^{(t)}(s))\\
&=\mathcal V_n^e(s)+R_e(s)\mathcal E_n^e(u_e(s)).
\end{aligned}
\]
Recall also that the auxiliary transforms satisfy
\[
        F_{n+1}^e(s)=R_e(s)F_n^e(u_e(s)),
        \qquad F_0^e=F_0.
\]

We now prove the two bounds by induction on \(n\).  At \(n=0\),
\[
        F_0^L(s)-\mathcal E_0^L(s)
        =F_0(s)
        =F_0^U(s)+\mathcal E_0^U(s).
\]
Suppose the bounds hold at generation \(n\).  Since \(R_U(s)\geq0\),
\[
\begin{aligned}
F_{n+1}(s)
&\leq R_U(s)F_n(u_U(s))+\mathcal V_n^U(s)\\
&\leq R_U(s)\{F_n^U(u_U(s))+\mathcal E_n^U(u_U(s))\}
 +\mathcal V_n^U(s)\\
&=F_{n+1}^U(s)+\mathcal E_{n+1}^U(s).
\end{aligned}
\]
Likewise, since \(R_L(s)\geq0\),
\[
\begin{aligned}
F_{n+1}(s)
&\geq R_L(s)F_n(u_L(s))-\mathcal V_n^L(s)\\
&\geq R_L(s)\{F_n^L(u_L(s))-\mathcal E_n^L(u_L(s))\}
 -\mathcal V_n^L(s)\\
&=F_{n+1}^L(s)-\mathcal E_{n+1}^L(s).
\end{aligned}
\]
The induction proves \eqref{eq:ad-raw-upper} and \eqref{eq:ad-raw-lower}.
\end{proof}

To apply these remainder bounds to harmonic moments, evaluate
\eqref{eq:ad-raw-upper}--\eqref{eq:ad-raw-lower} at \(s=\e^{-t}\) and at
\(s=0\), and subtract the corresponding zero atoms.  This introduces
accumulated error terms in the positive transforms.  If the Laplace--Gamma
integrals of these errors are
\[
        o(b_n^{\mathrm{sc}}(r))
\]
in the supercritical regime, or
\[
        o(b_n^{\mathrm{cr}}(r))
\]
in the critical regime, then the errors are negligible on the relevant
harmonic-moment scale.  Consequently, the two-sided harmonic-moment
conclusions of Theorems~\ref{thm:main-ad-supercritical} and
\ref{thm:main-ad-critical} continue to hold with the same normalizations.

\begin{remark}[Normalization at one]
A crude absorption of a remainder into a multiplicative constant can destroy
\(d_e(1)=R_e(1)=1\).  If the absorbed functions are not p.g.f.s,
\eqref{eq:ad-raw-upper}--\eqref{eq:ad-raw-lower} and the accumulated errors
must be retained.
\end{remark}

\subsection{Exact envelope dynamics}
\label{subsec:ad-exact-dynamics}

Fix one of the two envelopes and suppress its index.  We analyze the generic
exact transform recursion
\begin{equation}\label{eq:ad-exact-recursion}
        \mathsf F_{n+1}(s)=R(s)\mathsf F_n(u(s)),
        \qquad \mathsf F_0(1)=1,
\end{equation}
where \(u=l\circ f\) is the transformed reproduction p.g.f.\ and
\(R=d\circ f\) is the transformed immigration p.g.f.  We use
\(u^{(0)}(s):=s\) and \(u^{(k+1)}(s):=u(u^{(k)}(s))\), \(k\geq0\).
Iteration gives
\begin{equation}\label{eq:ad-exact-product}
        \mathsf F_n(s)=\mathsf F_0(u^{(n)}(s))
        \prod_{j=0}^{n-1}R(u^{(j)}(s)).
\end{equation}
Let \(\mathsf Z_n\) denote the auxiliary generation size with p.g.f.\
\(\mathsf F_n\).  The normalization theory of this recursion goes back to
Heathcote, Seneta, and Pakes; see
\cite{Heathcote1965,Heathcote1966,Seneta1968,Seneta1970,
Pakes1971,AthreyaNey1972}.

\subsubsection{Supercritical exact dynamics}

Assume \(a:=u'(1)>1\), and let \(q\in[0,1)\) be the attracting fixed point
of \(u\), with \(u(q)=q\) and \(0<u'(q)<1\).  Define the auxiliary
small-value rate by
\begin{equation}\label{eq:ad-super-theta}
\theta:=
\begin{cases}
R(0)u'(0), & q=0,\\[1mm]
R(q), & q>0\text{ and }R(q)<1,\\[1mm]
u'(q), & q>0\text{ and }R\equiv1,
\end{cases}
\end{equation}
and assume \(0<\theta<1\).  Write
\(u(s)=\sum_{k\geq0}u_ks^k\), let \(\eta\) have p.g.f.\ \(R\), and assume
\begin{equation}\label{eq:ad-super-xlogx}
        \sum_{k\geq1}k\log^+k\,u_k<\infty,
        \qquad
        \Ebf\log(1+\eta)<\infty.
\end{equation}
Then
\begin{equation}\label{eq:ad-super-normalization}
        a^{-n}\mathsf Z_n\longrightarrow W
        \quad\text{almost surely},
        \qquad \Pbf(0<W<\infty)>0,
\end{equation}
provided the initial or immigration mechanism is nontrivial
\cite{AthreyaNey1972}.

In the branch \(q=0\), assume
\begin{equation}\label{eq:ad-super-initial-zero-branch}
        \mathsf F_0(s)=\mu_1s+O(s^2)
        \quad(s\downarrow0),
        \qquad \mu_1>0.
\end{equation}
If the smallest positive state of the initial law is \(j_0>1\), the same
argument replaces \(R(0)u'(0)\) by \(R(0)u'(0)^{j_0}\); we state the
results for \(j_0=1\).  In the branch \(q>0\), \(R\equiv1\), assume
\(\mathsf F_0'(q)>0\).

The following classical iteration theorem will be used repeatedly; see
Athreya and Ney \cite{AthreyaNey1972}.

\begin{lemma}[Koenigs linearization]
\label{lem:ad-koenigs}
Let \(\vartheta:=u'(q)\).  There is a continuous function \(K_{u,q}\) on
the basin of attraction of \(q\) such that
\begin{equation}\label{eq:ad-koenigs-limit}
        \frac{u^{(n)}(s)-q}{\vartheta^n}
        \longrightarrow K_{u,q}(s)
\end{equation}
locally uniformly on compact subsets of that basin, and
\(K_{u,q}(u(s))=\vartheta K_{u,q}(s)\).  If \(q=0\), write
\(K_u:=K_{u,0}\).
\end{lemma}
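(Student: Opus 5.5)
The plan is to prove the Koenigs linearization directly, in the usual way: telescope the ratios \((u^{(k+1)}(s)-q)/(u^{(k)}(s)-q)\) and show the infinite product converges. For \(s\) in the basin of \(q\), set \(x_k:=u^{(k)}(s)-q\). Then \(x_k\to0\), and since \(u\) is analytic near \(q\) we have \(u(q+x)-q=\vartheta x(1+\epsilon(x))\) with \(|\epsilon(x)|\le C|x|\) for \(|x|\le\delta\). This gives
\[
\frac{x_n}{\vartheta^n}=x_{N}\,\vartheta^{-N}\prod_{k=N}^{n-1}\bigl(1+\epsilon(x_k)\bigr),
\]
where \(N=N(s)\) is the first index with \(|x_k|\le\delta\). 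In the case \(x_k=0\) for some \(k\), which happens only if \(s=q\) because \(u\) is injective near \(q\) with \(u'(q)>0\), we set \(K_{u,q}=0\). Once \(|x_k|\le\delta\), contraction gives \(|x_{k+1}|\le(\vartheta+C\delta)|x_k|\). Choosing \(\delta\) so that \(\vartheta':=\vartheta+C\delta<1\), the terms \(|\epsilon(x_k)|\le C\delta\vartheta'^{\,k-N}\) are summable, so the product converges and the limit \(K_{u,q}(s)\) exists.

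For local uniformity on a compact set \(\mathcal K\) in the basin, I would argue as follows. The iterates \(u^{(n)}\) converge to \(q\) uniformly on \(\mathcal K\): on \([0,1)\) the p.g.f.\ iterates are monotone, and Dini's theorem applies on compact subintervals. Hence a single \(N\) works for all of \(\mathcal K\), and the tail product is controlled by \(\prod_{k\ge N}(1+C\delta\vartheta'^{\,k-N})\) uniformly. The factor \(x_N\vartheta^{-N}\) is continuous in \(s\). Therefore the partial products converge uniformly, and \(K_{u,q}\) is a uniform limit of continuous functions, hence continuous.

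The functional equation follows from
\[
\frac{u^{(n)}(u(s))-q}{\vartheta^n}=\vartheta\,\frac{u^{(n+1)}(s)-q}{\vartheta^{n+1}}\longrightarrow\vartheta K_{u,q}(s),
\]
while the left side tends to \(K_{u,q}(u(s))\), since \(u(s)\) lies in the basin. The case \(q=0\) is just the notational specialization \(K_u:=K_{u,0}\).

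I expect the main obstacle to be the second-order control \(|\epsilon(x)|\le C|x|\) at the boundary of the domain. When \(q=0\), \(u\) is a p.g.f.\ defined on \([0,1]\), and the expansion is one-sided at \(0\); analyticity inside the unit disk still gives \(C^2\) regularity on \([0,\delta]\), which is enough. When \(q>0\), \(u\) is analytic at the interior point \(q\).

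Here \(\vartheta>0\) is assumed, which is the hypothesis \(0<u'(q)<1\). This rules out the degenerate Böttcher case, where \(u'(q)=0\) and the linearization above fails.
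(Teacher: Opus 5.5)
Your argument is correct. The paper gives no proof of this lemma; it states it as a classical fact and cites Athreya and Ney. Your telescoping product
\[
x_n\vartheta^{-n}=x_N\vartheta^{-N}\prod_{k=N}^{n-1}\bigl(1+\epsilon(x_k)\bigr),
\]
with a geometrically summable majorant for \(\epsilon(x_k)\), is the standard Koenigs argument behind that citation. So your proposal supplies a self-contained verification where the paper relies on a reference.

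A few points should be tidied.
\begin{itemize}
\item Your remark that \(x_k=0\) forces \(s=q\) should rest on global strict monotonicity of the nonconstant p.g.f.\ \(u\) on \([0,1]\), not on local injectivity near \(q\). In any case the remark is unnecessary: if \(x_N=0\), your formula already returns \(K_{u,q}(s)=0\).
\item Dini's theorem must be applied separately on \([0,q]\), where the iterates increase, and on \([q,b]\), where they decrease. More directly, monotonicity of \(u^{(n)}\) in \(s\) gives \(\sup_{s\in[0,b]}|u^{(n)}(s)-q|\leq\max\{q-u^{(n)}(0),\,u^{(n)}(b)-q\}\).
\item The ``obstacle'' at \(q=0\) is not one. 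The power series of \(u\) has radius at least one, so \(u\) is real-analytic on \((-1,1)\), and \(|\epsilon(x)|\leq C|x|\) holds on both sides of \(0\).
\item Also require \(C\delta<1\), so every factor \(1+\epsilon(x_k)\) is positive and the infinite product has a positive limit. Then \(K_{u,q}(s)\) has the sign of \(x_N\), hence of \(s-q\). In particular \(K_u(s)>0\) for \(s\in(0,1)\), which is the nonvanishing the paper later invokes in the proof of Lemma~\ref{lem:ad-super-subtracted-positive}.
\end{itemize}
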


\begin{proposition}[Supercritical generating-function profiles]
\label{prop:ad-super-fixed-profile}
Assume \eqref{eq:ad-exact-recursion}--\eqref{eq:ad-super-normalization}.
\begin{enumerate}[label=\textup{(\roman*)}]
\item If \(q>0\) and \(R(q)<1\), then, locally uniformly for \(s<1\),
\begin{equation}\label{eq:ad-super-positive-q-limit}
        R(q)^{-n}\mathsf F_n(s)
        \longrightarrow
        \mathsf Q(s):=\mathsf F_0(q)
        \prod_{j=0}^{\infty}\frac{R(u^{(j)}(s))}{R(q)}.
\end{equation}
\item If \(q=0\), \(R(0)>0\), and
\eqref{eq:ad-super-initial-zero-branch} holds, then, locally uniformly for
\(s<1\),
\begin{equation}\label{eq:ad-super-zero-q-limit}
        \theta^{-n}\mathsf F_n(s)
        \longrightarrow
        \mu_1K_u(s)
        \prod_{j=0}^{\infty}\frac{R(u^{(j)}(s))}{R(0)}.
\end{equation}
\item If \(q>0\), \(R\equiv1\), and \(\mathsf F_0'(q)>0\), then, locally
uniformly for \(s<1\),
\begin{equation}\label{eq:ad-super-noimm-positive-transform}
        u'(q)^{-n}\{\mathsf F_n(s)-\mathsf F_n(0)\}
        \longrightarrow
        \mathsf F_0'(q)\{K_{u,q}(s)-K_{u,q}(0)\}.
\end{equation}
\end{enumerate}
\end{proposition}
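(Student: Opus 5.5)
All three parts start from the iterated product \eqref{eq:ad-exact-product},
\[
\mathsf F_n(s)=\mathsf F_0(u^{(n)}(s))\prod_{j=0}^{n-1}R(u^{(j)}(s)),
\]
combined with the Koenigs linearization of Lemma~\ref{lem:ad-koenigs}. Fix a compact set \(K\subset[0,1)\). Every \(s<1\) lies in the basin of the attracting fixed point \(q\), because \(a>1\). So \(u^{(j)}(s)\to q\) geometrically, uniformly on \(K\), with \(|u^{(j)}(s)-q|\leq C_K\vartheta^j\) for some \(\vartheta<1\).

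\emph{Parts (i) and (ii).} Divide each factor by \(R(q)\), which requires \(R(q)>0\); in case (ii) this is the hypothesis \(R(0)>0\). Since \(R\) is a p.g.f., it is \(C^1\) on \([0,1)\). Hence
\[
\bigl|R(u^{(j)}(s))/R(q)-1\bigr|\leq C_K\vartheta^j,
\]
so the product \(\prod_{j\geq0}R(u^{(j)}(s))/R(q)\) converges, uniformly on \(K\), to a continuous positive limit.

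\begin{itemize}
\item In case (i), \(\mathsf F_0(u^{(n)}(s))\to\mathsf F_0(q)\) by continuity. Multiplying by \(R(q)^{n}\) gives \eqref{eq:ad-super-positive-q-limit}.
\item In case (ii), \(q=0\) and \(\theta=R(0)u'(0)\). Write \(\mathsf F_0(u^{(n)}(s))=\mu_1u^{(n)}(s)+O(u^{(n)}(s)^2)\) by \eqref{eq:ad-super-initial-zero-branch}. Lemma~\ref{lem:ad-koenigs} gives \(u'(0)^{-n}u^{(n)}(s)\to K_u(s)\) locally uniformly, and the quadratic remainder is \(O(u'(0)^{2n})=o(u'(0)^n)\). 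Combining this with the product of \(R(0)^n\) normalized factors yields \eqref{eq:ad-super-zero-q-limit}.
\end{itemize}

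\emph{Part (iii).} Here \(R\equiv1\), so \(\mathsf F_n=\mathsf F_0\circ u^{(n)}\). Use the mean value theorem:
\[
\mathsf F_n(s)-\mathsf F_n(0)=\mathsf F_0'(\zeta_n)\bigl(u^{(n)}(s)-u^{(n)}(0)\bigr),
\]
with \(\zeta_n\) between \(u^{(n)}(0)\) and \(u^{(n)}(s)\). Both points converge to \(q\), so \(\zeta_n\to q\) uniformly on \(K\), and \(\mathsf F_0'(\zeta_n)\to\mathsf F_0'(q)>0\) by continuity of \(\mathsf F_0'\) on \([0,1)\). Apply Lemma~\ref{lem:ad-koenigs} separately at \(s\) and at \(0\); both lie in the basin of \(q\). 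This gives
\[
u'(q)^{-n}\bigl(u^{(n)}(s)-u^{(n)}(0)\bigr)\to K_{u,q}(s)-K_{u,q}(0),
\]
and hence \eqref{eq:ad-super-noimm-positive-transform}.

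\emph{Main obstacle.} The only delicate point is uniformity of the geometric contraction \(|u^{(j)}(s)-q|\leq C_K\vartheta^j\) on compacts of \([0,1)\), including near \(s=0\) when \(q>0\). I will obtain it as follows:
\begin{itemize}
\item The iterates are monotone. Since \(u\) is increasing, \(u^{(j)}(0)\uparrow q\), and for \(s\in[q,s_K]\) the iterates decrease to \(q\).
\item Uniform convergence follows from Dini, and then the local contraction \(u'(q)<1\) near \(q\) takes over after finitely many steps.
\item Uniformity in \(s\) comes from sandwiching \(u^{(j)}(s)\) between \(u^{(j)}(0)\) and \(u^{(j)}(s_K)\).
\end{itemize}
The assumptions \eqref{eq:ad-super-xlogx} are not needed for these fixed-argument limits.
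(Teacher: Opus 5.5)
Your proposal is correct and follows the paper's proof essentially step for step: the iterated product formula, summability of the normalized factors \(R(u^{(j)}(s))/R(q)\) from geometric convergence of the iterates in (i), Koenigs' limit at \(0\) combined with the first-order expansion of \(\mathsf F_0\) in (ii), and the mean value theorem together with Koenigs' limits at \(q\) in (iii). Your monotone sandwich argument for the uniform geometric contraction only supplies detail that the paper asserts without proof, and the positivity \(R(q)>0\) needed in (i) is automatic because \(q>0\) and \(R\) is a p.g.f.
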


\begin{lemma}[Positivity of the supercritical positive profile]
\label{lem:ad-super-subtracted-positive}
In the branch \(q>0\), \(R(q)<1\), the function \(\mathsf Q\) in
\eqref{eq:ad-super-positive-q-limit} is continuous and strictly increasing.
Consequently, for every compact \(K\Subset(q,1)\),
\begin{equation}\label{eq:ad-super-profile-positive}
        \inf_{s\in K,\,0\leq r\leq q}
        \{\mathsf Q(s)-\mathsf Q(r)\}>0.
\end{equation}
The limiting positive profiles in the other two branches are strictly
positive for every \(s>0\).
\end{lemma}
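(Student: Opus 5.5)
The plan is to work directly with the product formula
\[
\mathsf Q(s)=\mathsf F_0(q)\prod_{j=0}^{\infty}\frac{R(u^{(j)}(s))}{R(q)}
\]
from Proposition~\ref{prop:ad-super-fixed-profile}(i). I will show that the product converges uniformly on $[0,1-\delta]$ for every $\delta>0$, so that $\mathsf Q$ is continuous as a uniform limit of continuous partial products. I will then show that each factor is nondecreasing and at least one factor is strictly increasing.

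\emph{Convergence.} By Lemma~\ref{lem:ad-koenigs}, $|u^{(j)}(s)-q|\leq C\vartheta^j$ uniformly for $s\in[0,1-\delta]$, where $\vartheta=u'(q)<1$; the basin of $q$ contains $[0,1)$ because $q$ is the attracting fixed point and $a>1$. Since $R(q)>0$ and $R$ is $C^1$ near $q<1$, we have
\[
\bigl|\log R(u^{(j)}(s))-\log R(q)\bigr|\leq C'\vartheta^j,
\]
which is summable. This gives uniform convergence of the logarithms of the partial products, hence continuity of $\mathsf Q$ on $[0,1)$. I also need $\mathsf F_0(q)>0$. This is the one point to check, and I would handle it through the initial-support convention: if $\mathsf F_0(q)=0$, then $\mathsf F_0\equiv0$ on $[0,q]$. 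That is impossible for a p.g.f. with $\mathsf F_0(1)=1$ unless $q=0$, which is excluded in this branch.

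\emph{Monotonicity.} Each $u^{(j)}$ is a nondecreasing p.g.f. composition and $R$ is a nondecreasing p.g.f., so every factor is nondecreasing in $s$. The $j=0$ factor $R(s)/R(q)$ is strictly increasing on $[0,1]$, because $R(q)<1=R(1)$ forces $R$ to be nonconstant, and a nonconstant p.g.f. is strictly increasing on $(0,1]$. Every factor is strictly positive on $[0,1)$, so the product is strictly increasing on $(0,1)$. At $s=0$, strict increase follows from monotonicity together with strict increase on $(0,\cdot]$ and continuity.

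\emph{Inequality \eqref{eq:ad-super-profile-positive} and the other branches.} For $K\Subset(q,1)$, set $s_*=\min K>q$. Then $\mathsf Q(s)-\mathsf Q(r)\geq\mathsf Q(s_*)-\mathsf Q(q)>0$ for every $s\in K$ and $0\le r\le q$. For the remaining two branches, in branch (ii) we have $K_u(s)>0$ for $s>0$. This holds because $u(0)=0$, $u'(0)>0$, and $u^{(n)}(s)>0$ with ratio limit $K_u(s)$; by the functional equation, $K_u(s)=\vartheta^{-n}K_u(u^{(n)}(s))$ with $u^{(n)}(s)$ small, and $K_u(x)\sim x$ as $x\downarrow0$. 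The product factors $R(u^{(j)}(s))/R(0)\ge1$ converge as above. In branch (iii), $K_{u,q}$ is strictly increasing: it is the limit of $(u^{(n)}-q)/\vartheta^n$, and strict increase follows from the functional equation and the fact that $K_{u,q}'(q)=1$. Hence $K_{u,q}(s)-K_{u,q}(0)>0$ for $s>0$, and $\mathsf F_0'(q)>0$ by assumption.

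The main obstacle is expected to be the strict positivity of the Koenigs functions away from $q$, particularly the strict monotonicity of $K_{u,q}$ on all of $[0,1)$ in branch (iii). I would first try to obtain it from $K_{u,q}(u^{(n)}(s))=\vartheta^nK_{u,q}(s)$ combined with the local expansion $K_{u,q}(x)=x-q+o(x-q)$ near $q$.
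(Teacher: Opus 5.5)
Your proposal is correct and follows essentially the same route as the paper: continuity from the convergent product, strict increase from the factorwise ratio $\mathsf Q(s)/\mathsf Q(r)=\prod_{j\geq0}R(u^{(j)}(s))/R(u^{(j)}(r))>1$, the infimum from monotonicity, and positivity of the Koenigs profiles in the other two branches; you also make explicit two points the paper leaves implicit, namely $\mathsf F_0(q)>0$ and that strictness comes from the nonconstant factor $R(s)/R(q)$. Two small repairs are needed: the $j=0$ factor vanishes at $s=0$ when $R(0)=0$, so split it off before taking logarithms (the factors with $j\geq1$ are positive because $u(0)>0$ when $q>0$), and for monotonicity of $K_{u,q}$ near $q$ use that the Koenigs map is holomorphic at the interior point $q$ with $K_{u,q}'(q)=1$, since the bare expansion $x-q+o(x-q)$ does not by itself give monotonicity.
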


The standard proofs of Proposition~\ref{prop:ad-super-fixed-profile} and
Lemma~\ref{lem:ad-super-subtracted-positive} are recorded in
Appendix~\ref{app:ad-standard-facts}.  In particular, for every compact
\(K\Subset(q,1)\), there are \(m_0<\infty\) and
\(0<c_K<C_K<\infty\) such that, for \(m\geq m_0\), \(s\in K\),
and \(0\leq r\leq q\),
\begin{equation}\label{eq:ad-super-compact-profile}
        c_K\theta^m
        \leq\mathsf F_m(s)-\mathsf F_m(r)
        \leq C_K\theta^m,
\end{equation}
with the corresponding branchwise interpretation when \(q=0\).

Set \(\mathsf G_n(t):=\mathsf F_n(\e^{-t})-\mathsf F_n(0)\) and
\begin{equation}\label{eq:ad-super-kappa}
        \kappa_{\mathrm{sc}}
        :=-\frac{\log\theta}{\log a},
        \qquad \theta a^{\kappa_{\mathrm{sc}}}=1.
\end{equation}
The next lemma links the macroscopic scale \(a^{-n}\) to fixed Laplace
arguments.

\begin{lemma}[Supercritical small-value window]
\label{lem:ad-super-window}
There are \(t_0\in(0,1)\) and \(0<c<C<\infty\) such that, for all
sufficiently large \(n\),
\begin{equation}\label{eq:ad-super-window}
        c\theta^nt^{-\kappa_{\mathrm{sc}}}
        \leq\mathsf G_n(t)
        \leq C\theta^nt^{-\kappa_{\mathrm{sc}}},
        \qquad a^{-n}\leq t\leq t_0.
\end{equation}
Moreover,
\begin{equation}\label{eq:ad-super-tail}
        \mathsf G_n(t)\leq C\theta^n\e^{-(t-1)},
        \qquad t\geq1.
\end{equation}
\end{lemma}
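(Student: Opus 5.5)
We work with \(\mathsf G_n(t)=\mathsf F_n(\e^{-t})-\mathsf F_n(0)\) and split the \(n\) generations at the random time at which the iterate \(u^{(j)}(\e^{-t})\) leaves a fixed neighbourhood of \(1\). Fix \(s_*\in(q,1)\) with \(s_*\) close to \(1\), and set \(x_0:=1-s_*\). Since \(u'(1)=a>1\), the backward dynamics near \(1\) are linearizable. More precisely, \(1-u(1-x)=ax(1+O(x^{\epsilon}))\) fails without extra moments, so we use only the weaker two-sided bound \(c'ax\le 1-u(1-x)\le ax\) for \(x\le x_0\). This bound comes from convexity and \(u'(1)=a\), and the \(k\log k\) condition in (AD-S) should control the accumulated correction \(\prod(1+O(\cdot))\). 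For \(t\in[a^{-n},t_0]\), define \(k=k(t):=\min\{j:1-u^{(j)}(\e^{-t})\ge x_0\}\). Then \(a^{k}t\asymp 1\), that is, \(k=\log(1/t)/\log a+O(1)\le n+O(1)\).

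The first step uses the semigroup property of the exact recursion. Iterating \eqref{eq:ad-exact-recursion} \(k\) times gives
\[
\mathsf F_n(s)=\mathsf F_{n-k}\bigl(u^{(k)}(s)\bigr)\prod_{j=0}^{k-1}R\bigl(u^{(j)}(s)\bigr),
\]
where \(\mathsf F_{n-k}\) denotes the recursion started from \(\mathsf F_0\). The product over \(j<k\) lies between two positive constants. Indeed, \(1-R(1-x)\le Cx\log(1/x)\)-type bounds, or simply \(\sum_j(1-R(1-a^{j}t))<\infty\) uniformly, follow from \(\Ebf\log(1+\eta)<\infty\), because \(\sum_j\Pbf(\eta> a^{-j}/t)\) is bounded by \(C\,\Ebf\log(1+\eta)\).

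The second step handles the term at \(s=0\), which needs a matching subtraction. We use the same decomposition for \(\mathsf F_n(0)\) with its own product \(\prod R(u^{(j)}(0))\). We then apply the compact-profile estimate \eqref{eq:ad-super-compact-profile} at time \(m=n-k\) with \(s=u^{(k)}(\e^{-t})\in K:=[u^{-1}(s_*)\wedge s_*,s_*]\) and \(r\) in the iterates of \(0\). This gives
\[
\mathsf G_n(t)\asymp\theta^{\,n-k}\asymp\theta^n\theta^{-\log(1/t)/\log a}=\theta^n t^{-\kappa_{\mathrm{sc}}}.
\]
The identity \(\theta^{-\log(1/t)/\log a}=t^{-\kappa_{\mathrm{sc}}}\) holds by \eqref{eq:ad-super-kappa}. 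The main obstacle is this subtraction step. The two products (at \(s\) and at \(0\)) differ, so we cannot factor naively. We first write
\[
\mathsf F_n(s)-\mathsf F_n(0)=\Bigl[\mathsf F_{n-k}\bigl(u^{(k)}(s)\bigr)-\mathsf F_{n-k}\bigl(u^{(k)}(0)\bigr)\Bigr]\Pi_k(s)+\mathsf F_{n-k}\bigl(u^{(k)}(0)\bigr)\bigl[\Pi_k(s)-\Pi_k(0)\bigr].
\]
The second term is nonnegative, since \(R\) is increasing, and it is of order \(\mathsf F_{n-k}(\cdot)\asymp\theta^{n-k}\) by Proposition~\ref{prop:ad-super-fixed-profile}. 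This gives the upper bound, and the lower bound follows from the first term alone. The uniformity of \(m_0\) forces \(n-k\ge m_0\). For \(t\) near \(a^{-n}\) with \(n-k<m_0\), we instead use directly that \(\mathsf F_m(s)-\mathsf F_m(0)\) is bounded above and below on the compact set for finitely many \(m\), with \(\mathsf F_m\) strictly increasing, which costs only constants. We also need \(t_0\) small enough that \(k\ge1\).

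For the tail \eqref{eq:ad-super-tail}, take \(t\ge1\), so that \(s=\e^{-t}\le\e^{-1}\). We use \(\mathsf G_n(t)=\Ebf[\e^{-t\mathsf Z_n};\mathsf Z_n\ge1]\le \e^{-(t-1)}\Ebf[\e^{-\mathsf Z_n};\mathsf Z_n\ge1]=\e^{-(t-1)}\mathsf G_n(1)\), since \(\e^{-tz}\le\e^{-(t-1)}\e^{-z}\) for \(z\ge1\). Finally, \(\mathsf G_n(1)\le C\theta^n\) by \eqref{eq:ad-super-compact-profile} with \(s=\e^{-1}\). If \(\e^{-1}\le q\), we replace it by any fixed point of \((q,1)\), using monotonicity of \(\mathsf G_n\) in \(s\).
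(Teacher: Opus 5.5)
Your route is the paper's: the same exit time \(k(t)\) of \(\delta_j(t)=1-u^{(j)}(\e^{-t})\) from a neighbourhood of \(0\), the same split of \(\mathsf G_n(t)\) into two nonnegative terms, the compact-profile estimate \eqref{eq:ad-super-compact-profile} at time \(n-k\), and the same integer-valuedness argument for \eqref{eq:ad-super-tail}.

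\textbf{Main gap.} The step that carries the lemma is not proved. That step is \(|k(t)-\log(1/t)/\log a|\le C\) uniformly in \(a^{-n}\le t\le t_0\), equivalently \(\delta_j(t)\asymp a^jt\) for \(j\le k(t)\). The bound \(c'ax\le1-u(1-x)\le ax\) that you state only yields \(\log(1/t)/\log a-C\le k(t)\le\log(1/t)/\log(c'a)+C\). An \(O(\log(1/t))\) uncertainty in \(k\) changes \(\theta^{n-k}\) by an uncontrolled power of \(t\), which destroys the exponent \(\kappa_{\mathrm{sc}}\). Saying that ``the \(k\log k\) condition should control the correction'' names exactly what has to be proved.

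The paper closes this step as follows.
\begin{itemize}
\item Put \(g(x):=ax-\{1-u(1-x)\}\). Convexity gives \(g\ge0\) and \(g\) nondecreasing.
\item \(\sum_kk\log^+k\,u_k<\infty\) is equivalent to \(\int_0^1g(x)x^{-2}\dd x<\infty\).
\item Before the exit, \(\delta_{j+1}/\delta_j\in[\underline a,a]\) with \(\underline a>1\). Hence \(\int_{\delta_j}^{\delta_{j+1}}g(x)x^{-2}\dd x\ge g(\delta_j)(\delta_j^{-1}-\delta_{j+1}^{-1})\ge(1-\underline a^{-1})\,g(\delta_j)/\delta_j\).
\item Summing gives \(\sum_{j<k(t)}g(\delta_j)/\delta_j\le(1-\underline a^{-1})^{-1}\int_0^1g(x)x^{-2}\dd x\), uniformly in \(t\).
\end{itemize}
So \(\prod_{j<k}\{1-g(\delta_j)/(a\delta_j)\}\) is bounded away from zero, and \(\delta_j\asymp a^j(1-\e^{-t})\asymp a^jt\).

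\textbf{Smaller repairs.}
\begin{itemize}
\item Your own estimate \(k(t)\le n+O(1)\) leaves the case \(k(t)>n\) open for \(t\) near \(a^{-n}\). There the decomposition at time \(k\) does not exist. The paper treats \(k>n\) and \(0\le n-k<m_0\) together via the normalization \(a^{-n}\mathsf Z_n\to W\) with \(\Pbf(0<W<\infty)>0\). This gives \(\mathsf G_n(t)\asymp1\asymp\theta^nt^{-\kappa_{\mathrm{sc}}}\) whenever \(a^nt\asymp1\).
\item Your finite-\(m\) substitute uses strict monotonicity of \(\mathsf F_m\). This fails at \(m=0\) when \(\mathsf Z_0=0\) almost surely, which is allowed in the branch \(q>0\), \(R(q)<1\). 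In that case the lower bound must come from the second term \(\mathsf F_{n-k}(r_k)\{\Pi_k(s)-\Pi_k(0)\}\).
\item The exit point lies in \([u(s_*),s_*]\). Your set \([u^{-1}(s_*)\wedge s_*,s_*]\) collapses to \(\{s_*\}\), because \(u^{-1}(s_*)>s_*\) on \((q,1)\).
\item The bound \(\mathsf F_{n-k}(r_k)\le C\theta^{n-k}\) holds only in the two immigration branches. When \(R\equiv1\) it can fail, but then \(\Pi_k(s)-\Pi_k(0)=0\).
\item For the immigration product, use \(1-R(1-x)\le\Ebf\min\{1,\eta x\}\) along the geometric sequence \(\delta_j\). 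This sums to at most \(C\,\Ebf[1+\log(1+\eta)]\). The tail sum \(\sum_j\Pbf(\eta>a^{-j}/t)\) omits the truncated-mean part, and a bound \(1-R(1-x)\le Cx\log(1/x)\) is false under a logarithmic moment alone.
\end{itemize}
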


\begin{proof}
The proof separates the transformed offspring iteration from the accumulated
immigration factors.  For \(t\leq t_0\), set
\(\delta_j(t):=1-u^{(j)}(\e^{-t})\), and define
\(g(x):=ax-\{1-u(1-x)\}\).  Convexity gives \(g\geq0\) and \(g\)
nondecreasing, while the Kesten--Stigum condition
\eqref{eq:ad-super-xlogx} is equivalent to
\begin{equation}\label{eq:ad-super-KS-integral}
        \int_0^1\frac{g(x)}{x^2}\dd x<\infty.
\end{equation}
Choose \(\underline a\in(1,a)\) and \(\delta_*\in(0,(1-q)/a)\) so small
that, for \(0<x\leq\delta_*\),
\[
        1-u(1-x)=ax\{1+\varepsilon(x)\},\quad
        \underline a\leq a\{1+\varepsilon(x)\}\leq a,
        \quad |\varepsilon(x)|\leq\tfrac12,
        \quad R(1-a\delta_*)\geq\tfrac12.
\]
Let \(j(t):=\inf\{j\geq0:\delta_j(t)\geq\delta_*\}\).  Before \(j(t)\),
\(\underline a\leq\delta_{j+1}(t)/\delta_j(t)\leq a\), and
\eqref{eq:ad-super-KS-integral} gives
\[
        \sup_{0<t\leq t_0}
        \sum_{j<j(t)}|\varepsilon(\delta_j(t))|<\infty.
\]
Hence, for \(j\leq j(t)\),
\begin{equation}\label{eq:ad-super-delta-comparison}
        c a^jt\leq\delta_j(t)\leq C a^jt.
\end{equation}
If \(j(t)<\infty\), then
\begin{equation}\label{eq:ad-super-hitting-time}
        \left|j(t)-\frac{\log(1/t)}{\log a}\right|\leq C,
        \qquad
        c\theta^nt^{-\kappa_{\mathrm{sc}}}
        \leq\theta^{n-j(t)}
        \leq C\theta^nt^{-\kappa_{\mathrm{sc}}}.
\end{equation}
At the first crossing,
\(u^{(j(t))}(\e^{-t})\in
K_*:=[1-a\delta_*,1-\delta_*]\Subset(q,1)\).

Let \(P_k(s):=\prod_{j=0}^{k-1}R(u^{(j)}(s))\).  Since
\(1-R(1-x)\leq\Ebf[\min\{1,\eta x\}]\),
\eqref{eq:ad-super-delta-comparison} and
\(\Ebf\log(1+\eta)<\infty\) give
\[
\sum_{j<j(t)}\{1-R(u^{(j)}(\e^{-t}))\}
\leq C\Ebf\!\left[\sum_{r\geq1}\min\{1,\eta a^{-r}\}\right]
\leq C\Ebf[1+\log(1+\eta)]<\infty.
\]
Therefore
\begin{equation}\label{eq:ad-super-early-product}
        0<c_R\leq P_{j(t)}(\e^{-t})\leq1.
\end{equation}

If \(j(t)>n\), then \eqref{eq:ad-super-delta-comparison} and
\(t\geq a^{-n}\) give \(1\leq a^nt\leq C\).  The normalization
\eqref{eq:ad-super-normalization} yields
\(0<c\leq\mathsf G_n(t)\leq C\), while
\(\theta^nt^{-\kappa_{\mathrm{sc}}}=(a^nt)^{-\kappa_{\mathrm{sc}}}\asymp1\).

If \(k:=j(t)\leq n\), set \(s_t:=\e^{-t}\),
\(s_k:=u^{(k)}(s_t)\in K_*\), and \(r_k:=u^{(k)}(0)\in[0,q]\).  From
\eqref{eq:ad-exact-product},
\begin{align*}
\mathsf G_n(t)
={}&P_k(s_t)\{\mathsf F_{n-k}(s_k)-\mathsf F_{n-k}(r_k)\}\\
&+\{P_k(s_t)-P_k(0)\}\mathsf F_{n-k}(r_k).
\end{align*}
Both terms are nonnegative.  If \(n-k<m_0\), then
\eqref{eq:ad-super-hitting-time} and \(t\geq a^{-n}\) imply
\(a^nt\asymp1\); the normalization \eqref{eq:ad-super-normalization}
therefore gives \(\mathsf G_n(t)\asymp1\), as does
\(\theta^nt^{-\kappa_{\mathrm{sc}}}\).  If \(n-k\geq m_0\),
\eqref{eq:ad-super-compact-profile} makes the first difference comparable
with \(\theta^{n-k}\).  In the two immigration branches
\(\mathsf F_{n-k}(r_k)\leq C\theta^{n-k}\); if \(q=0\), then
\(r_k=0\); and if \(R\equiv1\), the second term vanishes.  Combining
these bounds with \eqref{eq:ad-super-early-product} and
\eqref{eq:ad-super-hitting-time} proves \eqref{eq:ad-super-window}.
Finally, integer-valuedness gives
\(\mathsf G_n(t)\leq\e^{-(t-1)}\mathsf G_n(1)\) for \(t\geq1\), and the
fixed-argument profile gives \(\mathsf G_n(1)\leq C\theta^n\).
\end{proof}

Define
\begin{equation}\label{eq:ad-exact-super-scale}
        \mathfrak b_n^{\mathrm{sc}}(r):=
        \begin{cases}
        \theta^n, & \theta a^r>1,\\[1mm]
        n\theta^n, & \theta a^r=1,\\[1mm]
        a^{-rn}, & \theta a^r<1.
        \end{cases}
\end{equation}

\begin{proposition}[Supercritical harmonic-moment rates for the exact recursion]
\label{prop:ad-exact-super-harmonic}
Under the preceding supercritical assumptions, for every \(r>0\),
\begin{equation}\label{eq:ad-exact-super-harmonic}
        0<\liminf_{n\to\infty}
        \frac{\Ebf[\mathsf Z_n^{-r};\mathsf Z_n>0]}
             {\mathfrak b_n^{\mathrm{sc}}(r)}
        \leq
        \limsup_{n\to\infty}
        \frac{\Ebf[\mathsf Z_n^{-r};\mathsf Z_n>0]}
             {\mathfrak b_n^{\mathrm{sc}}(r)}
        <\infty.
\end{equation}
\end{proposition}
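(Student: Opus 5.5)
We will use the Laplace--Gamma representation \eqref{eq:bprei-laplace-gamma}, written for the auxiliary recursion as
\[
\Gamma(r)\Ebf[\mathsf Z_n^{-r};\mathsf Z_n>0]=\int_0^\infty\mathsf G_n(t)t^{r-1}\dd t,
\qquad \mathsf G_n(t)=\mathsf F_n(\e^{-t})-\mathsf F_n(0).
\]
We split the integral into three ranges: \(t\in(0,a^{-n}]\), \(t\in[a^{-n},t_0]\), and \(t\geq t_0\). Each range is then estimated with Lemma~\ref{lem:ad-super-window}, and the trichotomy comes from the middle range.

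\emph{Large \(t\).} On \([t_0,1]\), monotonicity gives \(\mathsf G_n(t)\leq\mathsf G_n(t_0)\leq C\theta^n\). On \([1,\infty)\), the tail bound \eqref{eq:ad-super-tail} applies, and \(\int_1^\infty\e^{-(t-1)}t^{r-1}\dd t<\infty\). Hence this range contributes \(O(\theta^n)\), which is \(O(\mathfrak b_n^{\mathrm{sc}}(r))\) in all three cases.

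\emph{Small \(t\).} For \(t\leq a^{-n}\) we use the trivial bound \(\mathsf G_n(t)\leq\Pbf(\mathsf Z_n>0)\leq1\). This range then contributes at most \(\int_0^{a^{-n}}t^{r-1}\dd t=a^{-rn}/r\).

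\emph{Middle window.} By \eqref{eq:ad-super-window}, the middle range is comparable, from above and below, to
\[
\theta^n\int_{a^{-n}}^{t_0}t^{r-\kappa_{\mathrm{sc}}-1}\dd t .
\]
We evaluate this in three cases.
\begin{itemize}
\item If \(r>\kappa_{\mathrm{sc}}\), which is equivalent to \(\theta a^r>1\), the integral converges at zero and is \(\asymp1\). The window is then \(\asymp\theta^n\).
\item If \(r=\kappa_{\mathrm{sc}}\), the integral is \(\log(t_0a^n)\asymp n\). The window is then \(\asymp n\theta^n\).
\item If \(r<\kappa_{\mathrm{sc}}\), the integral is \(\asymp a^{n(\kappa_{\mathrm{sc}}-r)}\). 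Since \(\theta a^{\kappa_{\mathrm{sc}}}=1\), the window is \(\asymp a^{-rn}\).
\end{itemize}
The small-\(t\) contribution \(a^{-rn}\) is at most of the same order as the window in each case, because \(a^{-rn}\leq\theta^n\) when \(\theta a^r\geq1\). Together with the large-\(t\) bound this gives the upper bound \(\limsup<\infty\).

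\emph{Lower bound.} Since \(\mathsf G_n\geq0\), it suffices to keep only the middle window. The lower half of \eqref{eq:ad-super-window} then gives \(\liminf>0\) directly in all three regimes. In the equality case we need the \(\log\)-divergence to produce a genuine factor \(n\). This holds because the window \([a^{-n},t_0]\) has logarithmic length \(\asymp n\) and the constants \(c,C\) in \eqref{eq:ad-super-window} are uniform in \(n\).

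\emph{Main obstacle.} The substantive work is already contained in Lemma~\ref{lem:ad-super-window}: the uniform two-sided window estimate, together with the hitting-time control \eqref{eq:ad-super-hitting-time}. Given that lemma, the only point needing care is uniformity. The constants must not depend on \(n\) or \(t\) across all three branches of \(\theta\), and the threshold ``\(n\) sufficiently large'' must be handled by discarding finitely many \(n\). Under those provisos the proposition reduces to the elementary integral comparisons above.
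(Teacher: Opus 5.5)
Your proposal is correct and follows essentially the same route as the paper. Both use the Laplace--Gamma integral split into a range below \(a^{-n}\), a middle window governed by Lemma~\ref{lem:ad-super-window}, and a tail controlled by \eqref{eq:ad-super-tail}, with the trichotomy read off from \(\theta^n\int t^{r-1-\kappa_{\mathrm{sc}}}\,\mathrm dt\). Your variations do not change the argument: you cut at \(t_0\) instead of \(1\), treating \([t_0,1]\) by monotonicity (slightly more careful, since the window estimate is only stated for \(t\leq t_0\)), and you use the trivial bound \(\mathsf G_n\leq1\) on \((0,a^{-n}]\) where the paper invokes the normalization.
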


\begin{proof}
Write
\(\Gamma(r)\Ebf[\mathsf Z_n^{-r};\mathsf Z_n>0]
=I_{n,0}+I_{n,1}+I_{n,2}\), where the three integrals are over
\((0,a^{-n})\), \((a^{-n},1)\), and \((1,\infty)\), respectively.  The
normalization \eqref{eq:ad-super-normalization} gives
\(I_{n,0}\asymp a^{-rn}\).  By Lemma~\ref{lem:ad-super-window},
\[
I_{n,1}\asymp
\theta^n\int_{a^{-n}}^1t^{r-1-\kappa_{\mathrm{sc}}}\dd t
\asymp
\begin{cases}
\theta^n, & r>\kappa_{\mathrm{sc}},\\
n\theta^n, & r=\kappa_{\mathrm{sc}},\\
a^{-rn}, & 0<r<\kappa_{\mathrm{sc}}.
\end{cases}
\]
Finally, \eqref{eq:ad-super-tail} gives \(I_{n,2}\leq C\theta^n\), and the
positive fixed-argument profile gives a matching lower bound when
\(\theta^n\) is dominant.  Since
\(\theta a^{\kappa_{\mathrm{sc}}}=1\), the three cases are exactly
\eqref{eq:ad-exact-super-scale}.
\end{proof}

\subsubsection{Critical exact dynamics}

It is classical that, for a critical p.g.f.\ \(u\) with
\(u''(1)=2\gamma\in(0,\infty)\),
\(1-u^{(k)}(s)\sim(\gamma k)^{-1}\) for every fixed \(s<1\); see, for
example, Athreya and Ney \cite{AthreyaNey1972}.  Here we also need uniform
control at the moving Laplace arguments \(s=\e^{-t/n}\).  The fixed-argument
estimate gives the \(n^{-\sigma}\) profile, whereas the moving-argument
estimate gives the critical harmonic-moment window.

Assume
\begin{equation}\label{eq:ad-critical-u}
        u(1)=u'(1)=1,
        \qquad \gamma:=\frac12u''(1)\in(0,\infty),
\end{equation}
and, for some \(\eta\in(0,1]\),
\begin{equation}\label{eq:ad-critical-u-regularity}
        u(1-x)=1-x+\gamma x^2+O(x^{2+\eta})
        \qquad(x\downarrow0).
\end{equation}
Recall that \(R=d\circ f\) is the transformed immigration p.g.f., so
\(R(1)=1\).  Assume
\begin{equation}\label{eq:ad-critical-R}
        \beta:=R'(1)\in(0,\infty),
        \qquad
        \log R(1-x)=-\beta x+O(x^2)
        \qquad(x\downarrow0),
\end{equation}
and set \(\sigma:=\beta/\gamma\).

For a fixed \(c>0\), we call \(0<t\leq cn\) the broad window.  We call
\(0<t\leq C_n\) a sublinear window when \(C_n\to\infty\) and
\(C_n/n\to0\).  The product representation shows that the critical profile
is governed by the sums of \(1-u^{(k)}(s)\) and their squares.  The next
lemma provides the fixed-argument and moving-window estimates needed below.

\begin{lemma}[Parabolic iteration at fixed and moving arguments]
\label{lem:ad-parabolic-iteration}
For \(s<1\), let \(\delta_k(s):=1-u^{(k)}(s)\).  Then
\begin{equation}\label{eq:ad-parabolic-fixed}
        \delta_k(s)=\frac1{\gamma k}+\varepsilon_k(s),
        \qquad \sum_{k\geq1}|\varepsilon_k(s)|<\infty,
\end{equation}
locally uniformly for \(s\) in compact subsets of \([0,1)\).

For \(s_{n,t}:=\e^{-t/n}\), set
\(\delta_{n,k}(t):=1-u^{(k)}(s_{n,t})\).  For every fixed \(c>0\), there
are \(0<c_{1,c}<c_{2,c}<\infty\) such that, for all sufficiently large
\(n\),
\begin{equation}\label{eq:ad-parabolic-window-bound}
        c_{1,c}\frac{t}{n+\gamma kt}
        \leq\delta_{n,k}(t)
        \leq c_{2,c}\frac{t}{n+\gamma kt},
        \qquad 0<t\leq cn,\quad0\leq k\leq n.
\end{equation}
Moreover, for every fixed \(c>0\), there are \(K_c<\infty\) and
\(n_c<\infty\) such that, for \(n\geq n_c\),
\begin{equation}\label{eq:ad-parabolic-broad-sums}
\begin{aligned}
\sup_{0<t\leq cn}
\left|
\sum_{k=0}^{n-1}\delta_{n,k}(t)
-\frac1\gamma\log(1+\gamma t)
\right|&\leq K_c,\\
\sup_{0<t\leq cn}
\sum_{k=0}^{n-1}\delta_{n,k}(t)^2&\leq K_c.
\end{aligned}
\end{equation}
For every sublinear window,
\begin{equation}\label{eq:ad-parabolic-window-sums}
\begin{aligned}
\sup_{0<t\leq C_n}
\left|
\sum_{k=0}^{n-1}\delta_{n,k}(t)
-\frac1\gamma\log(1+\gamma t)
\right|&\longrightarrow0,\\
\sup_{0<t\leq C_n}
\sum_{k=0}^{n-1}\delta_{n,k}(t)^2&\longrightarrow0.
\end{aligned}
\end{equation}
\end{lemma}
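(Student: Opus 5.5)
We will analyse the one-dimensional parabolic map through the reciprocal transform.  Put \(\delta=1-s\) and \(\delta'=1-u(s)\).  By \eqref{eq:ad-critical-u-regularity}, \(\delta'=\delta-\gamma\delta^2+O(\delta^{2+\eta})\) as \(\delta\downarrow0\).  Hence
\[
\frac1{\delta'}=\frac1\delta+\gamma+O(\delta^{\eta})
\qquad(\delta\downarrow0).
\]
Globally on \((0,1]\), convexity of \(u\) with \(u'(1)=1\) gives \(\delta'<\delta\); for \(s\geq0\), \(\delta'\geq1-u(0)>0\) is not needed.  All that is needed is that \(1/\delta'-1/\delta\) is bounded and bounded below by a positive constant on compact subsets of \(\delta\in(0,1]\).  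This holds because \(u(s)>s\) for \(s<1\) by criticality with \(\gamma>0\).  Consequently there are \(0<c_-<c_+<\infty\) with
\[
c_-\leq\frac1{\delta_{k+1}}-\frac1{\delta_k}\leq c_+
\]
for all \(\delta_k\in(0,1]\), and the increment equals \(\gamma+O(\delta_k^\eta)\).

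\emph{Fixed argument.}  Summing the increments gives \(1/\delta_k(s)=1/\delta_0+c k\)-type two-sided bounds.  In particular \(\delta_k\asymp1/k\), uniformly for \(s\) in compacts of \([0,1)\), where \(\delta_0\geq\) const.  Then the errors \(O(\delta_j^\eta)=O(j^{-\eta})\) sum to \(O(k^{1-\eta})\), which only gives \(1/\delta_k=\gamma k+O(k^{1-\eta})\) (with \(O(\log k)\) when \(\eta=1\)).  Therefore
\[
\varepsilon_k=\delta_k-\frac1{\gamma k}=O\!\left(\frac{k^{1-\eta}}{k^2}\right)=O(k^{-1-\eta}),
\]
which is summable; for \(\eta=1\) one gets \(O(k^{-2}\log k)\), still summable.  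This proves \eqref{eq:ad-parabolic-fixed}, uniformly on compacts because all constants depend only on a lower bound for \(\delta_0\).

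\emph{Moving arguments.}  Here \(\delta_{n,0}(t)=1-\e^{-t/n}\asymp t/n\) for \(t\leq cn\), with constants depending on \(c\).  The two-sided increment bounds give
\[
\frac1{\delta_{n,k}}\asymp\frac nt+k,
\]
that is, \(\delta_{n,k}\asymp t/(n+kt)\), which is \eqref{eq:ad-parabolic-window-bound}.  For the sharp sums, write
\[
\frac1{\delta_{n,k}}=\frac1{\delta_{n,0}}+\gamma k+E_{n,k},\qquad
|E_{n,k}|\leq C\sum_{j<k}\delta_{n,j}^\eta .
\]
Using the window bound,
\[
\sum_{j<n}\delta_{n,j}^\eta\leq C\sum_{j<n}\Bigl(\frac t{n+jt}\Bigr)^\eta .
\]
Since \(\delta_{n,0}^{-1}=n/t+O(1)\), we get
\[
\delta_{n,k}=\frac{t}{n+\gamma kt}\,(1+\rho_{n,k}),
\qquad
\rho_{n,k}=O\!\left(\frac{t\,(1+|E_{n,k}|)}{n+\gamma kt}\right).
\]
Then
\[
\sum_{k<n}\frac t{n+\gamma kt}=\frac1\gamma\log(1+\gamma t)+O(t/n),
\]
by comparison with the integral \(\int_0^n t\,\dd x/(n+\gamma xt)\).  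The error contribution is \(\sum_k\delta_{n,k}|\rho_{n,k}|\leq C\sum_k(t/(n+kt))^2(1+|E_{n,k}|)\).  Without \(E\) this is \(\leq C\,t/n\cdot\min(t,1/t)\cdot\)const, bounded by \(K_c\) and tending to \(0\) when \(t\leq C_n=o(n)\).

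The squared sum \(\sum_k\delta_{n,k}^2\asymp\sum_k(t/(n+kt))^2\) is bounded by \(C(t/n)^2+C\,t/n\).  This is \(O(1)\) on the broad window and \(o(1)\) on sublinear windows, giving the second lines of \eqref{eq:ad-parabolic-broad-sums} and \eqref{eq:ad-parabolic-window-sums}.

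\emph{Main obstacle.}  The difficult step is controlling the accumulated error \(E_{n,k}\) uniformly in the window.  We have \(|E_{n,k}|\leq C\sum_{j<k}(t/(n+jt))^\eta\), which can be as large as \(Ck^{1-\eta}t^{\eta}/\ldots\).  We must show that the weighted sum \(\sum_k(t/(n+kt))^2|E_{n,k}|\) stays bounded.  We would instead bound it by \(C\sum_k\delta_{n,k}^{1+\eta}\), using the relative form of the error: \(\delta_{k+1}=\delta_k(1-\gamma\delta_k+O(\delta_k^{1+\eta}))\), so that \(\log\delta\) accumulates the errors multiplicatively.  We then check that \(\sum_k\delta_{n,k}^{1+\eta}\leq C\) uniformly, tending to \(0\) when \(t/n\to0\); this follows from the window bound by comparison with an integral.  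If this bookkeeping fails at \(\eta=1\) because of logarithmic factors, we would fall back on the cruder bound \(O(\log)\times(t/n)\), which still vanishes on sublinear windows and stays bounded on the broad window because \(t/n\leq c\).
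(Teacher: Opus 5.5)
Your proposal is correct and follows essentially the paper's route. The reciprocal recursion \(1/\delta_{k+1}-1/\delta_k=\gamma+O(\delta_k^{\eta})\), with increments bounded above and below by positive constants on all of \((0,1]\), gives both the fixed-argument expansion and the window bound, and the sums then follow by harmonic-sum comparison once the accumulated error \(E_{n,k}\) is controlled. The step you flag as the obstacle closes directly by interchanging the order of summation: \(\sum_{k<n}\bigl(t/(n+kt)\bigr)^2\sum_{j<k}\delta_{n,j}^{\eta}\le C\sum_{j<n}\delta_{n,j}^{1+\eta}\le C\{(t/n)^{1+\eta}+(t/n)^{\eta}\}\), since \(\sum_{k>j}\bigl(t/(n+kt)\bigr)^2\le t/(n+jt)\). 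This bound is bounded on the broad window, tends to zero on sublinear windows, and has no logarithmic loss at \(\eta=1\). Your fallback is therefore unnecessary, and it would not work anyway: a crude \(\log n\) or \(\log(1+t)\) factor multiplying \(t/n\) does not stay bounded when \(t\asymp n\).
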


\begin{proof}
For fixed \(s<1\), the expansion \eqref{eq:ad-critical-u-regularity} gives
\(\delta_{k+1}=\delta_k-\gamma\delta_k^2+O(\delta_k^{2+\eta})\).  With
\(y_k:=\delta_k^{-1}\),
\[
        y_{k+1}-y_k=\gamma+e_k,
        \qquad |e_k|\leq Cy_k^{-\eta}.
\]
Since \(y_k\asymp k\),
\[
y_k=\gamma k+
\begin{cases}
O(k^{1-\eta}),&0<\eta<1,\\
O(\log k),&\eta=1,
\end{cases}
\quad
\delta_k-\frac1{\gamma k}=
\begin{cases}
O(k^{-1-\eta}),&0<\eta<1,\\
O(k^{-2}\log k),&\eta=1.
\end{cases}
\]
This proves \eqref{eq:ad-parabolic-fixed}, uniformly on compact subsets of
\([0,1)\).

For the moving argument, let
\(y_{n,k}(t):=\delta_{n,k}(t)^{-1}\) and
\(y_{n,0}(t):=(1-\e^{-t/n})^{-1}\).  The same reciprocal recursion gives
\(y_{n,k}(t)=y_{n,0}(t)+\gamma k+E_{n,k}(t)\), where
\begin{equation}\label{eq:ad-critical-window-error}
|E_{n,k}(t)|\leq
\begin{cases}
C\{y_{n,0}(t)+k\}^{1-\eta},&0<\eta<1,\\[1mm]
C\log\{1+k/y_{n,0}(t)\},&\eta=1.
\end{cases}
\end{equation}
This yields \eqref{eq:ad-parabolic-window-bound} and, uniformly on the broad
window,
\[
\sum_{k<n}
\left|
\delta_{n,k}(t)-\frac1{y_{n,0}(t)+\gamma k}
\right|\leq K_c,
\qquad
\sum_{k<n}\delta_{n,k}(t)^2\leq K_c.
\]
The harmonic-sum comparison and the bounds
\(c_c n/t\leq y_{n,0}(t)\leq C_c n/t\) give
\eqref{eq:ad-parabolic-broad-sums}.

On a sublinear window,
\[
\sup_{0<t\leq C_n}
\left|
\frac{n(1-\e^{-t/n})}{t}-1
\right|\longrightarrow0.
\]
Together with \eqref{eq:ad-critical-window-error}, this strengthens the last
two bounds to
\[
\sup_{0<t\leq C_n}
\sum_{k<n}
\left|
\delta_{n,k}(t)-\frac1{y_{n,0}(t)+\gamma k}
\right|\longrightarrow0,
\quad
\sup_{0<t\leq C_n}\sum_{k<n}\delta_{n,k}(t)^2\longrightarrow0.
\]
The same harmonic-sum comparison now proves
\eqref{eq:ad-parabolic-window-sums}.
\end{proof}

For \(0\leq x<1\), define \(\rho_R(x):=\log R(1-x)+\beta x\).  Then
\(|\rho_R(x)|\leq Cx^2\) near zero, and the exact product gives, for
\(s\in(0,1)\),
\begin{equation}\label{eq:ad-critical-log-product}
\begin{aligned}
\log\mathsf F_n(s)
={}&\log\mathsf F_0(u^{(n)}(s))
-\beta\sum_{k=0}^{n-1}\{1-u^{(k)}(s)\}\\
&+\sum_{k=0}^{n-1}\rho_R(1-u^{(k)}(s)).
\end{aligned}
\end{equation}
The first sum produces the critical power, and the square-sum estimate
controls the remainder.

\begin{proposition}[Critical generating-function and scaling profiles]
\label{prop:ad-critical-profile}
Under \eqref{eq:ad-critical-u}--\eqref{eq:ad-critical-R}, there is a
continuous function \(\mathsf U:[0,1)\to[0,\infty)\), positive on
\((0,1)\), such that, locally uniformly on \([0,1)\),
\begin{equation}\label{eq:ad-critical-fixed-profile}
        n^\sigma\mathsf F_n(s)\longrightarrow\mathsf U(s).
\end{equation}
Moreover,
\begin{equation}\label{eq:ad-critical-functional-equation}
        R(s)\mathsf U(u(s))=\mathsf U(s).
\end{equation}
For every fixed \(c>0\), there are \(0<c_{1,c}<c_{2,c}<\infty\) such that,
for all sufficiently large \(n\),
\begin{equation}\label{eq:ad-critical-window-two-sided}
        c_{1,c}(1+\gamma t)^{-\sigma}
        \leq\mathsf F_n(\e^{-t/n})
        \leq c_{2,c}(1+\gamma t)^{-\sigma},
        \qquad0<t\leq cn.
\end{equation}
For every sublinear window,
\begin{equation}\label{eq:ad-critical-window-exact}
        \sup_{0<t\leq C_n}
        \left|
        \frac{\mathsf F_n(\e^{-t/n})}
             {(1+\gamma t)^{-\sigma}}-1
        \right|\longrightarrow0.
\end{equation}
\end{proposition}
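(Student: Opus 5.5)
The plan is to work entirely with the exact log-product \eqref{eq:ad-critical-log-product} and to feed in the iteration estimates of Lemma~\ref{lem:ad-parabolic-iteration}.

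\emph{Fixed-argument profile.} For fixed \(s\in[0,1)\), \eqref{eq:ad-parabolic-fixed} gives
\[
\sum_{k<n}\delta_k(s)=\gamma^{-1}\log n+\gamma^{-1}c_E+\sum_{k\geq1}\varepsilon_k(s)+\delta_0(s)-\gamma^{-1}+o(1),
\]
where \(c_E\) is Euler's constant. All error sums converge absolutely, locally uniformly in \(s\). The remainder term \(\sum_k\rho_R(\delta_k(s))\) is bounded by \(C\sum_k\delta_k^2<\infty\), so it converges as well. Because \(u^{(n)}(s)\to1\), we also get \(\mathsf F_0(u^{(n)}(s))\to1\).

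Multiplying by \(n^{\sigma}=\exp\{\beta\gamma^{-1}\log n\}\), these facts show that \(\log(n^\sigma\mathsf F_n(s))\) converges to a finite limit \(\log\mathsf U(s)\) for \(s\in(0,1)\). At \(s=0\) the term \(\log\mathsf F_0(0)\) may equal \(-\infty\). There I would argue directly: \(n^\sigma\mathsf F_n(0)=n^\sigma R(0)\,\mathsf F_{n-1}(u(0))\). Since \(u(0)\in(0,1)\) (if \(u(0)=0\) the profile is simply \(0\)), this gives \(\mathsf U(0)=R(0)\mathsf U(u(0))\geq0\). Continuity and local uniformity come from the uniformity of the series bounds on compacts.

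The functional equation \eqref{eq:ad-critical-functional-equation} follows from \(\mathsf F_{n+1}(s)=R(s)\mathsf F_n(u(s))\) together with \((n+1)^\sigma/n^\sigma\to1\). For positivity on \((0,1)\), I would note that \(R(u^{(k)}(s))>0\) whenever \(s>0\), because \(R(1)=1\) and \(u^{(k)}(s)\to1\). The finitely many early factors are therefore positive and the tail converges.

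\emph{Window estimates.} Set \(s=\e^{-t/n}\). The factor \(\mathsf F_0(u^{(n)}(s))\) lies between \(\mathsf F_0(1-c_{2,c}t/(n+\gamma nt))\) and \(1\). Since \(t/(n(1+\gamma t))\leq 1/(\gamma n)\to0\) uniformly on the broad window, this factor tends to \(1\) uniformly. By \eqref{eq:ad-parabolic-broad-sums},
\[
-\beta\sum_{k<n}\delta_{n,k}(t)=-\sigma\log(1+\gamma t)+O(1),
\]
and the \(\rho_R\) remainder is at most \(C\sum\delta_{n,k}^2\leq CK_c\). Here I use that \(\delta_{n,k}\leq c_{2,c}/(\gamma k)\) stays in the small-\(x\) range, except for finitely many \(k\), which are handled by continuity of \(\log R\) on \([1-c_{2,c}t/n,1]\). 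When \(t\) is comparable to \(n\), \(\delta_{n,0}\) need not be small, but it is bounded below \(1\) in the broad window, so \(\log R\) is still bounded there. Exponentiating yields \eqref{eq:ad-critical-window-two-sided}.

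On a sublinear window, \eqref{eq:ad-parabolic-window-sums} makes both error terms \(o(1)\) uniformly, and \(\delta_{n,0}\leq C_n/n\to0\), so the quadratic bound on \(\rho_R\) applies to every \(k\). This gives \eqref{eq:ad-critical-window-exact}.

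\emph{Main obstacle.} The delicate point is uniformity of the \(\rho_R\) remainder and of \(\log\mathsf F_0(u^{(n)})\) near \(t\approx cn\), where \(\delta_{n,0}\) is of order one. I would handle this by splitting off the first \(k_0\) iterates, which have bounded contribution via compactness, and applying the quadratic bound only for \(k\geq k_0\). For the \(s=0\) edge of the local uniform convergence, I would either use the shifted recursion above or exclude the point when \(\mathsf F_0(0)=0\), as allowed by the stated nonnegativity of \(\mathsf U\).
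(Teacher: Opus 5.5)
Your proposal is correct and follows essentially the same route as the paper: the exact log-product \eqref{eq:ad-critical-log-product}, fed by the fixed-argument and broad/sublinear-window estimates of Lemma~\ref{lem:ad-parabolic-iteration}, with the point \(s=0\) handled through the shifted recursion \((n+1)^\sigma\mathsf F_{n+1}(s)=R(s)\{(n+1)/n\}^\sigma n^\sigma\mathsf F_n(u(s))\). Three small corrections. First, the possibly infinite term at \(s=0\) is \(\log R(0)\), not \(\log\mathsf F_0(0)\). Second, \(u(0)>0\) is forced by \(u'(1)=1\) and \(\gamma>0\). Third, the shifted recursion should be applied uniformly on \([0,b]\), which works because \(u([0,b])\) is a compact subset of \((0,1)\); excluding \(s=0\) is not an option, since the statement asserts local uniform convergence on \([0,1)\).
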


\begin{proof}
For \(s\) in a compact subset of \((0,1)\),
Lemma~\ref{lem:ad-parabolic-iteration} gives, uniformly,
\(\sum_{k<n}\{1-u^{(k)}(s)\}=\gamma^{-1}\log n+c_\delta(s)+o(1)\), while
\(\sum_k\{1-u^{(k)}(s)\}^2<\infty\).  Hence
\[
        \sum_{k=0}^{n-1}\log R(u^{(k)}(s))
        =-\sigma\log n+c_R(s)+o(1).
\]
Since \(u^{(n)}(s)\to1\), \eqref{eq:ad-critical-log-product} proves
\eqref{eq:ad-critical-fixed-profile} locally uniformly on \((0,1)\), with
a continuous positive limit there.

To include \(s=0\), note that the nondegenerate critical offspring p.g.f.\
\(u\) satisfies \(u(0)>0\).  Thus, for every \(0<b<1\),
\(u([0,b])\) is a compact subset of \((0,1)\).  The exact recursion gives
\[
(n+1)^\sigma\mathsf F_{n+1}(s)
=R(s)\left(\frac{n+1}{n}\right)^\sigma
n^\sigma\mathsf F_n(u(s)),
\]
whose right side converges uniformly on \([0,b]\) to
\(R(s)\mathsf U(u(s))\).  This extends the limiting profile continuously
to \(s=0\) and proves \eqref{eq:ad-critical-fixed-profile} locally uniformly
on \([0,1)\), as well as \eqref{eq:ad-critical-functional-equation} for
\(0\leq s<1\).  In particular, \(R(0)=0\) implies \(\mathsf U(0)=0\).

For \(s=\e^{-t/n}\), \eqref{eq:ad-critical-log-product} and
\eqref{eq:ad-parabolic-broad-sums} yield, for every fixed \(c>0\),
\[
\sup_{0<t\leq cn}
\left|
\log\mathsf F_n(\e^{-t/n})
+\sigma\log(1+\gamma t)
\right|\leq K_c
\]
for all sufficiently large \(n\).  This proves
\eqref{eq:ad-critical-window-two-sided}.  On a sublinear window,
\eqref{eq:ad-parabolic-window-sums} gives
\[
\sup_{0<t\leq C_n}
\left|
\log\mathsf F_n(\e^{-t/n})
+\sigma\log(1+\gamma t)
\right|\longrightarrow0.
\]
Also, \(1-u^{(n)}(\e^{-t/n})\leq C/n\) uniformly on this window, so the
initial factor tends uniformly to one.  Exponentiation proves
\eqref{eq:ad-critical-window-exact}.
\end{proof}

\begin{lemma}[Positivity and boundary growth of the critical profile]
\label{lem:ad-critical-profile-positive}
The function \(\mathsf U\) is strictly increasing, and
\begin{equation}\label{eq:ad-critical-U-boundary}
        \lim_{t\downarrow0}t^\sigma\mathsf U(\e^{-t})
        =\gamma^{-\sigma}.
\end{equation}
Consequently,
\(\mathsf U(\e^{-t})-\mathsf U(0)>0\) for every \(t>0\).
\end{lemma}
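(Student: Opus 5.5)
The plan is to extract everything from the functional equation \eqref{eq:ad-critical-functional-equation} together with the window estimate \eqref{eq:ad-critical-window-exact}. For the boundary growth \eqref{eq:ad-critical-U-boundary}, fix \(t>0\) small and choose \(n\) so that the moving argument \(\e^{-t'/n}\) with \(t'=nt\) matches \(\e^{-t}\). The idea is to iterate the functional equation backward, \(\mathsf U(s)=P_k(s)\,\mathsf U(u^{(k)}(s))\) with \(P_k(s):=\prod_{j<k}R(u^{(j)}(s))\), and to compare with the finite-\(n\) identity \(\mathsf F_n(s)=P_k(s)\mathsf F_{n-k}(u^{(k)}(s))\). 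The cleanest route is directly through the fixed-argument limit. Write \(\mathsf U(s)=\lim_n n^\sigma\mathsf F_n(s)\) at \(s=\e^{-t}\), set \(n=\lfloor T/t\rfloor\) with \(T\) a large fixed constant, and apply \eqref{eq:ad-critical-window-exact} in the sublinear window (fixed \(T\)), which gives \(\mathsf F_n(\e^{-t})\approx(1+\gamma T)^{-\sigma}\). The missing factor \(n^\sigma\) is supplied by splitting \(\mathsf F_N(s)=P_{N-n}(s)\mathsf F_n(u^{(N-n)}(s))\) for \(N\to\infty\). Equivalently, one uses the log-product \eqref{eq:ad-critical-log-product} together with the parabolic estimates of Lemma~\ref{lem:ad-parabolic-iteration}: with \(\delta_k=1-u^{(k)}(\e^{-t})\) and \(y_k=\delta_k^{-1}=y_0+\gamma k+E_k\), \(y_0=(1-\e^{-t})^{-1}\sim1/t\), we get
\[
\log\bigl(N^\sigma\mathsf F_N(\e^{-t})\bigr)=\sigma\log N-\beta\sum_{k<N}\delta_k+\sum_{k<N}\rho_R(\delta_k)+o(1).
\]
The harmonic comparison gives \(\sum_{k<N}\delta_k=\gamma^{-1}\log\bigl((y_0+\gamma N)/y_0\bigr)+r(t)\), so the right side converges as \(N\to\infty\) to \(\sigma\log(y_0/\gamma)+\beta\,r_\infty(t)+\sum_k\rho_R(\delta_k)\). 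Multiplying by \(t^\sigma\) and using \(ty_0\to1\), the claim reduces to showing that \(r_\infty(t)\to0\) and \(\sum_k\delta_k^2\to0\) as \(t\downarrow0\).

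The main obstacle is these two remainder estimates, which must hold uniformly over the infinite horizon rather than only up to \(k<n\) as in Lemma~\ref{lem:ad-parabolic-iteration}. For the square sum, \(\sum_k\delta_k^2\le C\sum_k(y_0+\gamma k)^{-2}\le C/y_0\asymp t\to0\). For the first-order error, use the bound \eqref{eq:ad-critical-window-error}, \(|E_k|\le C(y_0+k)^{1-\eta}\) (or a logarithm when \(\eta=1\)). This gives \(\sum_k|\delta_k-(y_0+\gamma k)^{-1}|\le C\sum_k|E_k|(y_0+k)^{-2}\le Cy_0^{-\eta}\) (resp.\ \(Cy_0^{-1}\log y_0\)), which tends to zero. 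Here the reciprocal recursion must be run from the starting value \(y_0\) with constants independent of \(y_0\), exactly as in the moving-argument part of the lemma's proof. Finally, replacing the sum by the integral in the harmonic comparison costs \(O(1/y_0)\). These three estimates give \(t^\sigma\mathsf U(\e^{-t})\to\gamma^{-\sigma}\).

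Strict monotonicity follows from the product representation. For \(0<s<s'<1\), each factor \(R(u^{(k)}(\cdot))\) is nondecreasing, since \(R\) and \(u\) are p.g.f.s, and at least one factor, \(R\) itself, is strictly increasing because \(\beta=R'(1)>0\) makes \(R\) nonconstant. Passing to the limit in \(N^\sigma\mathsf F_N\) requires care because the inequalities are not strict. The plan is therefore to write \(\mathsf U(s)/\mathsf U(s')=\frac{R(s)}{R(s')}\cdot\frac{\mathsf U(u(s))}{\mathsf U(u(s'))}\le R(s)/R(s')<1\). This uses the monotonicity of \(\mathsf U\), inherited from that of \(\mathsf F_N\), applied to \(u(s)<u(s')\), together with positivity of \(\mathsf U\) on \((0,1)\) from Proposition~\ref{prop:ad-critical-profile}. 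Strict increase at \(s=0\) comes from \(\mathsf U(0)=R(0)\mathsf U(u(0))<\mathsf U(u(0))\le\mathsf U(s)\) when \(s\ge u(0)\), combined with the previous step for \(s<u(0)\). The consequence \(\mathsf U(\e^{-t})-\mathsf U(0)>0\) is then immediate.
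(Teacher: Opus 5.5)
Your proof is correct, but for the boundary limit \eqref{eq:ad-critical-U-boundary} you take a different route from the paper.

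\textbf{The paper's route.} It does not reopen the parabolic iteration. For $t_j\downarrow0$ it uses the fixed-argument convergence to pick $n_j\uparrow\infty$ with $n_jt_j\to\infty$ and $t_j^\sigma|n_j^\sigma\mathsf F_{n_j}(\e^{-t_j})-\mathsf U(\e^{-t_j})|\to0$. Then, with $s_j:=n_jt_j$ (so $s_j/n_j=t_j\to0$), it reads off $t_j^\sigma\mathsf U(\e^{-t_j})=s_j^\sigma\mathsf F_{n_j}(\e^{-s_j/n_j})+o(1)\to\gamma^{-\sigma}$ from the uniform sublinear-window estimate \eqref{eq:ad-critical-window-exact}. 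This is a short diagonal argument that uses Proposition~\ref{prop:ad-critical-profile} as a black box.

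\textbf{Your route.} You let $N\to\infty$ in \eqref{eq:ad-critical-log-product} at the fixed argument $\e^{-t}$. This gives the explicit representation $\log\mathsf U(\e^{-t})=\sigma\log(y_0/\gamma)-\beta r_\infty(t)+\sum_k\rho_R(\delta_k)$; note the minus sign in front of $\beta r_\infty(t)$.
\begin{itemize}
\item The price is that the reciprocal-recursion bounds behind \eqref{eq:ad-critical-window-error} must be rerun over an infinite horizon with constants independent of $y_0$.
\item This is legitimate. The recursion $y\mapsto\{1-u(1-1/y)\}^{-1}$ is autonomous, and once $y_0$ is large, $y_k\geq y_0+\gamma k/2$ for all $k$.
\item Hence your three estimates hold uniformly: $O(y_0^{-\eta})$ (or $O(y_0^{-1}\log y_0)$) for the first-order error, and $O(1/y_0)$ both for the sum--integral comparison and for $\sum_k\delta_k^2$.
\item The gain is a rate, $t^\sigma\mathsf U(\e^{-t})=\gamma^{-\sigma}\{1+O(t^{\eta})\}$ up to a logarithm when $\eta=1$, which the soft argument does not provide.
\end{itemize}
Your first sketched variant, with $n=\lfloor T/t\rfloor$ and a split of $\mathsf F_N$, is left unfinished and is not needed.

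\textbf{Monotonicity.} This part agrees with the paper in substance. The paper uses the whole infinite product $\prod_jR(u^{(j)}(s))/R(u^{(j)}(r))$, whereas you use only its first factor plus the non-strict monotonicity of $\mathsf U$ inherited from the $\mathsf F_N$. At $s=0$ your ratio computation applies verbatim with $r=0$; alternatively use $\mathsf U(0)\leq\mathsf U(s/2)<\mathsf U(s)$. Either way this covers $R(0)=0$ and $R(0)>0$ alike.
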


\begin{proof}
For \(0<r<s<1\), monotonicity of \(u\) and \(R\) gives
\[
        \frac{\mathsf U(s)}{\mathsf U(r)}
        =\prod_{j=0}^{\infty}
        \frac{R(u^{(j)}(s))}{R(u^{(j)}(r))}>1.
\]
At \(r=0\), the functional equation gives
\(\mathsf U(0)=R(0)\mathsf U(u(0))\).  If \(R(0)>0\), the same product
ratio is valid and is greater than one.  If \(R(0)=0\), instead use
\(\mathsf U(s)>0=\mathsf U(0)\), without a product ratio at zero.  Thus
\(\mathsf U\) is strictly increasing.

Let \(t_j\downarrow0\), and choose \(n_j\uparrow\infty\) so that
\(n_jt_j\to\infty\) and
\(t_j^\sigma|n_j^\sigma\mathsf F_{n_j}(\e^{-t_j})-
\mathsf U(\e^{-t_j})|\to0\).  With \(s_j:=n_jt_j\), the sublinear-window
profile gives
\[
\begin{aligned}
t_j^\sigma\mathsf U(\e^{-t_j})
&=s_j^\sigma\mathsf F_{n_j}(\e^{-s_j/n_j})+o(1)\\
&=s_j^\sigma(1+\gamma s_j)^{-\sigma}\{1+o(1)\}
\longrightarrow\gamma^{-\sigma}.
\end{aligned}
\]
Since \((t_j)\) was arbitrary, \eqref{eq:ad-critical-U-boundary} follows.
\end{proof}

\begin{proposition}[Critical harmonic-moment limits for the exact recursion]
\label{prop:ad-exact-critical-harmonic}
For \(r>\sigma\),
\begin{equation}\label{eq:ad-critical-r-greater-constant}
\lim_{n\to\infty}n^\sigma
\Ebf[\mathsf Z_n^{-r};\mathsf Z_n>0]
=\frac1{\Gamma(r)}\int_0^\infty
\{\mathsf U(\e^{-t})-\mathsf U(0)\}t^{r-1}\dd t.
\end{equation}
For \(0<r<\sigma\),
\begin{equation}\label{eq:ad-critical-r-less-constant}
\lim_{n\to\infty}n^r
\Ebf[\mathsf Z_n^{-r};\mathsf Z_n>0]
=\frac1{\Gamma(r)}\int_0^\infty
(1+\gamma s)^{-\sigma}s^{r-1}\dd s,
\end{equation}
and, at \(r=\sigma\),
\begin{equation}\label{eq:ad-critical-border-constant}
\lim_{n\to\infty}\frac{n^\sigma}{\log n}
\Ebf[\mathsf Z_n^{-\sigma};\mathsf Z_n>0]
=\frac{\gamma^{-\sigma}}{\Gamma(\sigma)}.
\end{equation}
\end{proposition}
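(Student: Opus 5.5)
We start from the Laplace--Gamma identity
\(\Gamma(r)\Ebf[\mathsf Z_n^{-r};\mathsf Z_n>0]=\int_0^\infty\{\mathsf F_n(\e^{-t})-\mathsf F_n(0)\}t^{r-1}\dd t\)
and split the integral into three ranges, each handled by a result already established. Throughout we write \(\mathsf G_n(t):=\mathsf F_n(\e^{-t})-\mathsf F_n(0)\).

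\emph{Fixed range.} For fixed \(t\), Proposition~\ref{prop:ad-critical-profile} gives \(n^\sigma\mathsf G_n(t)\to\mathsf U(\e^{-t})-\mathsf U(0)\), locally uniformly on \([0,1)\). Large \(t\) is controlled by integer-valuedness: \(\mathsf G_n(t)\le\e^{-(t-1)}\mathsf G_n(1)\le C\e^{-t}n^{-\sigma}\). Small \(t\) is controlled by substituting \(t=s/n\). The window bound \eqref{eq:ad-critical-window-two-sided} then gives, for \(t\ge 1/n\),
\[
n^\sigma\mathsf G_n(t)\le n^\sigma\mathsf F_n(\e^{-t})\le C(n^{-1}+\gamma t)^{-\sigma}\le Ct^{-\sigma}
\]
on \(1/n\le t\le c\). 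This bound is uniform in \(n\).

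\emph{Case \(r>\sigma\).} Here \(t^{r-1-\sigma}\) is integrable near \(0\), so dominated convergence applies on \([1/n,\infty)\). The piece \((0,1/n)\) contributes at most \(\int_0^{1/n}t^{r-1}\dd t=O(n^{-r})=o(n^{-\sigma})\). This gives \eqref{eq:ad-critical-r-greater-constant}. Finiteness of the limiting integral follows from Lemma~\ref{lem:ad-critical-profile-positive}, since \(\mathsf U(\e^{-t})\sim(\gamma t)^{-\sigma}\) near \(0\).

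\emph{Case \(r<\sigma\).} Substitute \(t=s/n\), so that
\[
n^r\Gamma(r)\Ebf[\cdot]=\int_0^\infty\mathsf G_n(s/n)s^{r-1}\dd s.
\]
Since \(\mathsf F_n(0)\le Cn^{-\sigma}=o(1)\), the subtracted atom is negligible on \(s\le C_n\). The sublinear-window statement \eqref{eq:ad-critical-window-exact} gives pointwise convergence of \(\mathsf G_n(s/n)\) to \((1+\gamma s)^{-\sigma}\). For domination:
\begin{itemize}
\item on \(s\le cn\), the broad window bound gives \(C(1+\gamma s)^{-\sigma}\), which is integrable against \(s^{r-1}\) because \(r<\sigma\);
\item on \(s>cn\), we have \(\mathsf G_n(s/n)\le\mathsf F_n(\e^{-c})\le Cn^{-\sigma}\), together with the exponential tail beyond \(s>n\).
\end{itemize}
The resulting contribution is \(O(n^{-\sigma}n^{r})\to0\) for \(s\in[cn,n]\), and the exponential tail handles the rest. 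Dominated convergence then yields \eqref{eq:ad-critical-r-less-constant}.

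\emph{Case \(r=\sigma\).} This is the main obstacle, because the logarithm must be extracted with its exact constant. Write \(n^\sigma\Gamma(\sigma)\Ebf[\cdot]=\int_0^\infty n^\sigma\mathsf G_n(t)t^{\sigma-1}\dd t\) and treat four ranges.
\begin{itemize}
\item On \(t\ge1\) the integral is \(O(1)\).
\item On \(t\le 1/n\) it is \(O(1)\): the integrand is at most \(n^\sigma t^{\sigma-1}\), which integrates to \(O(1)\).
\item On \(1/n\le t\le 1\), the bound \(n^\sigma\mathsf G_n(t)\le Ct^{-\sigma}\) shows that any subrange \([1/n,C_n/n]\) or \([\epsilon,1]\) of logarithmic length \(o(\log n)\) contributes \(o(\log n)\).
\item On the bulk \(C_n/n\le t\le \epsilon\), with \(C_n\to\infty\) slowly, we use \eqref{eq:ad-critical-window-exact} in the variable \(s=nt\) when \(s\le C_n'\) (sublinear window), and the fixed profile with \(\mathsf U(\e^{-t})\sim(\gamma t)^{-\sigma}\) when \(t\) is small but fixed.
\end{itemize}
Both approximations give \(n^\sigma\mathsf G_n(t)=(\gamma t)^{-\sigma}(1+o(1))\) in the bulk. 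The bulk integral is therefore \(\gamma^{-\sigma}\log n(1+o(1))\), and dividing by \(\Gamma(\sigma)\log n\) gives \eqref{eq:ad-critical-border-constant}.

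The delicate step is bridging the two approximations across the intermediate scales \(C_n\ll nt\), \(t\ll1\). We handle it by choosing both cutoff sequences so the gap has log-length \(o(\log n)\) and bounding it crudely by \(Ct^{-\sigma}\). If that fails, we fall back on applying \eqref{eq:ad-critical-log-product} with the harmonic-sum comparison of Lemma~\ref{lem:ad-parabolic-iteration} uniformly on \(t\in[C_n/n,\epsilon]\).
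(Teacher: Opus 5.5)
Your proposal is correct and follows essentially the same route as the paper. The common ingredients are the Laplace--Gamma representation; dominated convergence with the broad-window majorant $Ct^{r-1-\sigma}$ near zero and the integer-valuedness tail for $r>\sigma$; the rescaling $t=s/n$ with the sublinear-window limit and broad-window domination for $r<\sigma$; and, at $r=\sigma$, a range split in which the sublinear window produces $\gamma^{-\sigma}\log n$ while the crude $Ct^{-\sigma}$ bound handles the remaining ranges.

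One simplification: the bridging through the fixed profile $\mathsf U$ is unnecessary. The sublinear window can be taken up to $s\le n/\log n$ directly, which leaves a gap $t\in[1/\log n,1]$ of logarithmic length $\log\log n=o(\log n)$. This is exactly the paper's choice $C_n=n/\log n$.
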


\begin{proof}
Let \(\mathsf G_n(t):=\mathsf F_n(\e^{-t})-\mathsf F_n(0)\).  If
\(r>\sigma\), then
\[
h_n(t):=n^\sigma\mathsf G_n(t)t^{r-1}
\longrightarrow
\{\mathsf U(\e^{-t})-\mathsf U(0)\}t^{r-1}.
\]
Moreover,
\[
0\leq h_n(t)\leq C\left\{
t^{r-1-\sigma}\one_{\{t\leq1\}}
+\e^{-(t-1)}t^{r-1}\one_{\{t>1\}}
\right),
\]
which is integrable.  Dominated convergence proves
\eqref{eq:ad-critical-r-greater-constant}.

Let \(0<r<\sigma\), fix \(c>0\), and define
\(h_n(s):=\mathsf G_n(s/n)\one_{\{s\leq cn\}}s^{r-1}\).  For every fixed
\(s>0\), \(h_n(s)\to(1+\gamma s)^{-\sigma}s^{r-1}\), while
\(0\leq h_n(s)\leq C(1+s)^{-\sigma}s^{r-1}\).  Hence generalized dominated
convergence gives
\[
\int_0^{cn}\mathsf G_n(s/n)s^{r-1}\dd s
\longrightarrow
\int_0^\infty(1+\gamma s)^{-\sigma}s^{r-1}\dd s.
\]
For the remaining tail, integer-valuedness gives
\[
\begin{aligned}
\int_{cn}^\infty\mathsf G_n(s/n)s^{r-1}\dd s
&=n^r\int_c^\infty\mathsf G_n(t)t^{r-1}\dd t\\
&\leq Cn^{r-\sigma}\int_c^\infty\e^{-(t-c)}t^{r-1}\dd t
\longrightarrow0.
\end{aligned}
\]
This proves \eqref{eq:ad-critical-r-less-constant}.

Finally, let \(r=\sigma\) and \(C_n:=n/\log n\).  Split the scaled integral
into \((0,1)\), \((1,C_n)\), \((C_n,cn)\), and \((cn,\infty)\).  The exact
sublinear-window profile gives
\[
\int_1^{C_n}\mathsf G_n(s/n)s^{\sigma-1}\dd s
=\gamma^{-\sigma}\log n+o(\log n).
\]
The broad-window bound gives
\(\int_{C_n}^{cn}\mathsf G_n(s/n)s^{\sigma-1}\dd s=O(\log\log n)\), and
the first and last ranges are \(O(1)\).  Division by \(\log n\) proves
\eqref{eq:ad-critical-border-constant}.
\end{proof}

\subsection{From the auxiliary population to the progenitor transform}
\label{subsec:ad-harmonic-transfer}

Subsection~\ref{subsec:ad-exact-dynamics} established the positive-transform
and harmonic-moment profiles of the auxiliary population \(\mathsf Z_n\),
whose p.g.f. satisfies the exact recursion
\[
        \mathsf F_{n+1}(s)=R(s)\mathsf F_n(u(s)).
\]
To prove Theorems~\ref{thm:main-ad-supercritical} and
\ref{thm:main-ad-critical}, we also need the corresponding profiles for the
auxiliary progenitor count.  This distinction is necessary because the
offspring-ratio deviation \(X_{n+1}/N_n\) is indexed by the progenitor count
\(N_n\), whereas the control and total-population ratios are indexed by
\(X_n\).

Fix one envelope pair \((d,l)\), and recall that
\[
        u=l\circ f,\qquad R=d\circ f.
\]
The corresponding auxiliary progenitor transform is
\begin{equation}\label{eq:ad-exact-control-transform}
        \widehat H_n(s):=d(s)\mathsf F_n(l(s)).
\end{equation}
We transfer the population results of
Subsection~\ref{subsec:ad-exact-dynamics} to \(\widehat H_n\) through two
complementary descriptions.  Analytically, evaluating \(\mathsf F_n\) at
\(l(\e^{-t})\) replaces the Laplace argument \(t\) by
\[
        \lambda_l(t):=-\log l(\e^{-t}).
\]
Lemma~\ref{lem:ad-log-change} shows that
\[
        \lambda_l(t)=\tau_l t+O(t^2),
        \qquad \tau_l=l'(1),
\]
together with the corresponding uniform approximation on the moving windows
used in the critical analysis.  Probabilistically, \(\widehat H_n\) is the
p.g.f. of a random sum consisting of an additive term with p.g.f. \(d\) and
\(\mathsf Z_n\) independent summands with p.g.f. \(l\).  Thus the auxiliary
progenitor count has first-order size \(\tau_l\mathsf Z_n\);
Lemma~\ref{lem:ad-control-random-sum} makes this representation precise.

These two descriptions allow
Proposition~\ref{prop:ad-exact-control-harmonic} to transfer the
supercritical harmonic-moment normalizations of
Proposition~\ref{prop:ad-exact-super-harmonic} and the critical limits of
Proposition~\ref{prop:ad-exact-critical-harmonic} from the auxiliary population to
the auxiliary progenitor count.  In the supercritical case, the three
normalizations are unchanged.  In the critical moving-window limits, the
linear change of Laplace argument replaces \(\gamma\) by
\(\gamma\tau_l\).  Subsection~\ref{subsec:ad-controlled-transfer} will then
apply these conclusions to the lower and upper envelope pairs and transfer
them to the population and progenitor counts of the original controlled
process.

For a nonnegative integer-valued random variable \(Y\),
\begin{equation}\label{eq:ad-laplace-gamma}
        \Gamma(r)\Ebf[Y^{-r};Y>0]
        =\int_0^\infty\Ebf[\e^{-tY};Y>0]t^{r-1}\dd t.
\end{equation}

\begin{lemma}[Logarithmic change induced by a p.g.f.]
\label{lem:ad-log-change}
Let \(l\) be a p.g.f.\ with \(l'(1)=\tau_l\in(0,\infty)\) and
\(l''(1)<\infty\), and set \(\lambda_l(t):=-\log l(\e^{-t})\).  Then, as
\(t\downarrow0\),
\begin{equation}\label{eq:ad-lambda-l-expansion}
        \lambda_l(t)=\tau_lt+O(t^2),
        \qquad \lambda_l'(t)=\tau_l+O(t).
\end{equation}
On a sufficiently small interval, \(\lambda_l\) is strictly increasing; its
inverse \(\Lambda_l\) satisfies
\begin{equation}\label{eq:ad-lambda-l-inverse}
        \Lambda_l(y)=\frac{y}{\tau_l}+O(y^2),
        \qquad \Lambda_l'(y)=\frac1{\tau_l}+O(y).
\end{equation}
For every positive sequence \((C_n)\) with \(C_n/n\to0\),
\begin{equation}\label{eq:ad-lambda-l-window}
        \sup_{0<s\leq C_n}
        \left|\frac{n\lambda_l(s/n)}s-\tau_l\right|
        \longrightarrow0.
\end{equation}
\end{lemma}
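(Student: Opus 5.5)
The plan is an elementary Taylor argument at \(s=1\) for the composition \(t\mapsto l(\e^{-t})\), followed by the inverse function theorem and a rescaling.

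\emph{First-order expansion.} Put \(\varphi(t):=l(\e^{-t})\) for \(t\geq0\). Since \(l''(1)<\infty\), \(l\) is twice differentiable on \([0,1]\) with \(l'\) and \(l''\) bounded there, by monotone convergence of the power series. Hence \(\varphi\) is \(C^2\) on \([0,\infty)\), with
\[
\varphi(0)=1,\qquad \varphi'(t)=-\e^{-t}l'(\e^{-t}),\qquad \varphi''(t)=\e^{-t}l'(\e^{-t})+\e^{-2t}l''(\e^{-t}),
\]
and \(\varphi''\) bounded near \(0\). Thus \(\varphi(t)=1-\tau_lt+O(t^2)\) and \(\varphi'(t)=-\tau_l+O(t)\). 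Since \(\varphi(t)\to1\), we get \(-\log\varphi(t)=(1-\varphi(t))+O((1-\varphi(t))^2)=\tau_lt+O(t^2)\). Next,
\[
\lambda_l'(t)=-\frac{\varphi'(t)}{\varphi(t)}=\frac{\tau_l+O(t)}{1+O(t)}=\tau_l+O(t).
\]
This proves \eqref{eq:ad-lambda-l-expansion}.

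\emph{Inverse.} Because \(\lambda_l'(t)\geq\tau_l/2>0\) on some interval \([0,t_1]\), the map \(\lambda_l\) is strictly increasing there. It therefore has a \(C^1\) inverse \(\Lambda_l\) on \([0,\lambda_l(t_1)]\), with \(\Lambda_l'(y)=1/\lambda_l'(\Lambda_l(y))\). From \(\lambda_l(t)\geq\tau_lt/2\) on \([0,t_1]\) we get \(\Lambda_l(y)=O(y)\). Hence
\[
\Lambda_l'(y)=\frac{1}{\tau_l+O(\Lambda_l(y))}=\frac1{\tau_l}+O(y),
\]
and integrating from \(0\) (where \(\Lambda_l(0)=0\)) gives \(\Lambda_l(y)=y/\tau_l+O(y^2)\). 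This is \eqref{eq:ad-lambda-l-inverse}.

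\emph{Moving window.} For \(0<s\leq C_n\), set \(t=s/n\leq C_n/n\to0\). For \(n\) large we have \(C_n/n\leq t_1\), so the expansion applies uniformly:
\[
\left|\frac{n\lambda_l(s/n)}{s}-\tau_l\right|=\frac{|\lambda_l(t)-\tau_lt|}{t}\leq Ct\leq C\frac{C_n}{n}\longrightarrow0,
\]
which is \eqref{eq:ad-lambda-l-window}.

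The only point needing care is the first step: justifying \(C^2\) regularity of \(l\) up to the boundary point \(s=1\) from \(l''(1)<\infty\), and keeping the \(O(t^2)\) remainder uniform. This is the step I would check first, and it is handled by Abel/monotone convergence for power series with nonnegative coefficients, which makes \(l''\) continuous and bounded on \([0,1]\). The remaining steps are routine.
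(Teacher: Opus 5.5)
Your proposal is correct and follows essentially the same route as the paper: a second-order Taylor expansion at \(s=1\), which you apply to the composition \(t\mapsto l(\e^{-t})\) and the paper applies in the variable \(1-\e^{-t}\), then the quotient formula for \(\lambda_l'\), the inverse-function theorem, and the uniform bound \(C\,C_n/n\) on the moving window. Your explicit justification that \(l''\) is bounded and continuous on \([0,1]\), via monotone convergence of the nonnegative-coefficient series, supplies a detail the paper leaves implicit.
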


\begin{proof}
As \(t\downarrow0\),
\[
l(\e^{-t})=1-\tau_l(1-\e^{-t})+O((1-\e^{-t})^2)
=1-\tau_lt+O(t^2),
\]
so \(\lambda_l(t)=\tau_lt+O(t^2)\) and
\(\lambda_l'(t)=\e^{-t}l'(\e^{-t})/l(\e^{-t})=\tau_l+O(t)\).  The inverse
expansions follow from the inverse-function theorem, and
\[
\sup_{0<s\leq C_n}
\left|\frac{n\lambda_l(s/n)}s-\tau_l\right|
\leq C\frac{C_n}{n}\longrightarrow0.
\]
\end{proof}

\begin{lemma}[Random-sum representation of the progenitor transform]
\label{lem:ad-control-random-sum}
Let \(\widehat N_n\) have p.g.f.\ \(\widehat H_n\).  There are independent
nonnegative integer-valued variables \(\zeta,L_1,L_2,\ldots\), with p.g.f.s
\(d,l\), respectively, such that
\begin{equation}\label{eq:ad-control-random-sum}
        \widehat N_n\stackrel d=
        \zeta+\sum_{j=1}^{\mathsf Z_n}L_j.
\end{equation}
If \(l'(1)=\tau_l\in(0,\infty)\) and \(l''(1)<\infty\), then, for every
\(\delta>0\),
\begin{equation}\label{eq:ad-control-random-sum-concentration}
        \Pbf\left(
        \left|\frac1k\sum_{j=1}^kL_j-\tau_l\right|>\delta
        \right)
        \leq\frac{\Var(L_1)}{\delta^2k},
        \qquad k\geq1.
\end{equation}
\end{lemma}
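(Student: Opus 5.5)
The plan is to realize the random sum on a product probability space and then read both claims off elementary p.g.f. calculations. Take \(\zeta\) with p.g.f.\ \(d\), an i.i.d.\ sequence \(L_1,L_2,\ldots\) with common p.g.f.\ \(l\), and an independent copy of \(\mathsf Z_n\) with p.g.f.\ \(\mathsf F_n\), all mutually independent. This is possible because \(d\), \(l\) and \(\mathsf F_n\) are genuine p.g.f.s. For \(d\) and \(l\) this is the content of \textup{(AD0)}, where they are assumed to be p.g.f.s with \(d(1)=l(1)=1\). For \(\mathsf F_n\) one argues by induction through \eqref{eq:ad-exact-recursion}: \(R=d\circ f\) and \(u=l\circ f\) are compositions of p.g.f.s and hence p.g.f.s, and a product of p.g.f.s composed with a p.g.f.\ is again a p.g.f. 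The proof never needs to assume that \(\mathsf Z_n\) arises from an actual branching construction.

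The main step is a direct computation. Condition on \(\mathsf Z_n=k\) and use independence. This gives
\[
\Ebf\Bigl[s^{\zeta+\sum_{j=1}^{\mathsf Z_n}L_j}\Bigr]
=d(s)\sum_{k\geq0}\Pbf(\mathsf Z_n=k)\,l(s)^k
=d(s)\mathsf F_n(l(s))=\widehat H_n(s),
\]
for \(s\in[0,1]\), with the empty sum equal to zero. Since a p.g.f.\ on \([0,1]\) determines the law, \eqref{eq:ad-control-random-sum} follows, and the variable \(\widehat N_n\) can be defined as this random sum.

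For \eqref{eq:ad-control-random-sum-concentration}, first note that \(l''(1)<\infty\) together with \(l'(1)=\tau_l\) gives
\[
\Ebf L_1=\tau_l,\qquad \Var(L_1)=l''(1)+\tau_l-\tau_l^2<\infty.
\]
The average \(k^{-1}\sum_{j\leq k}L_j\) has mean \(\tau_l\) and variance \(\Var(L_1)/k\), so Chebyshev's inequality gives the bound directly.

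I do not expect a real obstacle in any of these steps. The only point needing care is that \(\mathsf F_n\) is a bona fide p.g.f., which the inductive argument settles, and that the bound \eqref{eq:ad-control-random-sum-concentration} is stated for deterministic \(k\) and holds uniformly in \(k\geq1\). That uniformity is what allows it to be averaged later over \(\mathsf Z_n\).
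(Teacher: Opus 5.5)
Your proposal is correct and matches the paper's proof: you condition on \(\mathsf Z_n\) to obtain \(d(s)\mathsf F_n(l(s))=\widehat H_n(s)\), and then you apply Chebyshev's inequality to get the bound. Your additional checks are details the paper leaves implicit. These are that \(\mathsf F_n\) is a genuine p.g.f., by induction through the recursion, and that \(\Var(L_1)=l''(1)+\tau_l-\tau_l^2<\infty\).
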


\begin{proof}
Conditioning on \(\mathsf Z_n\),
\[
\Ebf\left[s^{\zeta+\sum_{j=1}^{\mathsf Z_n}L_j}\right]
=d(s)\Ebf[l(s)^{\mathsf Z_n}]
=d(s)\mathsf F_n(l(s))=\widehat H_n(s).
\]
The second assertion is Chebyshev's inequality.
\end{proof}

For every \(\delta>0\),
\[
\Pbf\left(
\left|\frac{\widehat N_n}{k}-\tau_l\right|>\delta
\,\middle|\,\mathsf Z_n=k
\right)
\leq
\Pbf\left(\zeta>\frac{\delta k}{2}\right)
+\frac{4\Var(L_1)}{\delta^2k}
\longrightarrow0.
\]
The analytic counterpart is \(\lambda_l(t)\sim\tau_lt\).  With
\(\widehat{\mathsf G}_n(t):=\widehat H_n(\e^{-t})-\widehat H_n(0)\),
\begin{equation}\label{eq:ad-control-positive-decomposition}
\begin{aligned}
\widehat{\mathsf G}_n(t)
={}&d(\e^{-t})
\{\mathsf F_n(\e^{-\lambda_l(t)})-\mathsf F_n(l(0))\}\\
&+\{d(\e^{-t})-d(0)\}\mathsf F_n(l(0)).
\end{aligned}
\end{equation}
Both terms are nonnegative.

\begin{proposition}[Harmonic moments of the exact progenitor transform]
\label{prop:ad-exact-control-harmonic}
Let \((\mathsf F_n)\) satisfy \eqref{eq:ad-exact-recursion}, with
\(u=l\circ f\), \(R=d\circ f\), and let
\(\widehat H_n(s)=d(s)\mathsf F_n(l(s))\).  Assume
\(l'(1)=\tau_l\in(0,\infty)\) and \(l''(1)<\infty\).

Under the hypotheses of Proposition~\ref{prop:ad-exact-super-harmonic}, for
every \(r>0\),
\begin{equation}\label{eq:ad-exact-control-super}
        \Ebf[\widehat N_n^{-r};\widehat N_n>0]
        \asymp\mathfrak b_n^{\mathrm{sc}}(r).
\end{equation}
Under the hypotheses of Proposition~\ref{prop:ad-exact-critical-harmonic},
define
\begin{equation}\label{eq:ad-critical-control-profile}
        \widehat{\mathsf U}(s):=d(s)\mathsf U(l(s)).
\end{equation}
Then, for \(r>\sigma\),
\begin{equation}\label{eq:ad-critical-control-r-greater}
\lim_{n\to\infty}n^\sigma
\Ebf[\widehat N_n^{-r};\widehat N_n>0]
=\frac1{\Gamma(r)}\int_0^\infty
\{\widehat{\mathsf U}(\e^{-t})-\widehat{\mathsf U}(0)\}
t^{r-1}\dd t.
\end{equation}
For \(0<r<\sigma\),
\begin{equation}\label{eq:ad-critical-control-r-less}
\lim_{n\to\infty}n^r
\Ebf[\widehat N_n^{-r};\widehat N_n>0]
=\frac1{\Gamma(r)}\int_0^\infty
(1+\gamma\tau_ls)^{-\sigma}s^{r-1}\dd s,
\end{equation}
and, at \(r=\sigma\),
\begin{equation}\label{eq:ad-critical-control-border}
        \lim_{n\to\infty}\frac{n^\sigma}{\log n}
        \Ebf[\widehat N_n^{-\sigma};\widehat N_n>0]
        =\frac{(\gamma\tau_l)^{-\sigma}}{\Gamma(\sigma)}.
\end{equation}
\end{proposition}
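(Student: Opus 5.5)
Both parts go through the Laplace--Gamma identity \eqref{eq:ad-laplace-gamma} applied to \(\widehat N_n\), using the positive decomposition \eqref{eq:ad-control-positive-decomposition}
\[
\widehat{\mathsf G}_n(t)=d(\e^{-t})\{\mathsf F_n(\e^{-\lambda_l(t)})-\mathsf F_n(l(0))\}+\{d(\e^{-t})-d(0)\}\mathsf F_n(l(0)).
\]
The plan is to compare each term with the population profile \(\mathsf G_n\) evaluated at the changed argument \(\lambda_l(t)\). Since \(l(0)\in[0,1)\), the subtraction point \(l(0)\) is a fixed argument below one rather than \(0\). Two facts handle this. First, \(\mathsf F_n(l(0))\) is of the same order as \(\mathsf F_n(0)\), with a positive fixed-argument profile: \(\theta^n\) by Proposition~\ref{prop:ad-super-fixed-profile} and \eqref{eq:ad-super-compact-profile}, and \(n^{-\sigma}\mathsf U(l(0))\) by Proposition~\ref{prop:ad-critical-profile}. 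Second, \(\mathsf F_n(l(0))-\mathsf F_n(0)\) is likewise of fixed-argument order. Consequently the second term of the decomposition and the correction \(\mathsf F_n(l(0))-\mathsf F_n(0)\) both contribute at most \(C(1-\e^{-t}\wedge1)\cdot(\text{fixed profile})\) near \(t=0\). In the Gamma integral they therefore give exactly the "fixed-argument" scale (\(\theta^n\), respectively \(n^{-\sigma}\)), which is never larger than \(\mathfrak b_n\).

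\emph{Supercritical case.} Lemma~\ref{lem:ad-log-change} gives \(\lambda_l(t)\asymp t\) on \((0,t_1]\), so Lemma~\ref{lem:ad-super-window} transfers and gives \(\mathsf G_n(\lambda_l(t))\asymp\theta^n t^{-\kappa_{\mathrm{sc}}}\) for \(a^{-n}\lesssim t\le t_1\). For \(t\lesssim a^{-n}\), the normalization \eqref{eq:ad-super-normalization} together with the random-sum representation in Lemma~\ref{lem:ad-control-random-sum} shows that \(\widehat N_n\asymp a^n\) on a set of positive probability. For large \(t\), use \(\widehat{\mathsf G}_n(t)\le\e^{-(t-1)}\widehat{\mathsf G}_n(1)\le C\theta^n\e^{-(t-1)}\). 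Splitting the Gamma integral over the three ranges exactly as in Proposition~\ref{prop:ad-exact-super-harmonic} then yields \(\asymp\mathfrak b_n^{\mathrm{sc}}(r)\). The lower bound uses only the first term of the decomposition together with \(d(\e^{-t})\ge d(\e^{-t_1})>0\) when \(d\not\equiv0\) on \((0,1)\), and \eqref{eq:ad-super-profile-positive} near fixed \(t\).

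\emph{Critical case.} There are three ranges of \(r\).

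For \(r>\sigma\), pass to the limit pointwise: \(n^\sigma\widehat{\mathsf G}_n(t)\to d(\e^{-t})\mathsf U(l(\e^{-t}))-d(0)\mathsf U(l(0))=\widehat{\mathsf U}(\e^{-t})-\widehat{\mathsf U}(0)\) by local uniform convergence on \([0,1)\). Dominate by \(Ct^{-\sigma}\) near zero, which follows from the window bound \eqref{eq:ad-critical-window-two-sided} at argument \(\lambda_l(t)\asymp t\), and by an exponential tail. Dominated convergence then gives \eqref{eq:ad-critical-control-r-greater}.

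For \(r<\sigma\), rescale \(t=s/n\). On \(s\le cn\), the relation \(n\lambda_l(s/n)=\tau_ls(1+o(1))\) uniformly on sublinear windows by \eqref{eq:ad-lambda-l-window}, so \eqref{eq:ad-critical-window-exact} gives \(\mathsf F_n(\e^{-\lambda_l(s/n)})\to(1+\gamma\tau_ls)^{-\sigma}\) pointwise. The broad window gives the domination \(C(1+s)^{-\sigma}\). The subtracted pieces are \(O(n^{-\sigma})\), and after multiplication by \(n^r\) they vanish, since the fixed-order pieces integrated over \(s\le cn\) against \(s^{r-1}\) contribute \(n^{r}\cdot n^{-\sigma}\to0\). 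This gives \eqref{eq:ad-critical-control-r-less}.

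For \(r=\sigma\), copy the four-range split of Proposition~\ref{prop:ad-exact-critical-harmonic} with \(C_n=n/\log n\). The middle range yields \((\gamma\tau_l)^{-\sigma}\log n\), and the remaining ranges, including the subtracted pieces, are \(O(\log\log n)\).

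\emph{Main obstacle.} The main difficulty is the transfer of the moving-window estimates under the nonlinear change \(t\mapsto\lambda_l(t)\). Lemma~\ref{lem:ad-log-change} controls \(\lambda_l\) only near zero. On the broad window \(s\le cn\), \(\lambda_l(s/n)\) ranges over \((0,\lambda_l(c)]\), so \(c\) should be chosen small so that \(\lambda_l(s/n)\in[c's/n,C's/n]\), and the complement handled by the fixed-argument/exponential-tail bound. I would also check carefully that \(d(\e^{-t})-d(0)\le C(1\wedge t)\) is not needed beyond boundedness: since the multiplying factor \(\mathsf F_n(l(0))\) is already of fixed order, the crude bound \(d\le1\) combined with integrability of \(t^{r-1}\e^{-t}\)-type tails near \(0\) requires \(r>0\) only after using \(d(\e^{-t})-d(0)\to d(1)-d(0)\). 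The integral of \(t^{r-1}\) near \(0\) is finite, so the bound suffices.
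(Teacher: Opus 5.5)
Your proposal follows the paper's proof essentially step for step: the Laplace--Gamma identity applied to the decomposition \eqref{eq:ad-control-positive-decomposition}, the comparison \(\lambda_l(t)\asymp t\) that transfers Lemma~\ref{lem:ad-super-window} into the three-range split of Proposition~\ref{prop:ad-exact-super-harmonic}, and, at criticality, dominated convergence for \(r>\sigma\), the rescaled window argument with an \(O(n^{r-\sigma})\) tail for \(r<\sigma\), and the four-range split with \(C_n=n/\log n\) for \(r=\sigma\). One claim needs correcting. In the branch \(q>0\), \(R\equiv1\), one has \(\mathsf F_n(l(0))\to\mathsf F_0(q)>0\), which is not \(O(\theta^n)\). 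The second term of the decomposition is therefore harmless there only because \(R=d\circ f\equiv1\) forces \(d\equiv1\), so the term vanishes; the gap \(\mathsf F_n(l(0))-\mathsf F_n(0)=O(\theta^n)\) then comes from the Koenigs limit, as in Lemma~\ref{lem:ad-super-zero-gap}. Relatedly, \(l(0)\) may exceed \(q\), and \eqref{eq:ad-super-profile-positive} covers only subtraction points \(r\leq q\). The \(\theta^n\)-regime lower bound is therefore best taken from \(\widehat{\mathsf G}_n(t)\geq c\,\mathsf G_n(c_2t)-C\theta^n\) at a small fixed \(t\), where the window bound \(c\theta^nt^{-\kappa_{\mathrm{sc}}}\) beats the \(O(\theta^n)\) gap.
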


\begin{proof}
For the supercritical case, choose \(t_0>0\) and
\(0<c_1<c_2<\infty\) such that
\(c_1t\leq\lambda_l(t)\leq c_2t\) on \((0,t_0]\).  From
\eqref{eq:ad-control-positive-decomposition}, the fixed-argument profiles,
and the positivity of the auxiliary profile,
\[
c\,\mathsf G_n(c_2t)-C\theta^n
\leq\widehat{\mathsf G}_n(t)
\leq C\,\mathsf G_n(c_1t)+C\theta^n,
\qquad 0<t\leq t_0,
\]
and
\(\int_{t_0}^\infty\widehat{\mathsf G}_n(t)t^{r-1}\dd t\leq C_r\theta^n\).
The three integral ranges in Proposition~\ref{prop:ad-exact-super-harmonic}
therefore give \eqref{eq:ad-exact-control-super}.

For the critical case and \(r>\sigma\),
\(n^\sigma\widehat{\mathsf G}_n(t)\to
\{\widehat{\mathsf U}(\e^{-t})-\widehat{\mathsf U}(0)\}\), with the same
integrable fixed-domain majorant as in
Proposition~\ref{prop:ad-exact-critical-harmonic}; dominated convergence
gives \eqref{eq:ad-critical-control-r-greater}.

Let \(0<r<\sigma\), fix \(c>0\), and define
\(h_n(s):=\widehat{\mathsf G}_n(s/n)\one_{\{s\leq cn\}}s^{r-1}\).  By
Lemma~\ref{lem:ad-log-change} and the broad critical window,
\(h_n(s)\to(1+\gamma\tau_ls)^{-\sigma}s^{r-1}\), while
\(0\leq h_n(s)\leq C(1+s)^{-\sigma}s^{r-1}\).  Generalized dominated
convergence gives the limit over \((0,cn)\), and
\[
\int_{cn}^\infty\widehat{\mathsf G}_n(s/n)s^{r-1}\dd s
\leq Cn^{r-\sigma}\int_c^\infty\e^{-(t-c)}t^{r-1}\dd t
\longrightarrow0.
\]
This proves \eqref{eq:ad-critical-control-r-less}.

For \(r=\sigma\), set \(C_n:=n/\log n\) and
\(t_{n,s}:=n\lambda_l(s/n)\).  Lemma~\ref{lem:ad-log-change} gives
\[
\sup_{0<s\leq C_n}
\left|\frac{t_{n,s}}s-\tau_l\right|\longrightarrow0,
\quad
\sup_{0<s\leq C_n}
\left|\frac{1+\gamma t_{n,s}}{1+\gamma\tau_ls}-1\right|
\longrightarrow0.
\]
The sublinear critical profile then gives
\[
\sup_{0<s\leq C_n}
\left|
\frac{\widehat H_n(\e^{-s/n})}
     {(1+\gamma\tau_ls)^{-\sigma}}-1
\right|\longrightarrow0.
\]
Since \(\widehat H_n(0)=O(n^{-\sigma})\),
\[
\int_1^{C_n}\widehat{\mathsf G}_n(s/n)s^{\sigma-1}\dd s
=(\gamma\tau_l)^{-\sigma}\log n+o(\log n).
\]
The broad window contributes \(O(\log\log n)\) on \((C_n,cn)\), and the
remaining two ranges contribute \(O(1)\).  This proves
\eqref{eq:ad-critical-control-border}.
\end{proof}

\subsection{Returning to the controlled process}
\label{subsec:ad-controlled-transfer}

We now transfer the exact-envelope profiles to the original population and
progenitor counts.  Under \textup{(AD0)}, the p.g.f.\ sandwiches hold before
subtraction of the zero atoms.  It remains to control the gaps between the
lower and upper zero atoms, first under \textup{(AD-S)} and then under
\textup{(AD-C)}.

\begin{lemma}[Zero-atom gaps for matched supercritical envelopes]
\label{lem:ad-super-zero-gap}
Under \textup{(AD-S)},
\begin{equation}\label{eq:ad-super-zero-gap}
        F_n^U(0)-F_n^L(0)=O(\theta^n),
        \qquad H_n^U(0)-H_n^L(0)=O(\theta^n).
\end{equation}
\end{lemma}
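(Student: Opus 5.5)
The plan is to bound each zero atom separately by \(C\theta^n\), branch by branch in (AD-S), rather than to estimate the difference directly. Both \(F_n^U(0)\) and \(F_n^L(0)\) are nonnegative. It therefore suffices to show \(F_n^e(0)=O(\theta^n)\) for each \(e\in\{L,U\}\) in the two immigration branches, and to handle the no-immigration branch separately.

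\emph{Immigration branches.} By the product formula \eqref{eq:ad-F-product} at \(s=0\),
\[
F_n^e(0)=F_0(u_e^{(n)}(0))\prod_{j=0}^{n-1}R_e(u_e^{(j)}(0)).
\]
\begin{itemize}[label={}]
\item If \(q_e=0\), every factor equals \(R_e(0)\), and \(F_0(0)\) appears. Assumption \eqref{eq:ad-super-initial-zero-branch} gives \(F_0(0)=0\), so \(F_n^e(0)=0\).
\item If \(q_e>0\) and \(R_e(q_e)<1\), then \(u_e^{(j)}(0)\uparrow q_e\), so each factor is at most \(R_e(q_e)=\theta\). Hence \(F_n^e(0)\le\theta^n\).
\end{itemize}
In both cases \(F_n^e(0)\le\theta^n\). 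Since \(\theta_L=\theta_U=\theta\), the mixed case (one envelope in each branch) is also covered, and the first half of \eqref{eq:ad-super-zero-gap} follows.

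\emph{No-immigration branch.} If one envelope has \(R_e\equiv1\) and \(q_e>0\), then by (AD-S) both do, and \(q_L=q_U=q\). Now \(F_n^e(0)=F_0(u_e^{(n)}(0))\to F_0(q)>0\), so the atoms are not small individually and only their difference can be estimated. Write
\[
F_n^U(0)-F_n^L(0)=F_0(u_U^{(n)}(0))-F_0(u_L^{(n)}(0)).
\]
By the Koenigs linearization (Lemma~\ref{lem:ad-koenigs}), \(q-u_e^{(n)}(0)=O(u_e'(q)^n)=O(\theta^n)\), because \(\theta_e=u_e'(q_e)\) and the rates are matched. Since \(F_0\) is Lipschitz on \([0,q]\) (it is a p.g.f. and \(q<1\)), the difference is \(O(\theta^n)\). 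This Koenigs step is the main obstacle; the immigration branches are elementary.

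\emph{Progenitor atoms.} By definition \(H_n^e(0)=d_e(0)F_n^e(l_e(0))\).
\begin{itemize}[label={}]
\item In the immigration branches, I would repeat the product argument at the starting point \(s_e:=l_e(0)\le 1\). If \(l_e(0)<1\), then \(s_e\) lies in the basin of \(q_e\), and for \(q_e>0\) we have \(u_e^{(j)}(s_e)\to q_e\). I expect the factors \(R_e(u_e^{(j)}(s_e))\) to be eventually bounded by \(\theta(1+Cx_j)\) with \(x_j\) summable, by Koenigs convergence, which gives \(H_n^e(0)\le C\theta^n\). For \(q_e=0\), I would use \(F_0(u_e^{(n)}(s_e))=O(u_e'(0)^n)\) by Koenigs, together with \(R_e(u_e^{(j)}(s_e))\le R_e(0)(1+Cu_e^{(j)}(s_e))\). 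This yields \(C(R_e(0)u_e'(0))^n=C\theta^n\). Alternatively, the shift identity \(H_n^e(f(s))=F_{n+1}^e(s)\) could give comparable bounds directly.
\item In the no-immigration branch, \(d_e\equiv1\), and the difference is handled by the same Koenigs/Lipschitz comparison. One writes \(u_e^{(n)}(l_e(0))-q=O(\theta^n)\) uniformly, so both terms are within \(O(\theta^n)\) of \(F_0(q)\).

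The degenerate case \(l_e(0)=1\) needs a separate check.
\end{itemize}
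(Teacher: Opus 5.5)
Your proposal is correct and follows essentially the paper's route: in the immigration branches each atom is bounded separately by \(O(\theta^n)\), with your direct product estimates reproducing the fixed-argument profiles of Proposition~\ref{prop:ad-super-fixed-profile}, and in the no-immigration branch the two iterates are compared at the common fixed point \(q\) via Koenigs and the smoothness of \(F_0\). In the progenitor no-immigration step, take \(F_0\) Lipschitz on \([0,b]\) for some \(b<1\) rather than on \([0,q]\), since \(l_e(0)\) may exceed \(q\); and the degenerate case \(l_e(0)=1\) you leave open cannot occur, because \(l_e'(1)=\tau>0\) forces \(l_e(0)<1\).
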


\begin{proof}
In either branch with immigration,
\[
\begin{aligned}
0\leq F_n^U(0)-F_n^L(0)
&\leq F_n^U(0)+F_n^L(0)=O(\theta^n),\\
0\leq H_n^U(0)-H_n^L(0)
&\leq d_U(0)F_n^U(l_U(0))+d_L(0)F_n^L(l_L(0))
=O(\theta^n).
\end{aligned}
\]
Suppose instead that
\(q_L=q_U=:q>0\) and \(R_L\equiv R_U\equiv1\).  Then
\(d_L\equiv d_U\equiv1\), \(u_L'(q)=u_U'(q)=\theta\), and, for
\(e\in\{L,U\}\),
\(u_e^{(n)}(s)=q+\theta^nK_{u_e,q}(s)+o(\theta^n)\).  Since \(F_0\) is
differentiable at \(q\),
\[
\begin{aligned}
F_n^U(0)-F_n^L(0)
&=F_0(u_U^{(n)}(0))-F_0(u_L^{(n)}(0))=O(\theta^n),\\
H_n^U(0)-H_n^L(0)
&=F_0(u_U^{(n)}(l_U(0)))-F_0(u_L^{(n)}(l_L(0)))=O(\theta^n).
\end{aligned}
\]
\end{proof}

\begin{lemma}[Critical zero-atom gaps]
\label{lem:ad-critical-zero-gap}
Under \textup{(AD0)} and \textup{(AD-C)},
\begin{equation}\label{eq:ad-critical-zero-gap}
        F_n^U(0)-F_n^L(0)=O(n^{-\sigma}),
        \qquad H_n^U(0)-H_n^L(0)=O(n^{-\sigma}).
\end{equation}
\end{lemma}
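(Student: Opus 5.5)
The bound follows by applying the fixed-argument critical profile of Proposition~\ref{prop:ad-critical-profile} to each envelope separately; no cancellation between the lower and upper zero atoms is needed. Fix \(e\in\{L,U\}\). Under \textup{(AD-C)}, each pair \((u_e,R_e)\) satisfies \eqref{eq:ad-critical-u}--\eqref{eq:ad-critical-R} with constants \(\gamma_e,\beta_e\), and the index is \(\sigma_e=\beta_e/\gamma_e=\sigma\). Proposition~\ref{prop:ad-critical-profile} applies to \(\mathsf F_n=F_n^e\), since \(F_0^e=F_0\) is a p.g.f. with \(F_0(1)=1\). It gives \(n^\sigma F_n^e(s)\to\mathsf U_e(s)\) locally uniformly on \([0,1)\). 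In particular, at \(s=0\),
\[
0\le F_n^e(0)\le C n^{-\sigma}.
\]
Then
\[
0\le F_n^U(0)-F_n^L(0)\le F_n^U(0)+F_n^L(0)=O(n^{-\sigma}).
\]
The left inequality is \eqref{eq:ad-F-sandwich} at \(s=0\) from Proposition~\ref{prop:ad-FH-calculus}. If a sign is unavailable, the absolute value is bounded in the same way.

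For the progenitor atoms, use \(H_n^e(0)=d_e(0)F_n^e(l_e(0))\). Since \(l_e\) is a p.g.f. with \(l_e'(1)=\tau>0\), it is not identically one. Hence \(l_e(0)\in[0,1)\) is a fixed point of \([0,1)\), and the singleton \(\{l_e(0)\}\) is compact in \([0,1)\). Local uniform convergence at that point gives \(n^\sigma F_n^e(l_e(0))\to\mathsf U_e(l_e(0))<\infty\). Together with \(d_e(0)\le 1\), this yields \(H_n^e(0)=O(n^{-\sigma})\). The same triangle bound then gives \(|H_n^U(0)-H_n^L(0)|\le H_n^U(0)+H_n^L(0)=O(n^{-\sigma})\).

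The only point needing care is that Proposition~\ref{prop:ad-critical-profile} gives convergence up to and including \(s=0\). Its proof extends to \(s=0\) through the functional equation and uses \(u_e(0)>0\). I would check that this applies to each envelope: \(u_e=l_e\circ f\) is critical with \(\gamma_e>0\), so it is nondegenerate and must put positive mass at zero, i.e. \(u_e(0)>0\). This is the one verification I expect to matter. The rest is the triangle inequality, and matching the critical indices \(\sigma_L=\sigma_U\) is what gives both envelopes the common rate \(n^{-\sigma}\).
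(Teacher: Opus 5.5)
Your proof is correct and matches the paper's argument: bound each atom $F_n^e(0)$ and $H_n^e(0)=d_e(0)F_n^e(l_e(0))$ separately using the fixed-argument profile \eqref{eq:ad-critical-fixed-profile}, where $l_e(0)<1$ because $l_e'(1)=\tau>0$, and then apply $0\le F_n^U(0)-F_n^L(0)\le F_n^U(0)+F_n^L(0)$ and its progenitor analogue. Your check that $u_e(0)>0$, which is what lets the profile extend to $s=0$, is correct and makes explicit a point the paper's proof uses without stating it.
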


\begin{proof}
By \eqref{eq:ad-critical-fixed-profile},
\(F_n^e(0)=O(n^{-\sigma})\) for \(e\in\{L,U\}\).  Since
\(l_e'(1)=\tau>0\), \(l_e(0)<1\), and another application of
\eqref{eq:ad-critical-fixed-profile} gives
\(H_n^e(0)=d_e(0)F_n^e(l_e(0))=O(n^{-\sigma})\).  Hence
\[
\begin{aligned}
0\leq F_n^U(0)-F_n^L(0)
&\leq F_n^U(0)+F_n^L(0)=O(n^{-\sigma}),\\
0\leq H_n^U(0)-H_n^L(0)
&\leq H_n^U(0)+H_n^L(0)=O(n^{-\sigma}).
\end{aligned}
\]
\end{proof}

\begingroup
\theoremstyle{plain}
\newtheorem*{adfiniteextraction}{Lemma}
\begin{adfiniteextraction}[Finite-state extraction]
Let \(Y_n\) be nonnegative integer-valued random variables and \(a_n>0\).
Put \(P_n(z):=\Ebf[z^{Y_n};Y_n>0]\).  Suppose that, for some
\(0<u<v<1\) and \(c,C>0\),
\[
        P_n(u)\geq ca_n,
        \qquad P_n(v)\leq Ca_n
\]
for all sufficiently large \(n\).  Then, for every fixed \(s\in(0,1)\),
there is \(c_s>0\) such that \(P_n(s)\geq c_sa_n\) for all sufficiently
large \(n\).
\end{adfiniteextraction}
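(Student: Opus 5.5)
The proof splits into two cases according to the position of \(s\) relative to \(u\). The lower bound at \(u\) combined with the upper bound at \(v>u\) forces a fixed fraction of the mass of \(P_n(u)\) to sit on a bounded window \(\{1,\ldots,K\}\) of positive states. Once that is known, every smaller argument \(s\) sees this window at a cost that is bounded below independently of \(n\).

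First, if \(s\geq u\), monotonicity of \(z\mapsto z^{Y_n}\) on \(\{Y_n>0\}\) gives \(P_n(s)\geq P_n(u)\geq ca_n\), so \(c_s=c\) works.

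Now let \(0<s<u\), and fix an integer \(K\geq1\) to be chosen. Split
\[
P_n(u)=\Ebf[u^{Y_n};0<Y_n\leq K]+\Ebf[u^{Y_n};Y_n>K].
\]
On \(\{Y_n>K\}\), write \(u^{Y_n}=(u/v)^{Y_n}v^{Y_n}\leq(u/v)^Kv^{Y_n}\). Hence
\[
\Ebf[u^{Y_n};Y_n>K]\leq(u/v)^KP_n(v)\leq C(u/v)^Ka_n
\]
for large \(n\). Since \(u/v<1\), we can fix \(K\) so large that \(C(u/v)^K\leq c/2\). This \(K\) depends only on \(u,v,c,C\), not on \(n\). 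It follows that
\[
\Ebf[u^{Y_n};0<Y_n\leq K]\geq \tfrac c2a_n
\]
for all sufficiently large \(n\).

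Finally, for \(1\leq k\leq K\) we have \(s^k=(s/u)^ku^k\geq(s/u)^Ku^k\), because \(s/u<1\). Therefore
\[
P_n(s)\geq\Ebf[s^{Y_n};0<Y_n\leq K]\geq(s/u)^K\,\Ebf[u^{Y_n};0<Y_n\leq K]\geq \tfrac c2(s/u)^Ka_n,
\]
which gives the claim with \(c_s:=\tfrac c2(s/u)^K>0\).

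The only step needing care is the choice of \(K\) uniformly in \(n\). This is exactly where the upper bound at the larger argument \(v\) is used: it prevents the mass carried by \(P_n(u)\) from escaping to arbitrarily large states as \(n\) grows. The remaining steps are elementary monotonicity comparisons.
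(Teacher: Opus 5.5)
Your proof is correct and follows the paper's argument: choose \(K\) with \(C(u/v)^K\leq c/2\), so that at least half of the lower bound for \(P_n(u)\) is carried by \(\{1\leq Y_n\leq K\}\), and then compare \(s^j\) with \(u^j\) on that bounded window. Your case split \(s\geq u\) versus \(s<u\) only makes explicit what the paper packages into the single factor \(\min_{1\leq j\leq K}(s/u)^j\).
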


\begin{proof}
For an integer \(K\geq1\),
\[
\Ebf[u^{Y_n};Y_n>K]
\leq\left(\frac uv\right)^K P_n(v).
\]
Choose \(K\) so that \(C(u/v)^K\leq c/2\).  Then
\[
\Ebf[u^{Y_n};1\leq Y_n\leq K]\geq\frac c2a_n.
\]
Consequently, for every fixed \(s\in(0,1)\),
\[
\begin{aligned}
P_n(s)
&\geq\min_{1\leq j\leq K}\left(\frac su\right)^j
\Ebf[u^{Y_n};1\leq Y_n\leq K]\\
&\geq c_sa_n.
\end{aligned}
\]
\end{proof}
\endgroup

\begin{remark}[One-sided envelope conclusions]
\label{rem:ad-one-sided}
The common-rate assumptions yield one two-sided normalization.  A one-sided
harmonic-moment or ratio bound follows when the corresponding envelope
comparison also controls the positive transform after subtraction of the
zero atom.  An ordering of the uncorrected p.g.f.s alone yields no one-sided
conclusion for the positive transforms, since that ordering need not be
preserved after the zero atoms are subtracted.  If the lower and upper
envelope parameters differ, the two positive-transform comparisons yield
their respective scales, and no common normalization follows.
\end{remark}

\subsection{Ratio kernels and completion of the main AD theorems}
\label{subsec:ad-ratio-transfer}

For \(k\geq1\) and \(\eps>0\), define
\begin{align}
\psi_\xi(k;\eps)
&:=\Pbf\left(\left|k^{-1}\sum_{j=1}^k\xi_j-m\right|>\eps\right),
\\
\psi_{\mathrm{ad}}(k;\eps)
&:=\Pbf\left(\left|\phi(k)/k-\tau\right|>\eps\right).
\end{align}
For the total ratio, define the complete population-conditioned kernel
\[
\psi_{\mathrm{comb}}(k;\eps)
:=\Pbf\left(
\left|\sum_{j=1}^{\phi(k)}\xi_j-m\tau k\right|>\eps k
\right).
\]
Recall that
\[
\begin{aligned}
J_{n,1}(\eps)
&=\Pbf\left(N_n>0,\left|X_{n+1}/N_n-m\right|>\eps\right),\\
J_{n,2}(\eps)
&=\Pbf\left(X_n>0,\left|N_n/X_n-\tau\right|>\eps\right),\\
J_{n,3}(\eps)
&=\Pbf\left(X_n>0,\left|X_{n+1}/X_n-m\tau\right|>\eps\right).
\end{aligned}
\]

\begin{proposition}[AD ratio identities]
\label{prop:ad-ratio-identities}
For every \(n\) and \(\eps>0\),
\begin{align}
J_{n,1}(\eps)&=\Ebf[\psi_\xi(N_n;\eps);N_n>0],
\label{eq:ad-ratio-offspring-identity}\\
J_{n,2}(\eps)&=\Ebf[\psi_{\mathrm{ad}}(X_n;\eps);X_n>0].
\label{eq:ad-ratio-control-identity}
\end{align}
For the total ratio,
\[
        J_{n,3}(\eps)
        =\Ebf[\psi_{\mathrm{comb}}(X_n;\eps);X_n>0].
\]
On \(\{X_n>0,N_n>0\}\),
\begin{equation}\label{eq:ad-X-over-X-decomposition}
        \frac{X_{n+1}}{X_n}-m\tau
        =\left(\frac{X_{n+1}}{N_n}-m\right)\frac{N_n}{X_n}
         +m\left(\frac{N_n}{X_n}-\tau\right).
\end{equation}
\end{proposition}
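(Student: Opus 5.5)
The plan is to condition on the current generation and use the generation-\(n\) independence structure: the control \(\phi_n\) and the offspring row \((\xi_{n,j})_{j\geq1}\) are independent of \(\cF_n\) (of \(X_n\)), and they are independent of each other. Each identity then follows by evaluating the conditional probability of the bad event given \(X_n=k\) or given \(N_n=\ell\), and averaging over the relevant denominator.

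For \eqref{eq:ad-ratio-control-identity}, fix \(k\geq1\). On \(\{X_n=k\}\) we have \(N_n=\phi_n(k)\), and \(\phi_n\) is independent of \(X_n\) with the law of \(\phi\). Hence \(\Pbf(|N_n/X_n-\tau|>\eps\mid X_n=k)=\psi_{\mathrm{ad}}(k;\eps)\). Summing against \(\Pbf(X_n=k)\) over \(k\geq1\) gives the identity.

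The total ratio is handled the same way. On \(\{X_n=k\}\) the event \(\{|X_{n+1}/X_n-m\tau|>\eps\}\) equals \(\{|\sum_{j=1}^{\phi_n(k)}\xi_{n,j}-m\tau k|>\eps k\}\). The pair \((\phi_n(k),(\xi_{n,j})_j)\) is independent of \(X_n\) and has the law of \((\phi(k),(\xi_j)_j)\), so the conditional probability is exactly \(\psi_{\mathrm{comb}}(k;\eps)\).

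For \eqref{eq:ad-ratio-offspring-identity}, the step that needs care is the conditioning on \(N_n=\ell\). The variable \(N_n=\phi_n(X_n)\) is measurable with respect to \(\sigma(\cF_n,\phi_n)\). The offspring row \((\xi_{n,j})_j\) is independent of this \(\sigma\)-field, because the controls are independent of the offspring array and \(\cF_n\) involves only earlier rows. So conditionally on \(N_n=\ell\geq1\), \(X_{n+1}/N_n=\ell^{-1}\sum_{j=1}^\ell\xi_{n,j}\) has the law of the empirical mean of \(\ell\) i.i.d. copies of \(\xi\). Therefore \(\Pbf(|X_{n+1}/N_n-m|>\eps\mid N_n=\ell)=\psi_\xi(\ell;\eps)\). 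Summing over \(\ell\geq1\) against the law of \(N_n\) gives \(\Ebf[\psi_\xi(N_n;\eps);N_n>0]\).

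Finally, \eqref{eq:ad-X-over-X-decomposition} is pure algebra on \(\{X_n>0,N_n>0\}\). Expanding the right-hand side,
\[
\Bigl(\frac{X_{n+1}}{N_n}-m\Bigr)\frac{N_n}{X_n}+m\Bigl(\frac{N_n}{X_n}-\tau\Bigr)=\frac{X_{n+1}}{X_n}-m\frac{N_n}{X_n}+m\frac{N_n}{X_n}-m\tau,
\]
which equals \(X_{n+1}/X_n-m\tau\). This is the deterministic-slope analogue of \eqref{eq:controlled-ratio-product-decomposition}.

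The only step needing attention is the offspring identity, where we must justify that the random denominator \(N_n\) is independent of the generation-\(n\) offspring row. Everything else is direct conditioning, as in Proposition~\ref{prop:controlled-ratio-identities}.
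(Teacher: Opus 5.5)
Your proposal is correct and follows essentially the paper's own proof. The paper also conditions on \(X_n=k\) or \(N_n=\ell\), identifies each conditional probability with the corresponding kernel, takes expectations, and checks the decomposition algebraically. Your justification that the generation-\(n\) offspring row is independent of \(\sigma(\cF_n,\phi_n)\) makes explicit the step the paper leaves implicit.
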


\begin{proof}
For every \(k\geq1\),
\[
\begin{aligned}
\Pbf\left(\left|X_{n+1}/N_n-m\right|>\eps\mid N_n=k\right)
&=\psi_\xi(k;\eps),\\
\Pbf\left(\left|N_n/X_n-\tau\right|>\eps\mid X_n=k\right)
&=\psi_{\mathrm{ad}}(k;\eps),\\
\Pbf\left(\left|X_{n+1}/X_n-m\tau\right|>\eps\mid X_n=k\right)
&=\psi_{\mathrm{comb}}(k;\eps).
\end{aligned}
\]
Taking expectations proves \eqref{eq:ad-ratio-offspring-identity}--
\eqref{eq:ad-ratio-control-identity} and the identity for \(J_{n,3}\);
\eqref{eq:ad-X-over-X-decomposition} is algebraic.
\end{proof}

\subsubsection{Exponential transfer}

We work under condition \textup{(AD-E)} of
Subsection~\ref{subsec:main-ad}.

\begin{lemma}[Exponential one-step kernels]
\label{lem:ad-exp-kernels}
For every \(\eps>0\), there are \(C_\eps<\infty\) and
\(s_{\xi,\eps},s_{\phi,\eps}\in(0,1)\) such that
\begin{equation}\label{eq:ad-exp-kernels}
        \psi_\xi(k;\eps)\leq C_\eps s_{\xi,\eps}^k,
        \qquad
        \psi_{\mathrm{ad}}(k;\eps)\leq C_\eps s_{\phi,\eps}^k.
\end{equation}
\end{lemma}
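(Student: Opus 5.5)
The offspring kernel follows from the classical Cramér–Chernoff bound, and the control kernel from the Gärtner–Ellis-type hypothesis in \textup{(AD-E)}, made non-asymptotic by an exponential Chebyshev inequality with finite-\(k\) corrections absorbed into the constant.

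\emph{Offspring kernel.} Since \(\xi\) has a moment generating function near zero, the function \(\Lambda_\xi(\eta):=\log\Ebf\e^{\eta\xi}\) is finite and smooth on an interval around zero, with \(\Lambda_\xi'(0)=m\). The exponential Chebyshev inequality gives, for small \(\eta>0\),
\[
\Pbf\Bigl(\sum_{j=1}^k\xi_j>(m+\eps)k\Bigr)\leq\exp\{-k[\eta(m+\eps)-\Lambda_\xi(\eta)]\}.
\]
Because \(\Lambda_\xi(\eta)=m\eta+O(\eta^2)\), the bracket is strictly positive for \(\eta\) small. The lower tail is handled symmetrically with \(\eta<0\). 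Together these give \(\psi_\xi(k;\eps)\leq2s_{\xi,\eps}^k\) for every \(k\geq1\), with \(s_{\xi,\eps}<1\).

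\emph{Control kernel.} Write \(\Lambda_{\phi,k}(\eta):=k^{-1}\log\Ebf\e^{\eta\phi(k)}\). This tends to \(\Lambda_\phi(\eta)\) on an open interval \(I\ni0\), and \(\Lambda_\phi\) is differentiable with \(\Lambda_\phi'(0)=\tau\).

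\begin{enumerate}[label=\textup{(\roman*)}]
\item Since \(\Lambda_\phi(0)=0\) and \(\Lambda_\phi'(0)=\tau\), choose a small \(\eta_+>0\) in \(I\) with \(\eta_+(\tau+\eps)-\Lambda_\phi(\eta_+)=:2c_+>0\). This is possible because the expression equals \(\eta_+\eps+o(\eta_+)\). Choose \(\eta_-<0\) analogously for the lower tail.
\item By convergence at these two fixed points, there is \(k_0\) with \(\Lambda_{\phi,k}(\eta_\pm)\leq\Lambda_\phi(\eta_\pm)+c_\pm\) for all \(k\geq k_0\).
\item The exponential Chebyshev inequality then gives
\[
\Pbf\bigl(\phi(k)>(\tau+\eps)k\bigr)\leq\e^{-kc_+},\qquad k\geq k_0,
\]
and the same for the lower tail with \(c_-\).
\item For \(k<k_0\), bound the kernel by one and enlarge \(C_\eps\) to \(\max\{2,\e^{k_0c}\}\), where \(c=\min\{c_+,c_-\}\).
\end{enumerate}
This gives \(\psi_{\mathrm{ad}}(k;\eps)\leq C_\eps s_{\phi,\eps}^k\) with \(s_{\phi,\eps}=\e^{-c}\).

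The rate-function hypothesis \(\inf_{|x-\tau|\geq\eps}\Lambda_\phi^*(x)>0\) is not actually needed here: the local expansion at zero already yields positive exponents.

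The only delicate point is step (ii), namely that pointwise convergence of \(\Lambda_{\phi,k}\) is used at only two fixed arguments. No uniformity in \(\eta\) is required, and finite \(k\) is absorbed into the constant.
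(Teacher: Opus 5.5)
Your proof is correct. For the offspring kernel it is the same Chernoff argument the paper uses; for the control kernel you take a more elementary route.

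The paper sets \(I_{\phi,\eps}:=\inf_{|x-\tau|\geq\eps}\Lambda_\phi^*(x)\) and invokes the G\"artner--Ellis upper bound to get \(\limsup_k k^{-1}\log\psi_{\mathrm{ad}}(k;\eps)\leq-I_{\phi,\eps}\). It then takes any \(c_{\phi,\eps}\in(0,I_{\phi,\eps})\) and absorbs the finitely many small \(k\) into the constant.

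You instead carry out the one-dimensional Chernoff computation directly at two fixed tilts \(\eta_\pm\). This needs only convergence of \(k^{-1}\log\Ebf\e^{\eta\phi(k)}\) at those two points; in fact a \(\limsup\) bound there would suffice. It makes no appeal to exponential tightness or to the closed-set G\"artner--Ellis theorem.

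Your remark that the rate-function hypothesis is redundant is also right. For \(x\geq\tau+\eps\) one has \(\Lambda_\phi^*(x)\geq\eta_+x-\Lambda_\phi(\eta_+)\geq\eta_+(\tau+\eps)-\Lambda_\phi(\eta_+)=2c_+\), and symmetrically \(\Lambda_\phi^*(x)\geq2c_-\) for \(x\leq\tau-\eps\). So differentiability at zero with \(\Lambda_\phi'(0)=\tau\) already forces \(\inf_{|x-\tau|\geq\eps}\Lambda_\phi^*(x)>0\).

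What the paper's formulation buys is an exponent arbitrarily close to the rate \(I_{\phi,\eps}\). Your small-tilt choice gives only some positive exponent, which is all the lemma asserts. Moving \(\eta_\pm\) toward the optimizing tilts would push your exponent toward the same rate, since on half-lines the one-dimensional G\"artner--Ellis upper bound is exactly this Chernoff estimate.
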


\begin{proof}
Let \(\Lambda_\xi(\eta):=\log\Ebf\e^{\eta\xi}\).  Chernoff's inequality
gives
\[
\begin{aligned}
\psi_\xi(k;\eps)
&\leq
\exp\{-k\sup_{\eta>0}[\eta(m+\eps)-\Lambda_\xi(\eta)]\}\\
&\quad+
\exp\{-k\sup_{\eta<0}[\eta(m-\eps)-\Lambda_\xi(\eta)]\}
\leq2\e^{-c_{\xi,\eps}k}.
\end{aligned}
\]
Set \(I_{\phi,\eps}:=\inf_{|x-\tau|\geq\eps}\Lambda_\phi^*(x)>0\).
The G\"artner--Ellis upper bound gives
\[
        \limsup_{k\to\infty}k^{-1}\log\psi_{\mathrm{ad}}(k;\eps)
        \leq-I_{\phi,\eps}.
\]
Choose a finite \(c_{\phi,\eps}\in(0,I_{\phi,\eps})\), taking
\(c_{\phi,\eps}=I_{\phi,\eps}/2\) when \(I_{\phi,\eps}<\infty\).
Then \(\psi_{\mathrm{ad}}(k;\eps)\leq\e^{-c_{\phi,\eps}k}\) for all
sufficiently large \(k\); enlarge the multiplicative constant for the
remaining values.
\end{proof}

\begin{proposition}[Generating-function ratio transfer]
\label{prop:ad-exp-ratio-transfer}
For every \(\eps>0\), there are arguments in \((0,1)\) such that
\begin{align}
J_{n,1}(\eps)&\leq C_\eps\{H_n(s_{1,\eps})-H_n(0)\},
\label{eq:ad-exp-ratio-H}\\
J_{n,2}(\eps)&\leq C_\eps\{F_n(s_{2,\eps})-F_n(0)\}.
\label{eq:ad-exp-ratio-F}
\end{align}
For suitable \(\eps_1,\eps_2>0\),
\begin{equation}\label{eq:ad-exp-ratio-X}
J_{n,3}(\eps)
\leq C_\eps\{H_n(s_{1,\eps_1})-H_n(0)
+F_n(s_{2,\eps_2})-F_n(0)\}.
\end{equation}
\end{proposition}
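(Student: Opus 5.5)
The plan is to combine the ratio identities of Proposition~\ref{prop:ad-ratio-identities} with the exponential kernels of Lemma~\ref{lem:ad-exp-kernels}, and then apply the geometric half of Lemma~\ref{lem:controlled-kernel-transfer}\textup{(i)}. For \(J_{n,1}\), the identity \(J_{n,1}(\eps)=\Ebf[\psi_\xi(N_n;\eps);N_n>0]\) and the bound \(\psi_\xi(k;\eps)\leq C_\eps s_{\xi,\eps}^k\) give \(J_{n,1}(\eps)\leq C_\eps\Ebf[s_{\xi,\eps}^{N_n};N_n>0]\). Since \(H_n(s)=\Ebf[s^{N_n}]\), the right-hand side equals \(C_\eps\{H_n(s_{\xi,\eps})-H_n(0)\}\). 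So we take \(s_{1,\eps}:=s_{\xi,\eps}\). The argument for \(J_{n,2}\) is identical: use \(\psi_{\mathrm{ad}}\), \(s_{2,\eps}:=s_{\phi,\eps}\), and \(F_n(s)-F_n(0)=\Ebf[s^{X_n};X_n>0]\).

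For the total ratio, I would not try to build a separate geometric bound for \(\psi_{\mathrm{comb}}\). Instead I would use the algebraic decomposition \eqref{eq:ad-X-over-X-decomposition}. Choose \(\eps_2:=\eps/(2m)\) and a small \(\eps_1>0\) satisfying \(\eps_1(\tau+\eps_2)\leq\eps/2\). On \(\{X_n>0,N_n>0\}\), the event \(|X_{n+1}/X_n-m\tau|>\eps\) then forces one of two things. Either \(|N_n/X_n-\tau|>\eps_2\), or \(N_n/X_n\leq\tau+\eps_2\) together with \(|X_{n+1}/N_n-m|>\eps_1\).

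The event \(\{X_n>0,N_n=0\}\) has to be handled separately. There \(X_{n+1}=0\), so the total ratio deviates by \(m\tau>\eps\) as soon as \(\eps<m\tau\). This event is, however, contained in \(\{|N_n/X_n-\tau|>\eps_2\}\), because on it \(N_n/X_n=0\) and \(\eps_2<\tau\) once \(\eps\) is small. For larger \(\eps\) the monotonicity of the events in \(\eps\) lets one reduce to a smaller tolerance, or one simply caps \(\eps_2\) at \(\tau/2\).

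A union bound then gives \(J_{n,3}(\eps)\leq J_{n,1}(\eps_1)+J_{n,2}(\eps_2)\). Applying the first two inequalities at these tolerances yields \eqref{eq:ad-exp-ratio-X}.

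The one delicate point is the bookkeeping in this decomposition: the zero-progenitor event, and keeping \(N_n/X_n\) bounded so that the product term is controlled. I expect it to be routine once \(\eps_1,\eps_2\) are fixed as above. All constants depend only on \(\eps\) through Lemma~\ref{lem:ad-exp-kernels}, and the arguments lie in \((0,1)\) by that lemma.
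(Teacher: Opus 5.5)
Your proposal is correct and follows essentially the same route as the paper. The first two bounds come from the identities of Proposition~\ref{prop:ad-ratio-identities} combined with the geometric kernels. For \(J_{n,3}\) the paper likewise uses \eqref{eq:ad-X-over-X-decomposition} with \(0<\eps_2<\tau\) and \(\eps_1(\tau+\eps_2)+m\eps_2\leq\eps\), noting that \(N_n=0\) forces \(|N_n/X_n-\tau|>\eps_2\), to obtain \(J_{n,3}(\eps)\leq J_{n,1}(\eps_1)+J_{n,2}(\eps_2)\). Your capped choice \(\eps_2=\min\{\eps/(2m),\tau/2\}\) with \(\eps_1(\tau+\eps_2)\leq\eps/2\) satisfies exactly this condition, so the appeal to monotonicity in \(\eps\) is unnecessary.
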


\begin{proof}
By Proposition~\ref{prop:ad-ratio-identities} and
Lemma~\ref{lem:ad-exp-kernels},
\[
\begin{aligned}
J_{n,1}(\eps)
&\leq C_\eps\Ebf[s_{1,\eps}^{N_n};N_n>0]
=C_\eps\{H_n(s_{1,\eps})-H_n(0)\},\\
J_{n,2}(\eps)
&\leq C_\eps\Ebf[s_{2,\eps}^{X_n};X_n>0]
=C_\eps\{F_n(s_{2,\eps})-F_n(0)\}.
\end{aligned}
\]
Choose \(0<\eps_2<\tau\) and \(\eps_1>0\) such that
\begin{equation}\label{eq:ad-ratio-tolerance-choice}
        \eps_1(\tau+\eps_2)+m\eps_2\leq\eps.
\end{equation}
Then
\[
\begin{aligned}
&\left\{X_n>0,\ \left|X_{n+1}/X_n-m\tau\right|>\eps\right\}\\
&\quad\subseteq
\left\{N_n>0,\ \left|X_{n+1}/N_n-m\right|>\eps_1\right\}
\cup
\left\{X_n>0,\ \left|N_n/X_n-\tau\right|>\eps_2\right\}.
\end{aligned}
\]
Indeed, \(N_n=0\) forces the second event, while
\eqref{eq:ad-X-over-X-decomposition} proves the inclusion on
\(\{N_n>0\}\).  Thus
\(J_{n,3}(\eps)\leq J_{n,1}(\eps_1)+J_{n,2}(\eps_2)\), proving
\eqref{eq:ad-exp-ratio-X}.
\end{proof}

The exact ratio identities in Proposition~\ref{prop:ad-ratio-identities} also
give the corresponding reverse transfers.  For \(c>0\) and \(s\in(0,1)\),
if
\[
        \psi_\xi(k;\eps)\geq cs^k,
        \qquad k\geq1,
\]
then
\[
        J_{n,1}(\eps)\geq c\{H_n(s)-H_n(0)\}.
\]
Similarly,
\[
        \psi_{\mathrm{ad}}(k;\eps)\geq cs^k,
        \qquad k\geq1,
\]
implies
\[
        J_{n,2}(\eps)\geq c\{F_n(s)-F_n(0)\},
\]
while
\[
        \psi_{\mathrm{comb}}(k;\eps)\geq cs^k,
        \qquad k\geq1,
\]
implies
\[
        J_{n,3}(\eps)\geq c\{F_n(s)-F_n(0)\}.
\]
These lower bounds follow by averaging the corresponding pointwise kernel
bounds in the exact identities of Proposition~\ref{prop:ad-ratio-identities}.
In particular, the lower bound for \(J_{n,3}\) uses the identity involving
\(\psi_{\mathrm{comb}}\), not a reversal of the event inclusion used to prove
\eqref{eq:ad-exp-ratio-X}.

\subsubsection{Polynomial transfer}

Let \(p\geq2\) and \(q:=p/2\), and assume
\eqref{eq:main-ad-poly-conditions}.

\begin{lemma}[Polynomial one-step kernels]
\label{lem:ad-poly-kernels}
For every fixed \(\eps>0\),
\begin{equation}\label{eq:ad-poly-kernels}
        \psi_\xi(k;\eps)+\psi_{\mathrm{ad}}(k;\eps)
        \leq C_\eps k^{-q},
        \qquad k\geq1.
\end{equation}
\end{lemma}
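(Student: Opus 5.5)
The plan is to treat the two kernels separately, using Markov's inequality at the exponent $p$ together with a moment bound of order $k^{p/2}$ for each centered sum. Throughout, $p\geq2$ and $q=p/2$.

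\emph{Offspring kernel.} Markov's inequality gives
\[
\psi_\xi(k;\eps)\leq\eps^{-p}k^{-p}\,\Ebf\Bigl|\sum_{j=1}^k(\xi_j-m)\Bigr|^p.
\]
Since $p\geq2$, Rosenthal's inequality bounds the $p$th moment on the right by
\[
C_p\bigl\{k\,\Ebf|\xi-m|^p+k^{p/2}(\Var\xi)^{p/2}\bigr\},
\]
which is finite by the first condition in \eqref{eq:main-ad-poly-conditions}; finiteness of $\Var\xi$ follows from Lyapunov's inequality. Because $k\leq k^{p/2}$ for $k\geq1$, the bound is at most $Ck^{p/2}$. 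Hence $\psi_\xi(k;\eps)\leq C\eps^{-p}k^{-q}$.

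\emph{Control kernel.} Write
\[
\psi_{\mathrm{ad}}(k;\eps)=\Pbf\bigl(|\phi(k)-\tau k|>\eps k\bigr).
\]
Markov's inequality at exponent $p$ gives
\[
\psi_{\mathrm{ad}}(k;\eps)\leq\eps^{-p}k^{-p}\,\Ebf|\phi(k)-\tau k|^p.
\]
The second condition in \eqref{eq:main-ad-poly-conditions} bounds the expectation by $Ck^{p/2}$, so $\psi_{\mathrm{ad}}(k;\eps)\leq C\eps^{-p}k^{-q}$.

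Adding the two bounds gives \eqref{eq:ad-poly-kernels} with $C_\eps=C\eps^{-p}$. There is no real obstacle here: the only point to check is that the control moment hypothesis is already stated at the scale $k^{p/2}$, which matches Rosenthal's bound for the offspring sum. Because both kernels are then of the same order $k^{-q}$, the polynomial transfer through $\Ebf[X_n^{-q};X_n>0]$ and $\Ebf[N_n^{-q};N_n>0]$ can proceed.
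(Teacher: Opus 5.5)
Your proposal is correct and follows the paper's argument: Markov's inequality at exponent $p$ for both kernels, Rosenthal's inequality to bound the centered offspring sum by $Ck^{p/2}$, and the control moment hypothesis in \eqref{eq:main-ad-poly-conditions} applied directly to $\Ebf|\phi(k)-\tau k|^p$. Your extra remarks on Lyapunov's inequality and on $k\leq k^{p/2}$ fill in details that the paper leaves implicit.
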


\begin{proof}
Rosenthal's inequality and then Markov's inequality give
\[
\begin{aligned}
\psi_\xi(k;\eps)
&\leq\eps^{-p}k^{-p}
\Ebf\left|\sum_{j=1}^k(\xi_j-m)\right|^p
\leq C_\eps k^{-p/2},\\
\psi_{\mathrm{ad}}(k;\eps)
&\leq\eps^{-p}k^{-p}\Ebf|\phi(k)-\tau k|^p
\leq C_\eps k^{-p/2}.
\end{aligned}
\]
Since \(q=p/2\), \eqref{eq:ad-poly-kernels} follows.
\end{proof}

\begin{proposition}[Harmonic-moment ratio transfer]
\label{prop:ad-poly-ratio-transfer}
For every fixed \(\eps>0\),
\begin{align}
J_{n,1}(\eps)&\leq C_\eps\Ebf[N_n^{-q};N_n>0],
\label{eq:ad-poly-ratio-N}\\
J_{n,2}(\eps)&\leq C_\eps\Ebf[X_n^{-q};X_n>0].
\label{eq:ad-poly-ratio-Xden}
\end{align}
For \(\eps_1,\eps_2\) satisfying
\eqref{eq:ad-ratio-tolerance-choice},
\begin{equation}\label{eq:ad-poly-ratio-total}
J_{n,3}(\eps)
\leq C_\eps\{\Ebf[X_n^{-q};X_n>0]
+\Ebf[N_n^{-q};N_n>0]\}.
\end{equation}
\end{proposition}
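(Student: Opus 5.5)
The plan is to average the one-step kernel bounds of Lemma~\ref{lem:ad-poly-kernels} against the relevant random denominators, using the exact identities of Proposition~\ref{prop:ad-ratio-identities}. This is the polynomial analogue of Proposition~\ref{prop:ad-exp-ratio-transfer}. The argument needs no envelope information, only the moment condition \eqref{eq:main-ad-poly-conditions}.

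\emph{Step 1: the two single-denominator bounds.} By \eqref{eq:ad-ratio-offspring-identity},
\[
J_{n,1}(\eps)=\Ebf[\psi_\xi(N_n;\eps);N_n>0].
\]
Lemma~\ref{lem:ad-poly-kernels} gives \(\psi_\xi(k;\eps)\leq C_\eps k^{-q}\) for every \(k\geq1\). Lemma~\ref{lem:controlled-kernel-transfer}\textup{(ii)}, applied with \(Y=N_n\), then yields \eqref{eq:ad-poly-ratio-N}. The same argument applied to \(\psi_{\mathrm{ad}}\) and \eqref{eq:ad-ratio-control-identity}, now with \(Y=X_n\), gives \eqref{eq:ad-poly-ratio-Xden}.

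The only point to check is the conditioning behind the identities. Given \(N_n=k\), the offspring row of generation \(n\) is independent of \(N_n\), because the controls are independent of the offspring array. Given \(X_n=k\), the control \(\phi_n\) is independent of \(X_n\). These facts are already contained in Proposition~\ref{prop:ad-ratio-identities}.

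\emph{Step 2: the total ratio.} Fix \(\eps_1,\eps_2\) satisfying \eqref{eq:ad-ratio-tolerance-choice}. Reuse the event inclusion from the proof of Proposition~\ref{prop:ad-exp-ratio-transfer}. On \(\{N_n=0,\,X_n>0\}\) one has \(N_n/X_n-\tau=-\tau\), so this event lies in the control-ratio failure event whenever \(\eps_2<\tau\). On \(\{N_n>0\}\), the decomposition \eqref{eq:ad-X-over-X-decomposition} together with the triangle inequality shows the following: if both \(|X_{n+1}/N_n-m|\leq\eps_1\) and \(|N_n/X_n-\tau|\leq\eps_2\), then
\[
|X_{n+1}/X_n-m\tau|\leq\eps_1(\tau+\eps_2)+m\eps_2\leq\eps.
\]
Hence \(J_{n,3}(\eps)\leq J_{n,1}(\eps_1)+J_{n,2}(\eps_2)\). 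Applying Step 1 at the tolerances \(\eps_1\) and \(\eps_2\), and absorbing the constants into \(C_\eps\), gives \eqref{eq:ad-poly-ratio-total}.

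\emph{Main obstacle.} No step is genuinely hard. The point needing the most care is Step 2. The total-ratio kernel \(\psi_{\mathrm{comb}}\) has no direct polynomial bound available in the AD setting, so it cannot simply be fed into Lemma~\ref{lem:controlled-kernel-transfer}. The inclusion argument is therefore the route, and it forces a second harmonic moment, \(\Ebf[N_n^{-q};N_n>0]\), to appear alongside \(\Ebf[X_n^{-q};X_n>0]\). One could alternatively bound \(\psi_{\mathrm{comb}}\) through Rosenthal's inequality applied to the centered offspring sum over \(\phi(k)\) summands plus the control remainder. That would require a moment bound on \(\phi(k)^{q}\), which also follows from \eqref{eq:main-ad-poly-conditions}, but the inclusion route matches the stated form of \eqref{eq:ad-poly-ratio-total} and is the one I would follow.
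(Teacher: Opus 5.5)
Your proposal is correct and follows the paper's proof. The identities of Proposition~\ref{prop:ad-ratio-identities} combined with the kernel bounds of Lemma~\ref{lem:ad-poly-kernels} give \eqref{eq:ad-poly-ratio-N} and \eqref{eq:ad-poly-ratio-Xden}, and the event inclusion from the proof of Proposition~\ref{prop:ad-exp-ratio-transfer} gives \(J_{n,3}(\eps)\leq J_{n,1}(\eps_1)+J_{n,2}(\eps_2)\). Your restriction \(\eps_2<\tau\) for the event \(\{N_n=0\}\) is the one the paper builds into its choice of tolerances; if instead \(\eps_2\geq\tau\), the tolerance inequality forces \(\eps\geq m\tau\), and that event then contributes nothing.
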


\begin{proof}
Proposition~\ref{prop:ad-ratio-identities} and
Lemma~\ref{lem:ad-poly-kernels} give
\[
J_{n,1}(\eps)\leq C_\eps\Ebf[N_n^{-q};N_n>0],
\qquad
J_{n,2}(\eps)\leq C_\eps\Ebf[X_n^{-q};X_n>0].
\]
The event inclusion used in the proof of
Proposition~\ref{prop:ad-exp-ratio-transfer} yields
\(J_{n,3}(\eps)\leq J_{n,1}(\eps_1)+J_{n,2}(\eps_2)\), proving
\eqref{eq:ad-poly-ratio-total}.
\end{proof}

The exact ratio identities in Proposition~\ref{prop:ad-ratio-identities} also
give the corresponding reverse polynomial transfers.  For \(c>0\), if
\[
        \psi_\xi(k;\eps)\geq ck^{-q},
        \qquad k\geq1,
\]
then
\[
        J_{n,1}(\eps)\geq c\Ebf[N_n^{-q};N_n>0].
\]
Similarly,
\[
        \psi_{\mathrm{ad}}(k;\eps)\geq ck^{-q},
        \qquad k\geq1,
\]
implies
\[
        J_{n,2}(\eps)\geq c\Ebf[X_n^{-q};X_n>0],
\]
while
\[
        \psi_{\mathrm{comb}}(k;\eps)\geq ck^{-q},
        \qquad k\geq1,
\]
implies
\[
        J_{n,3}(\eps)\geq c\Ebf[X_n^{-q};X_n>0].
\]
These inequalities follow by averaging the corresponding pointwise lower
bounds in the exact identities of Proposition~\ref{prop:ad-ratio-identities}.
In particular, the lower bound for \(J_{n,3}\) uses the identity involving
\(\psi_{\mathrm{comb}}\), not a reversal of the event inclusion used to prove
\eqref{eq:ad-poly-ratio-total}.

\begin{proof}[Proof of Theorem~\ref{thm:main-ad-supercritical}]
Let
\(G_n(t):=F_n(\e^{-t})-F_n(0)\),
\(G_n^e(t):=F_n^e(\e^{-t})-F_n^e(0)\), and
\(\Delta_n^F:=F_n^U(0)-F_n^L(0)=O(\theta^n)\).  Then
\begin{equation}\label{eq:ad-super-positive-sandwich}
        G_n^L(t)-\Delta_n^F\leq G_n(t)
        \leq G_n^U(t)+\Delta_n^F.
\end{equation}
Integer-valuedness gives \(G_n(t)\leq\e^{-(t-1)}G_n(1)\) for
\(t\geq1\).  Hence
\[
\begin{aligned}
\Gamma(r)\Ebf[X_n^{-r};X_n>0]
&\leq\int_0^1\{G_n^U(t)+\Delta_n^F\}t^{r-1}\dd t\\
&\quad+\int_1^\infty\e^{-(t-1)}
\{G_n^U(1)+\Delta_n^F\}t^{r-1}\dd t\\
&\leq C_r\{\Ebf[(\mathsf Z_n^U)^{-r};\mathsf Z_n^U>0]+\theta^n\}
\leq C_r b_n^{\mathrm{sc}}(r).
\end{aligned}
\]

If \(\theta a^r<1\), the lower-envelope normalization gives, for some
\(0<c_1<c_2<\infty\),
\[
\Gamma(r)\Ebf[X_n^{-r};X_n>0]
\geq\int_{c_1a^{-n}}^{c_2a^{-n}}
\{G_n^L(t)-\Delta_n^F\}t^{r-1}\dd t
\geq c a^{-rn}.
\]
If \(\theta a^r=1\), then \(r=\kappa_{\mathrm{sc}}\).
By Lemma~\ref{lem:ad-super-window},
\[
        G_n^L(t)\geq c\theta^nt^{-r},
        \qquad a^{-n}\leq t\leq t_0,
\]
for all sufficiently large \(n\).  Hence
\[
\begin{aligned}
\Gamma(r)\Ebf[X_n^{-r};X_n>0]
&\geq\int_{a^{-n}}^{t_0}
\{c\theta^nt^{-r}-C\theta^n\}t^{r-1}\dd t\\
&\geq c'n\theta^n.
\end{aligned}
\]
If \(\theta a^r>1\), choose \(t_*\in(0,t_0)\) so that the positive
lower-envelope profile on \([t_*/2,t_*]\) exceeds twice the zero-atom
constant.  Then
\[
\Gamma(r)\Ebf[X_n^{-r};X_n>0]
\geq\int_{t_*/2}^{t_*}\{G_n^L(t)-\Delta_n^F\}t^{r-1}\dd t
\geq c\theta^n.
\]
This proves \eqref{eq:main-ad-super-harmonic} for \(X_n\).  With
\(\widehat G_n(t):=H_n(\e^{-t})-H_n(0)\), the sandwich
\(\widehat G_n^L(t)-\Delta_n^H\leq\widehat G_n(t)
\leq\widehat G_n^U(t)+\Delta_n^H\),
\(\Delta_n^H=O(\theta^n)\), and
Proposition~\ref{prop:ad-exact-control-harmonic} give the same three bounds
for \(N_n\).

Under \eqref{eq:main-ad-poly-conditions},
Proposition~\ref{prop:ad-poly-ratio-transfer} and the harmonic-moment bounds
just proved give
\[
        J_{n,j}(\eps)\leq C_\eps b_n^{\mathrm{sc}}(q),
        \qquad j=1,2,3,
\]
which proves \eqref{eq:main-ad-super-ratio-profile}.  Under
\textup{(AD-E)}, Proposition~\ref{prop:ad-exp-ratio-transfer} and the matched
positive-transform profiles give
\[
        J_{n,j}(\eps)=O(\theta^n),
        \qquad j=1,2,3.
\]
This completes the proof.
\end{proof}

We finally justify the reverse-transfer consequences stated after
Theorem~\ref{thm:main-ad-supercritical}.  For \(j=1,2,3\), let
\[
        \psi_j:=\psi_\xi,\ \psi_{\mathrm{ad}},\ \psi_{\mathrm{comb}},
\]
respectively, and let \(Y_n=N_n\) when \(j=1\), while \(Y_n=X_n\) when
\(j=2,3\).  If
\[
        \psi_j(k;\eps)\geq ck^{-q},
        \qquad k\geq1,
\]
then the reverse polynomial transfer following
Proposition~\ref{prop:ad-poly-ratio-transfer} and the lower harmonic-moment
bound in \eqref{eq:main-ad-super-harmonic} give
\[
\liminf_{n\to\infty}
\frac{J_{n,j}(\eps)}{b_n^{\mathrm{sc}}(q)}
\geq c\liminf_{n\to\infty}
\frac{\Ebf[Y_n^{-q};Y_n>0]}{b_n^{\mathrm{sc}}(q)}>0.
\]
Suppose instead that, for some \(s\in(0,1)\),
\[
        \psi_j(k;\eps)\geq cs^k,
        \qquad k\geq1.
\]
Put \(P_n(z):=\Ebf[z^{Y_n};Y_n>0]\).  The matched positive-transform
estimates and the lower-envelope profile, together with the zero-atom bound
in Lemma~\ref{lem:ad-super-zero-gap}, give \(0<u<v<1\), sufficiently close
to one, such that
\[
        P_n(u)\geq c_1\theta^n,
        \qquad P_n(v)\leq c_2\theta^n
\]
for all sufficiently large \(n\).  The finite-state extraction lemma in
Subsection~\ref{subsec:ad-controlled-transfer} therefore yields
\(P_n(s)\geq c_s\theta^n\).  Applying the reverse geometric transfer
following Proposition~\ref{prop:ad-exp-ratio-transfer} gives
\[
        \liminf_{n\to\infty}\theta^{-n}J_{n,j}(\eps)>0.
\]

\begin{proof}[Proof of Theorem~\ref{thm:main-ad-critical}]
Retain the preceding notation, now with
\(\Delta_n^F=O(n^{-\sigma})\).  Using integer-valuedness for \(t\geq1\),
\[
\begin{aligned}
\Gamma(r)\Ebf[X_n^{-r};X_n>0]
&\leq\int_0^1\{G_n^U(t)+\Delta_n^F\}t^{r-1}\dd t\\
&\quad+\int_1^\infty\e^{-(t-1)}
\{G_n^U(1)+\Delta_n^F\}t^{r-1}\dd t\\
&\leq C_r\{\Ebf[(\mathsf Z_n^U)^{-r};\mathsf Z_n^U>0]+n^{-\sigma}\}
\leq C_r b_n^{\mathrm{cr}}(r).
\end{aligned}
\]
If \(0<r<\sigma\), the exact moving profile gives
\[
\begin{aligned}
\Gamma(r)\Ebf[X_n^{-r};X_n>0]
&\geq\int_{1/n}^{2/n}\{G_n^L(t)-\Delta_n^F\}t^{r-1}\dd t\\
&=n^{-r}\int_1^2\{G_n^L(s/n)-\Delta_n^F\}s^{r-1}\dd s
\geq c n^{-r}.
\end{aligned}
\]
If \(r=\sigma\), let \(C_n:=n/\log n\).  Then
\[
\begin{aligned}
\Gamma(\sigma)\Ebf[X_n^{-\sigma};X_n>0]
&\geq n^{-\sigma}\int_1^{C_n}
\{G_n^L(s/n)-\Delta_n^F\}s^{\sigma-1}\dd s\\
&\geq c n^{-\sigma}\log n.
\end{aligned}
\]
If \(r>\sigma\), choose \(t_*>0\) so that
\(\inf_{t\in[t_*/2,t_*]}
\{\mathsf U_L(\e^{-t})-\mathsf U_L(0)\}\) exceeds twice the zero-atom
constant.  Then
\[
\Gamma(r)\Ebf[X_n^{-r};X_n>0]
\geq\int_{t_*/2}^{t_*}\{G_n^L(t)-\Delta_n^F\}t^{r-1}\dd t
\geq c n^{-\sigma}.
\]
This proves \eqref{eq:main-ad-critical-harmonic} for \(X_n\).  The
progenitor-transform sandwich, \(\Delta_n^H=O(n^{-\sigma})\), and
Proposition~\ref{prop:ad-exact-control-harmonic} give the same three
normalizations for \(N_n\).

Under \eqref{eq:main-ad-poly-conditions},
Proposition~\ref{prop:ad-poly-ratio-transfer} and the harmonic-moment bounds
just proved give
\[
        J_{n,j}(\eps)\leq C_\eps b_n^{\mathrm{cr}}(q),
        \qquad j=1,2,3,
\]
which proves \eqref{eq:main-ad-critical-ratio-profile}.  Under
\textup{(AD-E)}, Proposition~\ref{prop:ad-exp-ratio-transfer} and the matched
critical positive-transform bounds give
\[
        J_{n,j}(\eps)=O(n^{-\sigma}),
        \qquad j=1,2,3.
\]
This completes the proof.
\end{proof}

We finally justify the reverse-transfer consequences stated after
Theorem~\ref{thm:main-ad-critical}.  For \(j=1,2,3\), let
\[
        \psi_j:=\psi_\xi,\ \psi_{\mathrm{ad}},\ \psi_{\mathrm{comb}},
\]
respectively, and let \(Y_n=N_n\) when \(j=1\), while \(Y_n=X_n\) when
\(j=2,3\).  If
\[
        \psi_j(k;\eps)\geq ck^{-q},
        \qquad k\geq1,
\]
then the reverse polynomial transfer following
Proposition~\ref{prop:ad-poly-ratio-transfer} and the lower harmonic-moment
bound in \eqref{eq:main-ad-critical-harmonic} give
\[
\liminf_{n\to\infty}
\frac{J_{n,j}(\eps)}{b_n^{\mathrm{cr}}(q)}
\geq c\liminf_{n\to\infty}
\frac{\Ebf[Y_n^{-q};Y_n>0]}{b_n^{\mathrm{cr}}(q)}>0.
\]
Suppose instead that, for some \(s\in(0,1)\),
\[
        \psi_j(k;\eps)\geq cs^k,
        \qquad k\geq1.
\]
Put \(P_n(z):=\Ebf[z^{Y_n};Y_n>0]\).  The matched positive-transform
estimates and the lower-envelope profile, together with the zero-atom bound
in Lemma~\ref{lem:ad-critical-zero-gap}, give \(0<u<v<1\), sufficiently
close to one, such that
\[
        P_n(u)\geq c_1n^{-\sigma},
        \qquad P_n(v)\leq c_2n^{-\sigma}
\]
for all sufficiently large \(n\).  The finite-state extraction lemma in
Subsection~\ref{subsec:ad-controlled-transfer} therefore yields
\(P_n(s)\geq c_sn^{-\sigma}\).  Applying the reverse geometric transfer
following Proposition~\ref{prop:ad-exp-ratio-transfer} gives
\[
        \liminf_{n\to\infty}n^\sigma J_{n,j}(\eps)>0.
\]

For \(Y_n=X_n\) or \(Y_n=N_n\),
\[
\Ebf[Y_n^{-r}\mid Y_n>0]
=\frac{\Ebf[Y_n^{-r};Y_n>0]}{\Pbf(Y_n>0)}.
\]
Therefore, if \(\Pbf(Y_n>0)\to p_Y\in(0,1]\), conditioning on
\(\{Y_n>0\}\) changes only the limiting constants and not the
harmonic-moment scales in \eqref{eq:main-ad-critical-harmonic}.

\begin{remark}[Exact ratio constants]
Theorems~\ref{thm:main-ad-supercritical} and
\ref{thm:main-ad-critical} give two-sided normalized bounds for the
sandwiched process.  An exact ratio constant follows if, in addition,
\(k^q\psi(k;\eps)\to c_\eps\), the corresponding normalized harmonic moment
has an exact limit, and fixed finite states are negligible on that scale.
These additional conclusions are not assumed here.
\end{remark}
\section{Examples and discussion}
\label{sec:comparisons-examples}
\label{sec:discussion}

The random-slope and deterministic-slope theories use the same one-step
kernel transfer but different population profiles.  In the first case the
profiles are generated by the environmental product, small-state persistence,
and the finite-horizon-minimum ladder scale; in the second they are generated
by the auxiliary exact recursions and their supercritical or parabolic
small-value behavior.  The examples below identify the common boundary and
illustrate both exact and nonexact controls.

\paragraph{The common boundary.}
We call the envelopes \emph{coincident} when
\(d_L=d_U=:d\) and \(l_L=l_U=:l\); then
\(h_k(s)=d(s)l(s)^k\), and the original process has the exact transformed
recursion \(F_{n+1}(s)=R(s)F_n(u(s))\), with \(u=l\circ f\) and
\(R=d\circ f\).  When the envelopes are distinct, their parameters are
\emph{matched} in the supercritical case when the two auxiliary recursions
satisfy the same branch conditions in \textup{(AD-S)} and have the same
Schr\"oder rate \(\theta\); they are matched in the critical case when both
are critical and \(\beta_L/\gamma_L=\beta_U/\gamma_U\).  Matching determines
one common two-sided normalization, but it does not imply coincident envelopes
or an exact recursion.  If the parameters differ, the two positive-transform
comparisons yield their respective scales and no common normalization follows.

In the individual-sum model, let \(A_n\equiv a_0\in\N\), let \(\ell\) be
the p.g.f. of \(L\), and let \(b\) be the p.g.f. of \(B\).  Then
\[
 h_k(z)=b(z)\ell(z^{a_0})^k,\qquad
 u(s)=\ell(f(s)^{a_0}),\qquad R(s)=b(f(s)),\qquad
 u'(1)=m\lambda a_0.
\]
Thus the deterministic-multiplier model has coincident envelopes and is an
exact member of the asymptotically deterministic class.  For the classical
branching process with child-level immigration \(I\), one instead has
\(u=f\) and \(R=b\); in the critical case,
\(\gamma=f''(1)/2\), \(\beta=\Ebf I\), and
\(\sigma=2\Ebf I/f''(1)\), under the full critical hypotheses.  These
critical regimes are classical; see, for example, \cite{LiZhang2021}.  If
immigration is introduced at the progenitor level through \(\phi(k)=k+B\),
then \(R=b\circ f\).  Equal immigration means need not give the same
profile or constant, since \(b\) and \(b\circ f\) enter different
recursions.

\paragraph{Binomial emigration with immigration.}
Let \(\phi_n(k)=\sum_{j=1}^k\chi_{n,j}+B_n\), where
\(\Pbf(\chi_{n,j}=1)=p\), and assume that the control, offspring, and
immigration variables are independent.  If \(b\) is the p.g.f. of \(B_n\),
then
\[
 h_k(z)=b(z)(1-p+pz)^k,\qquad
 F_{n+1}(s)=b(f(s))F_n(1-p+pf(s)).
\]
The effective mean is \(a=pm\).  If \(pm=1\), then
\(\gamma=pf''(1)/2\), \(\beta=m\Ebf B\), and
\(\sigma=2m\Ebf B/\{pf''(1)\}\).  Under the full conditions in
\textup{(AD-C)}, the population and progenitor harmonic moments have the
critical scales \(n^{-\sigma}\), \(n^{-\sigma}\log n\), and \(n^{-r}\)
according as \(r>\sigma\), \(r=\sigma\), or \(r<\sigma\).
Moreover,
\[
 \frac{N_n}{X_n}-p
 =\frac1{X_n}\sum_{j=1}^{X_n}(\chi_{n,j}-p)+\frac{B_n}{X_n},
\]
while \(X_{n+1}/N_n\) is an offspring empirical mean with random sample
size \(N_n\).  The one-step transfer results therefore give the
corresponding bounds for all three ratios under the polynomial or exponential
one-step conditions stated in Section~\ref{sec:ad-controls}.

\paragraph{A nonexact control with matched critical parameters.}
Here \emph{nonexact} means that there is no single pair of p.g.f.s \((d,l)\)
for which \(h_k(z)=d(z)l(z)^k\) for every \(k\).  Set
\[
 l_L(z)=\tfrac34z+\tfrac14z^3,\qquad
 l_U(z)=\tfrac14+\tfrac34z^2,\qquad d(z)=z,
\]
and let
\(\phi_n(k)=1+\sum_{j=1}^kL_{n,j}^{L}\) for even \(k\), while
\(\phi_n(k)=1+\sum_{j=1}^kL_{n,j}^{U}\) for odd \(k\), where the two
independent control arrays have p.g.f.s \(l_L\) and \(l_U\), respectively.
Thus the choice of the control law is measurable in \(X_n\), and
\(h_k(z)=d(z)l_L(z)^k\) for even \(k\) and
\(h_k(z)=d(z)l_U(z)^k\) for odd \(k\).  Since
\(l_U(z)-l_L(z)=(1-z)^3/4\geq0\), these are distinct lower and upper
envelopes, while \(l_L'(1)=l_U'(1)=3/2\) and
\(l_L''(1)=l_U''(1)=3/2\).

Take \(f(s)=1/3+2s/3\), set \(u_e=l_e\circ f\), and note that
\(R=d\circ f=f\).  For \(e\in\{L,U\}\),
\[
 u_e'(1)=1,\qquad \gamma_e=\tfrac12u_e''(1)=\tfrac13,
 \qquad \beta_e=R'(1)=\tfrac23,
 \qquad \frac{\beta_e}{\gamma_e}=2.
\]
The two auxiliary recursions therefore have matched critical parameters with
common index \(\sigma=2\), although the original parity-dependent control has
no exact factorization.  Theorem~\ref{thm:main-ad-critical} applies to the
harmonic moments and polynomial ratio kernels.  Condition \textup{(AD-E)}
does not hold in general, because
\[
 \frac1k\log\Ebf[\e^{\eta\phi(k)}]
 =\frac{\eta}{k}+
 \begin{cases}
 \log l_L(\e^\eta),&k\ \text{even},\\
 \log l_U(\e^\eta),&k\ \text{odd},
 \end{cases}
\]
and the two subsequential limits differ for \(\eta\neq0\).  The polynomial
one-step conditions hold uniformly, and geometric one-step bounds can be
verified directly for the two bounded control laws.

For comparison, if
\(\phi_n(k)\mid X_n=k\sim\operatorname{Bin}(k,p_k)+B_n\), where
\(p_k\to p\) and \(0<p_-\leq p_k\leq p_+<1\), then
\[
 b(z)(1-p_++p_+z)^k\leq h_k(z)\leq b(z)(1-p_-+p_-z)^k.
\]
If \(p_-<p_+\), the two envelopes have different slopes and yield their
separate scales.  A comparison at the limiting slope \(p\) instead uses
\(\lvert(1-p_k+p_kz)^k-(1-p+pz)^k\rvert
 \leq k\lvert p_k-p\rvert(1-z)\), with the accumulated remainder controlled
as in Proposition~\ref{prop:ad-raw-remainders}.

\paragraph{Discussion.}
The effect of immigration differs across the four regimes considered above.
In the random-slope upper-tail problem, immigration changes the sharp
prefactor \(c_X(\alpha)\), while the environmental large-deviation rate
\(I(\rho)\) remains unchanged.  In the supercritical BPREI harmonic-moment
problem, immigration enters through the persistence probability
\[
        \gamma_i=\Ebf[h_0p_1^i]
\]
and may therefore move the phase boundary \(c_r=\gamma_i\).  In the critical
random-slope regime, the normalization \(d_n^-\) is determined by the
environmental ladder process, whereas immigration affects the endpoint-minimum
entrance law and hence the limiting constant.  In the deterministic-slope
regime, immigration enters through the transform \(R\); consequently,
\(\beta=R'(1)\) and the critical index \(\sigma=\beta/\gamma\) may change the
harmonic-moment decay rate.
If a ratio-deviation event \(\mathcal E_n(\eps)\) satisfies
\(\Pbf(\mathcal E_n(\eps))\leq C_\eps a_n\) with
\(\sum_na_n<\infty\), then
\(\sum_n\Pbf(\mathcal E_n(\eps))<\infty\), and Borel--Cantelli gives
\(\Pbf\{\mathcal E_n(\eps)\ \mathrm{i.o.}\}=0\).  The corresponding ratio
therefore converges almost surely along generations on which its denominator
is positive.

Two extensions remain open: heavy immigration may contribute to the
upper-deviation exponent, and a negative tilt satisfying
\(\Lambda'(-r_c)=0\) leads to a weighted finite-horizon-minimum problem not
covered by the present critical theory.

\appendix
\section{Verification of the critical finite-variance inputs}
\label{sec:critical-verification}

The purpose of this appendix is to verify the abstract critical assumptions
\[
        (C)=(\mathrm{SD})+(\mathrm{EL})+(\mathrm{UIM})+(\mathrm{LOC})
\]
for the centered finite-variance BPREI class \(\mathcal C_{\mathrm{fv}}\)
introduced in Subsection~3.4.2, thereby completing the model-specific
verification used in Theorem~\ref{thm:bprei-critical-fv-class}.  Condition (C1)
supplies the Spitzer--Doney persistence scale
\[
        d_n^-\sim\kappa_0n^{-1/2}.
\]
The remaining task is to verify the post-minimum inverse-moment bound,
convolution-tail localization, and convergence of the endpoint-minimum
entrance laws.

The verification is carried out in three stages.  First,
Lemma~\ref{lem:bprei-critical-fv-kernels} and
Theorem~\ref{thm:bprei-critical-fv-occupation} establish local
nonnegative-bridge bounds and an integrated weighted-occupation estimate for
the centered finite-variance environmental walk.  Second,
Proposition~\ref{prop:bprei-critical-fv-uim} combines these environmental
estimates with the normalized quenched-variance bound and persistent
immigration to prove the uniform post-minimum inverse-moment and localization
assumptions \((\mathrm{UIM})\) and \((\mathrm{LOC})\).  Finally,
Proposition~\ref{prop:bprei-critical-fv-entrance} uses reversal at the endpoint
minimum to construct a limiting entrance law \(\nu^-\), prove total-variation
convergence, and show that the limit is independent of the fixed initial
population.  No lattice or nonlattice restriction and no Cram\'er condition
are used in the bridge or entrance-law arguments.

Throughout, write \(\zeta_n:=\log M_n\),
\(S_0:=0\), \(S_n:=\sum_{j=0}^{n-1}\zeta_j\), and
\(L_n:=\min_{0\leq j\leq n}S_j\).  For a fixed horizon \(m\), put
\[
        I_j^{(m)}:=\min_{j\leq r\leq m}S_r,
        \qquad
        \Xi_m:=\sum_{j=0}^{m}\e^{-(S_j-I_j^{(m)})},
\]
and, for integers \(y\geq0\),
\[
        B_{m,y}:=\{L_m\geq0,\ S_m\in[y,y+1)\}.
\]
The quantity \(\Xi_m\) is a weighted occupation functional measuring how
closely the environmental path remains above its successive future minima.
Its cubic bridge moment in
Theorem~\ref{thm:bprei-critical-fv-occupation} is the environmental estimate
used in the proof of Proposition~\ref{prop:bprei-critical-fv-uim}.

\subsection{Centered finite-variance bridge and occupation estimates}
\label{subsec:critical-fv-bridge}

Assume in this subsection that
\begin{equation}\label{eq:bprei-critical-fv-increments}
        \Ebf\zeta_0=0,
        \qquad
        0<\Var(\zeta_0)<\infty.
\end{equation}
We use one notation for the environmental walk and its reflection.  Whenever
\(W=(W_n)_{n\geq0}\) appears below, it denotes either
\[
        W_n=S_n
        \qquad\text{or}\qquad
        W_n=-S_n,
        \qquad n\geq0.
\]
Thus \(-S\) is obtained from \(S\) by reflection about zero, and its
increments have law \(-\zeta_0\).  The reflected walk is introduced because,
after reversing a terminal segment of a path, an upper constraint for the
original walk becomes a nonnegative-persistence constraint for the reflected
walk.  This forward--reflected duality allows the initial and terminal
portions of a bridge to be treated by the same persistence estimates.

For \(x,b\geq0\) and \(n\geq1\), define
\[
\begin{aligned}
        \pi_n^W(x)
        &:=\Pbf(x+W_j\geq0,\ 1\leq j\leq n),\\
        k_n^W(x;b)
        &:=\Pbf(x+W_j\geq0,\ 1\leq j\leq n,
                 \ x+W_n\in[b,b+1)),
\end{aligned}
\]
and set
\(k_0^W(x;b):=\one_{\{x\in[b,b+1)\}}\).

We shall use three classical facts.  First, a concentration inequality for
centered finite-variance sums gives a constant \(C_{\mathrm K}\) such that
\begin{equation}\label{eq:bprei-critical-concentration}
        \sup_{v\in\R}\Pbf(W_n\in[v,v+\lambda))
        \leq C_{\mathrm K}\lambda n^{-1/2},
        \qquad \lambda\geq1,
\end{equation}
for \(W\in\{S,-S\}\); see \cite{Petrov1975}.

Second, for \(W\in\{S,-S\}\), let
\[
        T_0^{W,-}:=0,
        \qquad
        T_{k+1}^{W,-}:=
        \inf\{n>T_k^{W,-}:W_n<W_{T_k^{W,-}}\}
\]
be the strict descending ladder epochs of \(W\), and put
\[
        H_k^{W,-}:=-W_{T_k^{W,-}},
        \qquad k\geq0.
\]
Since the centered nondegenerate walk oscillates, these ladder epochs are
finite almost surely.  Define the associated descending-ladder renewal
function by
\[
        V^W(x):=\sum_{k\geq0}\Pbf(H_k^{W,-}\leq x),
        \qquad x\geq0.
\]
To relate this function to the walk constrained to remain nonnegative, let
\[
        \tau_x^{W,-}:=\inf\{n\geq1:x+W_n<0\}.
\]
The walk is said to be killed at \(\tau_x^{W,-}\), meaning that it is stopped
when it first enters \(( -\infty,0)\).  There exist deterministic constants
\[
        0<c_V\leq C_V<\infty
\]
such that, for \(W\in\{S,-S\}\) and \(x\geq0\),
\begin{equation}\label{eq:bprei-critical-renewal-harmonic}
        c_V(1+x)\leq V^W(x)\leq C_V(1+x),
        \qquad
        \Ebf[V^W(x+W_1);x+W_1\geq0]\leq V^W(x).
\end{equation}
Only this superharmonic inequality and the linear bounds on \(V^W\) are used
below.  These properties follow from standard descending-ladder renewal
theory; see \cite[Chapters~XII and XVIII]{Feller1971} and
\cite{BertoinDoney1994}.

The third fact is a uniform fixed-window bound for the potential measure of
paths constrained to remain nonnegative.  For a Borel set \(A\subseteq\R\),
define
\[
U^+(A):=
\sum_{r\geq0}
\Pbf(S_j\geq0,\ 1\leq j\leq r,\ S_r\in A).
\]
Then
\begin{equation}\label{eq:bprei-critical-fixed-window-potential}
        U_2^+:=\sup_{z\in\R}U^+([z,z+2))<\infty.
\end{equation}
This bound is proved by using Feller duality to represent \(U^+\) as the
renewal measure of the weak ascending ladder heights.  Let \(H_0^+:=0\), let
\(H_j^+\) be the weak ascending ladder heights, and write
\(Y_j^+:=H_j^+-H_{j-1}^+\).  Choose \(\delta>0\) such that
\(p_\delta:=\Pbf(Y_1^+<\delta)<1\), and let
\(N_z:=\inf\{j\geq0:H_j^+\geq z\}\).  The renewal property gives, for
\(z\geq0\),
\[
\begin{aligned}
U^+([z,z+\delta))
&=\Ebf\sum_{j\geq0}\one_{\{H_j^+\in[z,z+\delta)\}}\\
&\leq\Ebf\left[\one_{\{N_z<\infty\}}
       \sum_{\ell\geq0}
       \one_{\{Y_{N_z+1}^+<\delta,\ldots,Y_{N_z+\ell}^+<\delta\}}\right]\\
&\leq\sum_{\ell\geq0}p_\delta^\ell
 =\frac1{1-p_\delta}.
\end{aligned}
\]
An interval of length two is covered by at most
\(\lceil2/\delta\rceil+1\) intervals of length \(\delta\).  Hence
\[
U_2^+\leq\frac{\lceil2/\delta\rceil+1}{1-p_\delta}<\infty,
\]
which proves \eqref{eq:bprei-critical-fixed-window-potential}.

For related local estimates for random walks conditioned to remain
nonnegative, see \cite{Caravenna2005,VatutinWachtel2009,Doney2012}.
The following lemma is proved in the unit-window form required below.

\begin{lemma}[Persistence and unit-window endpoint bounds]
\label{lem:bprei-critical-fv-kernels}
Under \eqref{eq:bprei-critical-fv-increments}, there is \(C<\infty\),
depending only on the increment law, such that, for
\(W\in\{S,-S\}\),
\begin{equation}\label{eq:bprei-critical-fv-kernels}
        \pi_n^W(x)\leq C(1+x)n^{-1/2},
        \qquad
        k_n^W(x;b)\leq C(1+x)(1+b)n^{-3/2}
\end{equation}
for all \(x,b\geq0\) and \(n\geq1\).
\end{lemma}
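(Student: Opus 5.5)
We prove the two bounds in turn, using only the three classical facts just recorded: the concentration inequality \eqref{eq:bprei-critical-concentration}, the renewal-function bounds and superharmonicity \eqref{eq:bprei-critical-renewal-harmonic}, and the fixed-window potential bound \eqref{eq:bprei-critical-fixed-window-potential}. The persistence bound comes first because the endpoint bound is obtained by splitting the path at its midpoint and using the persistence bound on each half.

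\emph{Persistence bound.} We apply optional stopping to the superharmonic function \(V^W\). The process \(V^W(x+W_{j\wedge\tau})\one_{\{j<\tau\}}\), with \(\tau=\tau_x^{W,-}\), is a nonnegative supermartingale, so \(\Ebf[V^W(x+W_n);\tau>n]\leq V^W(x)\leq C_V(1+x)\). To pass from this weighted bound to a probability bound, we use that on \(\{\tau>n\}\) the position \(x+W_n\) is typically of order \(\sqrt n\). Split according to whether \(x+W_n\geq\eps\sqrt n\) or not. On the first event, \(V^W\geq c_V\eps\sqrt n\), so its probability is at most \(C(1+x)/(\eps\sqrt n)\). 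For the second event, condition at time \(\lfloor n/2\rfloor\). The concentration inequality with \(\lambda=\eps\sqrt n\) bounds the probability that the last half-block lands in a window of length \(\eps\sqrt n\) by \(C\eps\). This gives \(\pi_n\leq C(1+x)/(\eps\sqrt n)+C\eps\,\pi_{\lfloor n/2\rfloor}\). Fixing \(\eps\) small and iterating over dyadic scales then closes to \(\pi_n^W(x)\leq C(1+x)n^{-1/2}\).

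\emph{Endpoint bound.} Let \(n_1=\lfloor n/2\rfloor\) and apply the Markov property at \(n_1\). The first half must survive, which costs \(\pi_{n_1}^W(x)\leq C(1+x)n^{-1/2}\). We then need the conditional probability that the second half stays nonnegative and ends in \([b,b+1)\), uniformly in the midpoint position \(y\). To bound it, reverse the second half: the reversed increments give the reflected walk \(-W\), started near \(b\). The constraint becomes a persistence constraint for \(-W\) started at height at most \(b+1\), up to a unit-window endpoint condition at \(y\). Applying the persistence bound to \(-W\) and the concentration inequality (\(\lambda=1\)) to the endpoint location, we aim to show that the second factor is \(\leq C(1+b)n^{-1}\). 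Multiplying the two factors yields the claimed \(C(1+x)(1+b)n^{-3/2}\).

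\emph{Main obstacle.} The difficulty is combining a persistence constraint with the unit endpoint window on the second half without losing a factor: persistence alone gives only \(n^{-1/2}\), and the window costs another \(n^{-1/2}\). We handle this by splitting the second half again at its midpoint. The middle piece carries the concentration factor \(n^{-1/2}\), and the reversed final quarter carries the reflected persistence factor \((1+b)n^{-1/2}\). Because the increments in the reversed segment are independent of the middle segment, the factors multiply. This gives \(C(1+x)n^{-1/2}\cdot n^{-1/2}\cdot C(1+b)n^{-1/2}\) after summing the middle-point distribution over unit windows. For small \(n\), say \(n\leq 3\), the bounds are trivial after enlarging \(C\). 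We keep \eqref{eq:bprei-critical-fixed-window-potential} in reserve in case a summation over unit windows needs a uniform potential bound.
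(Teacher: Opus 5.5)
Your proposal is correct and follows the paper's proof essentially step for step. The persistence bound uses the same superharmonic-$V^W$ split at $\varepsilon\sqrt n$ and the same halving recursion for $\sup_x\pi_n^W(x)/(1+x)$. The endpoint bound uses the same three-block product: persistence of the first block, unit-window concentration for the middle block, and reflected persistence from $b+1$ for the reversed last block. The only difference is that the paper splits into thirds rather than your $\tfrac12,\tfrac14,\tfrac14$.

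Two simplifications apply. No summation over unit windows and no fixed-window potential bound is needed, because the middle-block concentration estimate is uniform in the window's location. In the persistence step, take an interval of length $1\vee\varepsilon\sqrt n$, since the concentration inequality is stated only for $\lambda\geq1$.
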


\begin{proof}
Let
\[
        \tau_x:=\inf\{j\geq1:x+W_j<0\}.
\]
By \eqref{eq:bprei-critical-renewal-harmonic},
\[
        \Ebf[V^W(x+W_n);\tau_x>n]\leq V^W(x).
\]
Fix \(\varepsilon\in(0,1]\).  Since
\(\pi_n^W(x)=\Pbf(\tau_x>n)\), split
\[
\begin{aligned}
\pi_n^W(x)
={}&\Pbf(\tau_x>n,\ x+W_n>\varepsilon\sqrt n)\\
&+\Pbf(\tau_x>n,\ 0\leq x+W_n\leq\varepsilon\sqrt n).
\end{aligned}
\]
For the first term, the lower bound on \(V^W\) and the preceding
superharmonic estimate give
\[
\begin{aligned}
\Pbf(\tau_x>n,\ x+W_n>\varepsilon\sqrt n)
&\leq
\frac{\Ebf[V^W(x+W_n);\tau_x>n]}
     {V^W(\varepsilon\sqrt n)}\\
&\leq C\frac{1+x}{\varepsilon\sqrt n}.
\end{aligned}
\]
For the second term, put \(n':=\lfloor n/2\rfloor\).  Conditioning on the
position \(x+W_{n'}=z\) reached at time \(n'\), applying the Markov property,
and then dropping the requirement that the path remain nonnegative during the
remaining \(n-n'\) steps gives
\[
\begin{aligned}
&\Pbf(\tau_x>n,\ 0\leq x+W_n\leq\varepsilon\sqrt n)\\
&\quad\leq
\int_{[0,\infty)}
\Pbf(\tau_x>n',\ x+W_{n'}\in\dd z)
\Pbf(z+W_{n-n'}\in[0,\varepsilon\sqrt n])\\
&\quad\leq
\pi_{n'}^W(x)
\sup_{z\in\R}\Pbf(z+W_{n-n'}\in[0,\varepsilon\sqrt n]).
\end{aligned}
\]
Since \(n-n'\asymp n\), the concentration estimate
\eqref{eq:bprei-critical-concentration}, applied to an interval of length
\(1\vee\varepsilon\sqrt n\), yields
\[
\sup_{z\in\R}\Pbf(z+W_{n-n'}\in[0,\varepsilon\sqrt n])
\leq C(\varepsilon+n^{-1/2}).
\]
Consequently,
\[
\pi_n^W(x)
\leq C(\varepsilon+n^{-1/2})\pi_{n'}^W(x)
+C\frac{1+x}{\varepsilon\sqrt n}.
\]
For \(f(n):=\sup_{x\geq0}\pi_n^W(x)/(1+x)\), this becomes
\[
f(n)\leq C(\varepsilon+n^{-1/2})f(\lfloor n/2\rfloor)
      +\frac{C}{\varepsilon\sqrt n}.
\]
Choose \(\varepsilon\) and then \(n_0\) so that the first coefficient is at
most \(1/2\) for \(n\geq n_0\).  Induction over the dyadic recursion, with
the finitely many smaller values absorbed into the constant, yields
\(f(n)\leq Cn^{-1/2}\).

We next prove the unit-window endpoint bound in
\eqref{eq:bprei-critical-fv-kernels}.  Recall that
\[
        k_n^W(x;b):=
        \Pbf(x+W_j\geq0,\ 1\leq j\leq n,
              \ x+W_n\in[b,b+1)).
\]
Assume \(n\geq3\), and divide the \(n\) increments into three consecutive
blocks by setting
\[
        r_1=r_3:=\lfloor n/3\rfloor,
        \qquad
        r_2:=n-r_1-r_3.
\]
Thus all three block lengths are comparable with \(n\).  The first block
contributes the persistence probability \(\pi_{r_1}^W(x)\).  To treat the
last block, define its reversed partial sums by
\[
        R_j:=W_n-W_{n-j},
        \qquad 0\leq j\leq r_3.
\]
The process \((R_j)_{0\leq j\leq r_3}\) has the same law as
\((W_j)_{0\leq j\leq r_3}\) and is independent of the first and middle
blocks.  On the event defining \(k_n^W(x;b)\), put
\[
        q:=x+W_n\in[b,b+1).
\]
Since the original path remains nonnegative,
\[
        q-R_j=x+W_{n-j}\geq0,
        \qquad 1\leq j\leq r_3.
\]
Because \(q<b+1\), this implies
\[
        b+1-R_j\geq0,
        \qquad 1\leq j\leq r_3.
\]
Thus the reflected walk \((-R_j)\), started from \(b+1\), remains
nonnegative during the last block.  The probability of this necessary event
is \(\pi_{r_3}^{-W}(b+1)\).

Now condition on the increments in the first block and in the reversed last
block.  If
\[
        M:=W_{r_1+r_2}-W_{r_1}
\]
denotes the increment over the middle block, then the endpoint condition
requires
\[
M\in
[\,b-x-W_{r_1}-R_{r_3},\,
   b+1-x-W_{r_1}-R_{r_3}\,),
\]
which is an interval of length one.  Dropping the requirement that the path
remain nonnegative during the middle block only enlarges the probability.
Independence of the three increment blocks therefore gives
\[
        k_n^W(x;b)
        \leq
        \pi_{r_1}^W(x)\,
        \pi_{r_3}^{-W}(b+1)\,
        \sup_{z\in\R}\Pbf(W_{r_2}\in[z,z+1)).
\]
Applying the persistence bound already proved to the first and reflected last
blocks, and applying \eqref{eq:bprei-critical-concentration} to the middle
block, yields
\[
\begin{aligned}
        k_n^W(x;b)
        &\leq C(1+x)r_1^{-1/2}
                 (1+b)r_3^{-1/2}r_2^{-1/2}\\
        &\leq C(1+x)(1+b)n^{-3/2}.
\end{aligned}
\]
The finitely many cases \(n<3\) are absorbed into the constant.
\end{proof}

Recall that
\(B_{m,y}=\{L_m\geq0,\ S_m\in[y,y+1)\}\).  Hence, by the definition of
\(k_m^S\),
\begin{equation}\label{eq:bprei-critical-Hloc}
        \Pbf(B_{m,y})=k_m^S(0;y)
        \leq C(1+y)m^{-3/2},
        \qquad m\geq1,\ y\in\Nzero.
\end{equation}

We next derive the ladder-duration estimate used in
Theorem~\ref{thm:bprei-critical-fv-occupation}.  Let
\[
        \widehat S_n:=-S_n,
        \qquad n\geq0,
\]
and define its first weak descending ladder duration by
\[
        \tau:=\inf\{t\geq1:\widehat S_t\leq0\}.
\]
When a terminal segment of \(S\) is reversed, its increments have the law of
\(-\zeta_0\); consequently, its first weak descending ladder duration has the
same law as \(\tau\).

Fix \(t\geq2\).  On \(\{\tau=t\}\),
\[
        \widehat S_j>0,
        \qquad 1\leq j\leq t-1,
        \qquad
        \widehat S_t\leq0.
\]
For \(b\in\Nzero\), let
\[
\begin{aligned}
A_{t,b}:=\{\,&\widehat S_j>0,\ 1\leq j\leq t-1,\
\widehat S_{t-1}\in[b,b+1),\\
&\widehat S_t\leq0\,\}.
\end{aligned}
\]
The events \(A_{t,b}\), \(b\geq0\), are disjoint and their union is
\(\{\tau=t\}\).  Since
\[
        \widehat S_t=\widehat S_{t-1}-\zeta_{t-1},
\]
the conditions \(\widehat S_{t-1}\in[b,b+1)\) and
\(\widehat S_t\leq0\) imply
\[
        \zeta_{t-1}\geq\widehat S_{t-1}\geq b.
\]
The path up to time \(t-1\) is independent of the final increment
\(\zeta_{t-1}\).  Therefore
\[
\begin{aligned}
\Pbf(A_{t,b})
&\leq
\Pbf(\widehat S_j\geq0,\ 1\leq j\leq t-1,\
     \widehat S_{t-1}\in[b,b+1))\Pbf(\zeta_0\geq b)\\
&=k_{t-1}^{-S}(0;b)\Pbf(\zeta_0\geq b).
\end{aligned}
\]
Summing over \(b\geq0\) and applying
Lemma~\ref{lem:bprei-critical-fv-kernels} gives
\[
\begin{aligned}
\Pbf(\tau=t)
&\leq\sum_{b\geq0}k_{t-1}^{-S}(0;b)\Pbf(\zeta_0\geq b)\\
&\leq Ct^{-3/2}\sum_{b\geq0}(1+b)\Pbf(\zeta_0\geq b).
\end{aligned}
\]
Since \(\Ebf\zeta_0^2<\infty\),
\[
\sum_{b\geq0}(1+b)\Pbf(\zeta_0\geq b)
\leq C\{1+\Ebf[(\zeta_0^+)^2]\}<\infty.
\]
Thus, after enlarging the constant to include \(t=1\),
\begin{equation}\label{eq:bprei-critical-ladder-duration}
        \Pbf(\tau=t)\leq Ct^{-3/2},
        \qquad t\geq1.
\end{equation}

The next theorem supplies the integrated occupation estimate that is new to
the critical verification.

\begin{theorem}[Integrated occupation on a nonnegative bridge]
\label{thm:bprei-critical-fv-occupation}
Under \eqref{eq:bprei-critical-fv-increments}, there is \(C<\infty\) such
that
\begin{equation}\label{eq:bprei-critical-Hocc}
        \Ebf[\Xi_m^3;B_{m,y}]
        \leq C(1+y)^4m^{-3/2},
        \qquad m\geq1,\ y\in\Nzero.
\end{equation}
\end{theorem}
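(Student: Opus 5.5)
We would bound \(\Xi_m\) by a weighted count of visits to a bounded strip, and then estimate its third moment by a four-gap Markov decomposition. The tools are the unit-window kernel bound of Lemma~\ref{lem:bprei-critical-fv-kernels} and the fixed-window potential bound \eqref{eq:bprei-critical-fixed-window-potential}.

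\emph{Step 1 (pathwise reduction).} On \(B_{m,y}\), every future minimum satisfies \(0\leq I_j^{(m)}\leq S_m<y+1\). Hence \(S_j-I_j^{(m)}\geq (S_j-y-1)_+\), and also \(S_j-I_j^{(m)}\geq0\). For \(k\in\Nzero\), put \(O_k:=\#\{0\leq j\leq m:\ S_j<y+2+k\}\). Then
\[
\Xi_m\leq \sum_{k\geq0}\e^{-k}\,O_k ,
\]
since a time contributing weight at least \(\e^{-k-1}\) has \(S_j\leq y+1+k+1\). Minkowski's inequality in \(L^3(\Pbf(\,\cdot\,;B_{m,y}))\) then reduces the claim to
\[
\Ebf[O_k^3;B_{m,y}]\leq C(1+y+k)^{a}\,m^{-3/2}
\]
for a fixed power \(a\). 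The geometric weights \(\e^{-k}\) absorb any polynomial in \(k\). If the crude strip bound loses too much in \(y\), we would refine it. We would keep the exponential weight \(\e^{-(S_j-I_j)}\) and use the fact that \(S_j-I_j\in[k,k+1)\) forces the reversed path from \(m\) back to \(j\) to remain above a level within \(k+1\) of \(S_j\). This gives a reflected persistence factor \(\pi^{-S}\) with argument \(k+1\) rather than \(S_j\).

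\emph{Step 2 (four-gap decomposition).} Write \(O_k^3\leq 6\sum_{0\leq j_1\leq j_2\leq j_3\leq m}\one\{S_{j_i}<y+2+k,\ i=1,2,3\}\). Then split the path at \(j_1,j_2,j_3\) and decompose each \(S_{j_i}\) over unit windows \([z_i,z_i+1)\) with \(z_i\leq y+1+k\). By the Markov property, the probability of the joint event with \(B_{m,y}\) is at most
\[
k^S_{j_1}(0;z_1)\,k^S_{j_2-j_1}(z_1+1;z_2)\,k^S_{j_3-j_2}(z_2+1;z_3)\,k^S_{m-j_3}(z_3+1;y),
\]
enlarging the window widths harmlessly. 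Among the four gap lengths, at least one is at least \(m/4\). For that gap, Lemma~\ref{lem:bprei-critical-fv-kernels} gives \(C(\text{levels})\,m^{-3/2}\). For each other gap we sum over its length. We would use either \(\sum_r r^{-3/2}<\infty\) applied to the kernel bound, or, more economically, the uniform potential bound \(U_2^+\), transported to a start at \(x\) by writing the walk killed below \(-x\) as the concatenation of descending ladder pieces. Length-zero gaps give the diagonal terms, and these are handled identically. Summing over the at most \((y+2+k)^3\) window choices then gives a bound of the form \((1+y+k)^{a}m^{-3/2}\).

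\emph{Main obstacle.} The delicate point is bookkeeping the level factors so that the final exponent is \((1+y)^4\) rather than something larger. Each intermediate level \(z_i\) enters two kernels, each with a factor \((1+z_i)\), and in addition one sums over \(z_i\). To avoid this loss, we would handle the short gaps with the potential bound \(U_2^+\), which carries no endpoint factor, and not with \(k_n\). We would also charge the long gap's \((1+x)(1+b)\) only once. Where a residual factor remains, we would trade it against the ladder-duration estimate \eqref{eq:bprei-critical-ladder-duration} for the reversed excursions between successive future-minimum epochs. We expect this tuning step, that is, matching the stated power rather than proving some polynomial bound in \(y\), to be the hardest part. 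The \(m^{-3/2}\) factor itself comes directly from the longest gap.
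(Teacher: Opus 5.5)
There is a genuine gap, and it sits in Step~1. It is structural, so it is not a matter of bookkeeping in Step~2. Replacing $\e^{-(S_j-I_j^{(m)})}$ by $\e^{-(S_j-y-1)_+}$ discards exactly what keeps $\Xi_m$ small. $\Xi_m$ measures occupation near the future-infimum path. On $B_{m,y}$ that path consists of order $y$ ladder stretches, each carrying $O(1)$ occupation, so $\Xi_m$ is typically of order $y$. Your $O_0$ instead counts \emph{all} time spent below level $y+2$.

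For $1\ll y\ll\sqrt m$, the initial segment of the bridge is close to the walk conditioned to stay nonnegative, which spends time of order $y^2$ below level $y+2$. Hence $\Ebf[O_0^3;B_{m,y}]\geq c\,y^6\,\Pbf(B_{m,y})$, which is of order $y^7m^{-3/2}$ in that regime. So the inequality to which Step~1 reduces the theorem is false with $a=4$, and no allocation of level factors in Step~2 can rescue it. The route yields at best $(1+y)^7m^{-3/2}$.

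The exponent is not cosmetic. In Proposition~\ref{prop:bprei-critical-fv-uim} the bound enters through $y^{-6}(1+y)^4\leq Cy^{-2}$, summed over $y$, and any exponent $\geq5$ breaks that sum.

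Your proposed repair inside the strip framework also fails. $U_2^+$ bounds the potential of a walk started \emph{at} its barrier. An intermediate gap started at height $z_i$ above the barrier $0$ has killed potential of order $\min(1+z_i,1+z_{i+1})$ per window; the ladder transport you describe gives $V(z_i+1)U_2^+\leq C(1+z_i)$. So the $(1+z_i)$ factors are real, not an artifact.

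The missing idea is the one you mention only as a fallback. It must be the main line, and it must be imposed on every gap rather than used as a single persistence factor.
\begin{itemize}
\item On $\{S_{j_i}\in[z_i,z_i+1),\ S_{j_i}-I^{(m)}_{j_i}\in[k_i,k_i+1)\}$, the path after $j_i$ stays above $\beta_i:=z_i-k_i-1$. So in the four-gap product, for the gap starting at $j_i$, you may use the walk killed below $\beta_i$ and started at distance at most $k_i+2$ above it.
\item Each short gap after the first then contributes at most $V(k_i+2)U_2^+\leq C(1+k_i)$ per window, and a short first gap contributes $U_2^+$.
\item By Lemma~\ref{lem:bprei-critical-fv-kernels}, the long gap contributes at most $C(1+k_1+k_2+k_3)(1+y+k_1+k_2+k_3)m^{-3/2}$.
\item Summing over the $O((1+y+k_1+k_2+k_3)^3)$ window triples gives $(1+y+k_1+k_2+k_3)^4m^{-3/2}$ times a polynomial in the $k_i$, which the weights $\e^{-(k_1+k_2+k_3)}$ absorb.
\end{itemize}

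Carried out this way, your argument would be a valid and more elementary alternative to the paper's proof. The paper instead:
\begin{itemize}
\item reverses at time $m$ and splits the reversed path into i.i.d.\ weak descending ladder pieces with $\Xi_m=1+\sum_i\Phi_i$;
\item proves the one-piece bound $\Ebf[\Phi_1^3;\tau_1=t,H_1\in[h,h+1)]\leq Ct^{-3/2}\nu_h$;
\item bounds the modified bivariate bridge via Feller duality and $U_2^+$;
\item controls the piece-count weight $k^3$ by Hoeffding, since more than $C_0(1+y)$ pieces occur only with exponentially small probability.
\end{itemize}
In both routes the power $(1+y)^4$ is one linear endpoint factor times three powers of $y$ counting near-minimum stretches. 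As written, however, your proposal does not contain this step.
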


\begin{proof}
We prove \eqref{eq:bprei-critical-Hocc} by reducing the cubic occupation
moment to a weighted bivariate renewal-bridge estimate.  Reverse the walk at
time \(m\) and decompose the reversed path into its weak descending ladder
pieces.  For each piece, its duration, height decrease, and weighted
occupation will be denoted by \(\tau_i\), \(H_i\), and \(\Phi_i\),
respectively.  On \(B_{m,y}\), the ladder pieces form a bridge whose total
duration is \(m\), whose total height decrease lies in \([y,y+1)\), and
whose occupation functional satisfies
\[
        \Xi_m=1+\sum_i\Phi_i.
\]
For a fixed number \(k\) of pieces, an elementary cubic inequality and
exchangeability reduce the contribution of \(\Xi_m^3\) to \(k^3\) times the
bridge mass obtained by weighting the first piece by \(\Phi_1^3\).  Thus it
is enough to prove that the sum of these first-piece-weighted bridge masses is
bounded by \(C(1+y)^4m^{-3/2}\).  The one-piece estimate below supplies the
\(t^{-3/2}\) duration decay and a summable height weight.  Convolution with
the remaining ordinary ladder pieces preserves the \(m^{-3/2}\) bridge
scale, while the piece-count estimate controls the factor \(k^3\) and
produces the polynomial factor \((1+y)^4\).  The resulting weighted bridge
bound then yields \eqref{eq:bprei-critical-Hocc}.

Fix \(m\), reverse the walk by setting
\(R_u:=S_{m-u}-S_m\), \(0\leq u\leq m\), and put
\(J_u:=\min_{0\leq v\leq u}R_v\).  Then
\begin{equation}\label{eq:bprei-critical-reversal-occupation}
        S_j-I_j^{(m)}=R_{m-j}-J_{m-j},
        \qquad
        \Xi_m=\sum_{u=0}^{m}\e^{-(R_u-J_u)}.
\end{equation}
On \(B_{m,y}\), the reversed walk ends at its weak overall minimum and
\(-R_m=S_m\in[y,y+1)\).

Continue \(R\) beyond \(m\) by independent increments with the same law.
Let \(T_0:=0\) and define the weak descending ladder epochs of \(R\) by
\(T_i:=\inf\{u>T_{i-1}:R_u\leq R_{T_{i-1}}\}\).  Write
\[
        \tau_i:=T_i-T_{i-1},
        \qquad
        H_i:=R_{T_{i-1}}-R_{T_i}\geq0,
        \qquad
        \Phi_i:=\sum_{u=T_{i-1}}^{T_i-1}
                    \e^{-(R_u-R_{T_{i-1}})}.
\]
Since the centered nondegenerate walk \(R\) oscillates,
\(T_i<\infty\) almost surely for every \(i\).  The strong Markov property
at the ladder epochs therefore shows that
\((\tau_i,H_i,\Phi_i)\), \(i\geq1\), are i.i.d.  On \(B_{m,y}\), for a
unique \(N\geq1\),
\begin{equation}\label{eq:bprei-critical-ladder-pieces}
        \sum_{i=1}^{N}\tau_i=m,
        \qquad
        \sum_{i=1}^{N}H_i\in[y,y+1),
        \qquad
        \Xi_m=1+\sum_{i=1}^{N}\Phi_i.
\end{equation}
For \(k\geq1\), define
\[
E_{k,m,y}:=\left\{
        \sum_{i=1}^{k}\tau_i=m,\
        \sum_{i=1}^{k}H_i\in[y,y+1)
        \right\}.
\]
The events \(E_{k,m,y}\), \(k\geq1\), are disjoint and describe
\(B_{m,y}\) according to the number of ladder pieces.  Therefore
\[
\Ebf[\Xi_m^3;B_{m,y}]
=\sum_{k\geq1}
\Ebf\left[\left(1+\sum_{i=1}^{k}\Phi_i\right)^3;E_{k,m,y}\right].
\]
Since \(\Phi_i\geq1\),
\[
\left(1+\sum_{i=1}^{k}\Phi_i\right)^3
\leq8\left(\sum_{i=1}^{k}\Phi_i\right)^3
\leq8k^2\sum_{i=1}^{k}\Phi_i^3.
\]
The triples \((\tau_i,H_i,\Phi_i)\) are i.i.d., and
\(E_{k,m,y}\) is invariant under their permutations.  Hence
\[
\Ebf\left[\left(1+\sum_{i=1}^{k}\Phi_i\right)^3;E_{k,m,y}\right]
\leq8k^3\Ebf[\Phi_1^3;E_{k,m,y}].
\]
Consequently, it remains to prove
\begin{equation*}\tag{A.10a}
\sum_{k\geq1}k^3\Ebf[\Phi_1^3;E_{k,m,y}]
\leq C(1+y)^4m^{-3/2}.
\end{equation*}
We now prove the two estimates needed for this weighted bivariate renewal
bridge.

\medskip
\noindent\emph{One weighted ladder piece.}
For integers \(h\geq0\), define
\begin{equation}\label{eq:bprei-critical-nu-h}
        \nu_h:=\sum_{b\geq0}(1+b)
        \Pbf(\zeta_0\in[b+h,b+h+2)).
\end{equation}
The final downward jump of the reversed walk is exactly a positive original
environmental increment.  Hence no dual replacement is involved in this
definition.  Moreover,
\[
        \sum_{h\geq0}\nu_h
        \leq2\sum_{b\geq0}(1+b)\Pbf(\zeta_0\geq b)<\infty
\]
by finite variance.  We claim that
\begin{equation}\label{eq:bprei-critical-one-piece}
        \Ebf[\Phi_1^3;\tau_1=t,\ H_1\in[h,h+1))
        \leq C t^{-3/2}\nu_h,
        \qquad t\geq1,\ h\geq0.
\end{equation}
For \(t=1\), \(\Phi_1=1\), and the claim follows from the \(b=0\) term in
\eqref{eq:bprei-critical-nu-h}.  Let \(t\geq2\).  Expanding \(\Phi_1^3\),
ordering the at most three distinct observation times, and binning their
positive heights gives, after successive applications of the Markov property
and Lemma~\ref{lem:bprei-critical-fv-kernels},
\[
\Ebf[\Phi_1^3;\tau_1=t,H_1\in[h,h+1))
\leq C\nu_h\left\{\sum_{j=1}^{4}g^{*j}(t-1)\right\}
\left\{\sum_{a\geq0}\e^{-a}(1+a)^2\right\}^{3},
\]
where \(g(0):=1\) and \(g(r):=r^{-3/2}\) for \(r\geq1\).  The factor
\(\nu_h\) comes from the terminal crossing jump, and the time gaps sum to
\(t-1\).  For \(1\leq j\leq4\),
\begin{equation}\label{eq:bprei-critical-gap-convolution}
        g^{*j}(t-1)\leq C_jt^{-3/2},
        \qquad t\geq2.
\end{equation}
Indeed, one of the \(j\) gaps is at least \((t-1)/j\geq t/8\); apply the
factor \(Ct^{-3/2}\) to that gap and sum the remaining gaps using
\(\sum_{r\geq0}g(r)<\infty\).  Combining the height and time sums proves
\eqref{eq:bprei-critical-one-piece}.

\medskip
\noindent\emph{The modified bivariate bridge.}
Let \(\mu\) be the finite measure on first ladder pieces defined by
\[
\mu\{\tau=t,H\in[h,h+1)\}
:=\Ebf[\Phi_1^3;\tau_1=t,H_1\in[h,h+1)).
\]
For \(k\geq1\), let \(\Pbf_*^{(k)}\) denote the product of \(\mu\) for the
first piece and the original ladder-piece law for the remaining \(k-1\)
pieces.  Thus \(\Pbf_*^{(k)}\) is a finite product measure, not necessarily
a probability measure.  Put \(J_y:=[y-1,y+2)\cap[0,\infty)\).  We first
show
\begin{equation}\label{eq:bprei-critical-modified-bridge-mass}
\sum_{k\geq1}\Pbf_*^{(k)}\left(
        \sum_{i=1}^{k}\tau_i=m,
        \ \sum_{i=1}^{k}H_i\in J_y
        \right)
\leq C(1+y)m^{-3/2}.
\end{equation}
For \(t\geq1\), let
\(\mu_t(A):=\Ebf[\Phi_1^3;\tau_1=t,H_1\in A]\), and define the ordinary
ladder-renewal bridge mass by
\[
 B_r(J):=\sum_{\ell\geq0}
 \Pbf\left(\sum_{i=1}^{\ell}\tau_i=r,
            \sum_{i=1}^{\ell}H_i\in J\right),
\]
with the usual empty-sum convention.  Time reversal of an independent
\(r\)-step walk gives
\[
        B_r(J)
        =\Pbf\bigl(S_\ell\geq0\ \text{for all }0\leq\ell\leq r,
                    \ S_r\in J\bigr).
\]
We use this representation below both for the local estimate and, after
summing in \(r\), for Feller duality.  By Tonelli's theorem and the product
definition of \(\Pbf_*^{(k)}\),
\[
\sum_{k\geq1}\Pbf_*^{(k)}\!\left(
 \sum_{i=1}^{k}\tau_i=m,
 \sum_{i=1}^{k}H_i\in J_y\right)
 =\sum_{t=1}^{m}\int_{[0,\infty)}
   B_{m-t}(J_y-h)\,\mu_t(\dd h),
\]
where \(J_y-h:=\{z-h:z\in J_y\}\cap[0,\infty)\).  For \(t\leq m/2\),
\eqref{eq:bprei-critical-Hloc} and
\eqref{eq:bprei-critical-one-piece} give
\[
\begin{aligned}
\sum_{t\leq m/2}\int B_{m-t}(J_y-h)\,\mu_t(\dd h)
&\leq C(1+y)m^{-3/2}
       \sum_{t\geq1}t^{-3/2}\sum_{b\geq0}\nu_b\\
&\leq C(1+y)m^{-3/2}.
\end{aligned}
\]

For \(t>m/2\), decompose the \(h\)-integral over the unit intervals
\([b,b+1)\), \(b\geq0\).  If \(h\in[b,b+1)\), then
\[
        J_y-h\subseteq I_{y,b}:=
        [y-2-b,y+2-b)\cap[0,\infty).
\]
The interval \(I_{y,b}\) has length at most four and is therefore covered
by two intervals of length two.  The one-piece estimate gives
\(\mu_t([b,b+1))\leq Ct^{-3/2}\nu_b\), and hence
\[
\begin{aligned}
\sum_{t>m/2}\int B_{m-t}(J_y-h)\,\mu_t(\dd h)
&\leq Cm^{-3/2}\sum_{b\geq0}\nu_b
        \sum_{r\geq0}B_r(I_{y,b})\\
&\leq Cm^{-3/2}\sum_{b\geq0}\nu_b.
\end{aligned}
\]
The last inequality follows from Feller duality and
\eqref{eq:bprei-critical-fixed-window-potential}.  Since
\(\sum_{b\geq0}\nu_b<\infty\), this proves
\eqref{eq:bprei-critical-modified-bridge-mass}.

We next include the cubic piece-count weight:
\begin{equation}\label{eq:bprei-critical-piece-count}
\sum_{k\geq1} k^3\,
\Pbf_*^{(k)}\left(
        \sum_{i=1}^{k}\tau_i=m,
        \ \sum_{i=1}^{k}H_i\in J_y
        \right)
\leq C(1+y)^4m^{-3/2}.
\end{equation}
Choose \(\delta>0\) with
\(p:=\Pbf(H_1\leq\delta)<1\), put \(\rho:=(1+p)/2\), and set
\begin{equation}\label{eq:bprei-critical-C0}
        C_0:=1+\frac{2}{\delta(1-\rho)}.
\end{equation}
For \(k\leq C_0(1+y)\), the factor \(k^3\) is at most
\(C(1+y)^3\), so \eqref{eq:bprei-critical-modified-bridge-mass} gives the
required contribution.

Suppose now that \(k>C_0(1+y)\).  On the bridge event, at least a fraction
\(\rho\) of the unmodified heights \(H_2,\ldots,H_k\) must not exceed
\(\delta\); otherwise the total height would exceed \(y+2\).  Write
\[
A_k:=\left\{\sum_{i=2}^{k}\one_{\{H_i\leq\delta\}}
             \geq\rho(k-1)\right\}.
\]
The exact time constraint forces at least one duration to be at least
\(m/k\).  If the long piece is \(\ell=1\), the event \(A_k\) is independent
of that modified first piece.  If \(\ell\geq2\), replace \(A_k\) by
\[
A_k^{(\ell)}:=
\left\{\sum_{\substack{2\leq i\leq k\\ i\neq\ell}}
        \one_{\{H_i\leq\delta\}}
        \geq\rho(k-1)-1\right\}.
\]
Then \(A_k\subseteq A_k^{(\ell)}\), and
\(A_k^{(\ell)}\) is independent of the duration and height of piece
\(\ell\).  Moreover,
\[
        \rho(k-1)-1-p(k-2)
        =\frac{1-p}{2}(k-3).
\]
Hoeffding's inequality therefore gives
\(\Pbf(A_k)\leq Ce^{-ck}\) and
\(\Pbf(A_k^{(\ell)})\leq Ce^{-ck}\) for all sufficiently large \(k\);
the finitely many smaller values are absorbed into the constant.

Since the bridge event is contained in the union over possible long pieces,
\[
\Pbf_*^{(k)}(\text{bridge})
\leq\sum_{\ell=1}^{k}
\Pbf_*^{(k)}\left(
\sum_{i=1}^{k}\tau_i=m,\,
\tau_\ell\geq\frac{m}{k},\ A_k^{(\ell)}\right),
\]
where \(A_k^{(1)}:=A_k\).  After the other durations have been fixed, the
long duration is determined and is at least \(m/k\).  Both the modified
first-piece marginal, by
\eqref{eq:bprei-critical-one-piece} and summability of \((\nu_h)\), and the
ordinary duration law, by \eqref{eq:bprei-critical-ladder-duration}, are
bounded by \(Cs^{-3/2}\) at duration \(s\).  Summing over the possible
position of the long piece gives
\[
\Pbf_*^{(k)}\left(
        \sum_{i=1}^{k}\tau_i=m,
        \sum_{i=1}^{k}H_i\in J_y
        \right)
\leq Cm^{-3/2}k^{5/2}e^{-ck}.
\]
Multiplication by \(k^3\) and summation over
\(k>C_0(1+y)\) completes the proof of
\eqref{eq:bprei-critical-piece-count}.

By the definition of \(\Pbf_*^{(k)}\),
\[
\Ebf[\Phi_1^3;E_{k,m,y}]
=
\Pbf_*^{(k)}\left(
 \sum_{i=1}^{k}\tau_i=m,\
 \sum_{i=1}^{k}H_i\in[y,y+1)
\right).
\]
Since \([y,y+1)\subseteq J_y\), the left-hand side of
\eqref{eq:bprei-critical-piece-count} bounds the left-hand side of
(A.10a).  The reduction above therefore
proves \eqref{eq:bprei-critical-Hocc}.
\end{proof}

\subsection{Uniform post-minimum inverse moments}
\label{subsec:critical-fv-uim}

We next combine the environmental estimate with the branching structure.
Write
\[
\Pbf_{\cE}(\,\cdot\,)
:=\Pbf\left(\,\cdot\,\middle|\sigma(\cE_n:n\geq0)\right),
\]
and define \(\Ebf_{\cE}\) and \(\Var_{\cE}\) analogously.  These are the
quenched probability, expectation, and variance conditional on the full
environmental sequence.

For a generic environmental mark, let
\(\sigma^2(\cE_0):=\Var(\xi_{0,1}\mid\cE_0)\), and assume
\begin{equation}\label{eq:bprei-critical-normalized-variance}
        \frac{\sigma^2(\cE_0)}{M_0^2}\leq\overline C
        \qquad\text{almost surely}.
\end{equation}
For one particle present at generation \(j\), let \(Y_m^{(j)}\) be the
number of its descendants at generation \(m\) when immigration is removed.
A one-step conditional-variance recursion gives
\begin{equation}\label{eq:bprei-critical-clan-variance}
\Var_{\cE}\left(\frac{Y_m^{(j)}}{\Pi_{j,m}}\right)
 =\sum_{k=j}^{m-1}
   \frac{\sigma^2(\cE_k)}{M_k^2}\e^{-(S_k-S_j)}.
\end{equation}
Moreover,
\[
        \Ebf_{\cE}[Y_m^{(j)}]=\Pi_{j,m},
        \qquad
        \Ebf_{\cE}\left[\frac{Y_m^{(j)}}{\Pi_{j,m}}\right]=1.
\]
Put \(\Sigma_j:=\sum_{k=j}^{m-1}\e^{-(S_k-S_j)}\).  By
\eqref{eq:bprei-critical-normalized-variance} and
\eqref{eq:bprei-critical-clan-variance},
\[
\begin{aligned}
\Ebf_{\cE}\left[\left(\frac{Y_m^{(j)}}{\Pi_{j,m}}\right)^2\right]
&=1+\Var_{\cE}\left(\frac{Y_m^{(j)}}{\Pi_{j,m}}\right)\\
&\leq1+\overline C\Sigma_j.
\end{aligned}
\]
Applying the Paley--Zygmund inequality conditionally on \(\cE\), with
parameter \(1/2\), gives
\begin{equation}\label{eq:bprei-critical-clan-PZ}
\begin{aligned}
\Pbf_{\cE}\left(Y_m^{(j)}>\tfrac12\Pi_{j,m}\right)
&=\Pbf_{\cE}\left(
\frac{Y_m^{(j)}}{\Pi_{j,m}}>
\frac12\Ebf_{\cE}\left[\frac{Y_m^{(j)}}{\Pi_{j,m}}\right]
\right)\\
&\geq
\frac{(1-\frac12)^2
\left\{\Ebf_{\cE}[Y_m^{(j)}/\Pi_{j,m}]\right\}^2}
{\Ebf_{\cE}[(Y_m^{(j)}/\Pi_{j,m})^2]}\\
&\geq\frac{1}{4(1+\overline C\Sigma_j)}.
\end{aligned}
\end{equation}

The deterministic selection used below is recorded separately.

\begin{lemma}[Thinned future-minimum epochs]
\label{lem:bprei-critical-future-minima}
Let \(x=(x_0,\ldots,x_m)\) satisfy \(x_0=0\),
\(\min_jx_j\geq0\), and \(x_m=y\geq2\).  Put
\(I_j:=\min_{j\leq r\leq m}x_r\) and
\(K:=\max_{0\leq j<m}(x_{j+1}-x_j)_+\).  There are indices
\(1\leq j_1<\cdots<j_N\leq m-1\) such that each
\(x_{j_a}=I_{j_a}\), their values lie in \([1,y/2]\) and are separated by
at least one,
\begin{equation}\label{eq:bprei-critical-record-count}
        N\geq\frac{y}{2(K+1)}-1,
\end{equation}
and
\begin{equation}\label{eq:bprei-critical-record-occupation}
\sum_{a=1}^{N}\sum_{r=j_a}^{m-1}\e^{-(x_r-x_{j_a})}
\leq\frac{1}{1-\e^{-1}}
      \sum_{r=0}^{m}\e^{-(x_r-I_r)}.
\end{equation}
\end{lemma}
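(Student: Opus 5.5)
The selection will be greedy, scanning the path from right to left through its future-minimum values. The key observation is that the future-minimum function \(I_j\) is nondecreasing in \(j\), with \(I_0=0\) and \(I_m=y\). I will record the \emph{record epochs}, namely the indices \(j\) with \(x_j=I_j\); these are exactly the times at which the path sits at its future minimum. The first step is to control how \(I\) can move between consecutive record epochs. If \(j<j'\) are consecutive records, then \(I_{j+1}=\cdots=I_{j'}=x_{j'}\) and \(x_{j+1}\ge x_{j'}\). Hence the increment \(I_{j'}-I_j\) equals the sum of \(x_{j+1}-x_j\) and a nonpositive quantity. Since \(x_{j+1}\ge x_{j'}\ge x_j\), this increment is at most \(x_{j+1}-x_j\le K\). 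So the values of \(I\) at record epochs form a chain from \(0\) to \(y\) whose steps are bounded by \(K\).

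The second step extracts the indices. I take the interval \([1,y/2]\), of length at least \(y/2-1\). I then run through the records in increasing order and select a record whenever its value lies in \([1,y/2]\) and exceeds the previously selected value by at least one. Because consecutive record values increase by at most \(K\), each unit subinterval crossed forces a new selection within at most \(K+1\) units of height. This gives \(N\ge (y/2-1)/(K+1)\), which implies \eqref{eq:bprei-critical-record-count} once constants are checked. I also need \(1\le j_a\le m-1\). This holds because the values lie strictly between \(0\) and \(y\) when \(y\ge2\), or can be arranged so by discarding at most one endpoint. That discard accounts for the ``\(-1\)''.

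The third step proves \eqref{eq:bprei-critical-record-occupation}. For \(r\ge j_a\), the definition of future minimum gives \(I_r\ge I_{j_a}=x_{j_a}\), so
\[
x_r-x_{j_a}=(x_r-I_r)+(I_r-x_{j_a})\ge (x_r-I_r)+(I_r-x_{j_a}).
\]
Swapping the order of summation, the left side of \eqref{eq:bprei-critical-record-occupation} is at most
\[
\sum_r \e^{-(x_r-I_r)}\sum_{a:\,j_a\le r}\e^{-(I_r-x_{j_a})}.
\]
The selected values \(x_{j_a}\) with \(j_a\le r\) are all \(\le I_r\) and are mutually separated by at least one. Their distances to \(I_r\), listed in decreasing order of \(a\), are therefore at least \(0,1,2,\dots\). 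The inner sum is consequently bounded by \(\sum_{\ell\ge0}\e^{-\ell}=(1-\e^{-1})^{-1}\), which gives the stated constant.

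The main obstacle is the counting bound in the second step, specifically getting exactly \(y/(2(K+1))-1\) with both the unit separation and the endpoint exclusions respected. I expect to settle it by choosing the selected records as the first records whose value reaches each level in \(\{1,1+(K+1),1+2(K+1),\dots\}\cap[1,y/2]\). Those values lie in \([\ell,\ell+K]\), which makes them separated by at least one, and the count is then at least \(\lfloor(y/2-1)/(K+1)\rfloor+1\ge y/(2(K+1))-1\).
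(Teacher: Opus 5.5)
Your Steps 1 and 3 are correct and are essentially the paper's argument. Both use records of the future-minimum path, the bound \(K\) on record increments, and the geometric bound \(\sum_{\ell\ge0}\e^{-\ell}\) on the inner sum.

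The gap is in the counting step you flagged. The first record at or above a level \(\ell\) can overshoot \(\ell\) by almost \(K\). So taking the first record reaching every level \(1+k(K+1)\le y/2\) can select a value above \(y/2\). That violates the requirement that the selected values lie in \([1,y/2]\), which is exactly what later gives \(\Pi_{j,m}\ge\e^{y/2}\).

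For instance, take the increasing path \(0,0.999,1.999,2.998,3.998,4.998,5.998,6.998,7.9\). Then \(K=1\), \(y/2=3.95\), your levels are \(1\) and \(3\), and the first record reaching \(3\) has value \(3.998>y/2\). The only records with values in \([1,3.95]\) are \(1.999\) and \(2.998\), which are not one apart, so at most one index can be chosen. Hence your intermediate claim \(N\ge(y/2-1)/(K+1)=1.475\) and your final count \(\lfloor(y/2-1)/(K+1)\rfloor+1=2\) are both false. Only the statement's weaker bound \(y/(2(K+1))-1=0.975\) holds.

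The repair is the paper's bookkeeping:
\begin{itemize}
\item Let \(j_1\) be the first record with value \(\ge1\), and \(j_{a+1}\) the first later record with value \(\ge x_{j_a}+1\).
\item The record-increment bound gives \(x_{j_1}\le1+K\) and \(x_{j_{a+1}}\le x_{j_a}+1+K\), hence \(x_{j_a}\le a(K+1)\).
\item Keep exactly the \(a\) with \(a(K+1)\le y/2\). Each such index exists, because \(x_{j_a}+1\le y/2+1\le y\), and every retained value lies in \([1,y/2]\).
\item This yields \(N\ge\lfloor y/(2(K+1))\rfloor\ge y/(2(K+1))-1\).
\end{itemize}
In your level language, use only levels with \(\ell+K\le y/2\).

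So the \(-1\) comes from the floor, not from discarding an endpoint. The constraint \(1\le j_a\le m-1\) needs no discarding at all, since \(x_0=0<1\) and \(x_m=y>y/2\).
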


\begin{proof}
If \(y\leq2(K+1)\), take \(N=0\).  Suppose
\(y>2(K+1)\).  The future-infimum path \(j\mapsto I_j\) is nondecreasing,
\(I_0=0\), \(I_m=y\), and
\(I_j=\min\{x_j,I_{j+1}\}\).  Hence
\[
I_{j+1}>I_j
\quad\Longrightarrow\quad
x_j=I_j,
\qquad
0<I_{j+1}-I_j\leq(x_{j+1}-x_j)_+\leq K.
\]
Let \(j_1\) be the first future-minimum epoch with \(x_{j_1}\geq1\), and,
given \(j_a\), let \(j_{a+1}\) be the first later future-minimum epoch with
\(x_{j_{a+1}}\geq x_{j_a}+1\).  The preceding increment bound gives
\[
x_{j_1}\leq1+K,
\qquad
x_{j_{a+1}}\leq x_{j_a}+1+K,
\qquad
x_{j_a}\leq a(K+1).
\]
Retain the indices with \(x_{j_a}\leq y/2\).  Then
\[
N\geq\frac{y}{2(K+1)}-1,
\]
and the retained values lie in \([1,y/2]\) and are one-separated.

For every \(r\), monotonicity of \(I_j\) gives
\(x_{j_a}=I_{j_a}\leq I_r\) whenever \(j_a\leq r\).  Therefore
\[
\begin{aligned}
\sum_{a:j_a\leq r}\e^{-(x_r-x_{j_a})}
&\leq\e^{-(x_r-I_r)}\sum_{\ell\geq0}\e^{-\ell}\\
&=\frac{\e^{-(x_r-I_r)}}{1-\e^{-1}}.
\end{aligned}
\]
Summing over \(0\leq r\leq m-1\) proves
\eqref{eq:bprei-critical-record-occupation}.
\end{proof}

The strict and weak persistence probabilities are linked by an exact
identity.  Let \(u_n:=\Pbf(L_n\geq0)\).  Since \(\tau_n\) is the first time
of the finite-horizon minimum,
\begin{equation}\label{eq:bprei-critical-strict-weak}
        1=\sum_{k=0}^{n}d_k^-u_{n-k}.
\end{equation}
Indeed, on \(\{\tau_n=k\}\) the initial block ends at a strict descending
ladder epoch and the independent remaining block stays weakly nonnegative.
Centered finite variance gives
\(d_k^-\geq c(1+k)^{-1/2}\); since \(u_n\) is nonincreasing,
\eqref{eq:bprei-critical-strict-weak} yields
\begin{equation}\label{eq:bprei-critical-weak-persistence}
        u_n\leq C(1+n)^{-1/2},
        \qquad
        \sum_{j=0}^{m-1}u_j\leq C\sqrt m.
\end{equation}

\begin{proposition}[Uniform inverse moments and localization]
\label{prop:bprei-critical-fv-uim}
Assume \eqref{eq:bprei-critical-fv-increments},
\eqref{eq:bprei-critical-normalized-variance}, and
\(\eta_n\geq1\) almost surely for every \(n\geq1\).  Then, for every
\(s>0\) and every initial law \(\nu\), the following conclusions hold.
\begin{enumerate}[label=\textup{(\roman*)}]
\item If \(\Ebf\e^{\theta(\log M)_+}<\infty\) for some \(\theta>0\), then
\begin{equation}\label{eq:bprei-critical-uim-exp}
\Ebf_\nu[Z_m^{-s};Z_m>0,L_m\geq0]
\leq C_s(1\vee m)^{-3/2}[1+\log(m+1)]^4,
\qquad m\geq0.
\end{equation}
\item If \(\Ebf(\log M)_+^q<\infty\) for some \(q>12\), then
\begin{equation}\label{eq:bprei-critical-uim-poly}
\Ebf_\nu[Z_m^{-s};Z_m>0,L_m\geq0]
\leq C_s(1\vee m)^{-1-\delta_q},
\qquad
\delta_q:=\frac{q-12}{2(q+4)}>0.
\end{equation}
\end{enumerate}
Both majorants are summable and satisfy the convolution-tail localization
condition of Assumption~\ref{ass:bprei-critical-terminal-localization}.
\end{proposition}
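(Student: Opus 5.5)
The plan is to bound \(\Ebf_\nu[Z_m^{-s};Z_m>0,L_m\geq0]\) by decomposing over the terminal height of the environmental walk, using the persistent immigration to manufacture many independent clans started at future-minimum epochs. Since \(\eta_n\geq1\), we have \(Z_m\geq1\) and \(Z_m^{-s}\leq1\), so the expectation is at most \(\sum_{y\geq0}\Ebf[\Pbf_{\cE}(\text{small }Z_m);B_{m,y}]\). For bounded \(y\) (say \(y\leq y_0\)) the bound \(\Pbf(B_{m,y})\leq C(1+y)m^{-3/2}\) from \eqref{eq:bprei-critical-Hloc} already gives a \(m^{-3/2}\) contribution, and this does not depend on \(\nu\). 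For large \(y\), the following mechanism replaces \(Z_m^{-s}\) by a quenched lower-tail probability.

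\emph{Clan construction.} Fix a path in \(B_{m,y}\) and apply Lemma~\ref{lem:bprei-critical-future-minima} with \(K\) the largest positive increment. This gives \(N\gtrsim y/(K+1)\) future-minimum epochs \(j_a\) with \(S_{j_a}\leq y/2\). At each \(j_a\) there is at least one immigrant, and its clan satisfies \(\Pi_{j_a,m}=\e^{S_m-S_{j_a}}\geq\e^{y/2}\). The clans are quenched-independent. By \eqref{eq:bprei-critical-clan-PZ}, each clan exceeds \(\tfrac12\e^{y/2}\) with probability at least \(p_a:=[4(1+\overline C\Sigma_{j_a})]^{-1}\). Hence, on the environment,
\[
\Ebf_{\cE}[Z_m^{-s}]\leq 2^s\e^{-sy/2}+\prod_{a=1}^N(1-p_a)\leq 2^s\e^{-sy/2}+\exp\Bigl(-\sum_a p_a\Bigr).
\]
By Jensen/Cauchy–Schwarz, \(\sum_a p_a\geq N^2/(4N+4\overline C\sum_a\Sigma_{j_a})\). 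By \eqref{eq:bprei-critical-record-occupation}, \(\sum_a\Sigma_{j_a}\leq C\Xi_m\). Therefore the failure probability is at most \(\exp\{-cN^2/(N+\Xi_m)\}\).

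\emph{Averaging over the environment.} Split according to \(\{\Xi_m\leq y^{\kappa}\}\) and \(\{K\leq y^{\kappa'}\}\). On the good event \(N\gtrsim y^{1-\kappa'}\), and the exponential in \(y^{2-2\kappa'-\kappa}\), or in \(N\), kills the polynomial factor \((1+y)m^{-3/2}\), which is summable in \(y\). On \(\{\Xi_m>y^\kappa\}\), Markov's inequality with Theorem~\ref{thm:bprei-critical-fv-occupation} gives \(C(1+y)^4y^{-3\kappa}m^{-3/2}\). On \(\{K>y^{\kappa'}\}\), use a union bound over the time of the large jump, with the bridge estimate of Lemma~\ref{lem:bprei-critical-fv-kernels} on either side and \(\Pbf(\zeta_0>y^{\kappa'})\leq Cy^{-q\kappa'}\). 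This yields roughly \(m\cdot m^{-3/2}\)-type terms. Summability in \(y\) forces a truncation at \(y\leq m^{\gamma}\), with a global cutoff for larger \(y\) using \(\Pbf(L_m\geq0,S_m>m^\gamma)\) and the persistence bound \eqref{eq:bprei-critical-weak-persistence}. Balancing \(\kappa,\kappa',\gamma\) against the cubic moment (exponent \(4-3\kappa<-1\) requires \(\kappa>5/3\), compensated by larger \(N\)) and the \(q\)-th moment tail is where \(q>12\) and \(\delta_q=(q-12)/(2(q+4))\) should emerge. In case (i), exponential tails let \(K\leq C\log(m+1)\) off an event of probability \(O(m^{-2})\), and only logarithmic losses, \([1+\log(m+1)]^4\), remain.

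\emph{Localization.} Given \(\overline a_m(s)=C_s(1\vee m)^{-1-\delta}\) and \(d_k^-\sim\kappa_0k^{-1/2}\), compute \(\sum_{k\leq n/2}d_k^-\overline a_{n-k}\leq Cn^{-1-\delta}\sqrt n=o(n^{-1/2})\), which verifies \eqref{eq:critical-terminal-localization}. Summability is immediate.

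The main obstacle is the exponent bookkeeping in the averaging step. The jump tail, the cubic occupation moment and the large-\(y\) cutoff must be traded off so that every term is \(O(m^{-1-\delta})\) uniformly in \(\nu\). I would first fix \(\kappa\) to make the \(\Xi_m\) term summable in \(y\), then optimize the cutoff \(\gamma\) against the jump term.
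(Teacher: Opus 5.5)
You reproduce the paper's clan mechanism faithfully: thinned future-minimum epochs from Lemma~\ref{lem:bprei-critical-future-minima}, one tagged immigrant per epoch, the conditional Paley--Zygmund bound, and Cauchy--Schwarz with \eqref{eq:bprei-critical-record-occupation} to get $\mathcal R\ge N^2/(4N+4c_1\Xi_m)$. Your localization computation is also fine.

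The gap is the averaging step. You defer it as ``bookkeeping'', but it is where the quantitative content of the statement is produced: $q>12$, $\delta_q$, and the fourth power of the logarithm. The splittings you propose cannot produce it.

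Thresholding $\Xi_m$ at $y^\kappa$ forces $\kappa>5/3$, so that the Markov term $(1+y)^{4-3\kappa}m^{-3/2}$ is summable. Since $N\le y$, on the good event your split then certifies only $\mathcal R\gtrsim y^{2-\kappa}/(K+1)^2$, with $2-\kappa<1/3$. ``Larger $N$'' cannot compensate, because $N\le y$. The exponential therefore helps only once $y$ exceeds roughly the sixth power of the jump cutoff, and every lower bin must be paid for in full by $\Pbf(B_{m,y})\le C(1+y)m^{-3/2}$.

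With a global cutoff $T$, this costs about $T^{12}m^{-3/2}$ instead of $T^4m^{-3/2}$. In (i) you get $\log^{12}$ rather than $\log^4$. In (ii), balancing against $\sqrt m\,T^{-q}$, you need roughly $q>36$.

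A cutoff $y^{\kappa'}$ that does not grow with $m$ is no better. Positivity of the good-event exponent $2-\kappa-2\kappa'$ forces $\kappa'<1/6$. As you note, the union bound over the jump time gives only $m\cdot m^{-3/2}$ per bin. For all bins with $m\Pbf(\zeta_0>y^{\kappa'})\gtrsim1$ a large jump is typical on $B_{m,y}$, so nothing better than $\Pbf(B_{m,y})$ is available there. Those bins, up to $y\approx m^{1/(q\kappa')}$, cost about $m^{2/(q\kappa')-3/2}$, which is $o(m^{-1})$ only if $q\kappa'>4$, i.e.\ $q>24$. The truncation via $\Pbf(L_m\ge0,S_m>m^\gamma)$ does not help either: for $\gamma<1/2$ that probability is of order $m^{-1/2}$.

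What is missing are the paper's two devices.

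\emph{No threshold on $\Xi_m$.} The paper uses $\e^{-u}\le(3/\e)^3u^{-3}$. On $B_{m,y}\cap\{K_m\le T\}$ with $y\ge Y_T:=\lceil4(T+1)^2\rceil$ one has $y/(4(T+1))\le N\le y$, and hence $\e^{-\mathcal R}\le C(T+1)^6y^{-6}\{y^3+\Xi_m^3\}$. The bounds \eqref{eq:bprei-critical-Hloc} and the cubic bridge estimate \eqref{eq:bprei-critical-Hocc} integrate this directly to $C(T+1)^6y^{-2}m^{-3/2}$ per bin. Summing over $y\ge Y_T$, and using \eqref{eq:bprei-critical-Hloc} alone for $y<Y_T$, gives $C_s(1+T)^4m^{-3/2}$.

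\emph{One global, $m$-dependent jump cutoff.} The large-jump event is handled once, without bins and without any endpoint constraint:
$\Pbf(L_m\ge0,K_m>T)\le\sum_{j<m}\Pbf(L_j\ge0)\,\Pbf(\zeta_j>T)\le C\sqrt m\,\Pbf((\log M)_+>T)$, by \eqref{eq:bprei-critical-weak-persistence}.

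Then $T=m^{2/(q+4)}$ makes $m^{-3/2}T^4$ and $\sqrt m\,T^{-q}$ both equal to $m^{-1-\delta_q}$, which is exactly where $q>12$ comes from. In (i), $T\asymp\theta^{-1}\log(m+1)$ gives the factor $[1+\log(m+1)]^4$. Without these two devices your sketch does not establish the exponents in the statement.
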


\begin{proof}
We first reduce the problem to the population descended from immigrants; this
removes the dependence on the initial law and makes the resulting bounds
uniform in \(\nu\).  We then decompose according to the endpoint events
\(B_{m,y}\).  On \(B_{m,y}\), after excluding paths with a positive
environmental increment larger than a cutoff \(T\),
Lemma~\ref{lem:bprei-critical-future-minima} supplies order
\(y/(T+1)\) separated future-minimum epochs.  Persistent immigration
provides a clan at each such epoch, and the conditional Paley--Zygmund
estimate \eqref{eq:bprei-critical-clan-PZ} gives each tagged clan a positive
quenched probability of contributing a fixed fraction of its environmental
mean at time \(m\).

Since the selected future-minimum heights lie below half of the terminal
height \(S_m\), success of any tagged clan forces \(Z_m\) to be at least
\(\e^{y/2}/2\) on \(B_{m,y}\).  The probability that all tagged clans fail
is controlled by their conditional independence,
the lower bound on the sum of their success probabilities, and the occupation
functional \(\Xi_m\).  The bridge estimates
\eqref{eq:bprei-critical-Hloc} and \eqref{eq:bprei-critical-Hocc} then give
an inverse-moment bound of order \(m^{-3/2}(1+T)^4\) on the event that the
largest positive increment is at most \(T\).  The complementary
large-increment event is controlled directly by the positive tail of
\(\log M\).  Choosing \(T\) according to the available exponential or
polynomial moment yields \eqref{eq:bprei-critical-uim-exp} and
\eqref{eq:bprei-critical-uim-poly}.  The resulting majorants are summable,
and their decay also gives the convolution-tail localization condition.

Let \(Z_m^{\mathrm{imm}}\) be the population descended only from immigrants.
Persistent immigration gives
\[
Z_m\geq Z_m^{\mathrm{imm}}\geq1,
\qquad
Z_m^{-s}\leq(Z_m^{\mathrm{imm}})^{-s},
\]
pathwise.  Thus it is enough to bound the inverse moment of
\(Z_m^{\mathrm{imm}}\).  Its law depends only on the environmental and
immigration mechanisms after time zero, so the bound obtained below is
independent of the initial law \(\nu\).  The case \(m=0\) is bounded by one,
so fix \(m\geq1\).

On \(B_{m,y}\), let \(\mathcal J\) be the set selected by
Lemma~\ref{lem:bprei-critical-future-minima}, and put \(N:=|\mathcal J|\).
For \(j\in\mathcal J\), tag one immigrant present at generation \(j\), and
let
\[
        E_j:=\left\{Y_m^{(j)}>\tfrac12\Pi_{j,m}\right\}.
\]
The events \((E_j)_{j\in\mathcal J}\) are conditionally independent, and
\eqref{eq:bprei-critical-clan-PZ} gives
\[
        \Pbf_{\cE}(E_j)\geq\frac1{4(1+\overline C\Sigma_j)}.
\]
Hence
\[
\Pbf_{\cE}\left(\bigcap_{j\in\mathcal J}E_j^c\right)
\leq\e^{-\mathcal R},
\qquad
\mathcal R:=\sum_{j\in\mathcal J}
\frac1{4(1+\overline C\Sigma_j)}.
\]
By Cauchy--Schwarz,
\[
\left(\sum_{j\in\mathcal J}1\right)^2
\leq
\left\{\sum_{j\in\mathcal J}(1+\overline C\Sigma_j)\right\}
\left\{\sum_{j\in\mathcal J}\frac1{1+\overline C\Sigma_j}\right\}.
\]
Since \(|\mathcal J|=N\), this gives
\[
\mathcal R
\geq\frac{N^2}{4N+4\overline C\sum_{j\in\mathcal J}\Sigma_j}.
\]
Applying \eqref{eq:bprei-critical-record-occupation} then yields
\[
\mathcal R
\geq\frac{N^2}{4N+4c_1\Xi_m},
\qquad
c_1:=\frac{\overline C}{1-\e^{-1}}.
\]

For every \(j\in\mathcal J\),
Lemma~\ref{lem:bprei-critical-future-minima}, applied to the path with
terminal height \(S_m\), gives
\[
        S_j=I_j^{(m)}\leq\frac{S_m}{2}.
\]
Since \(S_m\geq y\) on \(B_{m,y}\),
\[
        \Pi_{j,m}=\e^{S_m-S_j}
        \geq\e^{S_m/2}\geq\e^{y/2}.
\]
Therefore, if \(E_j\) occurs for at least one \(j\in\mathcal J\), then the
corresponding immigrant clan contributes more than \(\e^{y/2}/2\) particles
at time \(m\), so
\[
        Z_m^{-s}\leq2^s\e^{-sy/2}.
\]
On the event that all \(E_j\) fail, we use only \(Z_m^{-s}\leq1\).
Conditional independence of the tagged clans and the preceding bound on the
joint failure probability therefore give
\begin{equation}\label{eq:bprei-critical-quenched-small}
\Ebf_{\cE}[Z_m^{-s};Z_m>0]
\leq2^s\e^{-sy/2}+\e^{-\mathcal R}
\qquad\text{on }B_{m,y}.
\end{equation}

Let \(K_m:=\max_{0\leq j<m}(\zeta_j)_+\), fix \(T\geq1\), and set
\(Y_T:=\lceil4(T+1)^2\rceil\).  On
\(B_{m,y}\cap\{K_m\leq T\}\), \(y\geq Y_T\),
\[
        \frac{y}{4(T+1)}\leq N\leq y.
\]
Using \(\e^{-u}\leq(3/\e)^3u^{-3}\) and
\((a+b)^3\leq4(a^3+b^3)\),
\[
\e^{-\mathcal R}
\leq C\frac{(N+c_1\Xi_m)^3}{N^6}
\leq C(T+1)^6y^{-6}\{y^3+\Xi_m^3\}.
\]
For \(y\geq1\),
\[
\begin{aligned}
y^{-6}\{y^3(1+y)+(1+y)^4\}
&\leq y^{-6}\{2y^4+16y^4\}\\
&\leq18y^{-2}.
\end{aligned}
\]
Thus \eqref{eq:bprei-critical-Hloc} and
\eqref{eq:bprei-critical-Hocc} give
\[
\Ebf[\e^{-\mathcal R};B_{m,y},K_m\leq T]
\leq C(T+1)^6y^{-2}m^{-3/2},
\qquad y\geq Y_T.
\]

We now sum over the disjoint endpoint bins.  For \(y<Y_T\),
\eqref{eq:bprei-critical-Hloc} gives
\[
\sum_{y<Y_T}\Pbf(B_{m,y})
\leq Cm^{-3/2}\sum_{y<Y_T}(1+y)
\leq Cm^{-3/2}(1+T)^4,
\]
because \(Y_T\asymp(T+1)^2\).  The term containing
\(\e^{-sy/2}\) is bounded by \(C_sm^{-3/2}\) using
\eqref{eq:bprei-critical-Hloc}, while
\[
C(T+1)^6m^{-3/2}\sum_{y\geq Y_T}y^{-2}
\leq Cm^{-3/2}(1+T)^4.
\]
Combining these three contributions with the event \(\{K_m>T\}\) yields
\begin{equation}\label{eq:bprei-critical-uim-cutoff}
\Ebf_\nu[Z_m^{-s};Z_m>0,L_m\geq0]
\leq C_sm^{-3/2}(1+T)^4+
\Pbf(L_m\geq0,K_m>T).
\end{equation}

For the large-jump term,
\[
\begin{aligned}
\Pbf(L_m\geq0,K_m>T)
&\leq\sum_{j=0}^{m-1}\Pbf(L_j\geq0,\zeta_j>T)\\
&=\Pbf((\log M)_+>T)\sum_{j=0}^{m-1}u_j\\
&\leq C\sqrt m\,\Pbf((\log M)_+>T),
\end{aligned}
\]
where the equality uses independence of \(\zeta_j\) from the preceding path
and the last inequality uses
\eqref{eq:bprei-critical-weak-persistence}.

Under the exponential-moment assumption,
\[
        \Pbf((\log M)_+>T)\leq C\e^{-\theta T}.
\]
Taking
\[
        T=\max\left\{1,\frac2\theta\log(m+1)\right\}
\]
makes the large-jump contribution \(O(m^{-3/2})\), while the first term in
\eqref{eq:bprei-critical-uim-cutoff} is
\[
        O\left(m^{-3/2}[1+\log(m+1)]^4\right).
\]
This proves \eqref{eq:bprei-critical-uim-exp}.

Under the \(q\)th-moment assumption,
\[
        \Pbf((\log M)_+>T)\leq CT^{-q}.
\]
Taking \(T=m^{2/(q+4)}\) gives
\[
\begin{aligned}
m^{-3/2}(1+T)^4+\sqrt m\,T^{-q}
&=O(m^{-1-\delta_q}),\\
\delta_q&=\frac{q-12}{2(q+4)}>0,
\end{aligned}
\]
because
\[
-\frac32+\frac8{q+4}
=\frac12-\frac{2q}{q+4}
=-1-\frac{q-12}{2(q+4)}.
\]
This proves \eqref{eq:bprei-critical-uim-poly}.

It remains to verify that either majorant satisfies
Assumption~\ref{ass:bprei-critical-terminal-localization}.  Since
\(d_n^-\asymp n^{-1/2}\), for either majorant \(\overline a_m(s)\),
\[
\frac1{d_n^-}\sum_{k\leq n/2}d_k^-\overline a_{n-k}(s)
\leq Cn\max_{n/2\leq j\leq n}\overline a_j(s).
\]
The right-hand side is \(O(n^{-1/2}\log^4n)\) in the exponential-moment
case and \(O(n^{-\delta_q})\) in the polynomial-moment case, and hence tends
to zero.
\end{proof}

\begin{remark}[Status of the positive-tail exponent]
The condition \(q>12\) is a sufficient exponent produced by the global
maximum cutoff and the cubic occupation estimate; it is not asserted to be a
threshold.  The positive-tail condition is used only to make the uniform
post-minimum majorant summable.  It is not needed in the entrance-law
construction below.
\end{remark}

\subsection{The endpoint-minimum entrance law}
\label{subsec:critical-fv-entrance}

The entrance law is obtained by looking backward from the terminal
generation.  For every fixed \(U\), endpoint-minimum conditioning produces a
limit for the last \(U+1\) environmental marks.  The population founded by
immigrants in those generations is obtained from the environmental window by
a common Markov kernel, so its law converges in total variation.  It remains
to show that clans founded earlier than \(U\) generations before the endpoint
are negligible, first uniformly in \(n\) and then under the limiting reversed
environment.

Assume now, in addition, that the immigration mean is uniformly bounded:
\begin{equation}\label{eq:bprei-critical-immigration-mean}
        \Ebf[\eta_{n+1}\mid\cE_n]\leq\overline\eta
        \qquad\text{almost surely}.
\end{equation}
Let \(W\) be the walk with increment law \(-\log M\) and, for \(x>0\),
put
\[
        \varphi_n(x):=\Pbf(x+W_j>0,\ 1\leq j\leq n).
\]
The first-attainment convention gives the pathwise identity
\begin{equation}\label{eq:bprei-critical-strict-reversal}
        \{\tau_n=n\}
        =\{S_j>S_n,\ 0\leq j<n\}
        =\{R_u>0,\ 1\leq u\leq n\},
\end{equation}
where \(R_u:=S_{n-u}-S_n\).  In particular, the value \(u=n\) excludes a
tie with \(S_0=0\); no assumption excluding arithmetic ties is needed.

We record the strict-persistence ratio in the normalization used here.  For
fixed \(x>0\), set \(b_0(x):=1\) and, for \(j\geq1\),
\[
 b_j(x):=
 \Pbf\left(W_j=\min_{0\leq\ell\leq j}W_\ell,\ -W_j<x\right).
\]
Decomposing a path that stays above \(-x\) at the last time it attains its
minimum gives the exact convolution
\begin{equation}\label{eq:bprei-critical-strict-ratio-convolution}
        \varphi_n(x)=\sum_{j=0}^{n}b_j(x)d_{n-j}^-.
\end{equation}
This last-minimum decomposition is only an auxiliary proof device; it does not
alter the first-attainment definition of \(\tau_n\).  Reversal of the first
\(j\) increments expresses \(b_j(x)\) as a dual-walk path that stays weakly
nonnegative and ends in \([0,x)\).  Covering that interval by finitely many
unit intervals and applying Lemma~\ref{lem:bprei-critical-fv-kernels} gives
\(b_j(x)\leq C_xj^{-3/2}\).  Thus \(\sum_jb_j(x)<\infty\).  Since
\(d_n^-\sim\kappa_0n^{-1/2}\), split
\eqref{eq:bprei-critical-strict-ratio-convolution} at \(n/2\).  Regular
variation and dominated convergence handle \(j\leq n/2\), while
\[
\frac1{d_n^-}\sum_{j>n/2}b_j(x)d_{n-j}^-
\leq
\frac{C_xn^{-3/2}}{d_n^-}\sum_{\ell\leq n/2}d_\ell^-
\longrightarrow0.
\]
Consequently,
\begin{equation}\label{eq:bprei-critical-BD-ratio}
        \sqrt n\,d_n^-\longrightarrow\kappa_0\in(0,\infty),
        \qquad
        \frac{\varphi_n(x)}{d_n^-}\longrightarrow
        v(x):=\sum_{j=0}^{\infty}b_j(x)\in(0,\infty).
\end{equation}
The first limit is the classical centered finite-variance persistence
asymptotic; see \cite[Chapter~XII]{Feller1971}.  The second is the
strict-persistence form of the Bertoin--Doney ratio theorem
\cite{BertoinDoney1994}, here obtained directly from the local and persistence
bounds already proved in the appendix.  The renewal interpretation uses an
open height interval and the corresponding normalization by strict survival
from zero.  Finally, the persistence bound and the lower bound on \(d_n^-\)
give
\begin{equation}\label{eq:bprei-critical-v-linear}
        v(x)\leq C(1+x),
        \qquad x>0.
\end{equation}

\begin{proposition}[Entrance law at an endpoint minimum]
\label{prop:bprei-critical-fv-entrance}
Assume \eqref{eq:bprei-critical-fv-increments},
\(\eta_n\geq1\) almost surely, and
\eqref{eq:bprei-critical-immigration-mean}.  There is a probability law
\(\nu^-\) on \(\{1,2,\ldots\}\), independent of the fixed initial state
\(i\), such that
\begin{equation}\label{eq:bprei-critical-fv-entrance-TV}
        \|\nu_{i,n}^- -\nu^-\|_{\mathrm{TV}}\longrightarrow0.
\end{equation}
Moreover,
\begin{equation}\label{eq:bprei-critical-endpoint-mean}
        \sup_{n\geq1}\Ebf_i[Z_n\mid\tau_n=n]<\infty.
\end{equation}
\end{proposition}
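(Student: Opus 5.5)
We will follow the plan sketched in the introduction to this subsection: look backward from the terminal generation, decompose \(Z_n\) into clans founded in the last \(U+1\) generations and older clans, show that the recent part has a limiting law under endpoint-minimum conditioning, and show that the older part is uniformly negligible.

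Write \(R_u:=S_{n-u}-S_n\). By \eqref{eq:bprei-critical-strict-reversal}, \(\{\tau_n=n\}=\{R_u>0,\ 1\le u\le n\}\). The reversed increments \(\log M_{n-u}\), \(1\le u\le n\), are i.i.d.\ with the law of \(\log M\), so \(R\) is a walk with increments \(-\log M\).

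Fix \(U\) and a bounded functional \(F\) of the last \(U+1\) marks. The Markov property of \(R\) at time \(U\) gives
\[
\Ebf[F;\tau_n=n]=\Ebf\bigl[F\,\one_{\{R_u>0,\,u\le U\}}\,\varphi_{n-U}(R_U)\bigr].
\]
Divide by \(d_n^-\). Then \eqref{eq:bprei-critical-BD-ratio} gives \(\varphi_{n-U}(x)/d_n^-\to v(x)\). The persistence bound of Lemma~\ref{lem:bprei-critical-fv-kernels}, together with \(d_n^-\asymp n^{-1/2}\), gives the domination \(\varphi_{n-U}(x)/d_n^-\le C(1+x)\). Since \(\Ebf|R_U|<\infty\), dominated convergence applies, and the conditioned law of the last \(U+1\) marks converges in total variation to the \(h\)-transform
\[
\Pbf^{\uparrow}(\dd e)=v(R_U)\,\one_{\{R_u>0,\,u\le U\}}\,\Pbf(\dd e).
\]
These laws are consistent in \(U\) because \(v\) is the harmonic function for strict persistence; this follows by passing to the limit in the Markov identity. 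They therefore define a reversed environment \(\Pbf^{\uparrow}\) on infinitely many past marks. Given the marks, the clans founded by immigrants at generations \(n-U,\dots,n\) have a law obtained through a fixed Markov kernel. Since total variation does not increase under a common kernel, the law of this recent-immigrant population \(Z_n^{(U)}\) under \(\Pbf_i(\cdot\mid\tau_n=n)\) converges in total variation to a limit \(\nu^{-,U}\). This limit does not depend on \(i\).

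The main step is negligibility of the old clans: those descended from the initial \(i\) particles and from immigrants arriving before generation \(n-U\). For the old part \(Z_n-Z_n^{(U)}\), take quenched means. A clan founded at generation \(j\) with \(\overline\eta\)-bounded immigration has quenched mean at most \(\overline\eta\,\e^{S_n-S_j}=\overline\eta\,\e^{-R_{n-j}}\) at time \(n\). The initial clans contribute \(i\,\e^{S_n}=i\,\e^{-R_n}\). Hence
\[
\Ebf_i\bigl[Z_n-Z_n^{(U)};\tau_n=n\bigr]\le (i+\overline\eta)\sum_{u>U}\Ebf\bigl[\e^{-R_u};R_\ell>0,\ \ell\le n\bigr].
\]
Apply the Markov property at \(u\), bound \(\varphi_{n-u}(R_u)\) by the persistence estimate, and bin \(R_u\) into unit windows using the local bound \(k_u(0;b)\le C(1+b)u^{-3/2}\) from Lemma~\ref{lem:bprei-critical-fv-kernels}. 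After dividing by \(d_n^-\), each term is of order
\[
\sum_b\e^{-b}(1+b)^2\,u^{-3/2}\bigl(n/(n-u)\bigr)^{1/2}.
\]
For \(u\le n/2\) this is summable in \(u\). For \(u>n/2\) we use \(\varphi\le1\), and the contribution is of order \(\sqrt n\,n^{-1/2}\) times a tail. I expect this endpoint range to need the most care: there one should split at \(n/2\) and use \(d_n^-\asymp n^{-1/2}\) against \(\sum_{u>n/2}u^{-3/2}\asymp n^{-1/2}\). The total contribution becomes \(o(1)\) as \(U\to\infty\), uniformly in \(n\) and with the \(i\)-dependence only in the constant. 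By Markov's inequality, \(\Pbf_i(Z_n\ne Z_n^{(U)}\mid\tau_n=n)\) is then small uniformly in \(n\). Taking \(U=0\) in the same bound also gives \eqref{eq:bprei-critical-endpoint-mean}, since the recent immigrant clans have mean at most \(\overline\eta\) times a bounded sum.

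To conclude, apply the same estimate under \(\Pbf^{\uparrow}\), obtained via Fatou's lemma in the limit. It shows that the \(\nu^{-,U}\) form a Cauchy family in total variation, so they converge to a law \(\nu^-\). A standard \(3\varepsilon\) argument then gives \eqref{eq:bprei-critical-fv-entrance-TV}. Finally, \(\eta\ge1\) puts \(\nu^-\) on \(\{1,2,\ldots\}\), and \(\nu^-\) is independent of \(i\) because the initial clans are among the negligible old clans.
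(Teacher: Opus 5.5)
Your overall route matches the paper's: reversal at the endpoint minimum, the finite-window density \(\varphi/d_n^-\to v\) with linear domination, total-variation contraction under the common population kernel, and removal of old clans through quenched means and the two-block estimate. The one step that fails as written is the endpoint range of the old-clan sum. If for \(u>n/2\) you replace \(\varphi_{n-u}(R_u)\) by \(1\), you are left with the bound
\[
\frac{C}{d_n^-}\sum_{n/2<u\leq n}u^{-3/2}\asymp\sqrt n\cdot n^{-1/2}\asymp1 .
\]
This is bounded, but it is not small. It also does not improve as \(U\to\infty\), because for \(n>2U\) the whole range lies inside \(u>U\). So it cannot show that \(\Pbf_i(Z_n\neq Z_n^{(U)}\mid\tau_n=n)\) is small for large \(U\), uniformly in large \(n\), and that is exactly what your final \(3\varepsilon\) step needs. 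Two of your conclusions survive this crude bound. The endpoint-mean bound only needs boundedness. The Cauchy property of \(\nu^{-,U}\) involves \(Z_n^{(U')}-Z_n^{(U)}\), which only contains ages \(U<u\leq U'<n/2\).

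The repair is the two-factor bound you wrote one line earlier; keep it in the endpoint range too. For \(1\leq u<n\) it gives \(\Ebf[\e^{-R_u};\tau_n=n]\leq Cu^{-3/2}(n-u)^{-1/2}\), so \(\sum_{n/2<u<n}\leq Cn^{-3/2}\sum_{r\leq n/2}r^{-1/2}\leq Cn^{-1}\). Treat the initial clans (\(u=n\), where the continuation factor is \(1\)) separately, using \(\Ebf[\e^{-R_n};\tau_n=n]\leq Cn^{-3/2}\). After dividing by \(d_n^-\geq cn^{-1/2}\), both pieces are \(O(n^{-1/2})\), which gives the paper's bound \(\Ebf_i[Z_n-Z_n^{(U)}\mid\tau_n=n]\leq C(U^{-1/2}+n^{-1/2})\).

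There is also a smaller indexing slip. Clans founded at generations \(n-U,\ldots,n\) depend on the last \(U+1\) marks, and these determine \(R_1,\ldots,R_{U+1}\). So the Markov property must be applied at time \(U+1\), with continuation factor \(\varphi_{n-U-1}(R_{U+1})\) and limiting density \(v(R_{U+1})\one_{\{R_u>0,\,u\leq U+1\}}\).

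Finally, your last step differs mildly from the paper's. You obtain \(\nu^-\) as the total-variation limit of the Cauchy family \(\nu^{-,U}\), using completeness of \(\ell^1\) on \(\Nzero\). This needs only the finite-window limits. The paper instead builds all clans under the extended reversed environment \(\Pbf^\uparrow\), bounds \(\Ebf^\uparrow[\e^{-R_u}]\leq Cu^{-3/2}\) via the density \(v(R_u)\), and identifies \(\nu^-=\Law^\uparrow(Z^\infty)\) explicitly.
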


\begin{proof}
We prove the total-variation convergence by approximating the terminal
population, under the conditioning \(\{\tau_n=n\}\), by the descendants of
immigrants entering during the last \(U+1\) generations.  For each fixed
\(U\), reversal at the endpoint minimum converts this conditioning into
strict positivity of a reversed environmental walk.  The strict-persistence
ratio limit in \eqref{eq:bprei-critical-BD-ratio} then gives a limiting law
for the last \(U+1\) reversed environmental marks.  Since the population
descended from the corresponding recent immigrants is obtained from this
environmental window through the same Markov kernel, its conditional law also
converges in total variation.

It remains to pass from the recent-immigrant population to the full
population.  The local bridge and persistence estimates show that, uniformly
for large \(n\), the expected contribution at time \(n\) of the initial
population and of immigrants entering more than \(U\) generations before the
endpoint is \(O(U^{-1/2}+n^{-1/2})\) under \(\{\tau_n=n\}\).  The
analogous tail under the limiting reversed environment is
\(O(U^{-1/2})\).  Consequently, the full conditioned population is close in
total variation to its recent-immigrant approximation, while the limiting
recent-immigrant populations increase to a finite random variable
\(Z^\infty\).  Letting first \(n\to\infty\) and then \(U\to\infty\)
yields the limiting entrance law.  The same estimates give the uniform
endpoint-mean bound, and the vanishing contribution of the fixed initial
population shows that the limit is independent of \(i\).

Fix \(n\) and reverse the environmental marks:
\(\widetilde\cE_u:=\cE_{n-1-u}\), \(0\leq u<n\).  Put
\(Y_u:=-\log\widetilde M_u\) and
\[
R_u:=S_{n-u}-S_n=\sum_{v=0}^{u-1}Y_v,
\qquad 0\leq u\leq n.
\]
By \eqref{eq:bprei-critical-strict-reversal},
\(\{\tau_n=n\}=\{R_1>0,\ldots,R_n>0\}\).

\medskip
\noindent\emph{Step 1: finite reversed environmental windows.}
Fix \(U\geq0\) and \(n\geq2(U+1)\).  Conditional on the reversed
environmental window
\[
        \widetilde\cE_0,\ldots,\widetilde\cE_U,
\]
the event \(\{\tau_n=n\}\) first requires
\[
        R_1>0,\ldots,R_{U+1}>0.
\]
Given this event and the endpoint \(R_{U+1}\), the remaining
\(n-U-1\) reversed increments are independent of the window, and the
complete reversed path remains strictly positive precisely when a fresh copy
of the reversed walk, started from \(R_{U+1}\), remains positive for those
\(n-U-1\) steps.  The conditional probability of this continuation is
\(\varphi_{n-U-1}(R_{U+1})\).  Since
\(\Pbf(\tau_n=n)=d_n^-\), the conditional law of the window has density
\[
D_n:=g_n(R_{U+1})\one_{\{R_1>0,\ldots,R_{U+1}>0\}},
\qquad
g_n(x):=\frac{\varphi_{n-U-1}(x)}{d_n^-}
\]
with respect to its unconditioned law.  Moreover,
\begin{equation}\label{eq:bprei-critical-entrance-density-factor}
g_n(x)=
\frac{\varphi_{n-U-1}(x)}{d_{n-U-1}^-}
\frac{d_{n-U-1}^-}{d_n^-}.
\end{equation}
The two factors converge to \(v(x)\) and one, respectively.  Also,
\[
0\leq g_n(x)
\leq\frac{C(1+x)(n-U-1)^{-1/2}}{cn^{-1/2}}
\leq C(1+x).
\]
Hence
\[
        0\leq D_n\leq C(1+R_{U+1}^{+}),
\]
and the right-hand side is integrable.  Therefore
\[
D_n\longrightarrow
D:=v(R_{U+1})\one_{\{R_1>0,\ldots,R_{U+1}>0\}}
\quad\text{in }L^1.
\]
Since \(\Ebf D_n=1\), \(\Ebf D=1\).  If \(\mu_{n,U}\) and
\(\mu_U^\uparrow\) denote the corresponding window laws, then
\[
\|\mu_{n,U}-\mu_U^\uparrow\|_{\mathrm{TV}}
=\frac12\Ebf|D_n-D|\longrightarrow0.
\]
The laws \((\mu_U^\uparrow)_U\) are consistent, because restriction commutes
with total-variation limits.  The standard-Borel extension theorem therefore
defines a probability \(\Pbf^\uparrow\) on the infinite reversed
environment.

\medskip
\noindent\emph{Step 2: finite-window populations.}
For \(0\leq u\leq U\), consider the immigrants entering generation
\(n-u\), so that \(u\) is their age at generation \(n\).  Let
\(F_U^{(n)}\) be the total number of their descendants present at generation
\(n\), summed over the reversed ages \(u=0,\ldots,U\).  Thus
\(F_U^{(n)}\) contains only the clans initiated by immigrants entering
during the last \(U+1\) generations and excludes the initial population and
all earlier immigrant clans.  Under the limiting reversed environmental law
\(\Pbf^\uparrow\), define \(F_U^\infty\) by applying the same immigration
and reproduction construction to the first \(U+1\) reversed marks.

Conditional on the environmental window, the law of this collection of
recent clans is described by a Markov kernel \(K_U\) that does not depend on
\(n\).  Hence
\[
\Law(F_U^{(n)}\mid\tau_n=n)=\mu_{n,U}K_U,
\qquad
\Law^\uparrow(F_U^\infty)=\mu_U^\uparrow K_U.
\]
Total variation contracts under application of the same Markov kernel, so
\begin{equation}\label{eq:bprei-critical-finite-window-population}
\begin{aligned}
&\|\Law(F_U^{(n)}\mid\tau_n=n)
  -\Law^\uparrow(F_U^\infty)\|_{\mathrm{TV}}\\
&\qquad\leq\|\mu_{n,U}-\mu_U^\uparrow\|_{\mathrm{TV}}
\longrightarrow0.
\end{aligned}
\end{equation}
On one probability space carrying the infinite reversed environment under
\(\Pbf^\uparrow\), construct all immigration variables and descendant clans.
Then \(F_U^\infty\) is nondecreasing in \(U\); write
\(Z^\infty:=\lim_{U\to\infty}F_U^\infty\).

\medskip
\noindent\emph{Step 3: removal of old clans.}
Let \(\mathcal R_{U,n}:=Z_n-F_U^{(n)}\).  Conditional on the reversed
environment,
\[
\Ebf_i[\mathcal R_{U,n}\mid\widetilde\cE]
\leq i\e^{-R_n}
+\overline\eta\sum_{u=U+1}^{n-1}\e^{-R_u}
\qquad\text{on }\{\tau_n=n\}.
\]
For \(1\leq u<n\), Lemma~\ref{lem:bprei-critical-fv-kernels} and the weak
persistence bound give
\begin{equation}\label{eq:bprei-critical-entrance-two-block}
\begin{aligned}
\Ebf[\e^{-R_u};\tau_n=n]
&\leq\sum_{a\geq0}\e^{-a}k_u^W(0;a)
     \sup_{x\in[a,a+1)}\pi_{n-u}^W(x)\\
&\leq Cu^{-3/2}(n-u)^{-1/2},
\end{aligned}
\qquad
\Ebf[\e^{-R_n};\tau_n=n]\leq Cn^{-3/2}.
\end{equation}
The first term in the remainder estimate accounts for descendants of the
\(i\) initial particles, while the sum accounts for immigrants of reversed
ages \(u>U\).  Using \eqref{eq:bprei-critical-entrance-two-block},
\[
\begin{aligned}
\sum_{u=U+1}^{n-1}\Ebf[\e^{-R_u};\tau_n=n]
&\leq
C\sum_{U<u\leq n/2}u^{-3/2}(n-u)^{-1/2}\\
&\quad+C\sum_{n/2<u<n}u^{-3/2}(n-u)^{-1/2}.
\end{aligned}
\]
On the first range, \((n-u)^{-1/2}\leq Cn^{-1/2}\), and hence the sum is
bounded by
\[
        Cn^{-1/2}\sum_{u>U}u^{-3/2}
        \leq Cn^{-1/2}U^{-1/2}.
\]
On the second range, \(u^{-3/2}\leq Cn^{-3/2}\), so, after putting
\(r=n-u\),
\[
\sum_{n/2<u<n}u^{-3/2}(n-u)^{-1/2}
\leq Cn^{-3/2}\sum_{r\leq n/2}r^{-1/2}
\leq Cn^{-1}.
\]
Dividing by \(\Pbf(\tau_n=n)=d_n^-\geq cn^{-1/2}\) and including the
initial-population term gives
\begin{equation}\label{eq:bprei-critical-entrance-remainder}
\Ebf_i[\mathcal R_{U,n}\mid\tau_n=n]
\leq C(i,\overline\eta)(U^{-1/2}+n^{-1/2}),
\qquad n\geq2(U+1),\ U\geq1.
\end{equation}

By the construction in Step~1, the law of the first \(u\) reversed marks
under \(\Pbf^\uparrow\) has density
\[
        v(R_u)\one_{\{R_1>0,\ldots,R_u>0\}}.
\]
Therefore, using \(v(x)\leq C(1+x)\), dividing \(R_u\) into unit
intervals, and applying Lemma~\ref{lem:bprei-critical-fv-kernels},
\[
\begin{aligned}
\Ebf^\uparrow[\e^{-R_u}]
&=\Ebf[\e^{-R_u}v(R_u);R_1>0,\ldots,R_u>0]\\
&\leq Cu^{-3/2}\sum_{a\geq0}\e^{-a}(1+a)^2
\leq C'u^{-3/2}.
\end{aligned}
\]
Consequently,
\[
\Ebf^\uparrow[Z^\infty-F_U^\infty]
\leq\overline\eta\sum_{u>U}C'u^{-3/2}
\leq C\overline\eta U^{-1/2}.
\]
In particular, \(\Ebf^\uparrow Z^\infty<\infty\) and
\(Z^\infty<\infty\) almost surely.

\medskip
\noindent\emph{Step 4: conclusion.}
Under the natural coupling,
\[
        F_U^{(n)}\leq Z_n,
        \qquad
        F_U^\infty\leq Z^\infty.
\]
For integer-valued random variables \(X\geq Y\) on a common probability
space,
\[
\|\Law(X)-\Law(Y)\|_{\mathrm{TV}}
\leq\Pbf(X\neq Y)
=\Pbf(X-Y>0)
\leq\Ebf[X-Y].
\]
Applying this inequality to the two truncations and then using the triangle
inequality gives
\[
\begin{aligned}
\|\nu_{i,n}^- -\Law^\uparrow(Z^\infty)\|_{\mathrm{TV}}
&\leq\Pbf_i(\mathcal R_{U,n}>0\mid\tau_n=n)\\
&\quad+\|\Law(F_U^{(n)}\mid\tau_n=n)
          -\Law^\uparrow(F_U^\infty)\|_{\mathrm{TV}}\\
&\quad+\Pbf^\uparrow(Z^\infty-F_U^\infty>0).
\end{aligned}
\]
For fixed \(U\), the middle term tends to zero by
\eqref{eq:bprei-critical-finite-window-population}.  The first term is
bounded by
\[
\Ebf_i[\mathcal R_{U,n}\mid\tau_n=n]
\leq C(U^{-1/2}+n^{-1/2})
\]
by \eqref{eq:bprei-critical-entrance-remainder}, while the third is bounded
by
\[
        \Ebf^\uparrow[Z^\infty-F_U^\infty]\leq CU^{-1/2}.
\]
Therefore, first letting \(n\to\infty\) and then \(U\to\infty\) proves
\eqref{eq:bprei-critical-fv-entrance-TV} with
\(\nu^-:=\Law^\uparrow(Z^\infty)\).  Persistent immigration gives
\(Z^\infty\geq1\), so \(\nu^-(\{0\})=0\).

The population \(F_0^{(n)}\) consists of the immigrants entering at
generation \(n\), so its conditional mean is at most \(\overline\eta\).
Repeating the split above with the older-clan sum beginning at \(u=1\) gives
a bound independent of \(n\) for the remaining contribution, and hence
\[
\Ebf_i[Z_n\mid\tau_n=n]
\leq\Ebf_i[F_0^{(n)}\mid\tau_n=n]
   +\Ebf_i[\mathcal R_{0,n}\mid\tau_n=n]
\leq\overline\eta+C(i,\overline\eta).
\]
This proves \eqref{eq:bprei-critical-endpoint-mean}.

Finally, the expected contribution at generation \(n\) of the \(i\) initial
particles is
\[
\Ebf_i[i\e^{-R_n}\mid\tau_n=n]
=\frac{i\Ebf[\e^{-R_n};\tau_n=n]}{d_n^-}
\leq Cin^{-1}\longrightarrow0.
\]
The limiting variable \(Z^\infty\) is constructed solely from the limiting
reversed environment and its immigrant clans.  It therefore does not depend
on \(i\), which proves the asserted independence of the entrance law.
\end{proof}

\begin{remark}
The uniform conditional-mean condition is used only to remove old immigrant
clans.  If the full mark is \((A,B)\), with \(B\) independent of \(A\) and
\(\Ebf B<\infty\), one may work with the coarser environment generated by
the \(A\)-marks.  Then
\(\Ebf[\eta\mid A]=m\Ebf B\), and the same proof applies.  Persistent
immigration can be weakened to a uniformly positive conditional probability
of immigration, but the nontriviality clause of the entrance-law assumption
must then be checked separately.
\end{remark}
\section{Standard facts for the auxiliary exact recursion}
\label{app:ad-standard-facts}

This appendix records the standard fixed-argument arguments used in
Subsection~\ref{subsec:ad-exact-dynamics}.  The moving-window and transfer
arguments remain in Section~\ref{sec:ad-controls}; the notation is the same.

\begin{proof}[Proof of Proposition~\ref{prop:ad-super-fixed-profile}]
The product formula \eqref{eq:ad-exact-product} gives all three cases.  If
\(q>0\) and \(R(q)<1\), the iterates \(u^{(n)}(s)\) approach \(q\)
geometrically and
\[
        \sum_{j\geq0}
        \left|\frac{R(u^{(j)}(s))}{R(q)}-1\right|<\infty
\]
locally uniformly for \(s<1\).  Hence the normalized product converges and
\(\mathsf F_0(u^{(n)}(s))\to\mathsf F_0(q)\), proving
\eqref{eq:ad-super-positive-q-limit}.

If \(q=0\), then
\(\mathsf F_0(u^{(n)}(s))=\mu_1u^{(n)}(s)+O(u^{(n)}(s)^2)\).  Koenigs'
limit \eqref{eq:ad-koenigs-limit} supplies the factor \(u'(0)^n\), while
the normalized \(R\)-product converges as above; this proves
\eqref{eq:ad-super-zero-q-limit}.

If \(R\equiv1\), the product disappears.  The mean-value theorem applied to
\(\mathsf F_0(u^{(n)}(s))-\mathsf F_0(u^{(n)}(0))\), together with the
locally uniform Koenigs limits at \(q\), gives
\eqref{eq:ad-super-noimm-positive-transform}.
\end{proof}

\begin{proof}[Proof of Lemma~\ref{lem:ad-super-subtracted-positive}]
Continuity follows from locally uniform convergence.  If \(0<r<s<1\), then
monotonicity of \(u\) and \(R\) gives
\[
        \frac{\mathsf Q(s)}{\mathsf Q(r)}
        =\prod_{j=0}^{\infty}
        \frac{R(u^{(j)}(s))}{R(u^{(j)}(r))}>1.
\]
Thus \(\mathsf Q\) is strictly increasing, and compactness yields
\eqref{eq:ad-super-profile-positive}.  In the branch \(q=0\), positivity
follows from \(K_u(s)>0\) and the positive product.  If \(R\equiv1\), it
follows from \(K_{u,q}(s)>K_{u,q}(0)\) for \(s>0\).
\end{proof}

\printbibliography
\end{document}